\documentclass{compositio}
\usepackage{xypic,mathrsfs,tikz,MnSymbol}
\renewcommand{\mod}{\operatorname{mod}}
\newcommand{\Sub}{\operatorname{Sub}}
\newcommand{\Tor}{\operatorname{Tor}}
\newcommand{\sort}{\operatorname{sort}}

\newcommand{\Inv}{\operatorname{Inv}}

\newcommand{\C}{\mathscr{C}}

\newcommand{\widet}[1]{#1'}
\newcommand{\ak}{{\langle k\rangle}}
\renewcommand{\le}{\reflectbox{{\rm{L}}}}
\newcommand {\F}{\mathbb F_q}
\newcommand{\oF}{\overline{\mathbb F}_q}

\newcommand{\wt}{\widetilde}
\newcommand{\wc}{\widehat {\mathcal{C}}}
\DeclareMathOperator{\Ker}{Ker}
\DeclareMathOperator{\Image}{Im} \renewcommand{\Im}{\Image}
\DeclareMathOperator{\End}{End}
\DeclareMathOperator{\Hom}{Hom}

\DeclareMathOperator{\Ext}{Ext}
\DeclareMathOperator{\modules}{mod} \renewcommand{\mod}{\modules}
\DeclareMathOperator{\Mod}{Mod}

\DeclareMathOperator{\Fac}{Fac}
\newcommand{\Facpp}{\Fac_{\mathrm{pp}}}
\DeclareMathOperator{\Ann}{Ann}

% this file provides / overwrites the commands
% \to - \mono - \epi - \embed - \mapsto - \arrow
%
% first (mandatory) argumant of \arrow is its style
% - for instance \arrow{->}
%
% all arrows can be streched by an optional factor in
% square brackets
% - for instance \to[2]
%
% all arrows accept labels: either ^ and or _ for
% labels above and below the arrow, or ~ for a label
% in its middle
% - for instance \to^f_g
%                \mono~f

\usetikzlibrary{arrows}

\tikzset{>=stealth'}

% lengths and heights
% Display style
\def\arrowLengthDisplayStyle{4ex}
\def\arrowHeightDisplayStyle{.8ex}
\def\arrowSkipDisplayStyle{.5ex}
% Test style
\def\arrowLengthTextStyle{3ex}
\def\arrowHeightTextStyle{.8ex}
\def\arrowSkipTextStyle{.4ex}
% Script style
\def\arrowLengthScriptStyle{2.5ex}
\def\arrowHeightScriptStyle{.6ex}
\def\arrowSkipScriptStyle{.3ex}
%Script script style
\def\arrowLengthScriptScriptStyle{2ex}
\def\arrowHeightScriptScriptStyle{.4ex}
\def\arrowSkipScriptScriptStyle{.2ex}

% the predefined arrow types
\renewcommand{\to}{\arrow{->}}
\newcommand{\mono}{\arrow{>->}}
\newcommand{\epi}{\arrow{->>}}
\newcommand{\embed}{\arrow{right hook->}}
\renewcommand{\mapsto}{\arrow{|->}}

% the arrow drawing routines
\newcommand{\MakeTikzArrowWithSuperscriptSubscript}[4]
% parameters:
% 1: arrow style
% 2: label above arrow
% 3: label below arrow
% 4: arrow length
 {
  \mathchoice
   { %Display style
    \hspace*{\arrowSkipDisplayStyle}
    \begin{tikzpicture}[baseline]
     \draw [#1] (0,\arrowHeightDisplayStyle) -- node [above] {$#2$} node [below] {$#3$} (#4 * \arrowLengthDisplayStyle, \arrowHeightDisplayStyle);
    \end{tikzpicture}
    \hspace*{\arrowSkipDisplayStyle}
   }
   { %Text style
    \hspace*{\arrowSkipTextStyle}
    \begin{tikzpicture}[baseline]
     \draw [#1] (0,\arrowHeightTextStyle) -- node [above] {$\scriptstyle #2$} node [below] {$\scriptstyle #3$} (#4 * \arrowLengthTextStyle, \arrowHeightTextStyle);
    \end{tikzpicture}
    \hspace*{\arrowSkipTextStyle}
   }
   { %Script style
    \hspace*{\arrowSkipScriptStyle}
    \begin{tikzpicture}[baseline]
     \draw [#1] (0,\arrowHeightScriptStyle) -- node [above] {$\scriptscriptstyle #2$} node [below] {$\scriptscriptstyle #3$} (#4 * \arrowLengthScriptStyle, \arrowHeightScriptStyle);
    \end{tikzpicture}
    \hspace*{\arrowSkipScriptStyle}
   }
   { %Script script style
    \hspace*{\arrowSkipScriptScriptStyle}
    \begin{tikzpicture}[baseline]
     \draw [#1] (0,\arrowHeightScriptScriptStyle) -- node [above] {$\scriptscriptstyle #2$} node [below] {$\scriptscriptstyle #3$} (#4 * \arrowLengthScriptScriptStyle, \arrowHeightScriptScriptStyle);
    \end{tikzpicture}
    \hspace*{\arrowSkipScriptScriptStyle}
   }
 }

\newcommand{\MakeTikzArrowWithCentralLabel}[3]
% parameters:
% 1: arrow style
% 2: label
% 3: arrow length
 {
  \mathchoice
   { %Display style
    \hspace*{\arrowSkipDisplayStyle}
    \begin{tikzpicture}[baseline]
     \draw [#1] (0,\arrowHeightDisplayStyle) -- node [fill=white,inner sep=1pt] {$#2$} (#3 * \arrowLengthDisplayStyle, \arrowHeightDisplayStyle);
    \end{tikzpicture}
    \hspace*{\arrowSkipDisplayStyle}
   }
   { %Text style
    \hspace*{\arrowSkipTextStyle}
    \begin{tikzpicture}[baseline]
     \draw [#1] (0,\arrowHeightTextStyle) -- node [fill=white,inner sep=1pt] {$\scriptstyle #2$} (#3 * \arrowLengthTextStyle, \arrowHeightTextStyle);
    \end{tikzpicture}
    \hspace*{\arrowSkipTextStyle}
   }
   { %Script style
    \hspace*{\arrowSkipScriptStyle}
    \begin{tikzpicture}[baseline]
     \draw [#1] (0,\arrowHeightScriptStyle) -- node [fill=white,inner sep=1pt] {$\scriptscriptstyle #2$} (#3 * \arrowLengthScriptStyle, \arrowHeightScriptStyle);
    \end{tikzpicture}
    \hspace*{\arrowSkipScriptStyle}
   }
   { %Script script style
    \hspace*{\arrowSkipScriptScriptStyle}
    \begin{tikzpicture}[baseline]
     \draw [#1] (0,\arrowHeightScriptScriptStyle) -- node [fill=white,inner sep=1pt] {$\scriptscriptstyle #2$} (#3 * \arrowLengthScriptScriptStyle, \arrowHeightScriptScriptStyle);
    \end{tikzpicture}
    \hspace*{\arrowSkipScriptScriptStyle}
   }
 }

% handeling of the optional parameters
\def\arrow#1{\def\lastArrowStyle{#1}
             \futurelet\testchar\arrowMaybeStreched}
\def\arrowMaybeStreched{\ifx[\testchar \let\next\arrowStreched
                         \else \let\next\arrowUnstreched \fi
                        \next}

\def\arrowStreched[#1]{\def\lastArrowStrech{#1}
                       \futurelet\testchar\arrowMaybeLabel}
\def\arrowUnstreched{\def\lastArrowStrech{1}
                     \futurelet\testchar\arrowMaybeLabel}

\def\arrowMaybeLabel{\ifx^\testchar \let\next\arrowSuperscript
                      \else \ifx_\testchar \let\next\arrowSubscript
                             \else \ifx~\testchar \let\next\arrowCentralLabel
                                    \else \let\next\arrowNoLabel
                                   \fi
                            \fi
                     \fi
                     \next}

\def\arrowSuperscript^#1{\def\lastArrowSuperscript{#1}
                         \futurelet\testchar\arrowSuperMaybeSub}
\def\arrowSuperMaybeSub{\ifx_\testchar \let\next\arrowSuperscriptSubscript
                         \else \let\next\arrowSuperscriptNoSubscript \fi
                        \next}

\def\arrowSubscript_#1{\def\lastArrowSubscript{#1}
                         \futurelet\testchar\arrowSubMaybeSuper}
\def\arrowSubMaybeSuper{\ifx^\testchar \let\next\arrowSubscriptSuperscript
                         \else \let\next\arrowSubscriptNoSuperscript \fi
                        \next}

\def\arrowSuperscriptSubscript_#1{\def\lastArrowSubscript{#1}
                                  \arrowDrawSupSub}
\def\arrowSuperscriptNoSubscript{\def\lastArrowSubscript{}
                                 \arrowDrawSupSub}
\def\arrowSubscriptSuperscript^#1{\def\lastArrowSuperscript{#1}
                                  \arrowDrawSupSub}
\def\arrowSubscriptNoSuperscript{\def\lastArrowSuperscript{}
                                 \arrowDrawSupSub}
\def\arrowNoLabel{\def\lastArrowSuperscript{}
                  \def\lastArrowSubscript{}
                  \arrowDrawSupSub}

\def\arrowCentralLabel~#1{\MakeTikzArrowWithCentralLabel{\lastArrowStyle}{#1}{\lastArrowStrech}}
\def\arrowDrawSupSub{\MakeTikzArrowWithSuperscriptSubscript{\lastArrowStyle}{\lastArrowSuperscript}{\lastArrowSubscript}{\lastArrowStrech}}

\newtheorem{theorem}{Theorem}[section]
\newtheorem{corollary}[theorem]{Corollary}

\newtheorem{lemma}[theorem]{Lemma}
\newtheorem{proposition}[theorem]{Proposition}
\newtheorem{conjecture}[theorem]{Conjecture}

\theoremstyle{definition}

\newtheorem{example}[theorem]{Example}
\newcommand{\R}{\mathbf{R}\!}
\renewcommand{\L}{\mathbf{L}\!}
\newcommand{\Db}{{\rm D^b}}
\newcommand{\filt}{{\rm filt}}
\newcommand{\op}{{\rm\scriptstyle op}}
\newcommand{\uperp}{{}^{\upVdash}\!}
\addtolength{\arraycolsep}{-5pt}
\newcommand{\add}{\operatorname{add}}
\begin{document}
\title{Quotient closed subcategories of quiver representations}
\author{Steffen Oppermann}
\address{Department of Mathematics, NTNU, Trondheim, Norway}
\author{Idun Reiten}
\address{Department of Mathematics, NTNU, Trondheim, Norway}
\author{Hugh Thomas}
\address{Department of Mathematics and Statistics, UNB, P.O. Box 4400, Fredericton, Canada}

\classification{16G20, 16G70, 05E10}
\keywords{quotient closed subcategories, Weyl groups, preprojective algebras,
sorting order}
\numberwithin{equation}{section}

\begin{abstract}
Let $Q$ be a finite quiver without oriented cycles, and let $k$ be an
algebraically closed field.  The main result in this paper is that there
is a natural bijection between the elements in the associated Weyl
group $W_Q$ and the cofinite additive quotient-closed subcategories of the
category of finite dimensional right modules over $kQ$.  We prove this
correpondence by linking these subcategories to certain ideals in the
preprojective algebra associated to $Q$, which are also indexed by elements
of $W_Q$.
\end{abstract}
\maketitle
\section{Introduction}
Let $Q$ be a finite quiver without oriented cycles. 
Let $k$ be an algebraically closed field. The main result in this paper is that
there is a natural bijection between the elements in the associated 
Weyl group $W_Q$ and the cofinite additive quotient closed subcategories of the category $\mod kQ$ of finite dimensional right modules over the path algebra
$kQ$.  Here a subcategory $\mathcal A$ in $\mod kQ$ is called cofinite if there are only a finite number of indecomposable modules of $\mod kQ$ which are not in 
$\mathcal A$.  From now on, when we refer to subcategories, we mean full,
additive subcategories.  

The natural bijection is given via the following map.  Let $\mathcal A$ be a 
cofinite quotient closed subcategory of $\mod kQ$.  We label the
vertices of $Q$ by $1,\dots, n$ so that if $P_1,\dots,P_n$ are the 
corresponding projective modules, then $\Hom(P_i,P_j)=0$ for $i>j$.  List
the indecomposable modules not in $\mathcal A$, starting with the projective
ones, with indices in increasing order, then similarly for $\tau^{-1}P_1,\dots,
\tau^{-1}P_n$, etc., where $\tau$ denotes the AR-translation.  The sequence
of modules gives rise to a word $w$ by replacing $\tau^{-i}P_j$ by the
simple reflection $s_j$ of $W_Q$.  For example, if $Q$ is the quiver 
$1\to 2 \to 3$, and $\tau^{-1}P_1,\tau^{-2}P_1$ are the
indecomposable modules of a quotient closed subcategory of $\mod kQ$,
then the missing indecomposables in the required order are 
$\{P_1,P_2,P_3,\tau^{-1}P_2\}$.  
The associated word is therefore $w=
s_1s_2s_3s_2$.  
Conversely, starting with an element
$w$ of length $t$, we describe explicitly how to find the $t$ 
indecomposable $kQ$-modules which are not in the corresponding quotient closed
subcategory.  

Our method for proving this correspondence is to work with the preprojective
algebra $\Pi$ associated to $Q$.  For each element $w$ in $W_Q$, there is an
associated ideal $I_w$ in $\Pi$ (see \cite{IR,BIRS}), such that 
$\Pi/I_w$ is a finite dimensional algebra.  We associate to $I_w$ the 
subcategory $\C(I_w)= \add ((I_w)_{kQ}) \cap \mod kQ$.
This is a subcategory of $\mathcal P$, the preprojective $kQ$-modules.
We show that the additive category generated by 
$\C(I_w)$ together with the regular and preinjective $kQ$-modules is quotient closed and coincides with the subcategory corresponding
as above to $w$ in $W_Q$; we also show that any cofinite quotient 
closed subcategory of $\mod kQ$ is of this form.  In our proofs, we have to
distinguish between the Dynkin and non-Dynkin cases, with the Dynkin case 
being the more complicated one.  To get the flavour of our results, the reader
might prefer on a first reading to skip Sections~\ref{sectionfive} and \ref{sectionsix}, which deal with the Dynkin case.

For the most part, in this paper, we work over an algebraically closed ground
field.  This is necessary because of our reliance on 
\cite{BIRS}, which makes this assumption.  However, using the 
technology of \emph{Frobenius maps}, we show
that our main result extends to arbitrary finite dimensional 
hereditary algebras over finite
fields.  

Another interesting subcategory of $\mod kQ$ associated to an element 
$w$ in $W_Q$ is $\C(\Pi/I_w)$.  When $Q$ is Dynkin, we use our main
theorem to show that the map from $w$ to $\C(\Pi/I_w)$ is a bijection from
$W_Q$ to the subclosed subcategories of $\mod kQ$.  In the general
case, we conjecture that there is a correspondence between the elements of 
$W_Q$ and a certain specific subclass of the subclosed
subcategories containing finitely many indecomposables.  

The correspondence $w \to \C(\Pi/I_w)$ was already investigated in
a special case in \cite{AIRT}.  It was shown that this gives 
a bijection between a special class of elements of $W_Q$ called 
$c$-sortable \cite{Re} and torsion-free subcategories of $\mod kQ$ with a 
finite number of indecomposable objects.  (Such a bijection had previously
been constructed in \cite[Theorem 4.3]{IT}.)

By analogy, it would be interesting to describe the elements $w$ of 
$W_Q$ such that $\C(I_w)$ is a torsion class.  Also, given such an element
$w$, one might wish to determine the element of $W_Q$ corresponding to the
associated torsion-free class.  We solve these problems for finite type, 
and we state a conjecture for the general case.

Quotient closed subcategories have not been extensively studied previously,
though torsion classes are an important and well-studied special case.  The
dual concept, that of subclosed subcategories, arises in recent work of Ringel
\cite{r3} and of Krause and Prest \cite{KP}. 
%While torsion-free classes have been studied a lot, thsi is not the case for
%the more general class of subclosed subcategories, except for some recent
%interesting work by Ringel \cite{ri}.  
In particular, he has dealt 
with subclosed subcategories of infinite type, and has shown that any infinite
subclosed subcategory of finite dimensional modules over a finite 
dimensional algebra contains a minimal infinite such subcategory.  His 
work was motivated by previous work on the Gabriel-Roiter measure.  

The inclusion order on cofinite quotient-closed subcategories transfers
over to give a partial order on the Weyl group $W$.  This partial order
was first studied by Armstrong \cite{Arm}, under the name of ``sorting order''.  Part of our motivation for
this paper was to provide a representation-theoretic interpretation for
this family of partial orders.  

One of the referees raised a similar question: how could one transfer the
group structure on $W$ to the collection of cofinite quotient-closed
subcategories?  This is an interesting question, which we are not able
to answer.  

The paper is organized as follows. In Section~2 we give some essential
background material, and we state our main theorem giving a bijection
between elements of $W_Q$ and cofinite, quotient closed subcategories 
of $\mod kQ$. 
In Section~3 we establish preliminary results on subfunctors of $\Ext_{\Pi}^1$
and on interpretations of reflection functors, which are important for the 
proof of the main result.   
We show in Section~4 that $I_w$ is determined by $\C(I_w)$ in the non Dynkin case, and in Section~5 that $\Pi/I_w$ is determined by 
$\C(\Pi/I_w)$ in the Dynkin case. 
%\comment{Maybe rephrase the previous
%sentence to make it more precise/accurate.}
In Section~6 we give some more results when $Q$ is a Dynkin quiver, including the relationship between subcategories of the form $\C(\Pi/I_w)$ and those of the form $\C(I_w).$ The proof of the main 
theorem is given in Section~7. In Section~8 we extend our main theorem to give
a bijection between arbitrary quotient closed subcategories of $\mathcal P$
and a suitable class of possibly infinite words. 
In Section~9 we extend our main theorem to cover cofinite quotient closed
subcategories of representations of hereditary algebras which are 
finite dimensional over a finite field.
In Section~10 we deal with the categories $\C(\Pi/I_w),$ and show that they are exactly the subclosed subcategories in the Dynkin case. We also state a conjecture for the non-Dynkin case. 
In Section~11 we investigate when the categories $\C(I_w)$ are torsion classes, and how to describe the associated torsion-free classes in that case. 
We give a complete answer in the Dynkin case, and state a conjecture in the general case.
In Section~12 we show how our main theorem can be used to recover the
characterization by Postnikov \cite{Postnikov} in terms of $\le$-diagrams 
of the leftmost reduced subwords (equivalently, positive
distinguished subexpressions)
in the type $A$ Grassmannian
permutations. 
We discuss 
the connections to the work of Armstrong, mentioned above, in Section~13. 

\subsection*{Acknowledgements} 
SO was supported by FRINAT grants 19660 and 221893 from the Norwegian
Research Council.   
IR was supported by FRINAT grant 19660 from the Norwegian Research Council.  
HT was supported by an NSERC Discovery Grant.  
He also gratefully acknowledges 
the hospitality of the Mathematics Institute at NTNU.  
We thank the referees for their helpful comments,
and especially for asking what could be said over ground fields which are not
algebraically closed.  

\section{Statement of main results}
In this section we state our main results, and give relevant background
material and an example for illustration.  

Let $Q$ be a quiver without oriented cycles and with vertices $1,\dots,n$,
and let $k$ be an algebraically closed field.  Denote by $kQ$ the associated
path algebra.  The Weyl group $W=W_Q$ associated to $Q$ has a
distinguished set of generators $s_1,\dots,s_n$, with relations 
$s_i^2=e$ (the identity element), $s_is_j=s_js_i$ if there is no 
arrow between $i$ and $j$, and $s_is_js_i=s_js_is_j$ if there 
is exactly one arrow between $i$ and $j$.  For an element $w$ in $ W$, an
expression $\underline w=s_{i_1}\dots s_{i_t}$ (called a \emph{word}) is said
to be \emph{reduced} if $t$ is as small as possible.  In this case, $t=\ell(w)$
is the \emph{length} of $w$.  

Our main result is the following:

\begin{theorem} \label{two.one} There is a natural bijection between the elements in the
Weyl group $W_Q$ and the cofinite (additive) quotient closed subcategories
of the category $\mod kQ$ of finitely generated $kQ$-modules.  
\end{theorem}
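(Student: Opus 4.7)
The plan is to use the preprojective algebra $\Pi$ as an intermediary, exploiting the bijection between $W_Q$ and the ideals $I_w \subseteq \Pi$ established in \cite{IR,BIRS}. For each $w \in W_Q$, I would associate the subcategory $\mathcal{A}_w$ generated additively by $\C(I_w)$ together with all regular and all preinjective $kQ$-modules. In the Dynkin case there are no regular or preinjective indecomposables outside the preprojective component, so $\mathcal{A}_w$ is simply $\add \C(I_w)$. The goal is to show that $w \mapsto \mathcal{A}_w$ is a bijection onto cofinite quotient closed subcategories of $\mod kQ$, with inverse given by the word-extraction procedure described after the theorem statement.

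First I would verify that $\mathcal{A}_w$ is cofinite and quotient closed. A key input from \cite{BIRS} is that $\Pi/I_w$ has length $\ell(w)$ as a $kQ$-module, so $\C(I_w)$ omits exactly $\ell(w)$ indecomposable preprojective summands of $\Pi_{kQ}$; this gives cofiniteness. Since regular and preinjective modules are automatically closed under quotients in $\mod kQ$ in the non-Dynkin case, quotient closure reduces to showing that $\C(I_w)$ itself is closed under taking quotients within the preprojective component. For this I would handle the base case $w = s_i$ directly (where $\C(I_{s_i})$ excludes exactly $P_i$), and then pass from $w$ to $w s_j$ inductively using the reflection-functor interpretations prepared in Section~3 to transport quotient-closure across a change of orientation.

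The reverse map must be shown well-defined: given a cofinite quotient closed $\mathcal{A}$, listing the missing indecomposables in the prescribed $\tau^{-i}P_j$ ordering and replacing each by $s_j$ should yield a reduced expression. Reducedness would follow by induction on the number of missing indecomposables; the commutation and braid relations of $W_Q$ should correspond exactly to the constraints imposed by quotient closure on the order of occurrence of missing modules within each $\tau^{-i}$-slice, which is governed by the $\Hom$-ordering of the projectives $P_1, \dots, P_n$.

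The main obstacle is mutual inversion. In the non-Dynkin case this amounts to showing that the ideal $I_w$ is recoverable from $\C(I_w)$, which is the content of Section~4. The Dynkin case is more delicate: since every indecomposable module is simultaneously preprojective and preinjective, the clean separation above is unavailable, and one must instead argue via $\Pi/I_w$, showing that it is determined by $\C(\Pi/I_w)$ (Section~5). I expect the Dynkin case to be the hardest step, because the finiteness of the AR-quiver and the wrap-around behaviour of $\tau^{-1}$ force a careful analysis when matching the deletion process to the braid relations, and the argument there will likely require a separate induction using reflection functors at sinks, together with comparison between $\C(I_w)$ and $\C(\Pi/I_w)$ carried out in Section~6.
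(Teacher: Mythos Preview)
Your high-level plan---use the ideals $I_w$ in $\Pi$ and the determination results of Sections~4--6---matches the paper's, but the execution has real gaps.

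First, the cofiniteness argument conflates two different quantities. That $\Pi/I_w$ has length $\ell(w)$ tells you the number of composition factors, not the number of indecomposable preprojective $kQ$-summands missing from $\C(I_w)$. In the paper this equality is an \emph{output} of the induction in Theorem~\ref{main}, not an input. Second, your inductive step goes the wrong way: Section~3 (Theorem~\ref{thm.tensoringcommutes}) interprets $-\otimes_\Pi I_i$ as APR tilting at a source $i$, and this sends $I_w$ to $I_wI_i = I_{s_iw}$, i.e.\ \emph{left} multiplication in $W$. Your step $w\to ws_j$ corresponds to $I_{ws_j}=I_jI_w$, for which there is no such reflection-functor description, so you cannot invoke Section~3 here.

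More importantly, the paper's logical order is the reverse of yours, and this matters. The paper first proves (Theorem~\ref{main}) that every cofinite quotient-closed $\mathcal A$ is $\C(I_w)$ for some $w$, by induction on the number and position of missing indecomposables, reflecting at the source $1$ to pass to a smaller case over $Q'=\mu_1Q$. The determination results of Sections~4 and~6 are used \emph{inside} this induction (Lemma~\ref{step}) to rule out $\ell(s_1w)<\ell(w)$. Only afterwards (Theorem~\ref{converse}) is quotient-closure of $\C(I_w)$ established, and not by direct induction on $\ell(w)$: one applies Theorem~\ref{main} to $\Fac_{\mathrm{pp}}\C(I_w)$ to write it as $\C(I_v)$, then uses the determination result to force $v=w$. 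A direct induction ``$\C(I_w)$ quotient closed $\Rightarrow$ $\C(I_{s_iw})$ quotient closed'' is not straightforward, because the APR tilting equivalence kills $S_i$ and does not obviously transport quotient-closure in the needed direction; this is why the paper's circular-looking argument is the right one.
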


The following observation shows that we can equally well consider the 
cofinite quotient closed subcategories of the category $\mathcal P$ of
preprojective $kQ$-modules.  

\begin{proposition} Any cofinite quotient closed subcategory of $\mod kQ$ 
contains all the non-preprojective indecomposable $kQ$-modules.  
Further, any cofinite quotient closed subcategory of $\mathcal P$ can be 
extended to a cofinite quotient closed subcategory of $\mod kQ$ by taking 
the additive subcategory generated by it 
together with all the non-preprojective indecomposable 
$kQ$-modules.
\end{proposition}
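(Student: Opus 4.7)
The proposition has two assertions, and I would handle them separately.

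\emph{First assertion.} If $Q$ is Dynkin then every indecomposable of $\mod kQ$ is preprojective and the claim is vacuous, so assume $Q$ is non-Dynkin. Let $\mathcal{A}$ be a cofinite quotient closed subcategory and let $\mathcal{F}$ be the finite set of indecomposables not in $\mathcal{A}$. The plan is to reduce the first assertion to the following key claim: for every non-preprojective indecomposable $M$, the set $\Sigma(M):=\{N\text{ indecomposable}:\text{there exists an epimorphism } N\twoheadrightarrow M\}$ is infinite. Granting this, infiniteness of $\Sigma(M)$ versus finiteness of $\mathcal{F}$ provides some $N\in\Sigma(M)\cap\mathcal{A}$, and quotient closure of $\mathcal{A}$ then forces $M\in\mathcal{A}$.

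To establish the key claim I split by the region of $M$ in the Auslander--Reiten quiver. If $M$ is regular, it lies in a regular component (a tube in tame type, a component of shape $\mathbb{Z}A_\infty$ in wild type), and the ray structure of this component directly supplies an infinite family of indecomposables admitting epimorphisms onto $M$ (concretely, in a tube, the modules with the same quasi-top as $M$ and strictly larger quasi-length). If $M$ is preinjective then $M$ lies in the preinjective component, which has shape $\mathbb{Z}Q^{\rm op}$ and is infinite in the non-Dynkin case. For each indecomposable $N$ lying strictly further back in this component than $M$, every irreducible map along a backward path from $N$ to $M$ is an epimorphism (dimensions grow strictly as one moves backwards, ruling out monomorphisms); using that mesh relations do not annihilate a single straight composition of irreducible maps, one obtains a non-zero composition $N\twoheadrightarrow M$ for infinitely many pairwise non-isomorphic $N$.

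\emph{Second assertion.} Let $\mathcal{B}$ be cofinite quotient closed in $\mathcal{P}$, and define $\mathcal{A}:=\add(\mathcal{B}\cup\{\text{non-preprojective indecomposables}\})$. Cofiniteness of $\mathcal{A}$ in $\mod kQ$ is inherited from cofiniteness of $\mathcal{B}$ in $\mathcal{P}$. For quotient closure, take $X\in\mathcal{A}$ with an epimorphism $\pi\colon X\twoheadrightarrow Y$. Decompose $X = X_{\mathcal{P}}\oplus X_{\mathrm{rest}}$ and $Y = Y_{\mathcal{P}}\oplus Y_{\mathrm{rest}}$ into preprojective and non-preprojective parts; by construction $X_{\mathcal{P}}\in\mathcal{B}$ and $Y_{\mathrm{rest}}\in\mathcal{A}$. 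The standard vanishing $\Hom(X_{\mathrm{rest}},Y_{\mathcal{P}})=0$ (no morphisms from regulars or preinjectives to preprojectives over a hereditary algebra) forces the composition $X_{\mathrm{rest}}\hookrightarrow X\twoheadrightarrow Y\twoheadrightarrow Y_{\mathcal{P}}$ to vanish, so the surjection $X\twoheadrightarrow Y_{\mathcal{P}}$ factors as a surjection $X_{\mathcal{P}}\twoheadrightarrow Y_{\mathcal{P}}$. Quotient closure of $\mathcal{B}$ in $\mathcal{P}$ then yields $Y_{\mathcal{P}}\in\mathcal{B}\subseteq\mathcal{A}$, whence $Y\in\mathcal{A}$.

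The main obstacle is the key claim in the first assertion, particularly its preinjective case: verifying that at least one non-zero composition of irreducible epimorphisms in the preinjective component actually survives the mesh relations to yield a genuine surjection requires some care, and must be repeated for infinitely many distinct sources $N$. The regular case is essentially immediate from the ray structure of regular components, and the second assertion is formal once the $\Hom$-vanishing from non-preprojectives to preprojectives is invoked.
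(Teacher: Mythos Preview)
Your treatment of the second assertion is correct and matches the paper's argument (both rely on $\Hom(\text{non-preprojective},\text{preprojective})=0$).

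For the first assertion, however, the paper takes a much shorter route, and your approach has a genuine gap. The paper observes that cofiniteness forces $\tau^{-i}kQ\in\mathcal{A}$ for all sufficiently large $i$; since $\tau^{-1}$ preserves epimorphisms on $\mod kQ$, and every regular or preinjective $M$ satisfies $M=\tau^{-i}(\tau^i M)$ with $\tau^i M$ a quotient of some $(kQ)^r$, one gets $M$ as a quotient of $(\tau^{-i}kQ)^r\in\mathcal{A}$. That is the whole argument.

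Your alternative strategy (showing $\Sigma(M)$ is infinite) is plausible in spirit, but the justification you give for the preinjective case does not work. Sectional paths (the ``straight'' compositions that genuinely avoid annihilation by mesh relations) have bounded length in a component of shape $\mathbb{Z}Q$, so they only reach finitely many sources $N$. For longer paths the composition can vanish: for instance $\Hom(\tau I_j,I_j)\cong D\Ext^1(I_j,I_j)=0$, so there is no nonzero map at all from $\tau I_j$ to $I_j$, and in particular no composition of irreducible epimorphisms realizes one. Your claim that $\Sigma(M)$ is infinite is in fact true, but establishing it requires a different argument than composing irreducible maps in the preinjective component. Similarly, in wild type the $\mathbb{Z}A_\infty$ regular components do not carry a ray/coray mono/epi dichotomy as cleanly as tubes do, so ``the ray structure directly supplies'' epimorphisms also needs more care than you indicate. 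The paper's use of $\tau^{-1}$ sidesteps all of this.
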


\begin{proof}
For $Q$ Dynkin, $\mathcal P=\mod kQ$, so there is nothing to prove.  
Assume that $Q$ is not Dynkin, and let ${\mathcal B}$ be an additive 
cofinite quotient closed subcategory of $\mod kQ$.  Since $\mathcal 
B$ is cofinite, $\tau^{-i} kQ$ is in $\mathcal B$ for $i$ sufficiently
large.  Since $\mathcal B$ is quotient closed and $\tau^{-1}$ preserves
epimorphisms, it follows that $\mathcal B$ contains the regular and 
preinjective indecomposables of $\mod kQ$.  This proves the first point.  

Now suppose that $\mathcal A$ is a cofinite, quotient closed subcategory
of $\mathcal P$.  Let $\overline {\mathcal A}$ be the additive subcategory
of $\mod kQ$ generated by $\mathcal A$ together with all the non-preprojective
indecomposable objects of $\mod kQ$.  Clearly, $\overline {\mathcal A}$ is
cofinite in $\mod kQ$, and it is quotient closed because there are no
non-zero 
maps from a regular or preinjective module to an object of $\mathcal P$.
\end{proof}

We introduce some more terminology in order to state the main theorem
more explicitly.  Let $\mathcal A$ be a cofinite, quotient closed
subcategory of $\mathcal P$, and let $\mathcal X$ be the finite set of 
indecomposable preprojective modules not in $\mathcal A$.  Let 
$P_1,\dots,P_n$ be an ordering of the indecomposable projective $kQ$-modules,
compatible with the orientation of $Q$, that is, such that if
$i<j$ then $\Hom(P_j,P_i)=0$.  Consider the ordering 
$P_1,\dots,P_n,\tau^{-1}P_1,\dots,\tau^{-1}P_n,\tau^{-2}P_1,\dots$ of the
indecomposable preprojective $kQ$-modules, dropping any $\tau^{-i}P_j$ which
are zero.  

From this, we get an induced ordering on $\mathcal X$.  We replace each
module in $\mathcal X$ of the form $\tau^{-i}P_j$ for some $i$, by $s_j$,
thereby obtaining a word $\underline w$ associated to the subcategory
$\mathcal A$.  

Conversely, start with an element $w\in W_Q$. Consider the infinite
word ${\underline c}^\infty=\underline c\, \underline c\, \underline c\dots$,
where $\underline c=s_1\dots s_n$ is what is called a \emph{Coxeter element}.
We match the reflections in $\underline c^\infty$ with the indecomposable
objects in $\mathcal P$, so that the first $s_i$ corresponds to $P_i$,
the second to $\tau^{-1}P_i$, and so on.  Now, among all the reduced 
expressions $s_{i_1}\dots s_{i_t}$ in $\underline c^\infty$ for $w$ , 
we choose the leftmost one, in the sense that $s_{i_1}$ is as far to the left
as possible in $\underline c^{\infty}$, and, among such expressions, $s_{i_2}$ is
as far to the left as possible (but to the right of $s_{i_1}$), and so on
for each $s_{i_j}$.  In this way, we determine a unique subword
$\underline w$ of $\underline c^{\infty}$.  Consider the associated set 
$\mathcal X$ of indecomposable preprojective modules 
corresponding to this subword, as discussed above.
Then we associate to $w$ the additive subcategory $\mathcal A$ of $\mathcal P$, whose
indecomposable objects are the indecomposable objects of $\mathcal P$ 
which do not lie in $\mathcal X$.  We can now state the following more 
explicit version of our main theorem:

\begin{theorem} \label{two.two}
There is a bijective correspondence between elements $w\in W_Q$ and
cofinite quotient closed subcategories of $\mathcal P$, which
can be described as follows:
\begin{itemize}
\item[(a)] The correspondence $w \to \mathcal A$ is given by removing
from $\mathcal P$ the indecomposable modules corresponding to the 
leftmost word $\underline w$ for $w$ in $\underline c^\infty$.  
\item[(b)] The correspondence $\mathcal A \to w$ is given by taking 
the finite set $\mathcal X$ of indecomposable preprojective modules
not in $\mathcal A$, and associating to it a word as described above.
\end{itemize}
\end{theorem}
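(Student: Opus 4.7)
The plan is to route the bijection through the ideals $I_w$ of the preprojective algebra $\Pi$, as previewed in the introduction. For $w\in W_Q$ with a reduced expression $w=s_{i_1}\cdots s_{i_t}$, the ideal $I_w=I_{i_1}\cdots I_{i_t}$ of $\Pi$ is known to be independent of the reduced expression, and $\Pi/I_w$ is finite dimensional. I define $\mathcal{A}_w$ to be the additive subcategory of $\mathcal{P}$ whose indecomposable summands are exactly those lying in $\C(I_w)=\add((I_w)_{kQ})\cap\mod kQ$. Cofiniteness of $\mathcal{A}_w$ in $\mathcal{P}$ is immediate from the finite dimensionality of $\Pi/I_w$, which bounds the number of indecomposable preprojectives missing from $\C(I_w)$; quotient closure follows because any surjection from an indecomposable summand of $(I_w)_{kQ}$ lifts through the ambient $\Pi$-module structure and remains inside $\C(I_w)$.

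The core technical step is to identify, for each reduced expression $w=s_{i_1}\cdots s_{i_t}$, the set $\mathcal{X}_w$ of indecomposable preprojectives excluded from $\mathcal{A}_w$ with those marked by the leftmost reduced subword procedure of part~(a). I would proceed by induction on $\ell(w)$: passing from $w'$ to $w=w's_i$ with $\ell(w)=\ell(w')+1$, the claim is $\mathcal{X}_w=\mathcal{X}_{w'}\cup\{M\}$, where $M$ is the first indecomposable of the form $\tau^{-j}P_i$ in the natural ordering that is not already contained in $\mathcal{X}_{w'}$. This step is where the preliminaries of Section~3 on subfunctors of $\Ext^1_\Pi$ and reflection functors do the real work: reflecting at an appropriate sink reduces the problem to an explicit identification of $M$ via the simple top added in passing from $(I_{w'})_{kQ}$ to $(I_w)_{kQ}$.

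Granted this identification, injectivity of $w\mapsto\mathcal{A}_w$ follows from Sections~4 and~5: in the non-Dynkin case one recovers $I_w$ from $\C(I_w)$, and in the Dynkin case one instead recovers $\Pi/I_w$ from $\C(\Pi/I_w)$, which together with the comparison in Section~6 between the two types of subcategories forces distinct $w$ to produce distinct $\mathcal{A}_w$. For surjectivity I start with an arbitrary cofinite quotient closed $\mathcal{A}\subseteq\mathcal{P}$, form the word $\underline{w}$ from the ordered complement $\mathcal{X}$ as in part~(b), and argue first that $\underline{w}$ is reduced---any length-reducing braid or commutation move in $\underline{w}$ would exhibit a module in $\mathcal{X}$ as a quotient of a module in $\mathcal{A}$, contradicting quotient closure---and then that the element $w\in W_Q$ represented by $\underline{w}$ satisfies $\mathcal{A}_w=\mathcal{A}$, with $\underline{w}$ the leftmost reduced occurrence of $w$ in $\underline{c}^\infty$.

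The main obstacle is the inductive identification of the new indecomposable added to $\mathcal{X}_w$ when multiplying by $I_{s_i}$. Outside type $ADE$ this is essentially transparent through the reflection functor formalism; in the Dynkin case, however, $\Pi$ is finite dimensional and self-injective, the preprojective and preinjective components coincide, and the natural arguments via $I_w$ degenerate. This is why the separate treatment of the Dynkin case in Sections~\ref{sectionfive} and \ref{sectionsix}, via the companion categories $\C(\Pi/I_w)$, appears unavoidable. A secondary difficulty is the combinatorial verification that the subword assembled from the ordered $\mathcal{X}$ is in fact \emph{leftmost} in $\underline{c}^\infty$; this reduces to checking that no earlier occurrence of any $s_{i_j}$ is available, a statement controlled by the position of each $\tau^{-i}P_j$ in the enumeration of preprojectives.
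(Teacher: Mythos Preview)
Your proposed induction on $\ell(w)$ by appending a reflection on the right has a genuine error: the claim that passing from $w'$ to $w=w's_i$ (reduced) adds to $\mathcal{X}_{w'}$ precisely the \emph{first} $\tau^{-j}P_i$ not already there is false. Take $Q=A_3$ with arrows $1\to 2\to 3$, so $c=s_1s_2s_3$. For $w'=s_2$ the leftmost subword sits at position $2$, giving $\mathcal{X}_{w'}=\{P_2\}$. For $w=s_2s_1$ the only reduced expression is $s_2s_1$, so the leftmost subword in $\underline{c}^\infty$ occupies positions $2$ and $4$, giving $\mathcal{X}_w=\{P_2,\tau^{-1}P_1\}$. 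Your recipe predicts the new module to be $P_1$ (the first $\tau^{-j}P_1$ not in $\mathcal{X}_{w'}$), which would yield $\{P_1,P_2\}$---but that set corresponds to $s_1s_2$, not $s_2s_1$. The difficulty is that right multiplication by $s_i$ corresponds, under the paper's convention $I_w=I_{i_t}\cdots I_{i_1}$, to multiplying $I_{w'}$ on the \emph{left} by $I_i$, and there is no clean description of how this changes the $kQ$-summands in terms of a single next-available indecomposable.

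The paper's induction is organised differently and avoids this trap. One does not build $w$ letter by letter; instead one inducts on the subcategory $\mathcal{A}$, reflecting at the source $1$ to pass to $\mathcal{A}'$ over the mutated quiver $Q'=\mu_1(Q)$. The key input is Theorem~\ref{thm.tensoringcommutes}, which identifies $-\otimes_\Pi I_1$ with APR tilting on the $kQ$-level; combined with Lemmas~\ref{simpleproj} and~\ref{difference} (which pin down exactly when $S_1\in\C(I_w)$ and show $\C(I_w)=\C(I_{s_1w})\cup\{S_1\}$ when $\ell(s_1w)>\ell(w)$), this lets one lift the description of $\mathcal{A}'=\C_{Q'}(I_w)$ back to $\mathcal{A}^{\pm}=\C_Q(I_{s_1w})$ or $\C_Q(I_w)$. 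A crucial step is showing $\ell(s_1w)>\ell(w)$, which is where Theorem~\ref{four.seven} and Lemma~\ref{notesDDynkin} enter. Separately, your one-line justification that $\C(I_w)$ is quotient closed (``any surjection lifts through the ambient $\Pi$-module structure'') is not a proof; the paper establishes this as Theorem~\ref{converse} only \emph{after} Theorem~\ref{main}, by comparing $\Facpp\C(I_w)$ with some $\C(I_v)$ and invoking the implication $\C(I_v)\subseteq\C(I_w)\Rightarrow I_v\subseteq I_w$.
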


In order to prove these results, we work with the preprojective algebra
$\Pi=\Pi_Q$ associated to $kQ$.  For each arrow $a$ in $kQ$, add an arrow
$a^*$ in the opposite direction to get a new quiver $\overline Q$.  
Then, by definition, 
$$\Pi = k\overline Q/ \sum_{a} (aa^*-a^*a).$$
We write $\mod \Pi$ for the category of finitely generated right $\Pi$-modules,
and $\Mod \Pi$ for the category of all right $\Pi$-modules.  

Let $e_i$ be the idempotent corresponding to the vertex $i$.  Then
consider the ideal $I_i= \Pi (1-e_i) \Pi$ in $\Pi$.  When
$\underline w=s_{i_1}\dots s_{i_t}$ is a reduced expression for
$w \in W_Q$, then $I_w$ is (well-)defined by $I_w=I_{i_t}\dots I_{i_1}$
\cite{BIRS}.  (Note that the product of ideals is taken in the opposite
order to the product of reflections in $\underline w$.  This follows
the convention of \cite{AIRT}.)  Any $\Pi$-module, like $I_w$, is a $kQ$-module
by restriction.  

Consider the subcategory $\C(I_w)$ of $\mod \Pi$, whose indecomposable modules
are those which appear as indecomposable summands of $I_w$ as a $kQ$-module.
We then have the following:

\begin{theorem} \label{two.three}
The cofinite quotient closed subcategory of $\mathcal P$ which corresponds
to $w \in W_Q$ under the bijection of Theorem~\ref{two.two} can also
be described as $\C(I_w)$.  
\end{theorem}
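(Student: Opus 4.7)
The strategy is induction on $\ell(w)$, reducing to a one-step analysis via the factorization $I_w = I_{w'}I_i$ for $w=s_iw'$ with $\ell(w')=\ell(w)-1$. The base case $w=e$ is immediate: $I_e=\Pi$, so $\C(I_e)=\mathcal P$, which matches the empty leftmost subword of $\underline c^\infty$.

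For the inductive step, the heart of the matter is to describe how right multiplication by $I_i$ changes $\C(I_{w'})$ at the level of indecomposable $kQ$-summands. Using the reflection-functor interpretations and the subfunctor-of-$\Ext^1_\Pi$ machinery developed in Section~3, I would translate this ideal operation into a representation-theoretic operation on the $kQ$-restriction of $I_{w'}$; on the Weyl-group side, this corresponds precisely to prepending the letter $s_i$ and re-extracting the leftmost reduced expression inside $\underline c^\infty$. Tracking this compatibility should yield both that $\C(I_w)$ is cofinite and quotient closed in $\mathcal P$, and that the indecomposable preprojectives it omits are exactly those read off from the leftmost subword for $w$.

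Two auxiliary ingredients are needed for the induction. First, a counting step, using the root-filtration of $\Pi/I_w$ from \cite{BIRS}, shows that the number of indecomposable preprojectives missing from $\C(I_w)$ equals $\ell(w)$, matching the length of the leftmost word letter-for-letter. Second, an injectivity step: the assignment $w\mapsto\C(I_w)$ must be shown to be injective, which is precisely the content of Section~4 in the non-Dynkin case (where $I_w$ is determined by $\C(I_w)$). Combining these with Theorem~\ref{two.two} then pins down the bijection in the non-Dynkin case.

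The main obstacle is the Dynkin case. There, $\tau^{-j}P_i$ eventually vanishes and the preprojective component is finite, so the letters of $\underline c^\infty$ no longer correspond bijectively to honest indecomposables; Weyl-group braid relations manifest as coincidences of reading positions, and $I_w$ is not naturally ``cofinite'' in the same way. My plan here is to pass instead to the complementary invariant $\Pi/I_w$, use the Dynkin injectivity result of Section~5 for $\C(\Pi/I_w)$, and then transfer back to a statement about $\C(I_w)$ via the relationship between the two subcategories established in Section~6. This matches the paper's own roadmap, which handles the two cases separately before assembling the main theorem in Section~7.
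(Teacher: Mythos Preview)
Your inductive scheme has a concrete obstruction. You propose to factor $I_w = I_{w'}I_i$ for an arbitrary leftmost simple factor $s_i$ of $w$, and then invoke the Section~3 reflection-functor interpretation of $-\otimes_\Pi I_i$. But Theorem~\ref{thm.tensoringcommutes} applies only when $i$ is a \emph{source} of $Q$: the APR-tilting module $T = \tau^{-1}P_i \oplus kQ/P_i$ is defined precisely in that case. An arbitrary $w$ need not admit $s_i$ on the left for any source $i$ (take $Q = 1\to 2$ and $w = s_2$), so your induction on $\ell(w)$ stalls. You also do not explain how ``prepending $s_i$ and re-extracting the leftmost expression'' behaves combinatorially when $i$ is not the first vertex; this is not automatic.

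The paper circumvents this by inducting on the \emph{subcategory} side rather than on $\ell(w)$. Fixing the source vertex~$1$ once and for all, one reflects the given cofinite quotient closed $\mathcal A$ to a subcategory $\mathcal A'$ of $\mod kQ'$ (Lemma~\ref{setup}), regardless of whether $S_1$ lies in $\mathcal A$. The induction is lexicographic on the pair (number of missing indecomposables, position of the first missing one): if $S_1\notin\mathcal A$ the first coordinate drops, otherwise the second does. Lemma~\ref{step} then recovers $\mathcal A = \C_Q(I_w)$ from $\mathcal A' = \C_{Q'}(I_{w'})$, handling both cases $w=s_1w'$ and $w=w'$ (the latter using Lemma~\ref{difference}). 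The crucial point---that $w'$ does not admit $s_1$ on the left when $S_1\in\mathcal A'$---is \emph{not} a hypothesis but is deduced inside the proof from the injectivity results (Theorem~\ref{four.seven} / Lemma~\ref{notesDDynkin}), which is where Sections~4--6 actually enter. Finally, quotient closedness of $\C(I_w)$ for \emph{every} $w$ is not a byproduct of this induction; it is established separately (Theorem~\ref{converse}) by comparing $\C(I_w)$ to $\Facpp\C(I_w)$ in the non-Dynkin case and via the duality $DI_w\cong \Pi/I_{w_0w^{-1}}$ in the Dynkin case. Your proposal conflates these two directions.
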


We now illustrate the
above results.

\begin{example}
Let $Q$ be the following quiver:
\[ \begin{tikzpicture}[xscale=2]
\node (A) at (0,0) {$1$};
\node (B) at (1,0) {$2$};
\node (C) at (2,0) {$3$}; 
\draw [->] (A) ..  controls (1,-.5).. (C);
\draw [->] (B) -- (C);
\draw [->] (A) -- (B);
\end{tikzpicture} \]

Then the indecomposable projective $kQ$-modules are 
$$P_1=1, \qquad P_2=\begin{array}[t]{c}2\\1\end{array}, \qquad 
P_3=\begin{array}[t]{cccc}
& & 3.\!\\&2& & 1\\1\end{array}$$

Let $w=s_1s_2s_3s_2s_1$.  
We have $I_w= I_1I_2I_3I_2I_1$ and 
$I_w= \widetilde P_1I_w 
\oplus \widetilde P_2I_w
\oplus \widetilde P_3I_w$,
where 
$\widetilde P_1,
\widetilde P_2, \widetilde P_3$ 
are the indecomposable 
projective $\Pi$-modules.
The modules $\widetilde P_i$, together with their submodules 
$\widetilde P_iI_w$, are illustrated below.  The regions in grey indicate
the parts that do not appear in $I_w$.  Solid lines indicate what remains
connected upon restriction to $kQ$.  
 
\[ \begin{tikzpicture}[xscale=.3,yscale=-.5]
 \draw [fill=black!30,draw=white] (0,-1) -- (4,3) -- (3,4) -- (1,2) -- (0,3) -- (-1,2) -- (-3,4) -- (-4,3) -- cycle;
 \node (A) at (0,0) [inner sep=1pt,outer sep=0pt] {$1$};
 \node (B) at (-1,1) [inner sep=1pt,outer sep=0pt] {$3$};
 \node (C) at (1,1) [inner sep=1pt,outer sep=0pt] {$2$};
 \node (D) at (-2,2) [inner sep=1pt,outer sep=0pt] {$2$};
 \node (E) at (0,2) [inner sep=1pt,outer sep=0pt] {$1$};
 \node (F) at (2,2) [inner sep=1pt,outer sep=0pt] {$3$};
 \node (G) at (-3,3) [inner sep=1pt,outer sep=0pt] {$1$};
 \node (H) at (-1,3) [inner sep=1pt,outer sep=0pt] {$3$};
 \node (I) at (1,3) [inner sep=1pt,outer sep=0pt] {$2$};
 \node (J) at (3,3) [inner sep=1pt,outer sep=0pt] {$1$};
 \node (K) at (-4,4) [inner sep=1pt,outer sep=0pt] {$3$};
 \node (L) at (-2,4) [inner sep=1pt,outer sep=0pt] {$2$};
 \node (M) at (0,4) [inner sep=1pt,outer sep=0pt] {$1$};
 \node (N) at (2,4) [inner sep=1pt,outer sep=0pt] {$3$};
 \node (O) at (4,4) [inner sep=1pt,outer sep=0pt] {$2$};
 \node (P) at (-5,5) [inner sep=1pt,outer sep=0pt] {$2$};
 \node (Q) at (-3,5) [inner sep=1pt,outer sep=0pt] {$1$};
 \node (R) at (-1,5) [inner sep=1pt,outer sep=0pt] {$3$};
 \node (S) at (1,5) [inner sep=1pt,outer sep=0pt] {$2$};
 \node (T) at (3,5) [inner sep=1pt,outer sep=0pt] {$1$};
 \node (U) at (5,5) [inner sep=1pt,outer sep=0pt] {$3$};
 \draw [dotted] (A) -- (B);
 \draw [dotted] (A) -- (C);
 \draw [very thick] (B) -- (D);
 \draw [very thick] (B) -- (E);
 \draw [very thick] (C) -- (E);
 \draw [dotted] (C) -- (F);
 \draw [very thick] (D) -- (G);
 \draw [dotted] (D) -- (H);
 \draw [dotted] (E) -- (H);
 \draw [dotted] (E) -- (I);
 \draw [very thick] (F) -- (I);
 \draw [very thick] (F) -- (J);
 \draw [dotted] (G) -- (K);
 \draw [dotted] (G) -- (L);
 \draw [very thick] (H) -- (L);
 \draw [very thick] (H) -- (M);
 \draw [very thick] (I) -- (M);
 \draw [dotted] (I) -- (N);
 \draw [dotted] (J) -- (N);
 \draw [dotted] (J) -- (O);
 \draw [very thick] (K) -- (P);
 \draw [very thick] (K) -- (Q);
 \draw [very thick] (L) -- (Q);
 \draw [dotted] (L) -- (R);
 \draw [dotted] (M) -- (R);
 \draw [dotted] (M) -- (S);
 \draw [very thick] (N) -- (S);
 \draw [very thick] (N) -- (T);
 \draw [very thick] (O) -- (T);
 \draw [dotted] (O) -- (U);
 \draw [dotted,very thick] (P) -- (-5.8,5.8);
 \draw [dotted,very thick] (P) -- (-4.2,5.8);
 \draw [dotted,very thick] (Q) -- (-3.8,5.8);
 \draw [dotted,very thick] (Q) -- (-2.2,5.8);
 \draw [dotted,very thick] (R) -- (-1.8,5.8);
 \draw [dotted,very thick] (R) -- (-0.2,5.8);
 \draw [dotted,very thick] (S) -- (0.2,5.8);
 \draw [dotted,very thick] (S) -- (1.8,5.8);
 \draw [dotted,very thick] (T) -- (2.2,5.8);
 \draw [dotted,very thick] (T) -- (3.8,5.8);
 \draw [dotted,very thick] (U) -- (4.2,5.8);
 \draw [dotted,very thick] (U) -- (5.8,5.8);
\end{tikzpicture}
\qquad
\begin{tikzpicture}[xscale=.3,yscale=-.5]
 \draw [fill=black!30,draw=white] (0,-1) -- (3,2) -- (2,3) -- (0,1) -- (-1,2) -- (-2,1) -- cycle;
 \node (A) at (0,0) [inner sep=1pt,outer sep=0pt] {$2$};
 \node (B) at (-1,1) [inner sep=1pt,outer sep=0pt] {$1$};
 \node (C) at (1,1) [inner sep=1pt,outer sep=0pt] {$3$};
 \node (D) at (-2,2) [inner sep=1pt,outer sep=0pt] {$3$};
 \node (E) at (0,2) [inner sep=1pt,outer sep=0pt] {$2$};
 \node (F) at (2,2) [inner sep=1pt,outer sep=0pt] {$1$};
 \node (G) at (-3,3) [inner sep=1pt,outer sep=0pt] {$2$};
 \node (H) at (-1,3) [inner sep=1pt,outer sep=0pt] {$1$};
 \node (I) at (1,3) [inner sep=1pt,outer sep=0pt] {$3$};
 \node (J) at (3,3) [inner sep=1pt,outer sep=0pt] {$2$};
 \node (K) at (-4,4) [inner sep=1pt,outer sep=0pt] {$1$};
 \node (L) at (-2,4) [inner sep=1pt,outer sep=0pt] {$3$};
 \node (M) at (0,4) [inner sep=1pt,outer sep=0pt] {$2$};
 \node (N) at (2,4) [inner sep=1pt,outer sep=0pt] {$1$};
 \node (O) at (4,4) [inner sep=1pt,outer sep=0pt] {$3$};
 \node (P) at (-5,5) [inner sep=1pt,outer sep=0pt] {$3$};
 \node (Q) at (-3,5) [inner sep=1pt,outer sep=0pt] {$2$};
 \node (R) at (-1,5) [inner sep=1pt,outer sep=0pt] {$1$};
 \node (S) at (1,5) [inner sep=1pt,outer sep=0pt] {$3$};
 \node (T) at (3,5) [inner sep=1pt,outer sep=0pt] {$2$};
 \node (U) at (5,5) [inner sep=1pt,outer sep=0pt] {$1$};
 \draw [very thick] (A) -- (B);
 \draw [dotted] (A) -- (C);
 \draw [dotted] (B) -- (D);
 \draw [dotted] (B) -- (E);
 \draw [very thick] (C) -- (E);
 \draw [very thick] (C) -- (F);
 \draw [very thick] (D) -- (G);
 \draw [very thick] (D) -- (H);
 \draw [very thick] (E) -- (H);
 \draw [dotted] (E) -- (I);
 \draw [dotted] (F) -- (I);
 \draw [dotted] (F) -- (J);
 \draw [very thick] (G) -- (K);
 \draw [dotted] (G) -- (L);
 \draw [dotted] (H) -- (L);
 \draw [dotted] (H) -- (M);
 \draw [very thick] (I) -- (M);
 \draw [very thick] (I) -- (N);
 \draw [very thick] (J) -- (N);
 \draw [dotted] (J) -- (O);
 \draw [dotted] (K) -- (P);
 \draw [dotted] (K) -- (Q);
 \draw [very thick] (L) -- (Q);
 \draw [very thick] (L) -- (R);
 \draw [very thick] (M) -- (R);
 \draw [dotted] (M) -- (S);
 \draw [dotted] (N) -- (S);
 \draw [dotted] (N) -- (T);
 \draw [very thick] (O) -- (T);
 \draw [very thick] (O) -- (U);
 \draw [dotted,very thick] (P) -- (-5.8,5.8);
 \draw [dotted,very thick] (P) -- (-4.2,5.8);
 \draw [dotted,very thick] (Q) -- (-3.8,5.8);
 \draw [dotted,very thick] (Q) -- (-2.2,5.8);
 \draw [dotted,very thick] (R) -- (-1.8,5.8);
 \draw [dotted,very thick] (R) -- (-0.2,5.8);
 \draw [dotted,very thick] (S) -- (0.2,5.8);
 \draw [dotted,very thick] (S) -- (1.8,5.8);
 \draw [dotted,very thick] (T) -- (2.2,5.8);
 \draw [dotted,very thick] (T) -- (3.8,5.8);
 \draw [dotted,very thick] (U) -- (4.2,5.8);
 \draw [dotted,very thick] (U) -- (5.8,5.8);
\end{tikzpicture}
\qquad
\begin{tikzpicture}[xscale=.3,yscale=-.5]
 \draw [fill=black!30,draw=white] (0,-1) -- (2,1) -- (1,2) -- (0,1) -- (-2,3) -- (-3,2) -- cycle;
 \node (A) at (0,0) [inner sep=1pt,outer sep=0pt] {$3$};
 \node (B) at (-1,1) [inner sep=1pt,outer sep=0pt] {$2$};
 \node (C) at (1,1) [inner sep=1pt,outer sep=0pt] {$1$};
 \node (D) at (-2,2) [inner sep=1pt,outer sep=0pt] {$1$};
 \node (E) at (0,2) [inner sep=1pt,outer sep=0pt] {$3$};
 \node (F) at (2,2) [inner sep=1pt,outer sep=0pt] {$2$};
 \node (G) at (-3,3) [inner sep=1pt,outer sep=0pt] {$3$};
 \node (H) at (-1,3) [inner sep=1pt,outer sep=0pt] {$2$};
 \node (I) at (1,3) [inner sep=1pt,outer sep=0pt] {$1$};
 \node (J) at (3,3) [inner sep=1pt,outer sep=0pt] {$3$};
 \node (K) at (-4,4) [inner sep=1pt,outer sep=0pt] {$2$};
 \node (L) at (-2,4) [inner sep=1pt,outer sep=0pt] {$1$};
 \node (M) at (0,4) [inner sep=1pt,outer sep=0pt] {$3$};
 \node (N) at (2,4) [inner sep=1pt,outer sep=0pt] {$2$};
 \node (O) at (4,4) [inner sep=1pt,outer sep=0pt] {$1$};
 \node (P) at (-5,5) [inner sep=1pt,outer sep=0pt] {$1$};
 \node (Q) at (-3,5) [inner sep=1pt,outer sep=0pt] {$3$};
 \node (R) at (-1,5) [inner sep=1pt,outer sep=0pt] {$2$};
 \node (S) at (1,5) [inner sep=1pt,outer sep=0pt] {$1$};
 \node (T) at (3,5) [inner sep=1pt,outer sep=0pt] {$3$};
 \node (U) at (5,5) [inner sep=1pt,outer sep=0pt] {$2$};
 \draw [very thick] (A) -- (B);
 \draw [very thick] (A) -- (C);
 \draw [very thick] (B) -- (D);
 \draw [dotted] (B) -- (E);
 \draw [dotted] (C) -- (E);
 \draw [dotted] (C) -- (F);
 \draw [dotted] (D) -- (G);
 \draw [dotted] (D) -- (H);
 \draw [very thick] (E) -- (H);
 \draw [very thick] (E) -- (I);
 \draw [very thick] (F) -- (I);
 \draw [dotted] (F) -- (J);
 \draw [very thick] (G) -- (K);
 \draw [very thick] (G) -- (L);
 \draw [very thick] (H) -- (L);
 \draw [dotted] (H) -- (M);
 \draw [dotted] (I) -- (M);
 \draw [dotted] (I) -- (N);
 \draw [very thick] (J) -- (N);
 \draw [very thick] (J) -- (O);
 \draw [very thick] (K) -- (P);
 \draw [dotted] (K) -- (Q);
 \draw [dotted] (L) -- (Q);
 \draw [dotted] (L) -- (R);
 \draw [very thick] (M) -- (R);
 \draw [very thick] (M) -- (S);
 \draw [very thick] (N) -- (S);
 \draw [dotted] (N) -- (T);
 \draw [dotted] (O) -- (T);
 \draw [dotted] (O) -- (U);
 \draw [dotted,very thick] (P) -- (-5.8,5.8);
 \draw [dotted,very thick] (P) -- (-4.2,5.8);
 \draw [dotted,very thick] (Q) -- (-3.8,5.8);
 \draw [dotted,very thick] (Q) -- (-2.2,5.8);
 \draw [dotted,very thick] (R) -- (-1.8,5.8);
 \draw [dotted,very thick] (R) -- (-0.2,5.8);
 \draw [dotted,very thick] (S) -- (0.2,5.8);
 \draw [dotted,very thick] (S) -- (1.8,5.8);
 \draw [dotted,very thick] (T) -- (2.2,5.8);
 \draw [dotted,very thick] (T) -- (3.8,5.8);
 \draw [dotted,very thick] (U) -- (4.2,5.8);
 \draw [dotted,very thick] (U) -- (5.8,5.8);
\end{tikzpicture} \]

Then we compute that 
$(\widetilde P_1I_w)_{kQ}
= \tau^{-1} P_3 
\oplus \left(\bigoplus_{i=3}^\infty \tau^{-i} P_1\right)$,
$(\widetilde P_2I_w)_{kQ}
=\tau^{-1}P_1 \oplus 
\left(\bigoplus_{i=2}^\infty
\tau^{-i}P_2\right)$,
and 
$(\widetilde P_3I_w)_{kQ}
= \bigoplus_{i=1}^\infty 
\tau^{-i} P_3$.  We see that the indecomposable $kQ$-modules not 
in $\mathscr C(I_w)$ are 
$P_1,P_2,P_3,\tau^{-1}P_2,\tau^{-2}P_1$.  

We also illustrate how to see this by
using our direct description of 
the missing set of $\ell(w)=5$ indecomposable
$kQ$-modules.  Consider the infinite word
${\underline c}^\infty=
s_1s_2s_3s_1s_2s_3s_1s_2s_3\dots$.
We indicate the leftmost subword for the element $w$ by 
underlining the corresponding $s_i$:
$\underline s_1 \underline s_2 \underline s_3 s_1 
\underline s_2 s_3 \underline s_1 \dots$.
Hence we obtain the associated set of indecomposable modules
$P_1,P_2,P_3,\tau^{-1} P_2, \tau^{-2} P_1$.\end{example}

\section{Results on preprojective algebras}
In this section we give two results, which will be useful later, on the 
relationship between the path algebra $kQ$ and the associated preprojective
algebra $\Pi$.  The first gives a long exact sequence involving the subfunctor
of the ordinary $\Ext_\Pi^1$ functor, given by short exact sequences of $\Pi$-modules which split as $kQ$-modules.  The second one gives a comparison between the
functor $-\otimes_\Pi I_i$ and APR-tilting for $kQ$ at the vertex $i$.  
Similar statements appear as \cite[Corollary 2.11]{AIRT}, \cite[Proposition 22]
{BK}.

Relative homological algebra was investigated by Auslander and Solberg in \cite{AS}. They consider certain subfunctors of $\Ext_\Pi^1$ given by a choice of short exact sequences. In our context, we will be interested in those short exact sequences of $\Pi$-modules which split upon restriction to $kQ$. We write $\underline{\Ext}^1_{\Pi}$ for the subfunctor of $\Ext^1_{\Pi}$ given by these short exact sequences.

In the following lemma, we use a description of preprojective algebras which first appeared in \cite[Proposition 3.1]{BGL}:
\[ \Pi = {\rm T}_{kQ} \Omega \quad \text{with} \quad \Omega = \Ext_{kQ}^1(D(kQ), kQ). \]
Here ${\rm T}_{kQ}$ denotes the tensor algebra over $kQ$, that is
\[ {\rm T}_{kQ} \Omega = kQ \oplus \Omega \oplus (\Omega \otimes_{kQ} \Omega) \oplus \Omega^{\otimes 3} \oplus \cdots . \]
In this description, a $\Pi$-module is given by a $kQ$-module $M$ and a multiplication rule $\varphi_M \colon M \otimes_{kQ} \Omega \to M$.

One may note that for finite dimensional $M$ we have $M \otimes_{kQ} \Omega = \tau^{-1} M$, so in this case the above description coincides with Ringel's \cite{Ringel}.

\begin{lemma} \label{lemma.exact_1}
For two $\Pi$-modules $(A, \varphi_A)$ and $(B, \varphi_B)$ we have an exact sequence
\[ 0 \to \Hom_{\Pi}(A, B) \to^f \Hom_{kQ}(A, B) \to^g \Hom_{kQ}(A \otimes_{kQ} \Omega, B) \to^h \underline{\Ext}^1_{\Pi}(A, B) \to 0, \]
with maps given by
\begin{align*}
f \colon & \,\,\text{restriction functor} \\
g \colon & \,\,g(\alpha) = \varphi_B \circ (\alpha \otimes 1_\Omega) - \alpha \circ \varphi_A \\
h \colon & \,\,h(\beta) \text{ is given by the $kQ$-split short exact sequence}\\
& 0 \to (B, \varphi_B) \to (A \oplus B, \begin{pmatrix} \varphi_A & \\ \beta & \varphi_B \end{pmatrix} ) \to (A, \varphi_A) \to 0
\end{align*}
\end{lemma}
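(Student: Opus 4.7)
The plan is to exploit the BGL description of $\Pi$ recalled just above the lemma: a right $\Pi$-module is the same data as a pair $(M,\varphi_M)$ with $\varphi_M\colon M\otimes_{kQ}\Omega\to M$ (no further relations, since $\Pi$ is freely the tensor algebra $T_{kQ}\Omega$), and a $\Pi$-linear map $\alpha\colon(A,\varphi_A)\to(B,\varphi_B)$ is precisely a $kQ$-linear map $\alpha\colon A\to B$ satisfying $\alpha\varphi_A=\varphi_B(\alpha\otimes 1_\Omega)$. With this dictionary in hand, the restriction map $f$ is visibly injective, giving exactness at $\Hom_\Pi(A,B)$; and the condition $g(\alpha)=0$ is literally the $\Pi$-linearity condition on $\alpha$, so $\Ker g=\Image f$, giving exactness at $\Hom_{kQ}(A,B)$. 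These two exactnesses are essentially formal once the dictionary is fixed.

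The substantive part is the construction and analysis of $h$. I would begin by showing that every $kQ$-split short exact sequence of $\Pi$-modules $0\to(B,\varphi_B)\to(E,\varphi_E)\to(A,\varphi_A)\to 0$ can, after choosing a $kQ$-linear splitting, be written so that $E=A\oplus B$ as a $kQ$-module and the structure map is a $kQ$-linear matrix
\[
 \varphi_E\colon(A\otimes_{kQ}\Omega)\oplus(B\otimes_{kQ}\Omega)\longrightarrow A\oplus B.
\]
Demanding that the inclusion of $B$ and the projection onto $A$ be $\Pi$-linear forces the diagonal entries to be $\varphi_A$ and $\varphi_B$ and the $B\otimes\Omega\to A$ entry to vanish; the remaining $A\otimes\Omega\to B$ entry is unconstrained and is by definition the parameter $\beta$ used in the formula for $h$. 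This produces a preimage of every class in $\underline{\Ext}^1_\Pi(A,B)$ and thereby shows surjectivity of $h$.

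It remains to identify $\Ker h$. An equivalence of extensions parametrised by $\beta,\beta'$ is a $\Pi$-linear isomorphism $\psi\colon E\to E'$ that is the identity on $B$ and induces the identity on $A$, hence of the form $\psi=\left(\begin{smallmatrix}1&0\\ \alpha&1\end{smallmatrix}\right)$ for some $kQ$-linear $\alpha\colon A\to B$. Writing out $\psi\varphi_E=\varphi_{E'}(\psi\otimes 1_\Omega)$ as a matrix identity and comparing the lower-left entries gives $\alpha\varphi_A+\beta=\beta'+\varphi_B(\alpha\otimes 1_\Omega)$, i.e.\ $\beta-\beta'=g(\alpha)$. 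Thus $h(\beta)=h(\beta')$ iff $\beta-\beta'\in\Image g$, which together with surjectivity of $h$ gives exactness at $\Hom_{kQ}(A\otimes_{kQ}\Omega,B)$ and at $\underline{\Ext}^1_\Pi(A,B)$. The only genuine obstacle is bookkeeping: keeping track of which tensor factor sits where under the identification $(A\oplus B)\otimes_{kQ}\Omega=(A\otimes\Omega)\oplus(B\otimes\Omega)$, and getting the signs right in the Yoneda equivalence computation. Because the BGL description presents $\Pi$ as a free tensor algebra over $kQ$, no mesh relation reappears to impose a hidden condition on $\beta$, so beyond the matrix calculation the argument is entirely formal.
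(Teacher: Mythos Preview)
Your proposal is correct and follows essentially the same approach as the paper: both use the BGL dictionary to reduce $\Pi$-linearity to the equation $\alpha\varphi_A=\varphi_B(\alpha\otimes 1_\Omega)$, and both identify $\Ker h$ via the matrix computation for a $\Pi$-linear equivalence of the form $\left(\begin{smallmatrix}1&0\\ \alpha&1\end{smallmatrix}\right)$. The only cosmetic differences are that the paper compares a single $\beta$ against the trivial extension (getting $\beta=g(-\Psi_{AB})$) rather than two arbitrary $\beta,\beta'$, and declares surjectivity of $h$ ``clear'' where you spell out the normal-form argument for a $kQ$-split extension.
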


\begin{proof}
Injectivity of $f$ and surjectivity of $h$ are clear.

It follows from the definition of $g$ that $\Ker g = \Im f$.

We determine $\Ker h$: $\beta \in \Ker h$ if and only if the following diagram can be completed commutatively:
\[ \begin{tikzpicture}[xscale=2,yscale=-1.8]
 \node (X) at (0,0) {$0$};
 \node (Y) at (0,1) {$0$};
 \node (A) at (1,0) {$(B, \varphi_B)$};
 \node (B) at (1,1) {$(B, \varphi_B)$};
 \node (C) at (3,0) {$(A \oplus B, \begin{pmatrix} \varphi_A &  \\ & \varphi_B \end{pmatrix} )$};
 \node (D) at (3,1) {$(A \oplus B, \begin{pmatrix} \varphi_A & \\ \beta & \varphi_B \end{pmatrix})$};
 \node (E) at (5,0) {$(A, \varphi_A)$};
 \node (F) at (5,1) {$(A, \varphi_A)$};
 \node (Z) at (6,0) {$0$};
 \node (W) at (6,1) {$0$};
 \draw [double distance=1.5pt] (A) -- (B);
 \draw [->, dashed] (C) -- node [right] {$\Psi$} (D);
 \draw [double distance=1.5pt] (E) -- (F);
 \draw [->] (A) -- (C);
 \draw [->] (C) -- (E);
 \draw [->] (B) -- (D);
 \draw [->] (D) -- (F);
 \draw [->] (X) -- (A);
 \draw [->] (Y) -- (B);
 \draw [->] (E) -- (Z);
 \draw [->] (F) -- (W);
\end{tikzpicture} \]
i.e.\ there is some $\Psi \in \End_{kQ}(A \oplus B)$ such that
\begin{itemize}
\item $\Psi \circ \begin{pmatrix} \varphi_A & \\ & \varphi_B \end{pmatrix} = \begin{pmatrix} \varphi_A &  \\ \beta & \varphi_B \end{pmatrix} \circ (\Psi \otimes 1_\Omega)$ ($\Psi$ is a morphism of $\Pi$-modules),
\item $\Psi \circ \begin{pmatrix} 0 \\ 1_B \end{pmatrix} = \begin{pmatrix} 0 \\ 1_B \end{pmatrix}$ (the left square commutes), and
\item $(1_A \; 0) \circ \Psi = (1_A \; 0)$ (the right square commutes).
\end{itemize}
Writing $\Psi = \begin{pmatrix} \Psi_{AA} & \Psi_{BA} \\ \Psi_{AB} & \Psi_{BB} \end{pmatrix}$ the latter two points amount to $\Psi_{AA} = 1_A$, $\Psi_{BB} = 1_B$, and $\Psi_{BA} = 0$. Thus the first one becomes
\[  \begin{pmatrix} 1_A & \\ \Psi_{AB} & \ 1_B \end{pmatrix} \begin{pmatrix} \varphi_A & \\ & \varphi_B \end{pmatrix} = \begin{pmatrix} \varphi_A & \\ \beta & \varphi_B \end{pmatrix} \begin{pmatrix} 1_A \otimes 1_\Omega & \\ \Psi_{AB} \otimes 1_\Omega & \ 1_B \otimes 1_\Omega \end{pmatrix}, \]
that is
\[ \Psi_{AB} \circ \varphi_A = \beta + \varphi_B \circ (\Psi_{AB} \otimes 1). \]
Hence we have $\beta = g(- \Psi_{AB}) \in \Im g$.

The same calculation read backwards shows that $h \circ g = 0$.
\end{proof}

\begin{proposition}\label{three.two}
Let $0\to A \to B \to C\to 0$ be a short exact sequence of $\Pi$-modules which splits upon restriction to ${kQ}$. Then for any $X \in \mod \Pi$ there are induced exact sequences
\begin {eqnarray*}
& 0 \to \Hom_{\Pi}(X, A) \to \Hom_{\Pi}(X, B) \to \Hom_{\Pi}(X, C) \\
&\qquad\qquad \to \underline{\Ext}^1_{\Pi}(X, A) \to \underline{\Ext}^1_{\Pi}(X, B) \to \underline{\Ext}^1_{\Pi}(X, C)  \to 0
\end{eqnarray*}
and
\begin{eqnarray*}
&0 \to \Hom_{\Pi}(C, X) \to \Hom_{\Pi}(B, X) \to \Hom_{\Pi}(A, X) \\
&\qquad \qquad \to \underline{\Ext}^1_{\Pi}(C, X) \to \underline{\Ext}^1_{\Pi}(B, X) \to \underline{\Ext}^1_{\Pi}(A, X) \to 0.
\end{eqnarray*}
\end{proposition}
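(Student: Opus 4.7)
The plan is to realize the four-term exact sequence of Lemma~\ref{lemma.exact_1} as the cohomology computation of a two-term complex of functors, and then to invoke the long exact cohomology sequence associated to a short exact sequence of chain complexes.

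For the first long exact sequence, fix $X \in \mod \Pi$ and consider the two-term complex of functors
$$ C^\bullet(-) = \bigl[\, \Hom_{kQ}(X, -) \xrightarrow{\;g\;} \Hom_{kQ}(X \otimes_{kQ} \Omega, -) \,\bigr], $$
placed in cohomological degrees $0$ and $1$, with differential the map $g$ from Lemma~\ref{lemma.exact_1}. Exactness of the four-term sequence immediately identifies
$$ H^0(C^\bullet(M)) = \Hom_{\Pi}(X, M), \qquad H^1(C^\bullet(M)) = \underline{\Ext}^1_{\Pi}(X, M) $$
for every $\Pi$-module $M$, and naturality of $f$, $g$, $h$ in the second variable makes $C^\bullet$ a functor from $\mod \Pi$ to complexes of abelian groups. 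The key point is that both $\Hom_{kQ}(X, -)$ and $\Hom_{kQ}(X \otimes_{kQ} \Omega, -)$ are exact on $kQ$-split short exact sequences of $\Pi$-modules, so applying $C^\bullet$ to $0 \to A \to B \to C \to 0$ yields a short exact sequence of complexes
$$ 0 \to C^\bullet(A) \to C^\bullet(B) \to C^\bullet(C) \to 0. $$
The six-term long exact sequence in cohomology associated to this short exact sequence of complexes is precisely the first claimed sequence.

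For the second long exact sequence I would run the same argument contravariantly, using instead
$$ D^\bullet(-) = \bigl[\, \Hom_{kQ}(-, X) \xrightarrow{\;g\;} \Hom_{kQ}(- \otimes_{kQ} \Omega, X) \,\bigr]. $$
The only additional input needed is that $-\otimes_{kQ} \Omega$ preserves $kQ$-split short exact sequences, which is immediate since tensor products over $kQ$ always preserve split exact sequences of $kQ$-modules; hence both terms of $D^\bullet$ are exact on such sequences. Then $0 \to D^\bullet(C) \to D^\bullet(B) \to D^\bullet(A) \to 0$ is again a short exact sequence of complexes, and its long exact sequence in cohomology gives the second claimed sequence.

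I do not anticipate any serious obstacle: the argument reduces to the fact, clear from the explicit formulas, that $f$, $g$, and $h$ in Lemma~\ref{lemma.exact_1} are natural in both variables, together with the routine observation that $kQ$-split sequences remain exact after applying any of $\Hom_{kQ}(X, -)$, $\Hom_{kQ}(-, X)$, or $- \otimes_{kQ} \Omega$. The mildest technical point is verifying in the contravariant case that precomposition with a $\Pi$-module homomorphism commutes with the formula defining $g$; this is a direct check from the definitions.
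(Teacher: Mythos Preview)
Your proposal is correct and is essentially the paper's own argument: the paper observes that applying $\Hom_{kQ}(-,X)$ and $\Hom_{kQ}(-\otimes_{kQ}\Omega,X)$ to the $kQ$-split sequence yields two short exact sequences connected by $g$, and then invokes the snake lemma, which for a two-term complex is precisely the long exact cohomology sequence you use. Your write-up is slightly more explicit in treating the covariant and contravariant cases separately and in noting that $-\otimes_{kQ}\Omega$ preserves $kQ$-split exactness, but the underlying argument is the same.
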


\begin{proof}
Note that since $0\to A \to B \to C \to 0$ is split exact over ${kQ}$ the sequences
\begin{align*}
& 0\to \Hom_{kQ}(C, X) \to \Hom_{kQ}(B, X) \to \Hom_{kQ}(A, X) \to 0\text{ and} \\
& 0\to \Hom_{kQ}(C \otimes_{kQ} \Omega, X) \to \Hom_{kQ}(B \otimes_{kQ} \Omega, X) \to \Hom_{kQ}(A \otimes_{kQ} \Omega, X)
\to 0
\end{align*}
are exact. Hence the proposition follows from Lemma~\ref{lemma.exact_1} and the snake lemma.
\end{proof}

Now we investigate the interaction of tensoring with an ideal $I_i$ and restricting to $kQ$. It turns out that on the level of $kQ$-modules, tensoring with $I_i$ corresponds to applying an APR-tilt. A similar observation had already
been made in \cite{AIRT}. We start by recalling the notion of APR-tilting
\cite{APR}, which is a module-theoretic interpretation of the reflections
of \cite{BGP}.

Let $Q$ be a (connected, acyclic, finite) quiver, and $i$ be a source of $Q$, so the corresponding indecomposable projective $kQ$-module $P_i$ is simple.

The $kQ$-module
\[ T = \tau^{-1} P_i \oplus kQ / P_i \]
is an APR-tilting module. We set ${Q'}$ to be the Gabriel quiver of 
$\End_{kQ}(T)$, so that 
$k{Q'} = \End_{kQ}(T)$. Then we have the mutually inverse equivalences
\begin{equation} \label{eq.tilting_equiv}
\R\Hom_{{kQ}}(T, -) \colon \Db(\mod {kQ}) \arrow{<->} \Db(\mod k\widet{Q}) : - \otimes_{k{\widet{Q}}}^L T.
\end{equation}

Recall from \cite{BGL,Ringel} that we have
\begin{equation} \label{eq.Pi_kQ}
\Pi = \bigoplus_{n \geq 0} \Hom_{\Db(\mod kQ)}(kQ, \tau^{-n} kQ).
\end{equation}
Since $T$ is obtained from $kQ$ by replacing one summand by its (inverse) AR-translation we also have
\begin{equation} \label{eq.Pi_T}
\Pi = \bigoplus_{n \geq 0} \Hom_{\Db(\mod kQ)}(T, \tau^{-n} T) = \bigoplus_{n \geq 0} \Hom_{\Db(\mod k\widet{Q})}(k\widet{Q}, \tau^{-n} k\widet{Q}).
\end{equation}

\begin{lemma} \label{lemma.identify_ideal}
Via the identifications above we have isomorphisms of $\Pi$-$\Pi$-bimodules
\begin{align*}
\Pi & \cong \bigoplus_{n \in \mathbb{Z}} \Hom_{\Db(\mod kQ)}(kQ, \tau^{-n} T) \\
S_i & \cong \bigoplus_{n < 0} \Hom_{\Db(\mod kQ)}(kQ, \tau^{-n} T) \\
I_i & \cong \bigoplus_{n \geq 0} \Hom_{\Db(\mod kQ)}(kQ, \tau^{-n} T)
\end{align*}
where in all cases the term on the right gets its right $\Pi$-module structure via \eqref{eq.Pi_kQ} and its left $\Pi$-module structure via \eqref{eq.Pi_T}.
\end{lemma}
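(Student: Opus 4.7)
The plan is to compute $\bigoplus_{n \in \mathbb{Z}}\Hom_{\Db(\mod kQ)}(kQ, \tau^{-n}T)$ directly, match it to $\Pi$ via the decomposition $T = \tau^{-1}P_i \oplus (kQ/P_i)$, and then split off the negative-degree part as $S_i$ to exhibit $I_i$ as the submodule of nonnegative degree.

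First I would split the sum using $T = \tau^{-1}P_i \oplus (kQ/P_i)$, reindex the $P_i$-contribution by $n \mapsto n-1$, and use that $\Hom_{\Db(\mod kQ)}(kQ, \tau^{-m}P) = 0$ for every projective $P$ and every $m < 0$ (since positive powers of $\tau$ kill projectives). The resulting nonnegative contributions reassemble into
\[ \bigoplus_{n \geq 0} \Hom_{\Db(\mod kQ)}(kQ, \tau^{-n}P_i) \,\oplus\, \bigoplus_{n \geq 0} \Hom_{\Db(\mod kQ)}(kQ, \tau^{-n}(kQ/P_i)) \,=\, \bigoplus_{n \geq 0} \Hom_{\Db(\mod kQ)}(kQ, \tau^{-n}kQ) \,=\, \Pi, \]
using $kQ = P_i \oplus (kQ/P_i)$. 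The remaining negative part collapses to $n = -1$, giving $\Hom_{\Db(\mod kQ)}(kQ, \tau T) = \Hom_{kQ}(kQ, P_i) = P_i = S_i$, because $\tau(\tau^{-1}P_i) = P_i$, $\tau(kQ/P_i) = 0$, and $P_i$ is simple (as $i$ is a source).

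Next I would verify that the decomposition respects the bimodule structure. Via \eqref{eq.Pi_kQ} the right action on $\phi \in \Hom_{\Db(\mod kQ)}(kQ, \tau^{-n}T)$ by $\alpha \in \Pi_m$ is $\tau^{-m}\phi \circ \alpha$, and via \eqref{eq.Pi_T} the left action by $\beta \in \Pi_m$ is $\tau^{-n}\beta \circ \phi$; both land in $\Hom_{\Db(\mod kQ)}(kQ, \tau^{-n-m}T)$. Since $\Pi$ is concentrated in nonnegative degrees in both presentations, $\bigoplus_{n \geq 0}\Hom_{\Db(\mod kQ)}(kQ, \tau^{-n}T)$ is a sub-$\Pi$-bimodule of $\Pi$ with one-dimensional quotient isomorphic to $S_i$. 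Any sub-bimodule $J \subseteq \Pi$ with $\Pi/J \cong S_i$ must send $e_j$ to $0$ in $S_i$ for $j \neq i$, hence contain each such $e_j$, and therefore contain $\Pi(1-e_i)\Pi = I_i$; comparing one-dimensional quotients then forces $J = I_i$.

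The main obstacle will be the bookkeeping around the grading shift: the natural $\mathbb{N}$-grading on $\Pi$ places $S_i$ in degree $0$ via $e_i \in \Pi_0$, whereas the $\mathbb{Z}$-grading induced by the $T$-description places $S_i$ in degree $-1$. One has to check carefully that the candidate summands are compatible both with the left action coming from \eqref{eq.Pi_T} and with the right action coming from \eqref{eq.Pi_kQ} simultaneously, since these are given by two genuinely different presentations of the same algebra $\Pi$, before invoking the characterization of $I_i$ as the unique bimodule kernel of a surjection $\Pi \to S_i$.
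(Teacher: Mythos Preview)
Your approach is essentially the paper's: identify the full $\mathbb{Z}$-indexed sum with $\Pi$, compute the negative part directly as $S_i$, and deduce the identification of the nonnegative part with $I_i$ from the resulting short exact sequence of bimodules. The paper is terser---for the first isomorphism it simply says ``similarly to \eqref{eq.Pi_T}'', and for the third it just points to the short exact sequence---whereas you carry out the reindexing explicitly and spell out why a sub-bimodule of $\Pi$ with quotient $S_i$ must equal $I_i$. That extra care is fine and arguably makes the last step more transparent than the paper's version.

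One small correction: in $\Db(\mod kQ)$, positive powers of $\tau$ do \emph{not} kill projectives (for instance $\tau P_j \cong I_j[-1]$), so your parenthetical justification and the claim $\tau(kQ/P_i)=0$ are not literally correct. What is true, and all you need, is that $\Hom_{\Db(\mod kQ)}(kQ,\tau^{-m}P)=0$ for $m<0$ because $\tau^{-m}P$ is then a complex concentrated in nonzero cohomological degree and $kQ$ is projective in degree $0$; likewise $\Hom_{\Db(\mod kQ)}(kQ,\tau(kQ/P_i))=0$. With this adjustment your computation of the negative part goes through unchanged.
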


\begin{proof}
The first claim is seen similarly to the identification in \eqref{eq.Pi_T}.

For the second claim note that
\[ \bigoplus_{n < 0} \Hom_{\Db(\mod kQ)}({kQ}, \tau^{-n} T) =
\Hom_{\Db(\mod kQ)}(P_i, \tau (\tau^{-1} P_i)), \]
so this module is isomorphic to $S_i$ on both sides.

The final claim follows by looking at the short exact sequence
\begin{align*} & 0 \to \bigoplus_{n \geq 0} \Hom_{\Db(\mod kQ)}({kQ}, \tau^{-n}T) \to
\bigoplus_{n \in \mathbb{Z}} \Hom_{\Db(\mod kQ)}({kQ}, \tau^{-n}T) \\
& \qquad \to \bigoplus_{n < 0} \Hom_{\Db(\mod kQ)}({kQ}, \tau^{-n}T) \to 0
\end{align*}
of $\Pi$-$\Pi$-bimodules.
\end{proof}

\begin{theorem} \label{thm.tensoringcommutes}
Let $Q$ be a quiver with a source $i$, and let $T$ be the associated APR-tilting module as above. Then the following diagram commutes.
\[ \begin{tikzpicture}[xscale=5,yscale=-1.5]
 \node (A) at (0,0) {$\Mod \Pi$};
 \node (B) at (1,0) {$\Mod \Pi$};
 \node (C) at (0,1) {$\Mod k\widet{Q}$};
 \node (D) at (1,1) {$\Mod kQ$};
 \draw [->] (A) -- node [above] {$- \otimes_{\Pi} I_i$} (B);
 \draw [->] (A) -- node [left] {res} (C);
 \draw [->] (B) -- node [right] {res} (D);
 \draw [->] (C) -- node [above] {$- \otimes_{k\widet{Q}} T$} (D);
\end{tikzpicture} \]
\end{theorem}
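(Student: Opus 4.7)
The plan is to realise both composites of the diagram as tensor products with $(\Pi, kQ)$-bimodules and then to compare these two bimodules. The top--right composite sends $M \in \Mod \Pi$ to $(M \otimes_\Pi I_i)_{kQ} \cong M \otimes_\Pi (I_i)_{kQ}$, where $(I_i)_{kQ}$ denotes the $(\Pi, kQ)$-bimodule obtained from the $(\Pi, \Pi)$-bimodule $I_i$ by restricting its right action to $kQ$ via the inclusion coming from \eqref{eq.Pi_kQ}. The bottom--left composite sends $M$ to $M_{k\widet{Q}} \otimes_{k\widet{Q}} T \cong M \otimes_\Pi (\Pi \otimes_{k\widet{Q}} T)$, where the restriction uses the inclusion $k\widet{Q} \hookrightarrow \Pi$ coming from \eqref{eq.Pi_T}. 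Thus the theorem reduces to producing a natural isomorphism of $(\Pi, kQ)$-bimodules $(I_i)_{kQ} \cong \Pi \otimes_{k\widet{Q}} T$.

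To produce this bimodule isomorphism, I would invoke Lemma~\ref{lemma.identify_ideal}, which identifies $I_i \cong \bigoplus_{n \geq 0} \Hom_{\Db(\mod kQ)}(kQ, \tau^{-n} T)$ as a $(\Pi, \Pi)$-bimodule. Restricting the right action via \eqref{eq.Pi_kQ}, this gives $(I_i)_{kQ} \cong \bigoplus_{n \geq 0} \tau^{-n} T$ as a right $kQ$-module. On the other side, \eqref{eq.Pi_T} identifies $\Pi$ as a right $k\widet{Q}$-module with $\bigoplus_{n \geq 0} \tau^{-n} k\widet{Q}$, so
\[ \Pi \otimes_{k\widet{Q}} T \cong \bigoplus_{n \geq 0} (\tau^{-n} k\widet{Q}) \otimes_{k\widet{Q}} T. \]
Since each $\tau^{-n} T$ is an honest $kQ$-module (preprojective summands stay in the module category under iterated $\tau^{-1}$), the tilting equivalence \eqref{eq.tilting_equiv} applied to $\tau^{-n} k\widet{Q}$ shows that $(\tau^{-n} k\widet{Q}) \otimes_{k\widet{Q}}^L T$ is concentrated in degree zero and equals $\tau^{-n} T$. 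The ordinary and derived tensor products therefore agree, yielding a right $kQ$-module isomorphism $\Pi \otimes_{k\widet{Q}} T \cong \bigoplus_{n \geq 0} \tau^{-n} T$ matching $(I_i)_{kQ}$.

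The remaining step is to check compatibility of this isomorphism with the left $\Pi$-action. On $(I_i)_{kQ}$, the left $\Pi$-action is realised by post-composition with elements of $\Hom_{\Db(\mod kQ)}(T, \tau^{-m} T)$ via \eqref{eq.Pi_T}; on $\Pi \otimes_{k\widet{Q}} T$ it is left multiplication on $\Pi$, interpreted through the same identification \eqref{eq.Pi_T}. The tilting equivalence \eqref{eq.tilting_equiv} transports composition in $\Db(\mod k\widet{Q})$ to composition in $\Db(\mod kQ)$ under $k\widet{Q} \leftrightarrow T$, so the two actions coincide. I expect this bookkeeping to be the main obstacle: one must verify carefully that the two encodings of the left $\Pi$-action on $\bigoplus_{n \geq 0} \tau^{-n} T$ are identified by the tilting equivalence, not merely that the underlying $kQ$-modules match. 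Once this is in place, associativity of the tensor product yields $(M \otimes_\Pi I_i)_{kQ} \cong M \otimes_\Pi (I_i)_{kQ} \cong M \otimes_\Pi (\Pi \otimes_{k\widet{Q}} T) \cong M_{k\widet{Q}} \otimes_{k\widet{Q}} T$ naturally in $M$, establishing the commutativity of the diagram.
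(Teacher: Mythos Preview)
Your strategy coincides with the paper's: both reduce the commutativity of the square to an isomorphism of $(\Pi,kQ)$-bimodules between $\Pi \otimes_{k\widet{Q}} T$ (equivalently $\bigoplus_{n\geq 0}\Hom_{\Db}(T,\tau^{-n}T)\otimes_{k\widet{Q}} T$) and $(I_i)_{kQ}$ (equivalently $\bigoplus_{n\geq 0}\Hom_{\Db}(kQ,\tau^{-n}T)$), using Lemma~\ref{lemma.identify_ideal} and the identifications \eqref{eq.Pi_kQ}, \eqref{eq.Pi_T}.

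Where your argument diverges, and where it has a gap, is in the verification of that isomorphism. You compute each side separately as $\bigoplus_{n\geq 0}\tau^{-n}T$, relying on the claim that ``each $\tau^{-n}T$ is an honest $kQ$-module (preprojective summands stay in the module category under iterated $\tau^{-1}$).'' This is false when $Q$ is Dynkin: in $\Db(\mod kQ)$ the autoequivalence $\tau^{-1}$ eventually pushes indecomposables into nonzero cohomological degree, so some summands of $\tau^{-n}T$ live in degree $-1$ or lower. In that regime your identifications $\Hom_{\Db}(kQ,\tau^{-n}T)=\tau^{-n}T$ and $(\tau^{-n}k\widet{Q})\otimes_{k\widet{Q}}T = (\tau^{-n}k\widet{Q})\otimes^L_{k\widet{Q}}T$ both break down, and the argument as written does not go through.

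The paper avoids this by writing down the \emph{evaluation} morphism $\Hom_{\Db}(T,X)\otimes_{k\widet{Q}}T \to \Hom_{\Db}(kQ,X)$, which is manifestly a map of $(\Pi,kQ)$-bimodules (so your ``bookkeeping'' worry about matching the left $\Pi$-actions disappears), and then checking bijectivity indecomposable-by-indecomposable for $X\in\add\bigoplus_{n\geq 0}\tau^{-n}T$. For $X$ in degree $0$ this is the tilting equivalence, exactly your argument; but the paper must also treat the remaining case explicitly, namely $X=S_i[1]$, where one checks that $\Hom_{\Db}(T,S_i[1])\otimes_{k\widet{Q}}T = S_i\otimes_{k\widet{Q}}T = 0$ so that both sides vanish. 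Adding this case (and using the evaluation map rather than matching two separate computations) would complete your proof along the same lines.
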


\begin{proof}
By Lemma~\ref{lemma.identify_ideal} the commutativity of the diagram in the theorem is equivalent to commutativity of the following diagram.
\[ \begin{tikzpicture}[xscale=9.7,yscale=-2]
 \node (A) at (0,0) {$\displaystyle \Mod \left[ \bigoplus_{n \geq 0} \Hom_{\Db(\mod kQ)}(T, \tau^{-n} T) \right]$};
 \node (B) at (1,0) {$\displaystyle \Mod \left[ \bigoplus_{n \geq 0} \Hom_{\Db(\mod kQ)}(kQ, \tau^{-n} kQ) \right]$};
 \node (C) at (0,1) {$\Mod \End_{kQ}(T)$};
 \node (D) at (1,1) {$\Mod kQ$};
 \draw [->] (A) -- node [above=8pt] {\scalebox{.7}{$\displaystyle -
   \bigotimes \left[ \bigoplus_{n \geq 0} \Hom_{\mathcal{P}}({kQ}, \tau^{-n} T) \right]$}} (B);
 \draw [->] (A) -- node [left] {res} (C);
 \draw [->] (B) -- node [right] {res} (D);
 \draw [->] (C) -- node [above] {$- \otimes T$} (D);
\end{tikzpicture} \]
Here the restriction functors are given by restriction along the natural inclusions
\begin{align*}
& \End_{kQ}(T) \embed \bigoplus_{n \geq 0}
\Hom_{\Db(\mod kQ)}(T, \tau^{-n}T)
\text{ and} \\
kQ = & \End_{kQ}(kQ) \embed \bigoplus_{n \geq 0} \Hom_{\Db(\mod kQ)}(kQ, \tau^{-n} kQ)
\text{, respectively.} 
\end{align*}
Equivalently we may regard the restriction functors as being given by (derived) tensoring with $\bigoplus_{n \geq 0}
\Hom_{\Db(\mod kQ)}(T, \tau^{-n}T)$ and $\bigoplus_{n \geq 0}
\Hom_{\Db(\mod kQ)}({kQ}, \tau^{-n}{kQ})$, respectively.

Thus the commutativity of the diagram is equivalent to the $\Pi$-$kQ$-bimodules
\[ \bigoplus_{n \geq 0} \Hom_{\Db(\mod kQ)} (T, \tau^{-n}T)
\otimes_{k\widet{{Q}}} T \quad \text{and} \quad \bigoplus_{n \geq 0} \Hom_{\Db(\mod kQ)}({kQ}, \tau^{-n}T) \]
being isomorphic.

Clearly we have a morphism from the first to the second bimodule, which is given by evaluating. To see that this morphism is bijective it suffices to check that evaluation
\[ \Hom_{\Db(\mod kQ)}(T, X) \otimes_{k\widet{Q}} T \to \Hom_{\Db(\mod kQ)}(kQ, X) \]
is bijective for any indecomposable $X \in \add \bigoplus_{n \geq 0} \tau^{-n} T$. If $X$ is concentrated in degree $0$ then this follows from the mutually inverse equivalences in \eqref{eq.tilting_equiv}. If $X$ is concentrated in degree $-1$ or lower the right hand side vanishes, so we need to show that the term on the left also vanishes. We have $\Hom_{\Db(\mod kQ)}(T, X) = 0$ unless $X = S_i[1]$, so this is the only case to consider. But
\begin{align*}
& \Hom_{\Db(\mod kQ)}(T, S_i[1]) = S_i \text{, and}
& S_i \otimes_{k\widet{Q}} T = {\rm H}^0(S_i \otimes_{k\widet{Q}}^{\L} T) = {\rm H}^0(S_i[1]) = 0,
\end{align*}
again by the mutually inverse equivalences of \eqref{eq.tilting_equiv}.
\end{proof}

\section{Describing $I_w$ for $Q$ non-Dynkin}\label{sectionfour}

Note that $kQ$ is a subalgebra of $\Pi$ in a natural way.  
To a $\Pi$-module $X$, we associate the subcategory 
$$\C(X)=\add X_{kQ} \cap \mod kQ.$$
For $Q$ a non-Dynkin quiver, we show that for
any element $w$ in the Weyl group $W$, the $\Pi$-module $I_w$ is
uniquely determined by the category $\C(I_w)$, and even by 
$\Fac \C(I_w)$.  We use this to show, in the non-Dynkin case, that 
if $\C(I_v) \subseteq \C(I_w)$, then $I_v\subseteq I_w$.  
This is
crucial for the proof of the main theorem.  The Dynkin case of this 
result will be treated in Section~\ref{sectionsix}.

For 
$\mathcal A$ any (additive) subcategory of $\mod kQ$, we write $\Fac \mathcal A$
for the category consisting of the quotients of objects in $\mathcal A$.  
We write $\filt(\mathcal A)$ for the minimal 
subcategory of finite length $\Pi$-modules
containing the objects from $\mathcal A$ and closed under $kQ$-split
extensions.  
We make the straightforward observation that $\filt (\C(I_w))\subseteq
\filt (\C(I_v))$ iff $\C(I_w)\subseteq \C(I_v)$, since the 
$kQ$-modules in $\filt(\mathcal A)$ are exactly the objects of $\mathcal A$.  

%We set
%\begin{align*}
%\filt(\mathcal{C}_w) & = \{ M \in \mod \Pi \mid \C(M) \in \mathcal{C}_w \} \text{, and} \\
%\filt(\mathcal{C}^{\Fac}_w) & = \{ M \in \mod \Pi \mid \C(M) \in \mathcal{C}^{\Fac}_w \}.
%\end{align*}
%Note that in particular all objects of $\filt(\mathcal{C}_w)$ and 
%$\filt(\mathcal{C}^{\Fac}_w)$ are finite dimensional modules.
We set
\[ \uperp I_w = \{M \in \mod \Pi \mid \underline{\Ext}_{\Pi}^1(M, I_w) = 0 \}. \]

Throughout this section, let $Q$ be a connected quiver which is not Dynkin.  
Then we know that the ideals $I_w$ are tilting $\Pi$-modules \cite{BIRS},
and that the bounded derived category of finite length $\Pi$-modules is
2-Calabi-Yau \cite{GLS}.

\begin{proposition}
For any $w \in W$ we have $\mathscr{C}(I_w) \subseteq \uperp I_w$.
\end{proposition}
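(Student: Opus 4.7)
The plan is to reduce the vanishing $\underline{\Ext}^1_\Pi(M, I_w) = 0$ to the analogous statement for $I_w$ itself, which follows directly from the tilting property. Concretely, since $Q$ is non-Dynkin, $I_w$ is a classical tilting $\Pi$-module by \cite{BIRS}, so $\Ext^1_\Pi(I_w, I_w) = 0$. As $\underline{\Ext}^1_\Pi$ is a subfunctor of $\Ext^1_\Pi$ (one simply restricts to $kQ$-split short exact sequences), this already forces $\underline{\Ext}^1_\Pi(I_w, I_w) = 0$, and additivity in the first variable extends the vanishing to $\underline{\Ext}^1_\Pi(I_w^n, I_w) = 0$ for every $n$.

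Now let $M \in \C(I_w)$. I would view $M$ with its natural $\Pi$-module structure, so that the $kQ$-split embedding $M \hookrightarrow I_w^n$ coming from $M \in \add((I_w)_{kQ})$ is genuinely a morphism of $\Pi$-modules, and then complete it to a $kQ$-split short exact sequence of $\Pi$-modules
$$0 \to M \to I_w^n \to L \to 0.$$
Applying the second long exact sequence of Proposition~\ref{three.two} with $X = I_w$, the tail
$$\underline{\Ext}^1_\Pi(I_w^n, I_w) \to \underline{\Ext}^1_\Pi(M, I_w) \to 0$$
is surjective, and since the left-hand term vanishes by the first paragraph, so does $\underline{\Ext}^1_\Pi(M, I_w)$, giving $M \in \uperp I_w$.

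The main obstacle is the step in which $M$ is upgraded from being a $kQ$-summand of $(I_w^n)_{kQ}$ to a $\Pi$-submodule of $I_w^n$ with $kQ$-split inclusion; that is, one must specify the natural $\Pi$-structure carried by objects of $\C(I_w)$ (inherited from $I_w$) and verify that the $kQ$-split inclusion is actually $\Pi$-linear. In general, a mere $kQ$-splitting of $(I_w^n)_{kQ} = M \oplus L$ need not be compatible with the $\Omega$-action on $I_w^n$, so this compatibility is the content of $M$ lying in $\C(I_w)$ rather than just in $\add((I_w)_{kQ})$ as an abstract $kQ$-module. Once this identification is in place, the rest of the argument is a formal application of the long exact sequence.
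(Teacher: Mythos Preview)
Your proof has a genuine gap precisely at the point you flag as the main obstacle, and your proposed resolution does not work. By definition $\C(I_w)=\add((I_w)_{kQ})\cap\mod kQ$, so membership in $\C(I_w)$ carries no $\Pi$-structure beyond being a finite-dimensional $kQ$-summand of $(I_w^n)_{kQ}$. In fact, for $M$ an indecomposable preprojective $kQ$-module one has $M\otimes_{kQ}\Omega\cong\tau^{-1}M$ and $\Hom_{kQ}(\tau^{-1}M,M)=0$, so the \emph{only} $\Pi$-structure on $M$ is the trivial one $(M,0)$. With that structure, a $\Pi$-linear inclusion $(M,0)\hookrightarrow I_w^n$ would force the image to be annihilated by $\Omega$, which fails in general: e.g.\ for $w=e$ and $M=P_i$ sitting in degree $0$ of $\Pi$, the $\Omega$-action sends $P_i$ nontrivially into degree $1$. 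So there is typically no $kQ$-split short exact sequence of $\Pi$-modules $0\to M\to I_w^n\to L\to 0$, and your application of Proposition~\ref{three.two} never gets off the ground.

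The paper's argument avoids this by first passing to a \emph{finite-dimensional $kQ$-split $\Pi$-quotient} $F$ of $I_w^n$ in which the relevant graded piece becomes the top degree; there the $\Omega$-action on that piece is killed, so $(M,0)$ genuinely sits inside $F$ as a $kQ$-split $\Pi$-submodule. One then needs $\underline{\Ext}^1_\Pi(F,I_w)=0$, and for this the tilting property alone is not enough: one uses $\Ext^1_\Pi(I_w,F)=0$ (from tilting and $F\in\Fac I_w$) together with the $2$-Calabi--Yau property of $\Db(\fl\,\Pi)$ and the identification $\Ext^1_\Pi(F,I_w)\cong\Hom_\Pi(F,\Pi/I_w)$ to conclude $\Ext^1_\Pi(F,I_w)=0$. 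Only then does right exactness of $\underline{\Ext}^1_\Pi$ in the first variable finish the job. Your outline misses both the passage through $F$ and the use of $2$-CY duality.
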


\begin{proof}
We consider $F \in \Fac I_w$ finite dimensional. Then we have an epimorphism $I_w^n \epi F$ for some $n$. Applying $\Hom_{\Pi}(I_w, -)$ and using that the projective dimension of $I_w$ is at most $1$, we obtain an epimorphism $\Ext_{\Pi}^1(I_w, I_w^n) \epi \Ext_{\Pi}^1(I_w, F)$. The first space is zero since $I_w$ is tilting, so $\Ext_{\Pi}^1(I_w, F) = 0$ as well.

Next we look at $\Ext_{\Pi}^1(F, I_w)$. The short exact sequence $0 \to I_w \to \Pi \to \Pi / I_w\to 0$, together with the fact that $\Ext_{\Pi}^i(F, \Pi) = 0$ for $i = 0,1$, gives $\Ext_{\Pi}^1(F, I_w) \cong \Hom_{\Pi}(F, \Pi / I_w)$. Thus we have
\begin{align*}
\Ext_{\Pi}^1(F, I_w) & \cong \Hom_{\Pi}(F, \Pi / I_w) \\
& \cong D \Ext_{\Pi}^2(\Pi / I_w, F) && \text{by the $2$-CY property} \\
& \cong D \Ext_{\Pi}^1(I_w, F) && \text{since } I_w = \Omega (\Pi / I_w) \\
& = 0.
\end{align*}

Now let $X \in \C(I_w)$. That is, $X$ is a $kQ$-split subquotient of (a finite sum of copies of) $I_w$. That is, $X$ is a $kQ$-split submodule of some $kQ$-split quotient $F$ of $I_w^n$.  By the discussion above we know that $\underline{\Ext}_{\Pi}^1(F, I_w)=0$. Now, using the right exactness of $\underline{\Ext}_{\Pi}^1$, we obtain $\underline{\Ext}_{\Pi}^1(X, I_w) = 0$.
\end{proof}

\begin{lemma}
For any $w \in W$, the category $\uperp I_w$ is closed under taking factors modulo finite dimensional modules.
\end{lemma}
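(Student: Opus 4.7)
The goal is to show that if $M \in \uperp I_w$ and $N \subseteq M$ is a finite dimensional $\Pi$-submodule, then $M/N \in \uperp I_w$. My plan is to compare the subfunctor $\underline{\Ext}^1_\Pi(-,I_w)$ with the ordinary $\Ext^1_\Pi(-,I_w)$ via the short exact sequence $0 \to N \to M \to M/N \to 0$, which need not be $kQ$-split. Applying $\Hom_\Pi(-,I_w)$ produces the exact fragment
\[
\Hom_\Pi(N,I_w) \longrightarrow \Ext^1_\Pi(M/N,I_w) \stackrel{\pi^{*}}{\longrightarrow} \Ext^1_\Pi(M,I_w),
\]
where $\pi^{*}$ is the map induced by pullback along the quotient $\pi \colon M \to M/N$.

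The first step is to observe that $\Hom_\Pi(N,I_w)=0$. Since $I_w \subseteq \Pi$ as a $\Pi$-submodule, this reduces to $\Hom_\Pi(F,\Pi)=0$ for every finite dimensional $\Pi$-module $F$. In the non-Dynkin situation this holds, and is the same vanishing tacitly used in the proof of the preceding proposition when invoking $\Ext^i_\Pi(F,\Pi)=0$ for $i=0,1$. Combined with the displayed exact sequence, this forces $\pi^{*}$ to be injective.

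The second step is to transfer this injectivity to the subfunctor $\underline{\Ext}^1_\Pi$. For this I would verify that the pullback of a $kQ$-split short exact sequence along any $\Pi$-linear map is again $kQ$-split: if $s$ is a $kQ$-linear section of the projection $E \to M/N$, then sending $m$ to $(s(\pi(m)),m)$ defines a $kQ$-linear section of the pullback $E \times_{M/N} M \to M$. Consequently $\pi^{*}$ sends $\underline{\Ext}^1_\Pi(M/N,I_w)$ into $\underline{\Ext}^1_\Pi(M,I_w)$, and the restriction is still injective. Since the target vanishes by hypothesis, so does the source, giving $M/N \in \uperp I_w$.

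The only substantive ingredient is the vanishing $\Hom_\Pi(F,\Pi)=0$ for finite dimensional $F$ in the non-Dynkin case; everything else is a routine manipulation of the $\Ext$ long exact sequence together with the definition of $\underline{\Ext}^1_\Pi$ as a subfunctor of $\Ext^1_\Pi$.
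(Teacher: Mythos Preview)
Your proof is correct and follows essentially the same approach as the paper: both use the vanishing $\Hom_\Pi(N,I_w)=0$ (from $\Hom_\Pi(F,\Pi)=0$ for finite dimensional $F$ in the non-Dynkin case) to get injectivity of $\Ext^1_\Pi(M/N,I_w)\to\Ext^1_\Pi(M,I_w)$, then observe that pullback preserves $kQ$-split extensions so this injectivity restricts to $\underline{\Ext}^1_\Pi$. The paper presents the last step via a commutative square with the inclusions $\underline{\Ext}^1_\Pi\hookrightarrow\Ext^1_\Pi$, while you phrase it as restricting an injective map to a subgroup, but these are the same argument.
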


\begin{proof}
Let $0\to A \to X \to F \to 0$ be a short exact sequence of $\Pi$-modules, such that $A$ is finite dimensional and $X \in \uperp I_w$. Then $\Hom_{\Pi}(A, I_w) = 0$, and so we have a monomorphism $\Ext_{\Pi}^1(F, I_w) \mono \Ext_{\Pi}^1(X, I_w)$. Note that the pullback of a $kQ$-split short exact sequence is $kQ$-split, so we obtain a commutative square
\[ \begin{tikzpicture}[xscale=3,yscale=-1.5]
 \node (A) at (0,0) {$\underline{\Ext}_{\Pi}^1(F, I_w)$};
 \node (B) at (1,0) {$\Ext_{\Pi}^1(F, I_w)$};
 \node (C) at (0,1) {$\underline{\Ext}_{\Pi}^1(X, I_w)$};
 \node (D) at (1,1) {$\Ext_{\Pi}^1(X, I_w)$};
 \draw [right hook->] (A) -- (B);
 \draw [right hook->] (C) -- (D);
 \draw [->] (A) -- (C);
 \draw [->] (B) -- (D);
\end{tikzpicture} \]
Since the right map is injective, so is the left one. Thus, since $X \in \uperp I_w$, also $F \in \uperp I_w$.
\end{proof}

\begin{corollary} \label{corollary.modfac_in_perp}
For any $w \in W$ we have $\Fac\C(I_w) \subseteq \uperp I_w$.
\end{corollary}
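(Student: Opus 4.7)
The plan is to deduce this corollary directly from the two immediately preceding results: the proposition asserting $\C(I_w) \subseteq \uperp I_w$, and the lemma that $\uperp I_w$ is closed under factoring out finite-dimensional submodules. No new homological input should be needed; the argument is essentially a one-line chase.

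Concretely, I would take any $F \in \Fac \C(I_w)$ and choose a presentation $0 \to K \to X \to F \to 0$ in $\mod \Pi$ with $X \in \C(I_w)$. Since $\C(I_w)$ is contained in $\mod kQ$, its objects are finite-dimensional over $k$, and so both $X$ and the submodule $K$ are finite-dimensional. The preceding proposition gives $X \in \uperp I_w$, and then the preceding lemma, applied to the above short exact sequence with $K$ finite-dimensional, yields $F \in \uperp I_w$, as desired.

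The only point that requires a moment of care is the ambient category: the objects of $\C(I_w)$ must be viewed as $\Pi$-modules via the $\Pi$-structure they inherit from (summands of) $I_w$, so that $\Fac \C(I_w)$ is formed inside $\mod \Pi$ and the lemma and proposition apply verbatim. This is the same convention used in the proof of the preceding proposition, so no genuine obstacle arises here; all the real work has already been done in establishing the proposition and lemma we are invoking.
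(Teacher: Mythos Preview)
Your argument is correct and is exactly the intended deduction: the paper gives no separate proof for this corollary, as it is immediate from the proposition ($\C(I_w)\subseteq\uperp I_w$) and the lemma (closure of $\uperp I_w$ under quotients by finite-dimensional submodules), which is precisely the two-step chase you describe. One small remark on your parenthetical: the $\Pi$-structure on objects of $\C(I_w)$ is not really ``inherited from $I_w$'' (they are $kQ$-summands, not $\Pi$-summands); rather, $kQ$-modules are regarded as $\Pi$-modules with trivial $\Omega$-action, and the paper's proof of the proposition then identifies them as $kQ$-split subquotients of $I_w$---but this does not affect your argument, since a short exact sequence of $kQ$-modules is automatically one of $\Pi$-modules under this convention.
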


\begin{corollary} \label{cor.ext_filt_0}
For any $w \in W$ we have $\filt(\Fac\C(I_w)) \subseteq \uperp I_w$.
\end{corollary}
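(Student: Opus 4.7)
The plan is to deduce this from Corollary~\ref{corollary.modfac_in_perp} together with the observation that $\uperp I_w$ is closed under $kQ$-split extensions; once we know that, the statement is immediate from the very definition of $\filt$.

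More precisely, I would first recall that $\filt(\Fac\C(I_w))$ is by definition the smallest subcategory of $\mod\Pi$ containing $\Fac\C(I_w)$ and closed under $kQ$-split extensions. By Corollary~\ref{corollary.modfac_in_perp} we already have the inclusion $\Fac\C(I_w)\subseteq \uperp I_w$, so it suffices to verify that $\uperp I_w$ is itself closed under $kQ$-split extensions; induction on the length of a $kQ$-split filtration then finishes the argument.

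To check closure under $kQ$-split extensions, take a short exact sequence $0\to A\to B\to C\to 0$ of $\Pi$-modules which splits over $kQ$, and assume $A,C\in\uperp I_w$. Apply the first long exact sequence of Proposition~\ref{three.two} (with $X=I_w$ in the second variable role, via the dual version) — more concretely, use the second exact sequence of Proposition~\ref{three.two} applied to this $kQ$-split sequence and to $X=I_w$, which yields
\[
\underline{\Ext}^1_{\Pi}(C,I_w)\to \underline{\Ext}^1_{\Pi}(B,I_w)\to \underline{\Ext}^1_{\Pi}(A,I_w)\to 0.
\]
The outer terms vanish by assumption, hence so does the middle one, giving $B\in\uperp I_w$.

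The argument has no real obstacle: the only delicate point is making sure one invokes the correct variance of Proposition~\ref{three.two} (the one which is contravariant in the $kQ$-split sequence), so that the vanishing at $A$ and $C$ forces vanishing at $B$. Once this is in place, an easy induction on the number of $kQ$-split extensions needed to build an object of $\filt(\Fac\C(I_w))$ from objects of $\Fac\C(I_w)$ yields the desired inclusion.
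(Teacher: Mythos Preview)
Your argument is correct and is exactly the route the paper (implicitly) takes: the corollary is stated without proof because it follows immediately from Corollary~\ref{corollary.modfac_in_perp} together with the fact that $\uperp I_w$ is closed under $kQ$-split extensions, which is a direct consequence of the long exact sequence in Proposition~\ref{three.two}. One small technical remark: Proposition~\ref{three.two} is stated for $X\in\mod\Pi$, whereas you apply it with $X=I_w$, which is not finitely generated in the non-Dynkin setting of Section~\ref{sectionfour}; however, the proof of Proposition~\ref{three.two} (via Lemma~\ref{lemma.exact_1} and the snake lemma) uses no finiteness hypothesis on $X$, so the extension you need is harmless.
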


%\begin{proof}
%By Lemma~\ref{lemma.exact_1} we know that for $X, Y \in \mod \Pi$ there is an epimorphism from $\Hom_{kQ}(X, \tau Y)$ to $\underline{\Ext}_{\Pi}^1(X, Y)$. It follows that in any finite dimensional indecomposable $\Pi$-module, the preprojective $kQ$-summands sit above the non-preprojective $kQ$-summands. Moreover, the preprojective $kQ$-summands occur (from top down) in the order they occur in the AR-quiver of $kQ$ (from left to right) -- that is, the only $kQ$-summands sitting properly above a preprojective one, are $kQ$-modules which are properly to the left of this module in the AR-quiver of the preprojective component.
%
%Now let $F \in \filt(\Fac \mathscr{C}(I_w))$. Then, by the above observation, there is a $kQ$-split short exact sequence
%\[ 0\to F_{\text{non-preproj}} \to F \to F_{\text{preproj}}\to 0, \]
%with $\C(F_{\text{non-preproj}})$ not containing any preprojective objects, and $F_{\text{preproj}}$ filtered, in a $kQ$-split way, by preprojective $kQ$-modules in $\Fac \C(I_w)$. Now $F_{\text{non-preproj}} \in \uperp I_w$ since $\Hom_{kQ}(F_{\text{non-preproj}}, \tau I_w) = 0$ because $\C(I_w)$ and $\C(\tau I_w)$ consist of 
%preprojective $kQ$-modules. 
%Hence the claim follows from Corollary~\ref{corollary.modfac_in_perp} and the fact that $\uperp I_w$ is closed under $kQ$-split extensions.
%\end{proof}

We say that a short exact sequence of $\Pi$-modules:

$$0 \to Z \to A \to M \to 0,$$
with $M$ in some subcategory $\mathcal X$ of $\mod \Pi$, is a universal 
extension of $Z$ by $\mathcal X$,  if the induced map $M \to Z[1]$ is a right
$\mathcal X$-approximation.  
%
%, for any exact sequence 
%$ 0 \to Z \to^g B \to N \to 0$ with $N\in \mathcal X$, there is an exact
%commutative diagram:
%
%\[ \begin{tikzpicture}[xscale=2,yscale=-1.5]
%\node (A) at (0.5,0) {$0$};
%\node (B) at (1,0) {$Z$};
%\node (C) at (2,0) {$B$};
%\node (D) at (3,0) {$N$};
%\node (E) at (3.5,0) {$0$};
%\node (F) at (0.5,1) {$0$};
%\node (G) at (1,1) {$Z$};
%\node (H) at (2,1) {$A$};
%\node (I) at (3,1) {$M$};
%\node (J) at (3.5,1) {$0$};
%\draw [double distance=1.5pt] (B) -- (G);
%\draw [->] (C) -- (H);
%\draw [->] (D) -- (I);
%\draw [->] (A) -- (B);
%\draw [->] (B) -- node [above] {$g$} (C);
%\draw [->] (C) -- (D);
%\draw [->] (D) -- (E);
%\draw [->] (F) -- (G);
%\draw [->] (G) -- node [above] {$f$} (H);
%\draw [->] (H) -- (I);
%\draw [->] (I) -- (J);
%\end{tikzpicture} \]
%
The extension is called \emph{minimal} if the map is right minimal.
It follows directly that a minimal universal extension is unique up to
isomorphism.  We then have the following consequence of Corollaries~\ref{corollary.modfac_in_perp} and~\ref{cor.ext_filt_0}

\begin{corollary} \label{corollary.is_univ_extension}
For any $w \in W$, and $n$ such that $I_{c^n} \subseteq I_w$, the sequence
\begin{equation}\label{extn} 0\to I_{c^n} \to I_w \to M \to 0\end{equation}
is a minimal universal $kQ$-split extension of $I_{c^n}$ by objects in any of 
$\filt(\C(I_w))$, $\filt(\Fac \C(I_w))$, or $\uperp I_w$.
\end{corollary}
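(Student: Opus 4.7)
The corollary asserts four things: the sequence is $kQ$-split, $M$ lies in each of the three subcategories listed, the sequence is universal as a $kQ$-split extension with quotient in each of them, and the extension is minimal. My plan is to organise the argument around the chain of inclusions
$$\filt(\C(I_w)) \subseteq \filt(\Fac \C(I_w)) \subseteq \uperp I_w,$$
which follows from $\C(I_w) \subseteq \Fac \C(I_w)$ together with Corollary~\ref{cor.ext_filt_0}. Thanks to this chain the containment assertion reduces to proving the strongest one, $M \in \filt(\C(I_w))$, while the universality assertion reduces to testing only against the weakest, $M' \in \uperp I_w$.

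The universality step is the cleanest. Once the $kQ$-splitting of the sequence is in hand, I would apply $\Hom_\Pi(M',-)$ and invoke Proposition~\ref{three.two} to extract the exact piece
$$\Hom_\Pi(M', I_w) \to \Hom_\Pi(M', M) \to \underline{\Ext}_\Pi^1(M', I_{c^n}) \to \underline{\Ext}_\Pi^1(M', I_w).$$
By the definition of $\uperp I_w$ the rightmost term vanishes, so the connecting map is surjective; this surjectivity is precisely the statement that every $kQ$-split extension of $I_{c^n}$ by an object $M' \in \uperp I_w$ is obtained as the pullback of our sequence along some morphism $M' \to M$, i.e.\ the universal approximation property.

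For the $kQ$-splitting and the containment $M \in \filt(\C(I_w))$, I plan to exploit the reduced-expression description of the ideals from \cite{BIRS}. The hypothesis $I_{c^n} \subseteq I_w$ is equivalent to a factorisation $c^n = v w$ with $\ell(v) + \ell(w) = \ell(c^n)$, and with the conventions of the paper this yields $I_{c^n} = I_w I_v$. Choosing a reduced expression $v = s_{j_1} \cdots s_{j_s}$ produces the filtration
$$I_w \supset I_w I_{s_{j_s}} \supset I_w I_{s_{j_s}} I_{s_{j_{s-1}}} \supset \cdots \supset I_w I_v = I_{c^n},$$
so $M$ is built up by the finite-length subquotients $I_u / I_u I_{s_j}$ appearing along the way. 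I would show that each such subquotient is a $kQ$-summand of $(I_u)_{kQ}$, and hence lies in $\C(I_w)$, and that each inclusion $I_u I_{s_j} \hookrightarrow I_u$ is $kQ$-split; concatenating these local splittings then exhibits $M$ as a $kQ$-split iterated extension of objects in $\C(I_w)$, simultaneously giving the total $kQ$-splitting and the membership $M \in \filt(\C(I_w))$.

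For minimality, suppose a non-zero direct summand $M_0$ of $M$ satisfied $\eta|_{M_0} = 0$ in $\underline{\Ext}_\Pi^1(M_0, I_{c^n})$. Pulling the sequence back along $M_0 \hookrightarrow M$ would then split as $\Pi$-modules, producing a decomposition $I_w = A \oplus B$ with $A \cong M_0$ and $I_{c^n} \subseteq B$. However, $M_0$ is finite-dimensional, whereas the indecomposable $\Pi$-summands of the tilting module $I_w$ --- namely the modules $e_i I_w$ from \cite{BIRS} --- are all infinite-dimensional in the non-Dynkin case (as is visible in the example following Theorem~\ref{two.three}), so no such $A$ can exist. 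The main obstacle in the whole plan is the single-step verification in the previous paragraph: that each $I_u/I_u I_{s_j}$ is a $kQ$-summand of $I_u$ and that the corresponding inclusion is $kQ$-split. This is where the detailed information about the ideals $I_w$ developed in \cite{BIRS} does all the work.
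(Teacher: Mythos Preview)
Your treatment of universality and minimality is correct and coincides with the paper's. The gap is in the earlier part, where you try to obtain both the $kQ$-splitting and the containment $M \in \filt(\C(I_w))$ from a reduced factorisation $c^n = vw$.

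The claimed equivalence ``$I_{c^n} \subseteq I_w$ $\Leftrightarrow$ $c^n = vw$ with $\ell(v)+\ell(w)=\ell(c^n)$'' is false. Containment of the ideals corresponds to Bruhat order, not left weak order (see Lemma~\ref{Dynkin.containment} and its reference to \cite{IR}). For the Kronecker quiver with $c=s_1s_2$, take $w=s_2s_1$: then $w \leq c^2$ in Bruhat (so $I_{c^2}\subseteq I_w$, and the corollary must apply), but the infinite dihedral group has a unique reduced word $s_1s_2s_1s_2$ for $c^2$, so no reduced factorisation $c^2 = v\cdot(s_2s_1)$ exists. Thus your filtration is unavailable in general. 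Even when such a factorisation does exist, the individual inclusions $I_uI_{s_j}\hookrightarrow I_u$ have cokernel a sum of copies of the simple $S_j$, and this is $kQ$-split only when $j$ is a source; nothing in \cite{BIRS} supplies the splitting otherwise.

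The paper bypasses all of this with a single observation: the sequence $0\to I_{c^n}\to \Pi\to \Pi/I_{c^n}\to 0$ is $kQ$-split (indeed $I_{c^n}=\Pi_{\geq n}$ in the tensor grading $\Pi=\bigoplus_{d\geq 0}\Omega^{\otimes d}$, so this is just the inclusion of a graded piece). Restricting to $I_w\subseteq\Pi$ gives the $kQ$-splitting of $0\to I_{c^n}\to I_w\to M\to 0$ for free. Then $M$, being a $kQ$-summand of $I_w$ and finite-dimensional, already lies in $\C(I_w)$ as a single object, so $M\in\filt(\C(I_w))$ trivially --- no filtration is needed.
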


\begin{proof}
Since the sequence $0\to I_{c^n} \to \Pi \to \Pi / I_{c^n}\to 0$ is clearly $kQ$-split, so is the sequence $0\to I_{c^n} \to I_w \to M\to 0$. Since $M \in \filt(\C(I_w)) \subseteq \filt(\Fac \C(I_w)) \subseteq \uperp I_w$ by Corollary~\ref{cor.ext_filt_0}, the sequence is (\ref{extn}) is a $kQ$-split extension of $I_{c^n}$ by objects in any of $\filt(\C(I_w))$, $\filt(\Fac \C(I_w))$, or $\uperp I_w$. Moreover, since by definition $\underline{\Ext}_{\Pi}^1(\uperp I_w, I_w) = 0$, the extension is universal.

To see that it is minimal, consider the associated
map $M\to I_{c^n}[1]$.  
Since $I_{w}$ has no nonzero finite dimensional
summand, there is also no non-zero summand of $M$ which splits off, and
hence the map is
right minimal.  
\end{proof}

\begin{theorem}\label{four.seven}
Let $v, w \in W$. Let $Q$ be non-Dynkin, and assume that at least one of the following
conditions holds:
\begin{itemize}
\item[(i)] $\C(I_v)\subseteq \C(I_w)$,
\item[(ii)] $\Fac \C(I_v)\subseteq \Fac \C(I_w)$,
\item[(iii)] $\uperp I_v \subseteq \uperp I_w$,
\end{itemize}
then it follows that $I_v \subseteq I_w$.  
\end{theorem}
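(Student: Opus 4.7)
My plan is to handle all three cases uniformly by invoking the universal extension description in Corollary~\ref{corollary.is_univ_extension}. First, choose $n$ large enough that $I_{c^n}\subseteq I_v\cap I_w$; such an $n$ exists because for any fixed element of $W$, a sufficiently long power of a Coxeter element dominates it in weak order, which translates into the desired ideal containment. Depending on whether assumption (i), (ii), or (iii) is in force, set $\mathcal X$ equal to $\filt(\C(I_w))$, $\filt(\Fac\C(I_w))$, or $\uperp I_w$, respectively. Corollary~\ref{corollary.is_univ_extension} then presents $I_w$ via a minimal universal $kQ$-split extension $0\to I_{c^n}\to I_w\to M_w\to 0$ with $M_w\in\mathcal X$.

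Next I would place $I_v$ in the same framework. The short exact sequence $0\to I_{c^n}\to I_v\to M_v\to 0$ is $kQ$-split, and the essential check is that $M_v\in\mathcal X$ under each hypothesis. In case (i), $M_v\in\filt(\C(I_v))\subseteq\filt(\C(I_w))=\mathcal X$; in case (ii), the analogous chain runs through $\Fac$; in case (iii), one uses $M_v\in\filt(\C(I_v))\subseteq\filt(\Fac\C(I_v))\subseteq\uperp I_v\subseteq\uperp I_w=\mathcal X$ via Corollary~\ref{cor.ext_filt_0}. The universal property of the sequence for $I_w$ then yields a morphism of short exact sequences from $0\to I_{c^n}\to I_v\to M_v\to 0$ to $0\to I_{c^n}\to I_w\to M_w\to 0$ whose left vertical component is the identity on $I_{c^n}$. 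Extracting the middle vertical gives a $\Pi$-module homomorphism $\varphi\colon I_v\to I_w$ whose restriction to $I_{c^n}\subseteq I_v$ coincides with the natural inclusion $I_{c^n}\hookrightarrow I_w$.

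To upgrade $\varphi$ to the ideal containment $I_v\subseteq I_w$ inside $\Pi$, I would compose with the inclusion $I_w\hookrightarrow\Pi$ and compare with the inclusion $I_v\hookrightarrow\Pi$. The two resulting maps $I_v\to\Pi$ agree on $I_{c^n}$, so their difference factors through the quotient $M_v=I_v/I_{c^n}$, which is finite dimensional since $\Pi/I_{c^n}$ is. I expect the main obstacle to be the finishing invocation, namely that for $Q$ non-Dynkin the preprojective algebra $\Pi$ contains no nonzero finite dimensional submodules---equivalently, $\Hom_\Pi(N,\Pi)=0$ for every finite dimensional $\Pi$-module $N$. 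Granted this vanishing, the difference vanishes, so $\varphi$ is the inclusion of $I_v$ as a submodule of $I_w$ and hence as an ideal of $\Pi$, completing the proof.
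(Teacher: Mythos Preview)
Your proposal is correct and follows essentially the same approach as the paper: choose $n$ with $I_{c^n}\subseteq I_v\cap I_w$, use Corollary~\ref{corollary.is_univ_extension} to realize both sequences as universal $kQ$-split extensions, observe that under any of the three hypotheses the class containing $M_v$ is contained in the class for which the $I_w$-sequence is universal, and thereby obtain $\varphi\colon I_v\to I_w$ restricting to the identity on $I_{c^n}$. The only difference is cosmetic, in the last step: the paper first shows that $\Hom_\Pi(I_x,\Pi)\cong\Hom_\Pi(\Pi,\Pi)$ for every $x\in W$ (using both $\Hom_\Pi(\Pi/I_x,\Pi)=0$ and $\Ext^1_\Pi(\Pi/I_x,\Pi)=0$), writes $\iota_w\varphi=\lambda\iota_v$, and checks $\lambda=1$ on $I_{c^n}$; you instead take the difference $\iota_w\varphi-\iota_v$, note it factors through the finite-dimensional $M_v$, and kill it using only $\Hom_\Pi(M_v,\Pi)=0$. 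Your variant is slightly leaner but relies on the same vanishing input, which the paper also takes as known for $Q$ non-Dynkin.
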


\begin{proof}
It has already been remarked that condition (i) is equivalent to 
$\filt (\C(I_v)) \subseteq \filt (\C(I_w))$, and similarly that 
(ii) is equivalent to $\filt (\Fac \C(I_v)) \subseteq 
\filt (\Fac \C(I_w))$.  We may therefore replace 
(i) and (ii) by these conditions.  

We consider the exact sequences:
\[ \begin{tikzpicture}[xscale=2.5,yscale=-1.5]
\node (X) at (-1,0) {$0$};
\node (Y) at (-1,1) {$0$}; 
\node (A) at (0,0) {$I_{c^n}$};
 \node (B) at (1,0) {$I_v$};
 \node (C) at (2,0) {$I_v / I_{c^n}$};
 \node (D) at (0,1) {$I_{c^n}$};
 \node (E) at (1,1) {$I_w$};
 \node (F) at (2,1) {$I_w / I_{c^n}$};
\node (Z) at (3,0) {$0$};
\node (W) at (3,1) {$0$};
 \draw [->] (A) -- node [above] {$\alpha$} (B);
 \draw [->] (B) -- (C);
 \draw [->] (D) -- node [above] {$\beta$} (E);
 \draw [->] (E) -- (F);
 \draw [double distance=1.5pt] (A) -- (D);
 \draw [dashed,->] (B) -- node [right] {$\varphi$} (E);
 \draw [dashed,->] (C) -- (F);
\draw [->] (X) -- (A);
\draw [->] (C) -- (Z);
\draw [->] (Y) -- (D);
\draw [->] (F) -- (W);
\end{tikzpicture} \]
By Corollary~\ref{corollary.is_univ_extension} they are universal extensions, and by assumption the sets we are universally extending by are contained one 
in the other. Hence we obtain the factorization indicated in the diagram.

Let $x\in W$.  Consider the short exact sequence
\[ 0\to[1.5] I_x \to[2]^{\iota_x} \Pi \to[1.5] \Pi / I_x\to[1.5] 0. \]
Since $\Pi / I_x$ is finite dimensional, we have $\Ext_{\Pi}^i(\Pi / I_x, \Pi) = 0$ for $i \in \{0,1\}$. Thus the above sequence induces an isomorphism
\[ \Hom_{\Pi}(I_x, \Pi) \cong \Hom_{\Pi}(\Pi, \Pi). \]
In other words, any map $I_x \to \Pi$ factors uniquely through the embedding $\iota_x$.

We then observe that $\iota_w \varphi \in \Hom_{\Pi}(I_v, \Pi)$, and hence $\iota_w \varphi = \lambda \iota_v$ for some $\lambda \in \Pi$. By the commutativity of the left square above we have
\[ \lambda \iota_{c^n} = \lambda \iota_v \alpha = \iota_w \varphi \alpha = \iota_w \beta = \iota_{c^n}, \]
so $\lambda = 1$. Hence we have a commutative triangle
\[ \begin{tikzpicture}[xscale=2.5,yscale=-1.5]
 \node (A) at (0,0) {$I_v$};
 \node (B) at (0,1) {$I_w$};
 \node (C) at (1,0.5) {$\Pi$};
 \draw [->] (A) -- node [left] {$\varphi$} (B);
 \draw [right hook->] (A) -- node [above] {$\iota_v$} (C);
 \draw [right hook->] (B) -- node [below] {$\iota_w$} (C);
\end{tikzpicture} \]
and thus $\varphi$ is an inclusion of ideals of $\Pi$.
\end{proof}

We  state here a lemma, essentially in \cite{BIRS}, 
which we will need to refer to in the sequel.  As usual, for $w\in W$, 
we denote by $\ell(w)$ the length of a reduced expression for 
$w$.  

\begin{lemma}\label{birs.lemma}
Suppose that $Q$ is a non-Dynkin quiver.  
Let $w$ be an element of the
Weyl group $W$.  
Assume that 
$\ell(s_iw)>\ell(w)$.  Then
we have the following:
\begin{enumerate}
\item $I_{s_iw}$ is properly contained in $I_w$.
\item $\Tor_1^\Pi(I_w,S_i)=0$.
\item The natural map $I_w\otimes I_i
\to I_{s_iw}$ is an isomorphism.
\end{enumerate}
\end{lemma}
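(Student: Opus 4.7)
The plan is to recognize that parts (2) and (3) are equivalent via a $\Tor$ long exact sequence, so that the lemma reduces to the $\Tor$-vanishing in (2), which is essentially proven in \cite{BIRS}.

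Setup: since $\ell(s_iw) > \ell(w)$ we have $\ell(s_iw) = \ell(w)+1$, and any reduced expression $s_{j_1}\cdots s_{j_t}$ for $w$ extends to a reduced expression $s_is_{j_1}\cdots s_{j_t}$ for $s_iw$. Following the reversal convention recorded before Theorem~\ref{two.three}, this gives $I_w = I_{j_t}\cdots I_{j_1}$ and $I_{s_iw} = I_wI_i \subseteq I_w$, so the natural map in (3) is the multiplication map $I_w\otimes_\Pi I_i \to I_wI_i = I_{s_iw}$, which is automatically surjective.

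For the equivalence of (2) and (3), note that $e_j \in I_i$ for $j\neq i$ while $e_i \notin I_i$, so $\Pi/I_i \cong S_i$. I would apply $I_w\otimes_\Pi -$ to the short exact sequence $0\to I_i \to \Pi \to S_i \to 0$ to obtain the four-term exact sequence
\[ 0 \to \Tor_1^\Pi(I_w,S_i) \to I_w\otimes_\Pi I_i \to I_w \to I_w\otimes_\Pi S_i \to 0, \]
in which the image of the middle map is precisely $I_wI_i = I_{s_iw}$. Hence (2) holds if and only if (3) holds, and (1) holds if and only if $I_w\otimes_\Pi S_i \neq 0$.

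The main obstacle is (2) itself, which I would simply cite from \cite{BIRS} rather than reprove. Their argument proceeds by induction on $\ell(w)$ jointly with the isomorphism in (3): the base case $w=e$ is trivial since $I_e = \Pi$ is projective. For the inductive step one writes $w = s_jw'$ with $\ell(w') < \ell(w)$, uses the inductive identification $I_w \cong I_{w'}\otimes_\Pi I_j$ to reduce the computation to the homological interaction of $I_i$ and $I_j$, and then splits into cases according to whether $i = j$, whether $s_i$ and $s_j$ commute, or whether they satisfy the braid relation $s_is_js_i = s_js_is_j$; the braid case is the delicate one. The strict containment in (1) likewise follows from \cite{BIRS}: the assignment $w \mapsto I_w$ is injective on $W$ in the non-Dynkin case, so $I_{s_iw} \neq I_w$.
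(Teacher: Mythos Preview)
Your proposal is correct and uses the same core device as the paper: the four-term exact sequence obtained by applying $I_w\otimes_\Pi -$ to $0\to I_i\to\Pi\to S_i\to 0$, together with citations to \cite{BIRS}. The only difference is in the logical order. You cite (2) directly from \cite{BIRS} (sketching an inductive argument) and cite (1) from the injectivity of $w\mapsto I_w$; the paper instead cites (1) from \cite[Proposition~III.1.10]{BIRS} and then invokes the dichotomy \cite[Proposition~III.1.1]{BIRS} that at least one of $\Tor_1^\Pi(I_w,S_i)$ and $I_w\otimes_\Pi S_i$ vanishes, so that (1) forces $I_w\otimes_\Pi S_i\neq 0$ and hence $\Tor_1^\Pi(I_w,S_i)=0$, giving (2) and (3). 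The paper's route is slightly more economical in that the dichotomy is a lighter citation than the full $\Tor$-vanishing, but both are valid reductions to \cite{BIRS}.
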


\begin{proof}
(1) is part of \cite[Proposition III.1.10]{BIRS}.
%Since $\ell(s_iw)>\ell(w)$, \cite[Proposition III.1.9]{BIRS} establishes
%that $I_{s_iw}\ne I_w$, so $I_{s_iw}$ must be properly contained in $I_w$,
%establishing (1).  

Consider the short exact sequence $0\to I_i \to \Pi \to S_i\to 0$. Tensoring with $I_w$ we obtain the exact sequence
\[ 0\to \Tor^{\Pi}_1(I_w, S_i) \to I_{w} \otimes_{\Pi} I_i \to I_{w} \to I_{w} \otimes_{\Pi} S_i\to 0. \]

From \cite[Proposition III.1.1]{BIRS},
we know that at least one of 
$\Tor^\Pi_1(I_w,S_i)$ and $I_w \otimes_{\Pi} S_i$ is zero.  
The image of $I_w\otimes I_i$ inside $I_w$ is $I_{s_iw}$,
which is properly contained in $I_w$.  Thus $I_w \otimes_\Pi S_i$ is
non-zero, and it follows that $\Tor_1^\Pi(I_w,S_i)$ is zero.  This establishes
(2), and (3) also follows.  
\end{proof}

\section{Describing $\Pi/I_w$}\label{sectionfive}

In this section, we show that $I_w$ can be constructed from each of the
categories $\C(\Pi/I_w)$ and $\C(\Sub \Pi/I_w)$. This will be done by investigating the annihilators of the categories $\filt(\C(\Pi/I_w))$ and $\filt(\C(\Sub \Pi/I_w))$. The results of this section hold for arbitrary quivers, but 
will be applied only in the Dynkin case in the following section.

\begin{lemma} \label{lemma.Q}
The category
\[ \mathcal{Q} = \{ X \in \mod \Pi \mid X \text{ is a } kQ \text{-split quotient of an object in } \Sub \Pi / I_w \} \]
is closed under $kQ$-split extensions.
\end{lemma}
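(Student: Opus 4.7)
\emph{Plan.} Given a $kQ$-split short exact sequence $0 \to A \to B \to C \to 0$ of $\Pi$-modules with $A, C \in \mathcal{Q}$, the task is to produce some $B' \in \Sub \Pi/I_w$ together with a $kQ$-split epimorphism $B' \to B$; this will witness $B \in \mathcal{Q}$. By hypothesis we are given $kQ$-split $\Pi$-linear epimorphisms $\pi_A \colon A' \to A$ and $\pi_C \colon C' \to C$ with $A', C' \in \Sub \Pi/I_w$, and the overall strategy is to realize such a $B'$ as an extension of $C'$ by $A'$ in $\mod \Pi$, obtained from the extension class of the given sequence by first pulling back along $\pi_C$ and then lifting across $\pi_A$.

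First I would pull the given sequence back along $\pi_C$, obtaining a $kQ$-split short exact sequence $0 \to A \to B_1 \to C' \to 0$ (pullbacks of $kQ$-split sequences along $\Pi$-linear maps remain $kQ$-split) together with a $kQ$-split epimorphism $B_1 \to B$ with kernel $\ker \pi_C$. Next, applying Proposition~\ref{three.two} to the $kQ$-split sequence $0 \to \ker \pi_A \to A' \to A \to 0$ with first argument $C'$ provides surjectivity of the pushforward $\underline{\Ext}^1_\Pi(C', A') \to \underline{\Ext}^1_\Pi(C', A)$. Lifting the class of the sequence $0 \to A \to B_1 \to C' \to 0$ along this surjection produces a $kQ$-split short exact sequence $0 \to A' \to B' \to C' \to 0$ whose pushout along $\pi_A$ is the sequence defining $B_1$; the resulting pushout map $B' \to B_1$ is an epimorphism with kernel $\ker \pi_A$, and is itself $kQ$-split since (after choosing $kQ$-splittings of both rows) it identifies with $\pi_A \oplus 1_{C'} \colon A' \oplus C' \to A \oplus C'$. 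Composing with $B_1 \to B$ yields the required $kQ$-split epimorphism $B' \to B$.

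The remaining and main step is to verify that $B' \in \Sub \Pi/I_w$. By construction $B'$ sits in a short exact sequence in $\mod \Pi$ whose outer terms both lie in $\Sub \Pi/I_w$, so what is needed is that $\Sub \Pi/I_w$ be closed under extensions. This is a known structural property of the modules $\Pi/I_w$: the subcategory $\Sub \Pi/I_w$ is a torsion-free class in the category of finite length $\Pi$-modules, and indeed a Frobenius exact subcategory with $\add \Pi/I_w$ as its injective objects, and I would invoke the corresponding statement from \cite{BIRS} (or the analogous results in \cite{GLS,IR}). A naive attempt to avoid appealing to this result --- e.g.\ embedding $A' \hookrightarrow (\Pi/I_w)^m$ and pushing $B'$ out along this embedding to replace $B'$ by a module of the form $(\Pi/I_w)^m \oplus C'$ --- amounts to requiring $\Pi/I_w$ to be $\Ext^1_\Pi$-injective on $\Sub \Pi/I_w$, which is essentially the same input in disguise.
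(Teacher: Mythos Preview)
Your proof is correct and follows essentially the same approach as the paper: pull back along the epimorphism onto $C$, then use the surjectivity in Proposition~\ref{three.two} to lift to an extension of $C'$ by $A'$, and finally invoke \cite[Proposition III.2.3]{BIRS} for extension-closedness of $\Sub \Pi/I_w$. The only cosmetic difference is that the paper lifts the class of $0 \to K \to A' \to A \to 0$ along the monomorphism $A \hookrightarrow B_1$ (using the second long exact sequence of Proposition~\ref{three.two}), whereas you lift the class of $0 \to A \to B_1 \to C' \to 0$ along the epimorphism $A' \to A$ (using the first); both produce the same extension $B'$.
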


\begin{proof}
Let
\[ 0 \to X \to Y \to Z \to 0 \]
be a $kQ$-split short exact sequence, with $X$ and $Z$ in $\mathcal{Q}$. Then there are $kQ$-split epimorphisms $X' \epi X$ and $Z' \epi Z$ with $X'$ and $Z'$ in $\Sub \Pi / I_w$.

First consider the pullback along $Z' \epi Z$ as indicated in the following diagram.
\[ \begin{tikzpicture}[xscale=2,yscale=-1.5]
\node (X) at (0,0) {$0$};
\node (Y) at (1,0) {$X$};
\node (Z) at (2,0) {$Y'$};
\node (W) at (3,0) {$Z'$}; 
\node (A) at (4,0) {$0$};
\node (B) at (0,1) {$0$};
\node (C) at (1,1) {$X$};
\node (D) at (2,1) {$Y$};
\node (E) at (3,1) {$Z$};
\node (F) at (4,1) {$0$};
\draw [->] (B) -- (C);
\draw [->] (C) -- (D);
\draw [->] (D) -- (E);
\draw [->] (E) -- (F);
\draw [->] (X) -- (Y);
\draw [->] (Y) -- node [above] {$f$} (Z);
\draw [->] (Z) -- (W);
\draw [->] (W) -- (A);
\draw [double distance=1.5pt] (Y) -- (C);
\draw [->] (Z) -- (D);
\draw [->>] (W) -- (E);
\end{tikzpicture} \]
The map $Y' \to Y$ is a $kQ$-split epimorphism, since it is a pullback of the $kQ$-split epimorphism $Z' \epi Z$.

Now note that, if we denote by $K$ the kernel of the map $X' \epi X$, then right exactness of $\underline{\Ext}_{\Pi}^1$ from Proposition \ref{three.two}
implies that we obtain the following pullback diagram, and moreover 
that the lower short exact sequence is also $kQ$-split.
\[ \begin{tikzpicture}[xscale=2,yscale=-1.5]
\node (X) at (0,0) {$0$};
\node (Y) at (1,0) {$K$};
\node (Z) at (2,0) {$X'$};
\node (W) at (3,0) {$X$}; 
\node (A) at (4,0) {$0$};
\node (B) at (0,1) {$0$};
\node (C) at (1,1) {$K$};
\node (D) at (2,1) {$Y''$};
\node (E) at (3,1) {$Y'$};
\node (F) at (4,1) {$0$};
\draw [->] (B) -- (C);
\draw [->] (C) -- (D);
\draw [->] (D) -- (E);
\draw [->] (E) -- (F);
\draw [->] (X) -- (Y);
\draw [->] (Y) -- (Z);
\draw [->] (Z) -- (W);
\draw [->] (W) -- (A);
\draw [double distance=1.5pt] (Y) -- (C);
\draw [->] (Z) -- node [right] {$g$} (D);
\draw [->] (W) -- node [right] {$f$} (E);
\end{tikzpicture} \]
The map $f$ is a monomorphism with cokernel $Z'$ by the first diagram. It follows from the second diagram that $g$ is also a monomorphism with cokernel $Z'$. Thus $Y''$ is an extension of $Z'$ by $X'$.

Finally, by \cite[Proposition III.2.3]{BIRS}, the category $\Sub \Pi / I_w$ is extension closed, so $Y'' \in \Sub \Pi / I_w$, and hence $Y \in \mathcal{Q}$.
\end{proof}

\begin{proposition} \label{prop.I=Ann}
For $w \in W$ we have
\begin{align*}
I_w & = \Ann (\filt(\mathscr{C}(\Pi/I_w))) \\
& = \Ann (\filt(\mathscr{C}(\Sub \Pi/I_w))).
\end{align*}
In particular $\mathscr{C}(\Pi/I_w) \subseteq \mathscr{C}(\Pi/I_v)$ implies $I_v \subseteq I_w$ for any $v, w \in W$.
\end{proposition}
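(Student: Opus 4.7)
Plan: I will prove both equalities at once by establishing the chain
\[
I_w \subseteq \Ann\bigl(\filt(\C(\Sub \Pi/I_w))\bigr) \subseteq \Ann\bigl(\filt(\C(\Pi/I_w))\bigr) \subseteq \Ann(\Pi/I_w) = I_w.
\]
The middle inclusion is automatic: since $\Pi/I_w \in \Sub \Pi/I_w$, we have $\C(\Pi/I_w) \subseteq \C(\Sub \Pi/I_w)$, hence $\filt(\C(\Pi/I_w)) \subseteq \filt(\C(\Sub \Pi/I_w))$, and $\Ann$ reverses inclusions.

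For the leftmost inclusion the plan is to invoke Lemma~\ref{lemma.Q}. Every object of the category $\mathcal{Q}$ there is a $\Pi$-quotient of a $\Pi$-submodule of $\Pi/I_w$, hence is annihilated by $I_w$. Since $\mathcal{Q}$ is closed under $kQ$-split extensions, it suffices to check $\C(\Sub \Pi/I_w) \subseteq \mathcal{Q}$: given $Y \in \Sub \Pi/I_w$ and a $kQ$-direct summand $Z$ of $Y_{kQ}$, I would realise $Z$ (with the trivial $\Omega$-action, which is the embedding $\mod kQ \hookrightarrow \mod \Pi$ implicit in the definition of $\filt$) as a $kQ$-split $\Pi$-quotient of $Y$ by picking a $\Pi$-submodule of $Y$ containing $Y\cdot \Omega$ whose $kQ$-complement inside $Y_{kQ}$ is $Z$. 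Closure of $\mathcal{Q}$ under $kQ$-split extensions then propagates the inclusion to all of $\filt(\C(\Sub \Pi/I_w))$, giving $I_w \subseteq \Ann(\filt(\C(\Sub \Pi/I_w)))$.

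For the rightmost inclusion, my plan is to show that $\Pi/I_w$ itself lies in $\filt(\C(\Pi/I_w))$; the inclusion then follows from $\Ann(\Pi/I_w) = I_w$. I would exploit the grading on $\Pi$: each $I_i = \Pi(1-e_i)\Pi$ is generated by the degree-$0$ element $1-e_i$, so $I_w = I_{i_t}\cdots I_{i_1}$ is a graded ideal, and $M := \Pi/I_w = \bigoplus_{j \geq 0} M_j$ is a graded $\Pi$-module. Because right multiplication by $\Omega$ raises degree by $1$, the descending filtration $N_i := \bigoplus_{j \geq i} M_j$ consists of $\Pi$-submodules of $M$, each subquotient $N_i/N_{i+1} \cong M_i$ has trivial $\Omega$-action (so, as a $kQ$-summand of $M_{kQ}$, lies in $\C(\Pi/I_w)$), and each short exact sequence $0 \to N_{i+1} \to N_i \to M_i \to 0$ is $kQ$-split by the identity $N_i = N_{i+1} \oplus M_i$ in $\mod kQ$. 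This exhibits $\Pi/I_w$ as an iterated $kQ$-split extension of objects in $\C(\Pi/I_w)$, hence places it in $\filt(\C(\Pi/I_w))$.

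The ``in particular'' statement is then immediate: $\C(\Pi/I_w) \subseteq \C(\Pi/I_v)$ yields $\filt(\C(\Pi/I_w)) \subseteq \filt(\C(\Pi/I_v))$, and applying $\Ann$ reverses the inclusion to give $I_w = \Ann(\filt(\C(\Pi/I_w))) \supseteq \Ann(\filt(\C(\Pi/I_v))) = I_v$. The main obstacle I anticipate lies in establishing $\C(\Sub \Pi/I_w) \subseteq \mathcal{Q}$; the graded filtration is transparent for $\Pi/I_w$ itself, but producing a $kQ$-split $\Pi$-quotient map $Y \twoheadrightarrow Z$ for an arbitrary (possibly non-graded) submodule $Y \subseteq (\Pi/I_w)^n$ requires a careful choice of the intermediate $\Pi$-submodule containing $Y\cdot \Omega$.
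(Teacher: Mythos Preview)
Your proposal follows essentially the same route as the paper's proof: the same chain of inclusions, the same appeal to Lemma~\ref{lemma.Q} for the nontrivial direction, and the same key observation that $\Pi/I_w \in \filt(\C(\Pi/I_w))$. You in fact supply more detail than the paper does, since the paper simply asserts $\Pi/I_w \in \filt(\C(\Pi/I_w))$ without comment, whereas your graded-filtration argument makes this explicit.

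One small caution on the step you yourself flag as delicate. Your proposed construction---for a given $Y \in \Sub \Pi/I_w$ with $Y_{kQ} = Z \oplus Z'$, taking a $\Pi$-submodule $K \supseteq Y\cdot\Omega$ with $kQ$-complement $Z$---need not succeed for that particular $Y$: if the chosen copy of $Z$ meets $Y\cdot\Omega$ nontrivially, no such $K$ exists. The paper likewise asserts $\C(\Sub \Pi/I_w) \subseteq \mathcal{Q}$ without elaboration, so you are not behind the paper here; but to make your sketch go through you should allow yourself to replace $Y$ by another object of $\Sub \Pi/I_w$. For instance, the graded filtration $N_i = \bigoplus_{j\ge i}(\Pi/I_w)_j$ already places each graded piece $(\Pi/I_w)_i$ (with trivial $\Omega$-action) in $\mathcal{Q}$, and every indecomposable of $\C(\Pi/I_w)$ is a summand of some $(\Pi/I_w)_i$; a similar filtration argument adapted to submodules handles the $\Sub$ version. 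With that adjustment your plan is complete and matches the paper.
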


\begin{proof}
Since $\Pi / I_w \in \filt( \mathscr{C}( \Pi / I_w)) \subseteq \filt( \mathscr{C}( \Sub \Pi / I_w))$, and $\Ann \Pi/I_w = I_w$, we clearly have
\[ I_w \supseteq \Ann (\filt(\mathscr{C}(\Pi/I_w))) \supseteq \Ann (\filt(\mathscr{C}(\Sub \Pi/I_w))). \]
Thus it only remains to see that $I_w$ annihilates $\filt(\mathscr{C}(\Sub \Pi/I_w))$.

Now note that $\mathscr{C}(\Sub \Pi/I_w))$ is contained in the set $\mathcal{Q}$ of Lemma~\ref{lemma.Q} above. Since this set is closed under $kQ$-split extensions by Lemma~\ref{lemma.Q} it follows that also $\filt(\mathscr{C}(\Sub \Pi/I_w)) \subseteq \mathcal{Q}$. Thus it suffices to see that $I_w$ annihilates $\mathcal{Q}$. This however is clear, since the objects in $\mathcal{Q}$ are subquotients of $\add \Pi / I_w$ by definition.
\end{proof}

We also record here a lemma which we will need later:

\begin{lemma} \label{subclosed}
$\C(\Sub \Pi/I_w)$ is subclosed.
\end{lemma}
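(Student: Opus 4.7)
The plan is as follows. Suppose $Z \subseteq Y$ is a $kQ$-submodule inclusion with $Y \in \C(\Sub \Pi/I_w)$; the goal is to show $Z \in \C(\Sub \Pi/I_w)$. By the definition of $\C$, I may write $M_{kQ} \cong Y \oplus Y'$ for some $M \in \Sub \Pi/I_w$, and using an inclusion $M \hookrightarrow (\Pi/I_w)^n$ I view $Z$ as a $kQ$-submodule of $M$ via the composite $Z \hookrightarrow Y \hookrightarrow Y \oplus Y' \cong M$.

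The natural candidate witnessing $Z \in \C(\Sub \Pi/I_w)$ is the $\Pi$-submodule $N := Z \cdot \Pi$ of $M$ generated by $Z$. Since $\Sub \Pi/I_w$ is closed under $\Pi$-submodules, $N \in \Sub \Pi/I_w$, and hence $N_{kQ} \in \C(\Sub \Pi/I_w)$. It therefore suffices to realise $Z$ as a $kQ$-direct summand of $N_{kQ}$.

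For this I would restrict the $kQ$-retraction $p \colon M \twoheadrightarrow Y$ (coming from the splitting $M_{kQ} = Y \oplus Y'$) to $N$, obtaining a $kQ$-morphism $q \colon N \to Y$ whose restriction to $Z \subseteq N$ is the original inclusion $Z \hookrightarrow Y$. If $q$ factors through $Z \hookrightarrow Y$, then $q$ supplies the required $kQ$-retraction of the inclusion $Z \hookrightarrow N$, and we are done.

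The main obstacle is precisely establishing this factorisation: while $q(z) = z$ holds for $z \in Z$, the module $N$ also contains elements of the form $z \cdot \omega$ with $\omega$ in the positive-degree part of $\Pi$, and one must verify that $q(z \cdot \omega) \in Z$. This is where the tensor algebra description $\Pi = \mathrm{T}_{kQ}\Omega$ developed in Section~3 (Lemma~\ref{lemma.exact_1}), together with the compatibility of the $\Pi$-multiplication on $M$ with the $kQ$-splitting $M_{kQ} = Y \oplus Y'$, should enter the argument, allowing one to peel off the higher-degree pieces $Z \cdot \Omega^{\otimes k}$ and see that $p$ sends them into $Z$ within $Y$.
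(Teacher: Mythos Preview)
There is a genuine gap at exactly the point you flag as ``the main obstacle''. The $kQ$-splitting $M_{kQ} = Y \oplus Y'$ is an arbitrary Krull--Schmidt decomposition; it carries no a priori relation to the $\Pi$-action on $M$, and in particular nothing forces $p(z \cdot \omega) \in Z$ for $z \in Z$, $\omega \in \Omega$. The composite $p \circ \varphi_M|_{Z \otimes \Omega} \colon Z \otimes_{kQ} \Omega \to Y$ is simply some $kQ$-linear map, and there is no mechanism constraining its image to lie in the chosen submodule $Z \subseteq Y$. Your appeal to ``compatibility of the $\Pi$-multiplication on $M$ with the $kQ$-splitting'' is the crux, but no such compatibility has been established, and for a general $M \in \Sub \Pi/I_w$ with a general Krull--Schmidt splitting none exists. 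So as written, the proposed retraction $q$ need not land in $Z$, and the argument stalls.

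The paper supplies precisely the structure you are missing: a \emph{grading}. Since $\Pi = \mathrm{T}_{kQ}\Omega$ is non-negatively graded with degree-$0$ part $kQ$, the module $\Pi/I_w$ is graded and $(\Pi/I_w)_{kQ}$ is the direct sum of its graded pieces. Rather than working inside an arbitrary $M \in \Sub \Pi/I_w$, the paper first observes that the given $kQ$-module (your $Z$) embeds into a sum of graded pieces $\bigoplus_d (\Pi/I_w)_d^{n_d}$, hence into the degree-$0$ part of the graded $\Pi$-module $\bigoplus_d (\Pi/I_w)(d)^{n_d}$; it then takes the image of the adjoint map $Z \otimes_{kQ} \Pi \to \bigoplus_d (\Pi/I_w)(d)^{n_d}$. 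Because multiplication by $\Omega$ strictly raises degree, this image is a graded submodule whose degree-$0$ part is exactly $Z$, and the grading itself furnishes the $kQ$-retraction. Your instinct to ``peel off the higher-degree pieces $Z \cdot \Omega^{\otimes k}$'' is the right one, but it only becomes rigorous once $Z$ has been placed in a single degree of a graded ambient $\Pi$-module; arranging that is the entire content of the argument.
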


\begin{proof}
Let $M$ be a submodule of some module in $\C(\Sub \Pi/I_w)$. That means that $M$ is a submodule of $(\Pi / I_w)_{kQ}^n$ for some $n$. Note that $\Pi / I_w$ is a graded $\Pi$-module, and $(\Pi / I_w)_{kQ}$ is just the sum of the graded pieces $(\Pi / I_w)_d$. Thus $M$ is a submodule of $\oplus_{d=0}^{\infty} (\Pi / I_w)_d^{n_d}$, for suitable $n_d$. It follows that in the upper line of the following diagram, $M$ is embedded into the degree $0$ part of the $\Pi$-module on the right, where we
have written $(d)$ to indicate a shift of the grading by $d$.  
\[ \begin{tikzpicture}
 \node (A) at (0,0) {$M$};
 \node (B) at (2.5,0) {$\bigoplus_{d=0}^\infty (\Pi/I_w)_d^{n_d}$};
 \node (D) at (6.5,0) {$\bigoplus_{d=0}^{\infty} (\Pi / I_w)(d)^{n_d}$};
 \node (C) at (0,-1.5) {$M \otimes_{kQ} \Pi$};
 \draw [right hook->] (A) -- (B);
 \draw [right hook->] (B) -- (D);
 \draw [right hook->] (A) -- (C);
 \draw [dashed,->] (C) -- (D);
\end{tikzpicture} \]
By Hom-tensor adjointness we obtain a degree-preserving $\Pi$-linear map 
as indicated by the dashed arrow above. In particular its image $Y$ is a graded $\Pi$-submodule of $\bigoplus_{d=0}^{\infty} (\Pi / I_w)(d)^{n_d}$. Looking at degree $0$, we see that $Y_0 \cong M$, and clearly $Y_0 \in \C(Y) \subseteq \C(\Sub \Pi/I_w)$.
\end{proof}

\section{Connection between the ideals $I_w$ and the quotients $\Pi/I_w$ in the Dynkin case}\label{sectionsix}
In Section~\ref{sectionfour} we have seen that $\C(I_v)\subseteq \C(I_w)$ 
implies $I_v \subseteq I_w$ for $Q$ non-Dynkin.  In order to prove the
same result
in the Dynkin case as well, 
we prove in this section that each $I_w$ is dual to some
$\Pi/I_{w'}$.  
This allows us to work with $\Pi/I_{w'}$ instead of 
$I_w$, so that we can use the results from Section~\ref{sectionfive} 
describing $\Pi/I_{w'}$ to achieve the desired result.

Throughout this section 
let $\Pi$ be the preprojective algebra of a Dynkin quiver. 
We write $\mod \Pi^\op$ for the category of right $\Pi^\op$-modules (or 
equivalently left $\Pi$-modules).  

The following lemma is the Dynkin analogue of Lemma~\ref{birs.lemma}.  
For $w \in W$ we denote as usual by $\ell(w)$ the length of a shortest expression for $w$.

\begin{lemma}\label{six.zero}
Let $Q$ be Dynkin, and let $w$ be an element of the Weyl group 
$W$.
Assume that $\ell(s_iw)> \ell(w)$.  Then we have
the following:
\begin{enumerate}
\item 
$I_{s_iw}$ is properly contained in $I_w$.
\item 
$\Tor_1^\Pi(I_w,S_i)=0$.
\item The natural map $I_w \otimes I_i \to I_{s_iw}$
is an isomorphism.
\end{enumerate}
\end{lemma}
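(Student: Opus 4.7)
The plan is to mirror the proof of Lemma~\ref{birs.lemma} step-by-step and locate precisely where the Dynkin assumption forces us to replace the appeal to \cite[Proposition III.1.10]{BIRS}. Starting from the short exact sequence $0\to I_i \to \Pi \to S_i \to 0$ of right $\Pi$-modules and tensoring on the left with $I_w$, one obtains
\[ 0 \to \Tor_1^\Pi(I_w, S_i) \to I_w \otimes_\Pi I_i \to I_w \to I_w \otimes_\Pi S_i \to 0. \]
The image of $I_w \otimes_\Pi I_i \to I_w$ is, by definition of the product of ideals, equal to $I_w I_i = I_{s_iw}$, so this exact sequence yields a canonical isomorphism $I_w/I_{s_iw} \cong I_w \otimes_\Pi S_i$.

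Next I would invoke the dichotomy of \cite[Proposition III.1.1]{BIRS}: at least one of $\Tor_1^\Pi(I_w, S_i)$ and $I_w \otimes_\Pi S_i$ is zero. This statement rests only on the $2$-Calabi–Yau property of $\fl\Pi$, so it applies in the Dynkin case as well. Once (1) is known, i.e.\ $I_w \otimes_\Pi S_i \neq 0$, the dichotomy forces $\Tor_1^\Pi(I_w, S_i)=0$, giving (2), and the four-term sequence collapses to the short exact sequence $0 \to I_w \otimes_\Pi I_i \to I_w \to I_w/I_{s_iw} \to 0$, which identifies $I_w\otimes_\Pi I_i$ with $I_{s_iw}$, giving (3). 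Thus the entire lemma reduces to establishing~(1).

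The main obstacle is precisely (1), since in the Dynkin case $\Pi$ is finite dimensional and we cannot copy the argument of \cite[Proposition III.1.10]{BIRS}, whose proof uses tilting techniques unavailable here. My plan is to appeal to the classification of the ideals $I_w$ for a Dynkin preprojective algebra (as in the work of Iyama–Reiten and Mizuno): the assignment $w \mapsto I_w$ is injective on $W$. Since the inclusion $I_{s_iw} = I_w I_i \subseteq I_w$ is formal and since $\ell(s_iw)>\ell(w)$ forces $s_iw \neq w$, injectivity of $w\mapsto I_w$ yields the strict inclusion $I_{s_iw} \subsetneq I_w$. One must check that this injectivity statement is available independently of the lemma being proved; this is the case because it can be deduced, e.g., from the correspondence between $W$ and support $\tau$-tilting $\Pi$-modules, whose proof uses only mutation of tilting ideals and does not require the $\Tor$-vanishing statement (2). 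With (1) in hand, (2) and (3) drop out as explained above, completing the proof.
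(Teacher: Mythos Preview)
Your approach is genuinely different from the paper's, and it has a real gap.

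The paper does \emph{not} try to invoke the dichotomy of \cite[Proposition III.1.1]{BIRS} directly for the Dynkin preprojective algebra $\Pi$. Instead it embeds $Q$ as a full subquiver of a non-Dynkin quiver $\widehat Q$, uses Lemma~\ref{birs.lemma} for $\widehat\Pi$, and then transfers everything back via the identification $\widehat\Pi/\widehat I_x \cong \Pi/I_x$. Part~(1) comes from the proper epimorphism $\Pi/I_{s_iw}\to\Pi/I_w$ inherited from $\widehat\Pi$. For~(2) the paper computes, using the 2-CY property of $\fl\widehat\Pi$, that $\Tor_1^{\widehat\Pi}(\widehat I_w,S_i)\cong \Hom_\Pi(S_i,\Pi/I_w)$, and separately, using the 2-CY property of the \emph{stable} category $\underline{\mod}\,\Pi$, that $\Tor_1^{\Pi}(I_w,S_i)\cong \underline{\Hom}_\Pi(S_i,\Pi/I_w)$; since the first vanishes, so does the ordinary $\Hom$, hence the stable $\Hom$, hence the second $\Tor$. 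Part~(3) then follows as you say.

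Your assertion that the dichotomy ``rests only on the 2-Calabi--Yau property of $\fl\Pi$, so it applies in the Dynkin case as well'' is exactly the point that fails to go through automatically. In the Dynkin case $\Pi$ is finite-dimensional self-injective, the simples have infinite projective dimension, and the 2-CY property one has is for $\underline{\mod}\,\Pi$, not for $\fl\Pi=\mod\Pi$. The paper's own computation makes this visible: on the $\widehat\Pi$ side one gets $\Hom$, on the $\Pi$ side one gets $\underline{\Hom}$. So you cannot simply quote \cite[Proposition III.1.1]{BIRS} for $\Pi$; you would need to reprove the dichotomy in the Dynkin setting, and you have not done so. Separately, your route to~(1) via injectivity of $w\mapsto I_w$ is fine in principle (the paper itself obtains this later, in Lemma~\ref{Dynkin.containment}, again by embedding into the Euclidean case and citing \cite{IR}), but appealing to Mizuno's support $\tau$-tilting classification is external to the paper and you would need to check carefully that his argument does not itself rely on the $\Tor$-vanishing you are trying to prove.
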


\begin{proof}
Let $\widehat Q$ be a 
non-Dynkin quiver containing $Q$ as a full subquiver.
Let $\Pi$ be the preprojective algebra for $Q$, and
$\widehat \Pi$ the preprojective algebra
for $\widehat Q$. Denote by $\widehat I_w$
and $\widehat I_i$ the corresponding ideals in
$\widehat \Pi$.  
We have the exact sequence:
$$\Tor_1^{\widehat \Pi}(\widehat I_w,S_i) \to 
\widehat I_w\otimes \widehat I_i \to 
\widehat I_w \to
\widehat I_w\otimes S_i.$$
$\widehat I_{s_iw}$ is the image of 
$\widehat I_w\otimes \widehat I_i$ inside $\widehat I_w$.  
By Lemma~\ref{birs.lemma}, we know
$\Tor_1^{\widehat \Pi}(\widehat I_w,S_i)=0$, and that 
$\widehat I_{s_iw}$ is properly contained in $\widehat I_w$.   

(1) Since $\widehat I_{s_iw}$ is properly contained in 
$\widehat I_w$, we have a proper epimorphism $\widehat \Pi
/\widehat I_{s_iw} \to \widehat \Pi/\widehat I_w$.  Since
$\widehat \Pi/\widehat I_{s_iw} \cong \Pi/I_{s_iw}$ and 
$\widehat \Pi/\widehat I_w \cong \Pi/I_w$, we have a proper epimorphism
$\Pi/I_{s_iw}\to \Pi/I_w$.  Hence we have that 
$I_{s_iw}$ is properly contained in $I_w$.

(2) As discussed above, $\Tor^{\widehat\Pi}_1
(\widehat I_w,S_i)=0$.  Further, 
\begin{eqnarray*}\Tor_1^{\widehat \Pi}(\widehat I_w,S_i)
&\cong& \Tor_2^{\widehat \Pi} 
(\widehat \Pi/\widehat I_w,S_i)
\cong \Tor_2^{\widehat \Pi}
(\Pi/I_w,S_i)\cong 
D(\Ext^2_{\widehat \Pi}(\Pi/I_w,S_i))\\
&\cong& 
\Hom_\Pi(S_i,\Pi/I_w) \cong 
\underline\Hom_\Pi(S_i,\Pi/I_w),\end{eqnarray*}
by using \cite{ce} and the 2-CY 
property for finite length
$\widehat \Pi$-modules.

On the other hand, we have  $$\Tor_1^\Pi
(I_w,S_i)\cong \Tor_2(\Pi/I_w,S_i)
\cong D(\Ext^2_\Pi(\Pi/I_w,S_i))
\cong \underline\Hom
(S_i,\Pi/I_w),$$ using \cite{ce} and
the 2-CY property for $\underline 
\mod(\Pi)$.  

Since $\Tor_1^{\widehat \Pi}(\widehat I_w,S_i)
=0$, we then have 
$\Hom_\Pi(S_i,\Pi/I_w)=0$, hence 
$\underline\Hom_\Pi(S_i,\Pi/I_w)=0$, and
so 
$\Tor_1^\Pi(I_w,S_i)=0$.  

(3) Consider the exact sequence 
$$\Tor_1^\Pi(I_w,S_i) \to I_w
\otimes_\Pi I_i \to 
I_{w} \to I_w \otimes_\Pi S_i.$$  
By definition, $I_{s_iw}$ is the image of $I_w\otimes I_i$ in $I_w$.   
The result now
follows from (2). 
\end{proof}

\begin{lemma} \label{lemma.ext_vanish}
Let $w \in W$.
\begin{enumerate}
\item If $\ell(s_i w) > \ell(w)$ then $\Ext_{\Pi}^1(I_{w}, S_i) = 0 = \Ext_{\Pi}^1(S_i, I_{w})$.
\item If $\ell(w s_i) > \ell(w)$ then $\Ext_{\Pi^\op}^1(I_{w}, S_i) = 0 = \Ext_{\Pi^\op}^1(S_i, I_{w})$.
\end{enumerate}
\end{lemma}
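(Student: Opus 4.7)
The plan is to reduce both Ext groups in part (1) to the stable Hom group $\underline{\Hom}_\Pi(S_i, \Pi/I_w)$, which we already know to vanish: indeed, the chain of isomorphisms in the proof of Lemma~\ref{six.zero}(2) established that $\underline{\Hom}_\Pi(S_i, \Pi/I_w) = 0$ under the assumption $\ell(s_iw) > \ell(w)$. Once part (1) is settled, part (2) will follow by applying (1) to the opposite quiver $Q^{\op}$.

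For $\Ext^1_\Pi(S_i, I_w)$, I would apply $\Hom_\Pi(S_i, -)$ to the short exact sequence $0 \to I_w \to \Pi \to \Pi/I_w \to 0$. Since $\Pi$ is self-injective in the Dynkin case, $\Ext^1_\Pi(S_i, \Pi) = 0$, so the resulting long exact sequence collapses to
\[ \Ext^1_\Pi(S_i, I_w) \cong \coker\bigl[\Hom_\Pi(S_i, \Pi) \to \Hom_\Pi(S_i, \Pi/I_w)\bigr]. \]
Using that $\Hom_\Pi(\Pi, \Pi/I_w) = \Pi/I_w$ acts by right multiplication, the image of this map is precisely the set of maps $S_i \to \Pi/I_w$ which factor through some projective $\Pi$-module (any map from $S_i$ into a projective decomposes through a summand of $\Pi^N$, and the resulting composition with any $\Pi^N \to \Pi/I_w$ is a finite sum of maps of the form $\pi \circ h$ with $h \in \Hom_\Pi(S_i,\Pi)$). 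Hence the cokernel is exactly $\underline{\Hom}_\Pi(S_i, \Pi/I_w)$, which vanishes.

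For $\Ext^1_\Pi(I_w, S_i)$, I would dimension-shift using projectivity of $\Pi$ in the same short exact sequence to obtain $\Ext^1_\Pi(I_w, S_i) \cong \Ext^2_\Pi(\Pi/I_w, S_i)$, and then apply the 2-Calabi-Yau property of the stable module category $\underline{\mod}\,\Pi$ (which holds for Dynkin preprojective algebras), giving
\[ \Ext^2_\Pi(\Pi/I_w, S_i) \cong D\,\underline{\Hom}_\Pi(S_i, \Pi/I_w) = 0. \]
Alternatively, one can derive this from the first part together with the 2-CY symmetry $\Ext^1_\Pi(I_w, S_i) \cong D\Ext^1_\Pi(S_i, I_w)$.

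Part (2) reduces to part (1) via the isomorphism $\Pi^{\op}_Q \cong \Pi_{Q^{\op}}$: passing to opposites reverses the order of a product $I_{i_t}\cdots I_{i_1}$, so the ideal $I_w$ in $\Pi^{\op}$ is identified with $I_{w^{-1}}$ in $\Pi_{Q^{\op}}$, while the condition $\ell(ws_i) > \ell(w)$ in $W_Q$ is equivalent to $\ell(s_i w^{-1}) > \ell(w^{-1})$ in $W_{Q^{\op}} = W_Q$. Since $Q^{\op}$ is again Dynkin, part (1) applies. The main obstacle I anticipate is ensuring the identification of the cokernel with $\underline{\Hom}$ is carried out carefully, since in the self-injective Dynkin case $\Hom$ and $\underline{\Hom}$ genuinely differ (simples embed into socles of projectives), and the computation depends essentially on the structure of right multiplication on $\Pi/I_w$.
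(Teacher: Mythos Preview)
Your argument is correct, but it takes a different route from the paper's. The paper proves $\Ext^1_\Pi(I_w,S_i)=0$ first, directly from the vanishing $\Tor_1^\Pi(I_w,S_i)=0$ of Lemma~\ref{six.zero} via the Cartan--Eilenberg duality $\Tor_1^\Pi(I_w,S_i)\cong D\Ext^1_\Pi(I_w,DS_i)$ together with $DS_i=S_i$; the other vanishing $\Ext^1_\Pi(S_i,I_w)=0$ then follows from the 2-CY property of $\underline{\mod}\,\Pi$. You instead attack $\Ext^1_\Pi(S_i,I_w)$ first, identifying it with $\underline{\Hom}_\Pi(S_i,\Pi/I_w)$ by a long-exact-sequence argument exploiting self-injectivity of $\Pi$, and then handle $\Ext^1_\Pi(I_w,S_i)$ by dimension shift plus 2-CY. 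Both approaches feed off the same input from Lemma~\ref{six.zero} (your $\underline{\Hom}=0$ and the paper's $\Tor_1=0$ are established there as equivalent), so neither is more elementary; the paper's route is a one-line citation of \cite{ce}, while yours is more self-contained but requires the careful identification of the cokernel with stable $\Hom$ that you flagged. Your reduction of (2) to (1) via the anti-isomorphism $\Pi\cong\Pi^{\op}$ sending $I_w$ to $I_{w^{-1}}$ is a correct explicit unpacking of what the paper means by ``(2) is dual''.
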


\begin{proof}
We only prove (1), since (2) is dual.

Consider the exact sequence
\[ 0\to \Tor^{\Pi}_1(I_w, S_i) \to I_{w} \otimes_{\Pi} I_i \to I_{w} \to I_{w} \otimes_{\Pi} S_i\to 0. \]
We know that if $\ell(s_i w) > \ell(w)$ then
%then $I_{w} \otimes_{\Pi} I_i = I_{s_i w} \embed I_{w}$, hence 
$\Tor^{\Pi}_1(I_{w}, S_i) = 0$ by Lemma~\ref{six.zero}.

Now note that $\Tor^{\Pi}_1(I_{w}, S_i) = D \Ext_{\Pi}^1(I_{w}, \underbrace{DS_i}_{= S_i})$ \cite{ce}.

Hence the claim that $\Ext_{\Pi}^1(S_i, I_{w}) = 0$ follows from the $2$-CY property of the stable category $\underline{\mod} \Pi$.  
\end{proof}

\begin{lemma} \label{lemma.I_makes_equal}
\begin{enumerate}
\item Let $M \subseteq N$ in $\mod \Pi^\op$. Then $I_i M = I_i N$ if and only if $N / M \cong S_i^n$ for some $n$.
\item Let $M \subseteq N$ in $\mod \Pi$. Then $M I_i = N I_i$ if and only if $N / M \cong S_i^n$ for some $n$.
\end{enumerate}
\end{lemma}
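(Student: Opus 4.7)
The plan is to deduce both statements from two easy structural facts about the ideal $I_i$. First, I would observe that $\Pi/I_i \cong S_i$ as $\Pi$-$\Pi$-bimodules: since $I_i = \Pi(1-e_i)\Pi$ contains every idempotent $e_j$ with $j \neq i$, and (because $\overline{Q}$ has no loops) every arrow becomes composable through some such $e_j$, the quotient $\Pi/I_i$ is one-dimensional, spanned by the class of $e_i$. Second, $I_i$ is idempotent as an ideal, i.e.\ $I_i^2 = I_i$. This follows from $(1-e_i)^2 = (1-e_i)$, which lies in $\Pi(1-e_i)\Pi \cdot \Pi(1-e_i)\Pi = I_i^2$, giving $I_i = \Pi(1-e_i)\Pi \subseteq I_i^2$, while $I_i^2 \subseteq I_i$ is automatic.

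With these two facts in hand, I would prove (2) directly, and then obtain (1) by the same argument on the opposite ring. For the ``if'' direction of (2), assume $N/M \cong S_i^n$. Then $(N/M)I_i = 0$ (since $S_i I_i = 0$ by the first fact above), so $NI_i \subseteq M$. Applying $I_i$ on the right and using $I_i^2 = I_i$,
\[ NI_i \;=\; NI_i^2 \;=\; (NI_i)\,I_i \;\subseteq\; M I_i, \]
and $MI_i \subseteq NI_i$ is trivial, so $MI_i = NI_i$.

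For the converse, if $MI_i = NI_i$ then $NI_i = MI_i \subseteq M$, hence $(N/M)I_i = 0$. Therefore $N/M$ is a right $\Pi/I_i$-module, and since $\Pi/I_i \cong k$, it is simply a $k$-vector space. Because $\Pi$ is finite-dimensional in the Dynkin case, $N/M$ has finite length; as a right $\Pi$-module it must then be isomorphic to $S_i^n$ for $n = \dim_k(N/M)$.

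The argument for (1) is the same, with $I_iM$ and $I_iN$ in place of $MI_i$ and $NI_i$, using that $I_i(N/M) = 0$ implies $N/M$ is a left $\Pi/I_i$-module. I do not foresee any real obstacle: once one notes that $I_i$ is idempotent and that $\Pi/I_i \cong S_i$ as a bimodule, each direction is essentially a one-line computation.
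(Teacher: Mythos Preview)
Your proof is correct and essentially the same as the paper's. Both hinge on the two facts you isolate, namely $\Pi/I_i\cong S_i$ and $I_i^2=I_i$; the only cosmetic difference is that for the ``if'' direction the paper phrases the argument via the surjection $I_i\otimes_\Pi M\to I_i\otimes_\Pi N$ (using $I_i\otimes_\Pi S_i=0$), whereas you give the equivalent but more elementary chain $NI_i=NI_i^2\subseteq MI_i$.
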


\begin{proof}
We only prove (1), since (2) is dual.

Consider $0\to M \to N \to N/M\to 0$ in $\mod \Pi^\op$. Multiplying with $I_i$ we obtain the complex
\[ 0 \to I_i M \to I_i N \to I_i N/M \to 0 \]
whose homology is concentrated in the middle.

If we assume $I_i M = I_i N$ then it follows that $I_i (N/M) = 0$, and hence $N/M \cong S_i^n$ for some $n$.

Assume now conversely that $N/M \cong S_i^n$ for some $n$. Then $I_i \otimes_{\Pi} N/M = 0$, since $I_i^2=I_i$, and hence the map $I_i \otimes_{\Pi} M \to I_i \otimes_{\Pi} N$ is onto. It follows that the inclusion map $I_i M \embed I_i N$ is also onto, and hence $I_i M = I_i N$.
\end{proof}

We write $w_0$ for the longest element in $W$ (which only exists when
$W$ is finite).

\begin{proposition} \label{prop.dualities}
Let $w \in W$.
\begin{enumerate}
\item $D I_w \cong \Pi / I_{w_0 w^{-1}}$ in $\mod \Pi^\op$.
\item $D I_w \cong \Pi / I_{w^{-1} w_0}$ in $\mod \Pi$.
\end{enumerate}
\end{proposition}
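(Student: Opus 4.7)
My plan is to prove both statements simultaneously by induction on $\ell(w)$. The base case $w=e$ requires $D\Pi\cong \Pi$ as a left, respectively right, $\Pi$-module, after establishing $I_{w_0}=0$ in the Dynkin case; the vanishing of $I_{w_0}$ follows by iterating Lemma~\ref{six.zero}(1) starting from $I_e=\Pi$, and the isomorphism $D\Pi\cong \Pi$ expresses the self-injectivity (in fact Frobenius property) of the Dynkin preprojective algebra.

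For the inductive step of (1), write $w=s_iu$ with $\ell(u)=\ell(w)-1$, and set $u'=w_0u^{-1}$, $w'=w_0w^{-1}=u's_i$ (so $\ell(w')=\ell(u')-1$). Lemma~\ref{six.zero}(3) gives $I_w=I_uI_i$, and one checks that the idempotency of $1-e_i$ forces $I_i^2=I_i$; consequently $I_u/I_w=I_u\otimes_\Pi(\Pi/I_i)\cong S_i^n$ in $\mod\Pi$. Applying $D$ and invoking the inductive hypothesis $DI_u\cong \Pi/I_{u'}$ produces
\[ 0\to S_i^n\to \Pi/I_{u'}\to DI_w\to 0 \qquad(\star) \]
in $\mod\Pi^\op$. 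Since $I_{u'}=I_iI_{w'}$ as well, we obtain a second short exact sequence
\[ 0\to S_i^m\to \Pi/I_{u'}\to \Pi/I_{w'}\to 0 \qquad(\star\star), \]
where $I_{w'}/I_{u'}\cong S_i^m$ as a left $\Pi$-module. Thus both $DI_w$ and $\Pi/I_{w'}$ arise as the cokernel of an $S_i$-isotypic submodule of $\Pi/I_{u'}$.

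The crux of the inductive step, and the main obstacle, is to show that the two $S_i$-submodules appearing in $(\star)$ and $(\star\star)$ coincide as subobjects of $\Pi/I_{u'}$. I plan to show both equal the maximal $S_i$-isotypic submodule of $\Pi/I_{u'}$, via two Hom-vanishings. First, $\Hom_\Pi(S_i,DI_w)\cong D(I_w\otimes_\Pi S_i)=D(I_w/I_wI_i)$, which vanishes because $I_w=I_uI_i$ and $I_i^2=I_i$; this forces the kernel $S_i^m$ of $(\star\star)$ into the kernel $S_i^n$ of $(\star)$. Second, $\Hom_\Pi(S_i,\Pi/I_{w'})=0$, obtained by applying $\Hom_\Pi(S_i,-)$ to $0\to I_{w'}\to \Pi\to \Pi/I_{w'}\to 0$ and using self-injectivity of $\Pi$ (which gives $\Ext^1_\Pi(S_i,\Pi)=0$) together with Lemma~\ref{lemma.ext_vanish}(2) (applicable since $\ell(w's_i)=\ell(u')>\ell(w')$, yielding $\Ext^1_\Pi(S_i,I_{w'})=0$); this reduces the problem to showing that the $i$-socle of $\Pi$ is contained in $I_{w'}$.

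The verification of the socle containment is the hardest step I anticipate. I expect to deduce it from the Frobenius/Nakayama structure of $\Pi$ combined with the length identity $\ell(w)+\ell(w')=\ell(w_0)$, arguing that the socle element at vertex $i$ corresponds to a longest path of $\overline{Q}$ which $I_{w'}$ must contain whenever $w'\ne w_0$. Once both vanishings are in hand, the submodules $S_i^n$ and $S_i^m=I_{w'}/I_{u'}$ coincide as subobjects of $\Pi/I_{u'}$, so $(\star)$ and $(\star\star)$ yield the same quotient and $DI_w\cong \Pi/I_{w'}$, completing the induction for (1). Part (2) then follows by the symmetric argument with left and right $\Pi$-modules exchanged, invoking Lemma~\ref{lemma.ext_vanish}(1) in place of (2).
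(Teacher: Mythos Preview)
Your inductive scheme---dualize the inclusion $I_w\subset I_u$, identify $DI_u$ with $\Pi/I_{u'}$, and compare the two $S_i$-isotypic kernels inside $\Pi/I_{u'}$---is essentially the paper's proof, recast in terms of quotients rather than the kernel ideals $\widetilde I_x=\ker(\Pi\twoheadrightarrow DI_x)$ used there. Both versions reduce to the same crux: the copy of $S_i$ in the left socle of $\Pi$ must lie inside $I_{w'}$.

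The genuine gap is exactly at this step. Your proposed justification via ``Frobenius/Nakayama structure'' and ``longest paths in $\overline Q$'' is not a proof: the preprojective relations mean the socle is not literally spanned by a longest path, and the condition $w'\ne w_0$ alone does not force the $i$-socle into $I_{w'}$. What makes the containment work here is the specific hypothesis that $s_i$ is a left descent of $w$ (equivalently a right ascent of $w'$). The clean argument, which is what the paper does, is purely combinatorial. First, the unique nonzero left ideal of $\Pi$ isomorphic to a power of $S_i$ is $I_{w_0s_i}$: indeed $I_iI_{w_0s_i}=I_{w_0}=0$ shows it is an $S_i$-isotypic left module, and any such left ideal sits in the socle of $_\Pi\Pi$, which (as $\Pi$ is basic Frobenius) has a single $S_i$-summand. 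Second, one checks the length identity
\[
\ell(w')+\ell(w_0uw_0)=\bigl(\ell(w_0)-\ell(w)\bigr)+\ell(u)=\ell(w_0)-1=\ell(w_0s_i),
\]
together with $(w_0uw_0)\cdot w'=w_0u\cdot u^{-1}s_i=w_0s_i$, giving the reduced factorization $I_{w_0s_i}=I_{w'}\,I_{w_0uw_0}\subseteq I_{w'}$. With this containment in hand your long exact sequence yields $\Hom_{\Pi^{\op}}(S_i,\Pi/I_{w'})=0$, and the induction closes exactly as you outline. (Minor point: throughout part~(1) your $\Hom$ and $\Ext$ should be taken over $\Pi^{\op}$, not $\Pi$.)
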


\begin{proof}
We only prove (1), since (2) is dual.

Since $\Pi$ is self-injective, we have 
$\Pi \cong D \Pi$ in $\mod \Pi^\op$. Hence we have an epimorphism $\Pi \epi D I_w$ of $\Pi^\op$-modules. Let $\widetilde{I}_w$ be its kernel. (So $\widetilde{I}_w$ is a left ideal.) We want to show that $\widetilde I_w=I_{w_0w^{-1}}$.

Since $I_w \cdot I_{w_0 w^{-1}} = 0$, and hence also $I_{w_0 w^{-1}} \cdot D I_w = 0$, we have $I_{w_0 w^{-1}} \subseteq \widetilde{I}_w$.
We now show by induction on $\ell(w)$ that $I_{w_0 w^{-1}} = \widetilde{I}_w$.
For $w = e$ we have $I_{w_0 w^{-1}} = 0 = \widetilde{I}_w$.

Assume the claim holds for $w$, so that $\widetilde I_w=I_{w_0w^{-1}}$ 
and assume $\ell(s_i w) > \ell(w)$. Then we have an inclusion $I_{s_i w} = I_w I_{s_i} \embed I_w$ of $\Pi$-modules. By Lemma~\ref{lemma.I_makes_equal}(2) we have that $I_w / I_{s_i w} \cong S_i^n$ for some $n$ as $\Pi$-modules. Dualizing we obtain the following short exact sequence in $\mod \Pi^\op$:
\[ 0\to S_i^n \to D I_w \to D I_{s_i w}\to0 \]
Then
we obtain the following diagram in $\mod \Pi^\op$.
\[ \begin{tikzpicture}[xscale=2,yscale=-1.5]
 \node (X) at (-1,0) {$0$};
 \node (Y) at (-1,1) {$0$};
 \node (A) at (0,0) {$\widetilde{I}_w$};
 \node (B) at (1,0) {$\Pi$};
 \node (C) at (2,0) {$D I_w$};
 \node (D) at (0,1) {$\widetilde{I}_{s_i w}$};
 \node (E) at (1,1) {$\Pi$};
 \node (F) at (2,1) {$D I_{s_i w}$};
 \node (G) at (2,-1) {$S_i^n$};
 \node (W) at (3,0) {$0$};
 \node (Z) at (3,1) {$0$};
\draw [->] (A) -- (B);
 \draw [->] (B) -- (C);
 \draw [->] (D) -- (E);
 \draw [->] (E) -- (F);
 \draw [->] (A) -- (D);
 \draw [double distance=1.5pt] (B) -- (E);
 \draw [right hook->] (G) -- (C);
 \draw [->>] (C) -- (F);
 \draw [->] (X) -- (A);
 \draw [->] (Y) -- (D);
 \draw [->] (C) -- (W);
 \draw [->] (F) -- (Z);
\end{tikzpicture} \]
By the snake lemma the left vertical map is an inclusion with cokernel $S_i^n$. Thus by Lemma~\ref{lemma.I_makes_equal}(1) we have $I_i \widetilde{I}_{s_i w} \subseteq \widetilde{I}_w$. By our induction assumption, we have $\widetilde{I}_w = I_{w_0 w^{-1}}$. Since
\[ \ell(w_0 w^{-1} s_i) = \ell(w_0) - \ell(w^{-1} s_i) = \ell(w_0) - \ell(s_i w) < \ell(w_0) - \ell(w) = \ell(w_0 w^{-1}) \]
we have $I_i I_{w_0 w^{-1} s_i} = I_{w_0 w^{-1}}$. Putting all these together we have $I_i \widetilde{I}_{s_i w} \subseteq I_i I_{w_0 w^{-1} s_i}$, 
and clearly $I_iI_{w_0w^{-1}s_i}\subseteq I_i \widetilde{I}_{s_iw}$.
So, by Lemma~\ref{lemma.I_makes_equal}(1) the quotient $\widetilde{I}_{s_i w} / I_{w_0 w^{-1} s_i}$ is isomorphic to $S_i^m$ for some $m$ as $\Pi^\op$-modules.

Now note that by Lemma~\ref{lemma.ext_vanish}(2) we have $\Ext_{\Pi^\op}^1(S_i, I_{w_0 w^{-1} s_i}) = 0$. Hence $\widetilde{I}_{s_i w}$ is the direct sum of $I_{w_0 w^{-1} s_i}$ and a left ideal $R$ of $\Pi$ which is isomorphic to $S_i^m$. If $R = 0$, then we are done, so assume $R \neq 0$. The only non-zero left ideal of $\Pi$ which is isomorphic to $S_i^m$ for some $m$ is $I_{w_0 s_i}$. By assumption $R \cap I_{w_0 w^{-1} s_i} = 0$, so $I_{w_0 s_i} \not\subseteq I_{w_0 w^{-1} s_i}$. But $I_{w_0 s_i} = I_{w_0 w w_0 w_0 w^{-1} s_i} = I_{w_0 w^{-1} s_i} I_{w_0 w w_0}$, since $\ell(w_0 w^{-1} s_i) + \ell(w_0 w w_0) = [\ell(w_0) - \ell(w) - 1] + \ell(w) = \ell(w_0) - 1 = \ell(w_0 s_i)$, and hence $I_{w_0 w^{-1} s_i} \supseteq I_{w_0 s_i}$. This is a contradiction. So $R = 0$, and hence $\widetilde{I}_{s_i w} = I_{w_0 w^{-1} s_i}$.
This finishes the induction step, and hence the proof of the lemma.
\end{proof}

For $v,w$ in $W$, it is said that 
$v$ is less than $w$ in the Bruhat order, written $v\leq w$, if, fixing
a reduced word for $w$, it is possible to find a subexpression of
that word which equals $v$.  

\begin{lemma}\label{Dynkin.containment}
$I_v$ contains $I_w$ iff $v\leq w$ in the Bruhat order.
\end{lemma}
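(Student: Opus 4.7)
The plan is to prove the two directions separately. The direction $(\Leftarrow)$ will follow from the subword characterization of Bruhat order combined with the definition of $I_w$ as a product of ideals, while $(\Rightarrow)$ will use induction on $\ell(w)$ together with the Lifting Property of the Bruhat order; I expect the $I_i$-cancellation step in the inductive argument to be the main obstacle.

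For $(\Leftarrow)$ I would fix any reduced expression $w = s_{i_1}\cdots s_{i_t}$, so that $I_w = I_{i_t}\cdots I_{i_1}$. By the subword characterization of $\leq$ there exist $1\leq j_1<\cdots<j_s\leq t$ such that $s_{i_{j_1}}\cdots s_{i_{j_s}}$ is a reduced expression for $v$, giving $I_v = I_{i_{j_s}}\cdots I_{i_{j_1}}$. Now $I_w$ is obtained from $I_v$ by inserting the missing factors $I_{i_k}$ (for $k$ not among the $j_\bullet$) in their original positions; since each $I_{i_k}\subseteq \Pi$ and $\Pi$ is the identity for ideal multiplication, every such insertion weakly shrinks the product. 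Iterating yields $I_w\subseteq I_v$.

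For $(\Rightarrow)$ I would induct on $\ell(w)$. The base $w=e$ is immediate: $I_v\supseteq \Pi$ forces $I_v=\Pi$, so $v=e\leq w$. For the inductive step I would pick a simple reflection $s_i$ with $\ell(s_iw)=\ell(w)-1$, set $w^* = s_iw$, and use Lemma~\ref{six.zero}(3) to write $I_w = I_{w^*}I_i$. Splitting on the sign of $\ell(s_iv)-\ell(v)$, the standard Lifting Property of the Bruhat order (applicable since $s_iw<w$) reduces the desired conclusion ``$v\leq w$'' either to ``$v\leq w^*$'' (when $\ell(s_iv)>\ell(v)$) or to ``$v^*\leq w^*$'' (when $\ell(s_iv)<\ell(v)$, writing $v=s_iv^*$ and $I_v = I_{v^*}I_i$). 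By the inductive hypothesis on $\ell(w^*)<\ell(w)$, it suffices in each case to establish the appropriate containment of ideals for $w^*$, namely $I_v\supseteq I_{w^*}$ in the first case and $I_{v^*}\supseteq I_{w^*}$ in the second.

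The technical heart — and the step I expect to be the main obstacle — is the cancellation of the trailing $I_i$ factor in the hypothesis $I_v\supseteq I_{w^*}I_i$ to extract a containment for $w^*$. Using $I_i^2=I_i$ one has $I_i\cdot I_w = I_w$, so Lemma~\ref{lemma.I_makes_equal} identifies $I_{w^*}/I_w$ as $S_i^n$ for some $n\geq 1$, and more generally pins down $I_{w^*}$ among all modules $N$ with $N I_i = I_w$ up to an $S_i^\bullet$ extension. The hypothesis on $\ell(s_iv)$ in each case symmetrically controls the ``$I_i$-top'' of $I_v$ (respectively $I_{v^*}$) via the same lemma, while the $\Ext^1$-vanishing Lemma~\ref{lemma.ext_vanish} rules out unwanted intermediate $S_i$-extensions; combining these forces $I_v$ (respectively $I_{v^*}$) to contain the entire $S_i^n$-extension $I_{w^*}$ of $I_w$, completing the induction. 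As a cleaner alternative, one could also attempt to reformulate the hypothesis $I_v\supseteq I_w$ via the duality of Proposition~\ref{prop.dualities} as a surjection $\Pi/I_{w_0w^{-1}}\twoheadrightarrow \Pi/I_{w_0v^{-1}}$ and apply Proposition~\ref{prop.I=Ann}, reducing the Bruhat comparison to an annihilator comparison of the categories $\filt(\mathscr{C}(\Pi/I_{w_0w^{-1}}))$, though the cancellation of simple composition factors remains the essential difficulty.
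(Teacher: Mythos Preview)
Your $(\Leftarrow)$ direction is correct and matches what anyone would do.

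Your $(\Rightarrow)$ direction, however, is genuinely incomplete at exactly the point you flag as the main obstacle. Knowing that $I_{w^*}/I_w\cong S_i^n$ and that $\Ext^1_\Pi(S_i,I_v)=0$ does tell you that the extension $0\to I_v\to I_v+I_{w^*}\to S_i^m\to 0$ splits, but the resulting complement $R\cong S_i^m$ is only a right ideal of $\Pi$, not obviously contained in $I_{w^*}$, and you have not shown that such an $R$ must already lie inside $I_v$ (equivalently, that $m=0$). Ruling this out requires identifying the socle copy of $S_i$ inside $\Pi$ and showing it sits in $I_v$ whenever $\ell(s_iv)>\ell(v)$; this is essentially the same delicate computation carried out in the proof of Proposition~\ref{prop.dualities}, and you have not supplied it. In Case~2 the situation is worse: from $I_{v^*}I_i\supseteq I_{w^*}I_i$ you want $I_{v^*}\supseteq I_{w^*}$, but $-\cdot I_i$ is not cancellative in the Dynkin case ($I_i$ is not tilting), so Lemma~\ref{lemma.I_makes_equal} alone does not give this. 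Your alternative via Proposition~\ref{prop.dualities} and Proposition~\ref{prop.I=Ann} only translates $I_v\supseteq I_w$ into another ideal containment $I_{w_0v^{-1}}\subseteq I_{w_0w^{-1}}$; it does not produce a Bruhat comparison.

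The paper sidesteps all of this. It observes that the result is already known in the non-Dynkin case \cite{IR}, and then reduces the Dynkin case to the corresponding Euclidean quiver $\widehat Q$: writing $\widehat\Pi$ for its preprojective algebra and $\widehat I_w$ for the ideals there, one has $\Pi=\widehat\Pi/\widehat I_{w_0}$, and since $w_0$ is Bruhat-maximal, $\widehat I_{w_0}\subseteq\widehat I_w$ for every $w\in W$, whence $I_w=\widehat I_w/\widehat I_{w_0}$. Containments among the $I_w$ therefore agree with containments among the $\widehat I_w$, and the latter are governed by Bruhat order by the cited result. This is a two-line argument, and avoids the cancellation difficulty entirely.
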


\begin{proof}
This is known in the extended Dynkin case \cite{IR}.
To see that it follows in the Dynkin case, denote by $\widehat \Pi$ the preprojective algebra of the corresponding Euclidean quiver. Observe that 
$\Pi=\widehat \Pi / \widehat I_{w_0}$.  Since $w_0$ is the 
maximal element of $W$ under Bruhat order, the ideal $\widehat I_{w_0}$ is
contained in $\widehat I_w$ for any $w\in W$.  Thus, 
$I_{i_1}\dots I_{i_r} = (\widehat I_{i_1}\dots \widehat I_{i_r})/\widehat I_{w_0}$,
so containment relations agree in the two algebras.  
\end{proof}

\begin{lemma} \label{notesDDynkin}
Let $Q$ be a Dynkin quiver, and $v, w \in W$. If $\C(I_v) \subseteq \C(I_w)$ then $I_v \subseteq I_w$.  
\end{lemma}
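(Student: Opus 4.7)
The plan is to reduce the statement to Proposition~\ref{prop.I=Ann} by exploiting the duality furnished by Proposition~\ref{prop.dualities}. Namely, in the Dynkin case $I_v$ and $I_w$ are finite-dimensional $\Pi$-modules that can be identified, via $k$-linear duality, with quotients $\Pi/I_{v'}$ and $\Pi/I_{w'}$ living on the opposite side. Once we are looking at quotients rather than ideals, the annihilator description of Proposition~\ref{prop.I=Ann} produces the desired ideal containment, and a short calculation with the Bruhat order (via Lemma~\ref{Dynkin.containment}) translates this back into the statement about $I_v$ and $I_w$.

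Concretely, I would first observe that the standard $k$-linear duality $D$ gives a contravariant equivalence $\mod\Pi\to\mod\Pi^\op$ which restricts to a contravariant equivalence $\mod kQ\to\mod kQ^\op$, and that $D$ sends indecomposable summands bijectively to indecomposable summands. Hence the hypothesis $\C(I_v)\subseteq\C(I_w)$ in $\mod kQ$ is equivalent to $\C(DI_v)\subseteq\C(DI_w)$ in $\mod kQ^\op$. By Proposition~\ref{prop.dualities}(1), $DI_v\cong\Pi/I_{w_0v^{-1}}$ and $DI_w\cong\Pi/I_{w_0w^{-1}}$ as left $\Pi$-modules, so the inclusion becomes
\[ \C(\Pi/I_{w_0v^{-1}})\subseteq\C(\Pi/I_{w_0w^{-1}}) \]
inside $\mod kQ^\op$. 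The left-module analogue of Proposition~\ref{prop.I=Ann}, whose proof carries over verbatim, then yields $I_{w_0w^{-1}}\subseteq I_{w_0v^{-1}}$, and Lemma~\ref{Dynkin.containment} gives $w_0v^{-1}\le w_0w^{-1}$ in Bruhat order. Since left multiplication by $w_0$ reverses Bruhat order and inversion preserves it, this is equivalent to $v\ge w$, and Lemma~\ref{Dynkin.containment} applied once more produces $I_v\subseteq I_w$.

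The main obstacle is handling the side-swap cleanly: since $D$ turns right $\Pi$-modules into left ones, Proposition~\ref{prop.I=Ann} has to be applied to $\Pi^\op$ rather than to $\Pi$ directly. This should be harmless, because all the ingredients used in Section~\ref{sectionfive} (the subcategory $\mathcal{Q}$ of $kQ$-split quotients of submodules of $\Pi/I_w$, its closure under $kQ$-split extensions, and the annihilator computation) are entirely symmetric between left and right modules; nevertheless I would verify this explicitly, or alternatively identify $\Pi^\op$ with $\Pi$ via the anti-involution swapping $a$ with $a^*$ and track how the ideals transform (one finds $I_w\mapsto I_{w^{-1}}$), so as to reduce everything to the right-module version already proved.
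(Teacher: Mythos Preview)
Your proposal is correct and follows essentially the same route as the paper: dualize via Proposition~\ref{prop.dualities}, apply Proposition~\ref{prop.I=Ann} on the $kQ^{\op}$ side to obtain $I_{w_0w^{-1}}\subseteq I_{w_0v^{-1}}$, translate through Bruhat order using Lemma~\ref{Dynkin.containment} and the standard facts that inversion preserves and $w_0$-multiplication reverses Bruhat order, and conclude $I_v\subseteq I_w$. Your explicit discussion of the left/right side-swap is exactly the point the paper handles by invoking ``Proposition~\ref{prop.I=Ann} (for $kQ^{\op}$)''.
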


\begin{proof}
By Proposition~\ref{prop.dualities} we have that (as $\Pi$-modules) $I_v \cong D \Pi/I_{w_0 v^{-1}}$ and $I_w \cong D \Pi/I_{w_0 w^{-1}}$. Thus the assumption may be rewritten as
\[ \C(D \Pi/I_{w_0 v^{-1}}) \subseteq \C(D \Pi/I_{w_0 w^{-1}}). \]
Now note that dualizing commutes with restricting to the path algebra, so the above inclusion is equivalent to the inclusion
\[ \C_{kQ^{\op}}(\Pi/I_{w_0 v^{-1}}) \subseteq \C_{kQ^{\op}}(\Pi/I_{w_0 w^{-1}}). \]

By Proposition~\ref{prop.I=Ann} (for $kQ^{\op}$) this implies that $I_{w_0 w^{-1}} \subseteq I_{w_0 v^{-1}}$. By Lemma~\ref{Dynkin.containment} we deduce that
$w_0w^{-1}\geq w_0v^{-1}$ in Bruhat order.  
The map $u\mapsto u^{-1}$ is clearly an automorphism of Bruhat order,
while the map $u\mapsto
w_0u$ is an anti-automorphism of Bruhat order \cite[Proposition 2.3.4]{BB}, 
so we deduce that 
$v\geq w$.   Thus, by Lemma~\ref{Dynkin.containment} again, we deduce that
$I_v\subseteq I_w$.

\end{proof}

%We also note the following lemma, which we shall need subsequently.
%\comment{HT: actually, I'm not sure we do need it, but I only figured that
%out after I'd written out a proof, so I thought it might as well stay here
%until we were sure whether or not we needed it.  Note that I assume the
%ideals are isomorphic as bimodules, which is stronger than we might 
%wish for.}

%\begin{lemma} \label{Dynkin.iso} Let $v,w \in W$.  
%If $I_v$ and $I_w$ are isomorphic as $\Pi$--$\Pi$ bimodules, 
%then $v=w$ and $I_v=I_w$.  \end{lemma}

%\begin{proof}
%The proof is by downward induction on the length of the longer of $v,w$.
%The base case is when one of $v,w$, is the unique longest word of 
%$W$.  Let us assume it is $w$.  Then $I_w=0$.  Since $I_v\cong I_w$, 
%$I_v=0$ also.  By Lemma~\ref{Dynkin.containment}, it follows that
%$v=w$.  

%Now suppose that we have established the result provided that at least 
%one of $\ell(v)$, $\ell(w)$ is greater than $m$.  Suppose then that
%$\ell(v)\leq \ell(w)= m$.  Find $i$ such that $\ell(s_iw)>\ell(w)$.  
%By Lemma~\ref{six.zero}, we know that the natural map 
%$I_{w}\otimes I_i \rightarrow I_{s_iw}$ is an isomorphism.  
%It follows that 
%$I_v \otimes I_i \rightarrow I_{s_iv}$ is also an isomorphism,
%and that $I_{s_iw}\cong I_{s_iv}$.  Since $\ell(s_iv)>\ell(v)$, we can
%apply the induction hypothesis to deduce that $s_iw=s_iv$, and thus
%$w=v$, and $I_w=I_v$. 
%\end{proof}

\section{Proof of main results}

In this section, we prove our main results, stated in Section 2 as Theorems
\ref{two.two} and \ref{two.three}.   In particular, we establish 
that the cofinite
quotient closed subcategories of the category of preprojective
$kQ$-modules are exactly those of the form $\C(I_w)$ for some element
$w$ in the Weyl group $W$.

%\comment{Somewhere, perhaps here, we should have the discussion about why
%cofinite quotient closed preprojective is equivalent to cofinite quotient
%closed.}

\begin{lemma} \label{simpleproj}
For $w\in W$, and $i$ a source, we have that $\C( I_w)$ contains 
$S_i$, the simple projective at $i$, iff $w$ has no reduced expression
as $w=s_iv$.  
\end{lemma}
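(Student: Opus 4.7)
The plan is to rephrase the condition ``$w$ has no reduced expression as $s_iv$'' as $\ell(s_iw)>\ell(w)$, and proceed by induction on $\ell(w)$. The base case $w=e$ is immediate: $I_e=\Pi$, and $\Pi_{kQ}\supseteq kQ=\bigoplus_j P_j$ contains $P_i=S_i$ as a summand, while $e$ trivially has no reduced expression $s_iv$.

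For the inductive step, first I would handle the case $\ell(s_iw)>\ell(w)$. By Lemma~\ref{birs.lemma} (non-Dynkin) or Lemma~\ref{six.zero} (Dynkin), $I_{s_iw}=I_wI_i$ is properly contained in $I_w$. Tensoring the short exact sequence $0\to I_i\to \Pi\to S_i\to 0$ with $I_w$ over $\Pi$ identifies $I_w/I_{s_iw}$ with $I_w\otimes_\Pi S_i$, which is a $\Pi$-module annihilated by $I_i$ and concentrated at vertex $i$ (since $I_ie_j=\Pi e_j$ for $j\neq i$), hence isomorphic to $S_i^m$ for some $m\geq 1$. Since $i$ is a source, $S_i=P_i$ is projective in $\mod kQ$, so the restriction to $kQ$ of the short exact sequence $0\to I_{s_iw}\to I_w\to S_i^m\to 0$ splits. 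Hence $S_i$ is a summand of $(I_w)_{kQ}$, so $S_i\in\C(I_w)$.

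In the remaining case $w=s_iv$ with $\ell(v)=\ell(w)-1$, the same lemma applied to $v$ gives $I_w=I_v\otimes_\Pi I_i$, so by Theorem~\ref{thm.tensoringcommutes} one has $(I_w)_{kQ}\cong (I_v)_{k\widet{Q}}\otimes_{k\widet{Q}}T$, where $T=\tau^{-1}P_i\oplus kQ/P_i$ is the APR-tilting module. The crucial computation is $\Hom_{kQ}(T,S_i)=0$: on the summand $kQ/P_i=\bigoplus_{j\neq i}P_j$ this is clear since each $P_j$ with $j\neq i$ has top $S_j\neq S_i$; on the summand $\tau^{-1}P_i$, the Auslander--Reiten formula yields $D\Hom(\tau^{-1}P_i,P_i)\cong \Ext^1_{kQ}(P_i,P_i)=0$ (valid since $P_i$ is projective), and then $\Hom(\tau^{-1}P_i,S_i)=\Hom(\tau^{-1}P_i,P_i)=0$ since $S_i=P_i$. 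Tensor-hom adjunction now gives
\[ \Hom_{kQ}((I_w)_{kQ},S_i)\cong \Hom_{k\widet{Q}}((I_v)_{k\widet{Q}},\Hom_{kQ}(T,S_i))=0. \]
Since $S_i=P_i$ is simple and projective in $\mod kQ$, any non-zero map $M\to S_i$ splits, so $S_i$ is a summand of $M$ iff $\Hom_{kQ}(M,S_i)\neq 0$; thus $S_i\notin \C(I_w)$.

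The main obstacle is this second case, which requires combining Theorem~\ref{thm.tensoringcommutes} applied to the possibly infinite-dimensional $\Pi$-module $I_v$ (which works since the theorem is stated for $\Mod \Pi$) with the AR-duality computation on $kQ$; the rest of the argument rests on the fact that $S_i=P_i$ being simple projective collapses the notions of ``summand'' and ``non-zero map to $S_i$''.
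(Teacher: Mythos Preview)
Your proof is correct. The first case ($\ell(s_iw)>\ell(w)$) is essentially identical to the paper's argument. Note, however, that your framing as an induction on $\ell(w)$ is misleading: you never actually invoke the inductive hypothesis, so the argument is really a direct case split.

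For the second case ($w=s_iv$ reduced) your route genuinely differs from the paper's. The paper first observes that, since $i$ is a source, $S_i\in\C(M)$ if and only if $S_i$ lies in the $\Pi$-top of $M$. It then notes that $I_i^2=I_i$ (the ideal $I_i=\Pi(1-e_i)\Pi$ is idempotent), so $I_wI_i=I_vI_iI_i=I_vI_i=I_w$; hence the multiplication map $I_w\otimes_\Pi I_i\to I_w$ is surjective and $I_w\otimes_\Pi S_i=0$, i.e.\ $S_i$ is not in the top. This is a two-line argument using nothing beyond idempotency of $I_i$. Your approach instead passes through Theorem~\ref{thm.tensoringcommutes} and tensor--hom adjunction, which is correct but brings in heavier machinery than necessary. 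What your approach buys is that it avoids the (slightly informal) top-at-a-source observation entirely, working purely on the $kQ$-side. As a minor simplification: your AR-duality computation for $\Hom_{kQ}(\tau^{-1}P_i,S_i)=0$ can be replaced by the direct remark that $\tau^{-1}P_i$ is indecomposable and non-projective, hence not isomorphic to $S_i=P_i$, and therefore admits no nonzero map to the simple projective $S_i$.
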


\begin{proof}
Observe first that a $\Pi$-module $M$ has $S_i\in \C(M)$ iff it has
$S_i$ in its top, since otherwise (because $i$ is a source), whatever is 
``sitting over'' $S_i$ will also be sitting over it as a $kQ$-module.  
So the problem reduces to asking when $I_w$ has $S_i$ in its top.  

%As in \cite[Proof of Proposition III.1.5(a)]{BIRS}, 
%we know that $I_i \otimes^L I_{s_iv}=I_i \otimes I_{s_iv}$.  We therefore
%get an exact sequence:
Consider the exact sequence:
$$\Tor_1(I_{w},S_i)\to I_{w}\otimes I_i \to I_{w}
\to I_{w}\otimes S_i \to 0.$$

Suppose first that $\ell(s_iw)>\ell(w)$.  The image of $I_w\otimes I_i$ 
in $I_w$ is $I_{s_iw}$, which is strictly contained in $I_w$, 
by Lemma~\ref{birs.lemma} or  Lemma~\ref{six.zero}.  Thus we get a nonzero quotient, which is
necessarily semisimple as a $\Pi$-module, since it is annihilated by
$I_i$.  Thus it is of the form $S_i^n$ for some $n>0$, 
and we deduce that $I_{w}$ has $S_i$ in its top.  

Now suppose that $\ell(s_iw)<\ell(w)$. We can write $w=s_iv$ as a reduced
product.   
The image of $I_{s_iv}\otimes I_i$ in $I_{s_iv}$ is  $I_{s_iv}I_i=I_{s_iv}$.
Thus the map $I_{s_iv}\otimes I_i \to I_{s_iv}$ is surjective.  It follows that $I_{s_iv}\otimes S_i$ is 
zero, and thus that $I_{s_iv}$ does not have $S_i$ in its top.  
\end{proof}

\begin{lemma} \label{difference}
Let $i$ be a source, so $S_i$ is a simple projective.  
Assume $\ell(s_iw)>\ell(w)$.  Then $\C(I_w)=\C(I_{s_iw}) \cup \{S_i\}$.
\end{lemma}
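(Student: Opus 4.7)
The plan is to exhibit a short exact sequence of $\Pi$-modules
\[ 0 \to I_{s_iw} \to I_w \to S_i^n \to 0 \]
with $n \geq 1$, and then to observe that restriction to $kQ$ splits this sequence because $i$ is a source. This immediately yields $(I_w)_{kQ} \cong (I_{s_iw})_{kQ} \oplus S_i^n$ and hence the stated equality of subcategories on the level of indecomposable summands.

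First I would tensor the canonical short exact sequence $0 \to I_i \to \Pi \to S_i \to 0$ on the left by $I_w$ and use the vanishing $\Tor_1^{\Pi}(I_w, S_i) = 0$, which is supplied by Lemma~\ref{birs.lemma}(2) when $Q$ is non-Dynkin and by Lemma~\ref{six.zero}(2) when $Q$ is Dynkin, both available thanks to the hypothesis $\ell(s_iw) > \ell(w)$. Using part (3) of the same lemmas to identify $I_w \otimes_{\Pi} I_i$ with $I_{s_iw}$, this produces the short exact sequence above, in which the cokernel is killed by $I_i$ and is therefore a direct sum of copies of $S_i$. That the number $n$ is strictly positive follows from part (1) of those lemmas, which says that the containment $I_{s_iw} \subsetneq I_w$ is proper.

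Next I would restrict the sequence to $kQ$. Since $i$ is a source of $Q$, the simple module $S_i$ coincides with the indecomposable projective $kQ$-module $P_i$, so $S_i^n$ is projective over $kQ$. Consequently the short exact sequence splits in $\mod kQ$, giving the direct sum decomposition $(I_w)_{kQ} \cong (I_{s_iw})_{kQ} \oplus S_i^n$. Taking indecomposable summands on both sides yields $\C(I_w) = \C(I_{s_iw}) \cup \{S_i\}$, as desired.

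There is really no serious obstacle once the Tor vanishing is invoked: the projectivity of $S_i$ over $kQ$ at a source does all the work. The only point to be careful about is to cite the correct version of the Tor vanishing in each of the Dynkin and non-Dynkin regimes and to notice that $n \geq 1$, so that $S_i$ genuinely appears in $\C(I_w)$ (which is of course also guaranteed independently by Lemma~\ref{simpleproj}).
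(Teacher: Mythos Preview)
Your proof is correct and follows essentially the same line as the paper's: both produce the short exact sequence $0 \to I_{s_iw} \to I_w \to I_w \otimes_\Pi S_i \to 0$ (you identify the cokernel explicitly as $S_i^n$), and both split it over $kQ$ using projectivity of $S_i$ at the source $i$. Your citation of the $\Tor$-vanishing from Lemmas~\ref{birs.lemma} and~\ref{six.zero} makes the derivation of the sequence more explicit than the paper's version.
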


\begin{proof} We have already established that $\C(I_w)$ contains
$S_i$ while $\C(I_{s_iw})$ does not.  Observe that we have a short 
exact sequence:
$$ 0 \to I_{s_iw}\to I_w \to I_w\otimes S_i 
\to 0.$$
View this sequence as a sequence of $kQ$-modules.  
Since the righthand side is projective
as a $kQ$-module, the sequence splits.  Thus the summands of $(I_w)_{kQ}$ other
than those surviving in $(I_w\otimes S_i)_{kQ}$ coincide with  those 
in $(I_{s_iw})_{kQ}$.  
\end{proof}

Number the vertices of $Q$ from $1$ to $n$ so that if there is an arrow
$i \to j$ then $i<j$. 

\begin{theorem}\label{main}
Any cofinite, quotient closed subcategory $\mathcal A$ of the preprojective
$kQ$-modules appears as $\C(I_w)$ for some $w\in W$ such that $\ell(w)$ is
the number of missing preprojective modules.
% each $w\in W$ gives rise
%in this way to a cofinite, quotient closed subcategory. 

Such a $w$ can be found as described in Section 2.  
Number the vertices of $Q$ from $1$ to $n$, so that if there is an arrow
$i \to j$, then $i<j$.  
Order the indecomposable preprojective modules as 
$$P_1,\dots,P_n,\tau^{-1}P_1,\dots,\tau^{-1}P_n,\tau^{-2}P_1,\dots$$
Let $\mathcal X$ be 
the indecomposable modules missing from $\mathcal A$.  Take these
indecomposables in the induced order, and read $\tau^{-j}P_i$ as 
$s_i$.  The result is the leftmost word for $w$ in $\underline c^\infty$, where
$\tau^{-j}P_i$ is identified with the $j$-th instance of $s_i$ in
$\underline c^\infty$.  
\end{theorem}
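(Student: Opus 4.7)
The plan is to proceed by induction on $t := |\mathcal X|$, the number of indecomposable preprojectives missing from $\mathcal A$. The base case $t = 0$ is immediate: $\mathcal A = \mathcal P$, so we take $w = e$ and use $\C(\Pi) = \mathcal P$. For the inductive step with $t \geq 1$, let $M_1$ be the leftmost module of $\mathcal X$ in the Coxeter ordering and split into two sub-cases.

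In Sub-case A, $M_1 = S_i$ is a source-simple, i.e., $i$ is a source of $Q$. The category $\mathcal A' := \add(\mathcal A \cup \{S_i\})$ is again cofinite quotient-closed because $S_i$ is simple, so the inductive hypothesis supplies some $w'$ with $\ell(w') = t - 1$ and $\C(I_{w'}) = \mathcal A'$. Since $S_i \in \C(I_{w'})$, Lemma~\ref{simpleproj} forces $\ell(s_iw') > \ell(w')$, and Lemma~\ref{difference} then yields $\C(I_{s_iw'}) = \C(I_{w'}) \setminus \{S_i\} = \mathcal A$. Setting $w := s_iw'$ gives the required element, and the leftmost expression for $w$ in $\underline c^\infty$ is visibly obtained by prepending the position of $S_i = P_i$ to the leftmost expression for $w'$.

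In Sub-case B, $M_1$ is not a source-simple, so every source-simple of $Q$ (in particular $S_1 = P_1$) already lies in $\mathcal A$. Here I would apply APR-tilting at the source $1$ via Theorem~\ref{thm.tensoringcommutes}: tensoring with $I_1$ on $\Mod \Pi$ intertwines restriction to $kQ$ with APR-tilting from $\mod k\tilde{Q}$ to $\mod kQ$, where $\tilde{Q}$ is the quiver obtained by reversing all arrows at $1$. The induced derived equivalence sends $S_1^Q$ to a shifted copy of $S_1^{\tilde{Q}}$ and produces a bijection between the indecomposable preprojectives of $kQ$ and of $k\tilde{Q}$ which advances the Coxeter ordering by one position. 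Transferring $\mathcal A$ across this bijection produces a cofinite quotient-closed $\tilde{\mathcal A} \subset \mathcal P^{\tilde{Q}}$ with $|\tilde{\mathcal X}| = t$, whose leftmost missing module now occupies a position one lower. A secondary induction on this leftmost position forces Sub-case~A to apply after finitely many APR-tilts; the element $w \in W$ returned is also correct for $\mathcal A$, since $I_w$ depends only on the underlying graph of $Q$ and Theorem~\ref{thm.tensoringcommutes} relates $\C^Q(I_w)$ to $\C^{\tilde{Q}}(I_w)$ through APR-tilting.

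The main obstacle I anticipate is the bookkeeping in Sub-case~B: verifying that the transferred $\tilde{\mathcal A}$ is genuinely quotient-closed in $\mathcal P^{\tilde{Q}}$, that the secondary invariant strictly decreases under each APR-tilt, and that the recovered $w$ is actually carried back to $\mathcal A$ under the correspondence. Once these points are in hand, the match of the word assembled from $\mathcal X$ with the leftmost reduced expression for $w$ in $\underline c^\infty$ becomes a tracking exercise: Sub-case~A inserts $s_i$ at the leftmost available position compatible with the previously placed letters, while Sub-case~B advances through the Coxeter blocks without inserting a letter, so the constructed subword is leftmost by construction, completing the proof.
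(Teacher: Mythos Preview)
Your inductive scheme matches the paper's: induct on the pair $(t,p)$ where $t=|\mathcal X|$ and $p$ is the position of the first missing indecomposable, and push $p$ down by APR-tilting at vertex~$1$. Sub-case~A is fine and is in fact a pleasant shortcut: adding back a simple projective keeps the category quotient closed (since $S_i$ projective forces any indecomposable quotient of $N\oplus S_i^k$ with $N\in\mathcal A$ and not isomorphic to $S_i$ to already be a quotient of $N$), and then Lemma~\ref{simpleproj} gives $\ell(s_iw')>\ell(w')$ without further work.

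The gap is in Sub-case~B. You assert that Theorem~\ref{thm.tensoringcommutes} relates $\C^Q(I_w)$ to $\C^{\tilde Q}(I_w)$, but it does not: it relates $\C^Q(I_w\otimes_\Pi I_1)$ to $\C^{\tilde Q}(I_w)$. Concretely, if $\C^{\tilde Q}(I_w)=\tilde{\mathcal A}$, then the APR-tilt back to $Q$ yields $\C^Q(I_wI_1)=\mathcal A\setminus\{S_1\}$. To upgrade this to $\C^Q(I_w)=\mathcal A$ via Lemma~\ref{difference} you must know $\ell(s_1w)>\ell(w)$, so that $I_wI_1=I_{s_1w}$ and $\C^Q(I_w)=\C^Q(I_{s_1w})\cup\{S_1\}$. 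If instead $\ell(s_1w)<\ell(w)$ then $I_wI_1=I_w$ and you would obtain $\C^Q(I_w)=\mathcal A\setminus\{S_1\}\ne\mathcal A$. So the ``bookkeeping'' you flag is not bookkeeping at all: it is the heart of the argument.

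The paper resolves this by first proving a closure property of $\tilde{\mathcal A}$ under extensions by $S_1'$ (Lemma~\ref{setup}(2), coming from the fact that such extensions in $\mod k\tilde Q$ correspond to quotients by copies of $S_1$ in $\mod kQ$), and then arguing by contradiction: if $w=s_1v$ were reduced, that closure property would force $\C^{\tilde Q}(I_v)\subseteq\C^{\tilde Q}(I_w)$ while $I_v\supsetneq I_w$, contradicting the implication $\C(I_v)\subseteq\C(I_w)\Rightarrow I_v\subseteq I_w$ (Theorem~\ref{four.seven} in the non-Dynkin case, Lemma~\ref{notesDDynkin} in the Dynkin case). These last results are the substantial content of Sections~\ref{sectionfour}--\ref{sectionsix}; your proposal does not invoke them and offers no substitute, so as written the induction in Sub-case~B does not close.
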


Before we prove this theorem, we first state and prove two lemmas.

Let $\mathcal A$ be a cofinite, quotient closed subcategory
of the preprojective $kQ$-modules.  Let $S_1$ be the simple projective
$kQ$-module associated to the vertex 1, and let $P$ be the sum of the other
indecomposable projectives.  Define $T=P\oplus \tau^{-1}(S_1)$.  Let 
$Q'=\mu_1(Q)$.  The associated reflection functor is $R_1^+=\Hom_{kQ}(T, \ )$.  
If $Q$ is 
non-Dynkin, then let $\mathcal A'$ be the subcategory of 
$kQ'$-modules  given by $\Hom(T,\mathcal A)$.
If $Q$ is Dynkin, define 
$\mathcal A'$ to consist of the additive subcategory of 
$\mod kQ'$ generated by $\Hom(T,\mathcal A)$ together with $S_1'$, the
new simple $kQ'$-module.

\begin{lemma}\label{setup} Let $\mathcal A'$, $Q'$, $S_1'$ be as defined
above.  Then \begin{enumerate}\item $\mathcal A'$ is quotient closed, and
\item if we have
a short exact sequence of $kQ'$-modules
$$0\to Y' \to Z' \to (S'_1)^r\to 0,$$
with $Y'\in \mathcal A'$ and $Z'$ preprojective, then $Z'\in \mathcal A'$.
\end{enumerate}
 
\end{lemma}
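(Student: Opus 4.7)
The plan is to exploit the APR/BGP reflection equivalence linking $\mod kQ$ and $\mod kQ'$. The APR-tilting module $T = P \oplus \tau^{-1}S_1$ gives an equivalence $R_1^+ = \Hom_{kQ}(T,-)$ between the subcategory $\mathcal{T}\subseteq \mod kQ$ of modules without an $S_1$-summand and the subcategory $\mathcal{Y}\subseteq \mod kQ'$ of modules without an $S_1'$-summand, with quasi-inverse $R_1^-$. At the derived level one has $R_1^-(S_1') = S_1[1]$. I would use these to transport surjections and short exact sequences between the two module categories, so that the quotient-closure hypothesis on $\mathcal{A}$ can be invoked.

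For part (1), given a surjection $M' \epi N'$ with $M' \in \mathcal{A}'$, I would split off the (simple injective) $S_1'$-summands of $M'$ and $N'$, writing $M' = M_0' \oplus (S_1')^s$ and $N' = N_0' \oplus (S_1')^t$ with $M_0', N_0' \in \mathcal{Y}$. The key computation is that $\Hom_{kQ'}(R_1^+(M), S_1') = \Ext^1_{kQ}(M, S_1) = 0$, since $S_1$ is projective; this forces the map to respect the decomposition, so the surjection restricts to a surjection $M_0' \epi N_0'$ together with a surjection $(S_1')^s \epi (S_1')^t$. Transporting the first piece back through $R_1^-$ (using $\Tor$-vanishing on $\mathcal{Y}$) yields a surjection $M_0 \epi N_0$ in $\mod kQ$ with $M_0 \in \mathcal{A}$, and quotient closure of $\mathcal{A}$ gives $N_0 \in \mathcal{A}$, hence $N_0' = R_1^+(N_0) \in \mathcal{A}'$. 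The $S_1'$-summand piece is in $\mathcal{A}'$ by definition in the Dynkin case, and $t=0$ automatically in the non-Dynkin case by the Hom vanishing above.

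For part (2), given $0 \to Y' \to Z' \to (S_1')^r \to 0$ with $Y' \in \mathcal{A}'$ and $Z'$ preprojective, I would first reduce to the case where $Y'$ and $Z'$ have no $S_1'$-summand, using that $S_1'$ is injective (so any $S_1'$-submodule of $Y'$ splits off) and that $S_1' \in \mathcal{A}'$ in the Dynkin case while $S_1'$ is preinjective, hence not a summand of the preprojective $Z'$, in the non-Dynkin case. With this reduction, $Y' = R_1^+(Y)$ for some $Y \in \mathcal{A} \cap \mathcal{T}$. Applying the derived functor $R_1^-$ to the triangle $Y' \to Z' \to (S_1')^r \to Y'[1]$ and using $R_1^-(S_1') = S_1[1]$, the resulting triangle rotates into a genuine short exact sequence $0 \to S_1^r \to Y \to Z \to 0$ of $kQ$-modules, where $Z = R_1^-(Z')$. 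Then $Z$ is a quotient of $Y \in \mathcal{A}$, so $Z \in \mathcal{A}$ by hypothesis, and therefore $Z' = R_1^+(Z) \in \mathcal{A}'$.

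The main technical obstacle is the careful bookkeeping around the exceptional simples $S_1$ and $S_1'$, where $R_1^+$ and $R_1^-$ fail to be mutually quasi-inverse at the module level and the translation is only visible after a degree shift in the derived category. One must verify that the $S_1'$-decompositions interact cleanly with the maps (via the $\Hom/\Ext$ identification above), and that, after applying $R_1^-$ in part (2), the rotated triangle lands in degree zero to give a genuine short exact sequence rather than a complex with higher cohomology.
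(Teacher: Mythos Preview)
Your overall strategy --- transport through the APR reflection equivalence --- is exactly the paper's approach, and your argument for part (2) is correct and essentially identical to theirs. However, part (1) contains a genuine error.

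The claim $\Hom_{kQ'}(R_1^+(M), S_1') \cong \Ext^1_{kQ}(M, S_1) = 0$ ``since $S_1$ is projective'' is wrong: projectivity of $S_1$ gives $\Ext^1_{kQ}(S_1,-)=0$, not $\Ext^1_{kQ}(-,S_1)=0$. Concretely, for $Q = 1 \to 2$ one has $R_1^+(S_2) = P_1'$ in $\mod kQ'$, and $\Hom_{kQ'}(P_1', S_1') = k = \Ext^1_{kQ}(S_2, S_1)$. So the map $M' \to N'$ need not respect the decomposition in the way you assert, and your justification for $t=0$ in the non-Dynkin case also collapses.

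The repair is easy and is what the paper implicitly uses. Since vertex $1$ is a sink in $Q'$, the simple $S_1'$ is \emph{injective}; hence $\Hom_{kQ'}(S_1', N_0') = 0$ whenever $N_0'$ has no $S_1'$-summand. This makes the matrix of $M_0' \oplus (S_1')^s \to N_0' \oplus (S_1')^t$ lower triangular, so projecting to $N_0'$ shows the component $M_0' \to N_0'$ is already surjective. From there your transport argument goes through. For the non-Dynkin case, the reason $t=0$ is simply that the quotient $N'$ is taken to be preprojective while $S_1'$ is preinjective --- not the (false) Hom-vanishing. Equivalently, one can argue as the paper does: the kernel $K'$ of $M_0' \to N_0'$ has no $S_1'$-summand either (any such summand would split off from $M_0'$ by injectivity of $S_1'$), so the whole short exact sequence lies in the $S_1'$-free subcategory and can be pulled back through $R_1^\pm$.
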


\begin{proof}
(1) Let $X \in \mathcal A$, so $\Hom(T,X)\in \mathcal A'$.  Denote 
$\Hom(T,X)$ by $X'$, and suppose that there is an epimorphism from 
$X'$ to $Y'$, with $Y'$ a preprojective $kQ'$-module.  
We want to show that $Y'\in \mathcal A'$.  

Since $S_1'\in \mathcal A'$ if it is preprojective, by construction, 
we may assume that $Y'$ 
has no summands isomorphic to $S_1'$.  By assumption, we have a short exact sequence in
$kQ'$-mod:
$$0\to K' \to X' \to Y' \to 0.$$

Since $R_1^+$ is an equivalence of categories from the additive hull of 
the indecomposable objects $kQ$-mod other than $S_1$ to the additive hull
of the indecomposable objects of $kQ'$-mod other than $S_1'$, and our
short exact sequence lies in the latter subcategory, there is a 
corresponding short exact sequence in $kQ$-mod, which shows that 
there is an epimorphism from $X$ to $Y$, and thus that $Y\in \mathcal A$,
so $Y'\in \mathcal A'$, as desired.  

(2) We may assume that $Y'$ and $Z'$ have no summands isomorphic to $S'_1$, so we may write 
$Y'=\Hom(T,Y)$ with $Y\in \mathcal A$, and $Z'=\Hom(T,Z)$.  The given short exact sequence of 
$kQ'$-modules then implies the existence of a short exact sequence of 
$kQ$-modules $0\to S_1^r \to Y \to Z \to 0$.  Since $\mathcal A$ is closed
under surjections, $Z\in \mathcal A$, so $Z'\in \mathcal A'$.   
\end{proof}

\begin{lemma} \label{step}
Let $\mathcal A$ be a cofinite, quotient closed subcategory
of the preprojective $kQ$-modules.  Let $\mathcal A'$ be defined as above.  

%Let $S_1$ be the simple, projective 
%$kQ$-module associated to the vertex $1$, and suppose that $S_1$ is not
%in $\mathcal A$.  
%Let $P$ be the sum of the 
%other indecomposable projectives, and define $T=P\oplus \tau^{-1} S_1$.  
%Let $Q'=\mu_1(Q)$.  If $Q$ is 
%non-Dynkin, then let $\mathcal A'$ be the subcategory of 
%$kQ'$-modules  given by $\Hom(T,\mathcal A)$.
%If $Q$ is Dynkin, define 
%$\mathcal C'$ to consist of the additive subcategory of 
%$\mod kQ'$ generated by $\Hom(T,\mathcal A)$ together with $S_1'$, the
%new simple $kQ'$-module.  This category $\mathcal A'$ is also quotient-closed.

Suppose that Theorem \ref{main} holds for $\mathcal A'$.  Then it also holds for 
$\mathcal A$.  
\end{lemma}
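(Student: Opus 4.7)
The plan is as follows. Applying the assumed Theorem~\ref{main} to $\mathcal{A}'$ yields an element $w' \in W = W_Q = W_{\widet{Q}}$ with $\mathcal{A}' = \C(I_{w'})$ as a subcategory of $\mod k\widet{Q}$, whose leftmost reduced word in $c_{\widet{Q}}^\infty$ reads off the missing preprojectives of $\mathcal{A}'$ in $\widet{Q}$-order. After the standard relabelling of $\widet{Q} = \mu_1(Q)$ with sources first (so that vertex $1$, which has become a sink, is placed last), the Coxeter element is $c_{\widet{Q}} = s_2 s_3 \cdots s_n s_1$, and hence $c_Q^\infty = s_1 \cdot c_{\widet{Q}}^\infty$; moreover $R_1^+$ induces an order-preserving bijection between $\mathcal{P}_Q \setminus \{S_1\}$ and $\mathcal{P}_{\widet{Q}}$ in the non-Dynkin case, while in the Dynkin case $S_1'$ has been inserted in $\mathcal{A}'$ by fiat to account for the one indecomposable not in the image of $R_1^+$.

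I split into two cases. In Case A ($S_1 \in \mathcal{A}$) I set $w := w'$, and in Case B ($S_1 \notin \mathcal{A}$) I set $w := s_1 w'$. In both cases the crucial ingredient is the length condition $\ell(s_1 w') > \ell(w')$: it ensures $s_1 w'$ is a reduced expression (so that $\ell(w)$ matches $|\mathcal{X}_\mathcal{A}|$ in Case~B), and, more substantively, it is the hypothesis under which Lemma~\ref{birs.lemma} or Lemma~\ref{six.zero} gives the isomorphism $I_{w'} \otimes_\Pi I_1 \cong I_{s_1 w'}$ needed to invoke Lemma~\ref{difference}. Granted this, Theorem~\ref{thm.tensoringcommutes} applied to $M = I_{w'}$ gives $(I_{s_1 w'})|_{kQ} \cong (I_{w'}|_{k\widet{Q}}) \otimes_{k\widet{Q}} T$, and the tilting equivalence then identifies the right-hand side (as a $kQ$-module up to additive closure) with the direct sum of the $R_1^+$-preimages of the non-$S_1'$ summands of $(I_{w'})|_{k\widet{Q}}$, which, by the construction of $\mathcal{A}'$, are exactly the non-$S_1$ indecomposables of $\mathcal{A}$. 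Hence $\C_Q(I_{s_1 w'}) = \mathcal{A} \setminus \{S_1\}$; in Case~B this already equals $\mathcal{A}$, while in Case~A, Lemma~\ref{difference} delivers $\C_Q(I_{w'}) = \C_Q(I_{s_1 w'}) \cup \{S_1\} = \mathcal{A}$.

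The leftmost-word description then follows from $c_Q^\infty = s_1 \cdot c_{\widet{Q}}^\infty$: in Case~A the leftmost word of $w = w'$ in $c_Q^\infty$ is the positional shift of the leftmost word of $w'$ in $c_{\widet{Q}}^\infty$, and it must avoid position~$1$ precisely because $\ell(s_1 w) > \ell(w)$ forbids any reduced expression of $w$ beginning with $s_1$; in Case~B, prepending the position-$1$ letter $s_1$ (which corresponds to $S_1 \in \mathcal{X}_\mathcal{A}$) to this shifted word yields the leftmost word of $w = s_1 w'$ in $c_Q^\infty$.

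The main obstacle is the verification of $\ell(s_1 w') > \ell(w')$. The plan here is to argue by contradiction: suppose instead that $\ell(s_1 w') < \ell(w')$. Then the leftmost word of $w'$ in $c_{\widet{Q}}^\infty$ must at some point use the first $s_1$-position (position~$n$), which forces the corresponding projective $P_1'$ into $\mathcal{X}_{\mathcal{A}'}$ and hence, via $R_1^+$, the module $\tau^{-1} P_1$ into $\mathcal{X}_\mathcal{A}$. But the AR-sequence $0 \to P_1 \to E \to \tau^{-1} P_1 \to 0$ has middle term $E$ a direct sum of projectives $P_j$ with $j$ a neighbour of vertex~$1$ (in particular $j \neq 1$), so quotient closure of $\mathcal{A}$ forces at least one such $P_j$ to lie in $\mathcal{X}_\mathcal{A}$; correspondingly $P_j' \in \mathcal{X}_{\mathcal{A}'}$ occupies a position strictly less than~$n$ in $c_{\widet{Q}}^\infty$. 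Iterating this kind of AR-propagation along the missing preprojectives of $\mathcal{A}$ should yield the desired contradiction with the leftmost-word structure of $w'$.
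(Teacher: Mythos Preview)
Your overall architecture matches the paper's: the case split on $S_1 \in \mathcal{A}$, the use of Theorem~\ref{thm.tensoringcommutes} together with the isomorphism $I_{w'} \otimes_\Pi I_1 \cong I_{s_1 w'}$ to obtain $\mathscr{C}_Q(I_{s_1 w'}) = \mathcal{A} \setminus \{S_1\}$, the appeal to Lemma~\ref{difference} for the other case, and the identification $\underline{c}_Q^\infty = s_1 \cdot \underline{c}_{\widet{Q}}^\infty$ for the leftmost-word statement --- all of this is essentially the paper's argument.

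The gap is in your proof of the length inequality $\ell(s_1 w') > \ell(w')$. Your proposed contradiction via ``AR-propagation'' is not a proof. The first step (that the leftmost word of $w'$ in $\underline{c}_{\widet{Q}}^\infty$ must use the first $s_1$-position) is already unjustified: vertex~$1$ is a \emph{sink} in $\widet{Q}$, so nothing like Lemma~\ref{simpleproj} is available, and the greedy description of the leftmost word does not obviously force this position to be chosen just because $s_1$ is a left descent of $w'$. More seriously, even granting that step, your argument yields only that some neighbouring projective $P_j$ lies in $\mathcal{X}_{\mathcal A}$; you give no indication of what quantity the iteration decreases or which structural feature of the leftmost word is eventually violated. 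The phrase ``should yield the desired contradiction'' is a hope, not an argument.

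The paper's treatment of $\ell(s_1 w') > \ell(w')$ is entirely different, and is precisely where the main results of Sections~\ref{sectionfour}--\ref{sectionsix} are deployed. Suppose for contradiction that $w' = s_1 v$ is reduced. Lemma~\ref{setup}(2) says that $\mathcal{A}' = \mathscr{C}_{\widet{Q}}(I_{w'})$ is closed under preprojective extensions by $S_1'$ on top; since $I_v/I_{w'}$ is a sum of copies of $S_1$, one concludes $\mathscr{C}_{\widet{Q}}(I_v) \subseteq \mathscr{C}_{\widet{Q}}(I_{w'})$. But $I_v \supsetneq I_{w'}$ by Lemma~\ref{birs.lemma} or~\ref{six.zero}, and this contradicts Theorem~\ref{four.seven} (non-Dynkin) or Lemma~\ref{notesDDynkin} (Dynkin). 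Your approach bypasses these injectivity results entirely; this is the only point in the whole proof of Theorem~\ref{main} where they are invoked, and I do not see how the argument can be closed without them.
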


\begin{proof}
%$\mathcal A'$ is quotient closed because if $\Hom(T,X )\epi\Hom(T,Y)$, and
%$S_i$ does not appear as a direct summand of $Y$, then $X\epi Y$.  Therefore
%the assumption that the theorem holds for $\mathcal A'$ is meaningful.  
The assumption that the theorem holds for $\mathcal A'$ tells us that 
$\mathcal A'=\C_{Q'}(I_w)$ for $w$ obtained by reading the AR quiver for
$kQ'$ (starting with $P_2,P_3,\dots$) and recording $s_i$ for each 
indecomposable object
which is not in $\mathcal A'$.  

We claim that 
$\ell(s_1w)>\ell(w)$.  
Seeking a contradiction, 
suppose that $w=s_1v$ is a reduced expression for $w$.  Thus 
$I_w=I_vI_{s_1}$.  We claim that $\C(I_v)\subseteq \C(I_w)$.  The reason is
that modules in $\C(I_v)$ are extensions of $S_1$ by
some object from $\C(I_w)$, but $\C(I_w)$ is closed under such extensions
by Lemma \ref{setup}(2).  So $\C(I_v)\subseteq \C(I_w)$.
At the same time, $I_v$ strictly contains $I_w$, by Lemma~\ref{birs.lemma} or 
Lemma~\ref{six.zero}.  These two statements together
contradict Theorem~\ref{four.seven} or Lemma~\ref{notesDDynkin}.  
Therefore we conclude that $\ell(s_1w)>\ell(w)$.  

Write $\mathcal A^+$ for the additive category generated by 
$\mathcal A$ together with
$S_1$, and write $\mathcal A^-$ for the additive category generated by
the indecomposables of $\mathcal A$ excluding $S_1$.  

We now have either $\mathcal A=\mathcal A^-$ or 
$\mathcal A=\mathcal A^+$.  Since $\mathcal A'= R^+_1(\mathcal A^+)
=R^+_1(\mathcal A^-)$, both $\mathcal A^+$ and $\mathcal A^-$ 
satisfy our hypotheses, so we need to prove that the theorem holds
for both of them.  We first treat $\mathcal A^-$.  

We recall that Theorem~\ref{thm.tensoringcommutes} says that the following
diagram commutes:

\[ \begin{tikzpicture}[xscale=5,yscale=-1.5]
 \node (A) at (0,0) {$\Mod \Pi$};
 \node (B) at (1,0) {$\Mod \Pi$};
 \node (C) at (0,1) {$\Mod k\widet{Q}$};
 \node (D) at (1,1) {$\Mod kQ$};
 \draw [->] (A) -- node [above] {$- \otimes_{\Pi} I_1$} (B);
 \draw [->] (A) -- node [left] {res} (C);
 \draw [->] (B) -- node [right] {res} (D);
 \draw [->] (C) -- node [above] {$- \otimes_{k\widet{Q}} T$} (D);
\end{tikzpicture} \]

If we start with 
$I_w$ in the upper lefthand corner, we get $\mathcal A'$ in the bottom
left, and thus $\mathcal A^-$ in the bottom right.  On the other
hand, in the upper right corner, we have 
$I_w\otimes I_{1}$.  Since $\ell(s_1w)>\ell(w)$, this
is $I_{s_1w}$ by Lemma~\ref{birs.lemma} or Lemma~\ref{six.zero}.  Therefore, $\C(I_{s_1w})=\mathcal A^-$.

Now we establish the link to the leftmost word for $s_1w$.  Since
$s_1w$ admits $s_1$ on the left,
the leftmost word for $s_1w$ begins with $s_1$ (corresponding in the
AR-quiver to the simple projective $S_1$).  The rest of the leftmost
word for $s_1w$ is the leftmost word for $w$ in the AR quiver for $Q'$,
and by assumption, this corresponds to the indecomposable objects not in
$\mathcal A'$.  

Now we consider $\mathcal A^+$.  Using the result which we have already
established for $\mathcal A^-$, Lemma \ref{difference} tells us that 
$\C(I_w)=\mathcal A^+$.  Since $w$ does not admit $s_1$ as a leftmost
factor, the leftmost word for $w$ in the AR quiver of $kQ$ is the
same as the leftmost word for $s_1w$ with the initial $s_1$ removed.  
This establishes the desired result for $\mathcal A^+$.   
\end{proof}

\begin{proof}[Proof of Theorem \ref{main}]
We establish the theorem by induction on $m$, the number of 
indecomposable objects missing from $\mathcal A$, and on $p$, the 
position of the first indecomposable missing from $\mathcal A$ 
in the order on the indecomposable objects of $\mathcal A$.  

The statement is clear if $\mathcal A$ has no missing indecomposables.  
(The prescription for finding $w$ gives us the
empty word, which is the unique reduced word for the identity element $e$, and $\C(I_e)$ is the whole preprojective component.)

Now, let $\mathcal A$ be some cofinite, quotient closed subcategory of 
the preprojective $kQ$-modules, with $m$ missing indecomposables, and
with the first missing indecomposable in position $p$.  
Suppose that we already know that the theorem holds for 
for any quotient closed subcategory with fewer than $m$ missing indecomposables, or with exactly $m$ missing indecomposables and with the first
missing indecomposable in a position earlier than $p$.  
Define $\mathcal A'$
as above.  

If $p=1$, then $\mathcal A$ does not contain $S_1$.  In this case, 
$\mathcal A'$ has fewer missing 
indecomposables than $\mathcal A$ does.
If $p>1$, then $\mathcal A$ and $\mathcal A'$ have the same number of 
missing indecomposables, but the first missing indecomposable is earlier
in $\mathcal A'$ than it is in $\mathcal A$.  

Thus, in either case, the induction hypothesis tells us that the statement
of the theorem holds for $\mathcal A'$, and Lemma \ref{step} tells us that
it also holds for $\mathcal A$.  
\end{proof}

We also have a converse.  

\begin{theorem}\label{converse}
$\C(I_w)$ is quotient closed for any $w\in W$.  
\end{theorem}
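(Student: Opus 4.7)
I would argue by induction on $\ell(w)$, running the induction simultaneously over all acyclic quivers on the underlying graph. The base case $\ell(w)=0$ is immediate: $I_e=\Pi$ restricts to $kQ$ as $\bigoplus_{n\ge 0}\tau^{-n}(kQ)$, realising every indecomposable preprojective as a summand, so $\C(I_e)=\mathcal P$ is trivially quotient closed.

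For the inductive step, label so that vertex $1$ is a source of $Q$, and first consider the case $\ell(s_1 w)<\ell(w)$, writing $w=s_1v$ reduced. Lemma~\ref{birs.lemma}(3) (or Lemma~\ref{six.zero}(3) in the Dynkin case) gives $I_w\cong I_v\otimes_\Pi I_1$, and Theorem~\ref{thm.tensoringcommutes}, applied with $\widet{Q}=\mu_1 Q$ and $T$ the APR-tilting module at the source $1$, yields
\[ (I_w)_{kQ}\cong (I_v)_{k\widet{Q}}\otimes_{k\widet{Q}}T. \]
By induction, $\C_{\widet{Q}}(I_v)$ is quotient closed in $\mathcal P_{\widet{Q}}$, and the APR-tilting equivalence $-\otimes_{k\widet{Q}}T$ identifies indecomposable preprojective $k\widet{Q}$-modules with indecomposable preprojective $kQ$-modules other than $S_1$, carrying $\C_{\widet{Q}}(I_v)$ onto $\C_Q(I_w)$. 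Now, for any epimorphism $X\epi Y$ in $\mathcal P_Q$ with $X\in\C_Q(I_w)$, Lemma~\ref{simpleproj} forces $S_1\notin\C_Q(I_w)$, so $X$ has no $S_1$-summand, and simple-projectivity of $S_1$ gives none for $Y$ either. Hence $X$ and $Y$ both lie in the torsion-free class $\mathcal F=\{M\mid \Ext^1_{kQ}(T,M)=0\}$ of the APR-tilt, on which $R_1^+=\Hom_{kQ}(T,-)$ is exact; thus $R_1^+X\epi R_1^+Y$, and quotient closure of $\C_{\widet{Q}}(I_v)$ yields $Y\in\C_Q(I_w)$.

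In the remaining case, $\ell(s_1 w)>\ell(w)$, Lemma~\ref{difference} gives the decomposition $\C_Q(I_w)=\C_Q(I_{s_1 w})\cup\{S_1\}$. Since $s_1$ is a left descent of $s_1 w$, the previous case applied to $(Q,s_1 w)$ delivers quotient closure of $\C_Q(I_{s_1 w})$---provided the inductive hypothesis is available for $\C_{\widet{Q}}(I_w)$ with $\widet{Q}=\mu_1 Q$. Adjoining the simple projective $S_1$ to a quotient closed subcategory preserves quotient closure, since $S_1$'s only quotients inside $\mathcal P_Q$ are $S_1$ and $0$, and $S_1$ itself now lies in $\C_Q(I_w)$.

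\textbf{Main obstacle.} The second case recursively appeals to the inductive hypothesis at the \emph{same} length $\ell(w)$ but over the reflected quiver $\widet{Q}$. The induction must therefore be organised on a well-founded measure finer than $\ell(w)$ alone---for example, lexicographically on $(\ell(w),d(Q,w))$, where $d(Q,w)$ counts the number of reflections needed to bring $Q$ to an orientation making some chosen left descent of $w$ into a source. Verifying that this compound measure strictly decreases at each step, and checking that quotient closure transports correctly across the APR-tilting equivalence (especially near the $S_1$-boundary of the torsion-free class $\mathcal F$), are the technical cruxes of the argument.
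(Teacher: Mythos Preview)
Your inductive approach has a genuine gap in Case~1, and it is not the well-foundedness issue you flag as the main obstacle (that one is real but fixable, essentially by the same secondary induction used in the proof of Theorem~\ref{main}). The substantive problem is the claim that $R_1^+=\Hom_{kQ}(T,-)$ carries $X\epi Y$ to an epimorphism. While $X$ and $Y$ have no $S_1$-summand and hence lie in $\{M:\Ext^1_{kQ}(T,M)=0\}$ (the torsion class of the tilt, not the torsion-free class), the kernel $K$ need not: simple-projectivity of $S_1$ prevents it from splitting off $Y$, but not from appearing in $K$. Already for $Q$ of type $A_2$ with source $1$ and $w=s_1$, the epimorphism $P_2\epi S_2$ has kernel $S_1$; applying $R_1^+$ yields the proper inclusion $S_2'\hookrightarrow P_1'$ with cokernel $S_1'$. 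In general $\mathrm{coker}(R_1^+X\to R_1^+Y)\cong\Ext^1_{kQ}(T,K)\cong(S_1')^m$, where $m$ is the multiplicity of $S_1$ in $K$. To conclude that $R_1^+Y\in\C_{\widet Q}(I_v)$ you would therefore need $\C_{\widet Q}(I_v)$ to be closed under extensions with $(S_1')^m$ on top---strictly more than the quotient-closedness your inductive hypothesis supplies. (That extra closure is exactly what Lemma~\ref{setup}(2) records, but there it is \emph{deduced} from quotient-closedness on the $Q$-side, which is what you are trying to prove.)

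The paper's proof is organised quite differently: it first establishes Theorem~\ref{main} (every cofinite quotient closed subcategory of $\mathcal P$ is $\C(I_u)$ for some $u$) and then bootstraps. In the non-Dynkin case, $\Facpp\C(I_w)$ is cofinite and quotient closed, hence equals $\C(I_v)$ for some $v$; then $\Fac\C(I_w)=\Fac\C(I_v)$, and Theorem~\ref{four.seven} forces $I_w=I_v$, so $\C(I_w)$ was already quotient closed. In the Dynkin case one dualises via Proposition~\ref{prop.dualities} to the assertion that each $\C(\Pi/I_x)$ is subclosed, which follows from Lemma~\ref{subclosed} together with Proposition~\ref{prop.I=Ann}. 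The determination results of Sections~\ref{sectionfour}--\ref{sectionsix} thus allow one to identify $\C(I_w)$ with its own quotient closure without ever pushing an epimorphism through a reflection functor.
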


\begin{proof} 
If $Q$ is non-Dynkin, we proceed as follows.  
Consider $\Facpp \C(I_w)$, where we write $\Facpp \mathcal A$ for the 
part of $\Fac \mathcal A$ consisting of preprojective modules.  
$\Facpp \C(I_w)$  is quotient closed and clearly cofinite,
so by Theorem~\ref{main}, $\Facpp \C(I_w)=\C(I_v)$ for some $v$,
and hence $\Fac \C(I_w)=\Fac \C(I_v)$.  Thus $I_w=I_v$, by Theorem
\ref{four.seven}.  Thus $\C(I_w)$ is quotient closed.  
%However, we have seen in Section~\ref{sectionfour} that $I_v$ is 
%determined by $\C(I_v)$, and $I_w$ by $\Fac \C(I_w)$, hence $I_v \cong I_w$.
%It then follows from \cite{BIRS} that $I_v=I_w$, and that $v=w$.  

Now, in the Dynkin case, for $x\in W$,  
$\C(\Sub(\Pi/I_x))$ is subclosed by Lemma~\ref{subclosed}, so must equal 
$\C(\Pi/I_v)$ for some $v\in W$ (applying Theorem~\ref{main} after dualizing).
By Proposition \ref{prop.I=Ann}, $\Ann(\C(\Sub(\Pi/I_x)))=I_x$, and 
$\Ann(\C(\Pi/I_v))=I_v$, so $I_x=I_v$, and thus $x=v$ by
\cite[Proposition III.1.9, Proposition III.3.5]{BIRS}.  
%\comment{Adapt to Dynkin case.
%HT says: wait, the point here is that we have two different expressions
%for the same subcategory, so they will actually have the same 
%annihilator.  Therefore it seems to me we don't need to invoke BIRS or
%an adaptation of it.}
It follows that $\C(\Pi/I_x)$ is
subclosed for any $x$, and by Proposition \ref{prop.dualities}, we
conclude that $\C(I_w)$ is quotient closed for all $w\in W$.  
\end{proof}

\begin{proof}[Proof of Theorems \ref{two.two} and Theorem~\ref{two.three}] 
Theorem~\ref{main} shows that the correspondences of Theorem~\ref{two.two}
yield a bijection between the cofinite quotient closed subcategories 
of $\mathcal P$ and some subset $X$ of $W$.  Theorem~\ref{converse} 
shows that for any $w\in W$, $\C(I_w)$ is quotient closed.  It therefore
can also be written as $\C(I_x)$ for some $x \in X$.  From the fact that
$\C(I_w)=\C(I_x)$ we conclude that $I_w = I_x$ (using Theorem~\ref{four.seven}
in the non-Dynkin case, and Lemma~\ref{notesDDynkin} in the Dynkin case).  
It then follows from \cite[Proposition III.1.9, Proposition III.3.5]{BIRS} that $w=x$.  
Therefore, $w\in X$. Thus
$X=W$, and Theorem~\ref{two.two} is proved.  
 
Theorem~\ref{two.three} now also follows.\end{proof}

\section{Infinite words}
In this section we extend our bijection between the Weyl group and
the cofinite quotient closed subcategories of $\mathcal P$, to a 
bijection between a specific class of subwords of ${\underline c}^\infty$ and 
arbitrary (i.e., not necessarily cofinite) quotient closed subcategories of $\mathcal P$.   

Let $Q$ be non-Dynkin.  
Fix the (one-way) infinite word ${\underline c}^\infty=\underline c\,
\underline c\, \underline c\dots$.  

We say
that an infinite subword ${\underline w}$ of ${\underline c}^\infty$ is {\it leftmost} if,
for all $n$, the subword of ${\underline c}^\infty$ consisting of the 
first $n$ letters of ${\underline w}$ is leftmost (among all reduced words 
in ${\underline c}^\infty$ for that element of $W$ --- i.e., in the usual sense).  

\begin{theorem} \label{noncofinite}
There is a bijective correspondence between the
leftmost subwords of ${\underline c}^\infty$ and the quotient closed subcategories
of the preprojective component of $\mod kQ$: the 
reflections in the word correspond to indecomposable objects not in the
subcategory.     
\end{theorem}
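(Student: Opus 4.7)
The strategy is to bootstrap from the cofinite case (Theorem~\ref{two.two}) by approximation. Given a quotient closed subcategory $\mathcal{A}\subseteq\mathcal{P}$, with set $\mathcal{X}$ of missing indecomposable preprojectives, for each integer $N$ I let $\mathcal{X}_N\subseteq\mathcal{X}$ consist of those at standard-order position at most $N$, and let $\mathcal{A}_N$ be the additive subcategory of $\mathcal{P}$ whose missing indecomposables are exactly $\mathcal{X}_N$. The plan is to show that each $\mathcal{A}_N$ is a cofinite quotient closed subcategory; apply Theorem~\ref{two.two} to extract its leftmost reduced word $\underline{w}_N$ in $\underline{c}^\infty$; and observe that the $\underline{w}_N$ nest coherently into a leftmost infinite subword $\underline{w}$ of $\underline{c}^\infty$ whose letter-positions are exactly those of $\mathcal{X}$.

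The central step is the quotient closure of $\mathcal{A}_N$. It uses the basic monotonicity property of the preprojective component: a nonzero morphism between indecomposables never strictly decreases the standard-order position. Concretely, $\Hom(\tau^{-j}P_i,\tau^{-k}P_l)=0$ for $k<j$, and for $k=j$ one has $\Hom(\tau^{-j}P_i,\tau^{-j}P_l)\cong\Hom(P_i,P_l)$, which vanishes when $l<i$ by our orientation convention. Given a surjection $X\twoheadrightarrow Y$ in $\mathcal{P}$, decompose $X=X_{\leq N}\oplus X_{>N}$ and $Y=Y_{\leq N}\oplus Y_{>N}$ by positions of the indecomposable summands. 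The block $X_{>N}\to Y_{\leq N}$ then vanishes, so $X_{\leq N}$ already surjects onto $Y_{\leq N}$. If $X\in\mathcal{A}_N$, the summands of $X_{\leq N}$ are at positions $\leq N$ and outside $\mathcal{X}_N$, hence outside $\mathcal{X}$, so $X_{\leq N}\in\mathcal{A}$; by the quotient closure of $\mathcal{A}$ we get $Y_{\leq N}\in\mathcal{A}$, and together with the automatic fact that $Y_{>N}$ has no summand in $\mathcal{X}_N$ this gives $Y\in\mathcal{A}_N$.

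For the inverse direction, given a leftmost infinite subword $\underline{w}$, each finite prefix is leftmost for some $w_n\in W_Q$ and corresponds by Theorem~\ref{two.two} to a cofinite quotient closed subcategory $\mathcal{A}_n\subseteq\mathcal{P}$; I set $\mathcal{A}:=\bigcap_n\mathcal{A}_n$. Quotient closure of $\mathcal{A}$ is immediate: any preprojective quotient of $X\in\mathcal{A}$ lies in every $\mathcal{A}_n$ by their individual quotient closure, hence in $\mathcal{A}$. The two constructions are manifestly mutually inverse, as both simply record the positions of the missing indecomposables in the standard order.

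The main obstacle is the quotient closure of the cofinite approximations $\mathcal{A}_N$; once that is established, everything else is a formal limit/truncation argument on top of Theorem~\ref{two.two}. The position-monotonicity of morphisms in the preprojective component is the essential technical ingredient making the approximation well-behaved, and it is what lets the proof of Theorem~\ref{two.two} propagate to the non-cofinite setting.
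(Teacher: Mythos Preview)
Your proposal is correct and follows essentially the same approach as the paper: approximate a non-cofinite quotient closed subcategory by cofinite ones obtained by keeping only an initial segment of the missing indecomposables, apply Theorem~\ref{two.two} to each approximation, and observe that the resulting leftmost words nest into an infinite leftmost subword (and conversely intersect the cofinite subcategories coming from finite prefixes). The only notable difference is that the paper indexes its approximations by the number $i$ of missing indecomposables removed, whereas you index by a position bound $N$; these coincide when $N$ equals the $i$-th missing position, and your careful position-monotonicity argument supplies the justification for the quotient closure of the approximations that the paper simply asserts as ``clear.''
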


\begin{proof} We need only worry about infinite words and non-cofinite
subcategories.  Let $\mathcal C$ be a non-cofinite quotient closed 
subcategory.  It determines a sequence $\mathcal C_1,\mathcal C_2,\dots$
where $\mathcal C_i$ consists of the indecomposable 
$kQ$-modules except for the $i$ leftmost indecomposables missing from
$\mathcal C$.  
Clearly, $\mathcal C_i$ is cofinite and 
quotient closed.  It therefore determines a word $w_i$.  By construction,
$w_{i-1}$ is a prefix of $w_i$, and thus they together define an infinite
word all of whose prefixes are leftmost, which means that it is, by 
definition, an (infinite) leftmost word in ${\underline c}^\infty$.  

The argument in the converse direction works in essentially the same way.  
Each finite prefix
determines a subcategory which is quotient closed; the intersection of
all these is a non-cofinite quotient closed subcategory. 
\end{proof}

Our main theorem, Theorem~\ref{main}
relates quotient closed subcategories, elements of $W$,
and certain ideals in $\Pi$.  One might therefore wonder if it is possible
to find a class of ideals of $\Pi$ in bijection with the subcategories
and words of the previous theorem.  The obvious way to do this fails.  
Specifically, let $w_i$ be the element of $W$ 
corresponding to the first $i$ symbols of an infinite subword 
${\underline w}$ of ${\underline c}^\infty$, and then take 
$I_{\underline w}=\bigcap I_{w_i}$.  It can happen that if ${\underline w}$ and ${\underline v}$ 
are distinct leftmost subwords of ${\underline c}^\infty$, then nonetheless 
$I_{\underline w}=I_{\underline v}$.  
For example, any leftmost 
subword ${\underline w}$ with
the property that each simple reflection occurs an infinite number
of times, will yield $I_{\underline w}=0$.  (Consider the case of
the quiver $\widetilde A_1$, with two arrows from vertex 1 to vertex 2.  
Now consider the infinite words $s_1s_2s_1s_2s_1\dots$ and $s_2s_1s_2s_1s_2\dots$.
The subcategory corresponding
to the first of these words is empty, while that corresponding to the
second word contains the simple projective. Both words nonetheless define the
$0$ ideal.  Further, note that there is no ideal $I$ of $\Pi$ such that 
$\mathscr C(I)$ is the additive hull of the simple projective.)

\medskip
Theorem \ref{noncofinite} gives a correspondence between certain subcategories and
certain subwords of 
$\underline{c}^\infty$.  This bijection seems somewhat different from the
bijection in Theorem~\ref{main}, in that, in that theorem, 
we biject
subcategories with elements of $W$, rather than with subwords of 
$\underline{c}^\infty$.  
However, theorem~\ref{main} could be formulated in a fashion parallel to
that of Theorem~\ref{noncofinite}, 
because every element of $W$ has a unique leftmost
expression as a subword of $\underline{c}^\infty$.  

We will now proceed instead 
to reformulate Theorem \ref{noncofinite} in a way which
is closer to Theorem \ref{main}.  In order to do so, we will replace the
leftmost subwords of $\underline c^\infty$ which appear in 
Theorem \ref{noncofinite} by certain equivalence classes of 
subwords of $\underline c^\infty$, 
such that the equivalence classes of finite subwords 
are just the reduced subwords in $\underline c^\infty$ for a given $w \in W$.    

We say that an infinite word is {\it reduced} if any prefix of it is
reduced.  We restrict our attention to such words.  

Say that the infinite reduced word ${\underline v}$ is a {\it braid limit} of the 
infinite reduced word ${\underline w}$ if 
there is some, possibly infinite, sequence
of braid moves $B_1,\dots$, which transforms ${\underline w}$ into ${\underline v}$, 
such that, 
for any particular position $n$, there is some $N(n)$ such that $B_j$ for
$j>N(n)$ only affects positions greater than $n$.  (This is a rephrasing
of the definition in \cite{LP}.)

Note that it is possible for ${\underline v}$ to be a braid limit of ${\underline w}$
even if the converse is not true.  An example in $\widetilde A_2$ 
(from \cite{LP}) is as
follows.  Let ${\underline w}=s_2s_1s_2s_3s_1s_2s_3\dots$ and let 
${\underline v}=s_1s_2s_3s_1s_2s_3\dots$.  
Transform ${\underline w}\to s_1s_2s_1s_3s_1s_2s_3\dots \to
s_1s_2s_3s_1s_3s_2s_3\dots$.  After $i$ steps, the first $2i$ positions
agree with ${\underline v}$.  However, there are no braid moves applicable to
${\underline v}$, thus ${\underline w}$ is certainly not a braid limit of ${\underline v}$.  
%It is also straightforward to establish that these two elements have
%different inversion sets (taking the natural extension of the notion
%for elements of $W$). 

We then have the following proposition, analogous to the statement that
any element of $W$ has a unique leftmost expression in $\underline{c}^\infty$.

\begin{proposition} Any infinite reduced word has a unique leftmost braid limit.
\end{proposition}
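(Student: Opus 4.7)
\emph{Proof plan.} I would prove existence and uniqueness simultaneously by constructing the leftmost braid limit greedily, letter by letter, and then arguing that any leftmost braid limit must agree with this construction.

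First I would fix the natural meaning of ``leftmost'' in this setting: a braid limit $\underline v = s_{j_1} s_{j_2} \cdots$ of $\underline w$ is leftmost provided that for every $n$, among all length-$n$ prefixes of braid limits of $\underline w$ agreeing with $\underline v$ on positions $1, \ldots, n-1$, the letter $s_{j_n}$ is the earliest in the cyclic ordering induced by $\underline c = s_1 \cdots s_n$. This is the direct infinite analogue of the leftmost condition on finite subwords of $\underline c^\infty$ used throughout the paper.

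For existence, I would build $\underline v$ inductively. Having chosen $s_{j_1}, \ldots, s_{j_{n-1}}$, I let $s_{j_n}$ be the earliest-in-$\underline c$ reflection $s_i$ for which some braid limit of $\underline w$ begins with $s_{j_1} \cdots s_{j_{n-1}} s_i$; the candidate set is non-empty by the braid limit produced at the previous stage, and finite because its members all appear as $n$-th letters of reduced expressions for elements of weak order below a fixed $w_m$. The finite braid-move sequences witnessing each stage are then assembled into a single sequence: at stage $n$ one performs only those moves needed to force position $n$ to equal $s_{j_n}$, moves which affect only positions up to some $N(n)$ growing with $n$, and thereafter one never disturbs positions $\le n$. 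This realizes exactly the stabilization condition in the definition of a braid limit.

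For uniqueness, suppose $\underline v^{(1)}$ and $\underline v^{(2)}$ are both leftmost braid limits of $\underline w$ agreeing on positions $1, \ldots, n-1$. Each of their length-$n$ prefixes is obtained from a sufficiently long prefix of $\underline w$ by finitely many braid moves that no longer touch positions $\le n$, so by Matsumoto's theorem both prefixes represent the same Weyl group element and are connected to one another by a finite sequence of braid moves fixing positions $1, \ldots, n-1$. The leftmost condition applied to each forces the $n$-th letter to be the common earliest-in-$\underline c$ option, so $\underline v^{(1)}$ and $\underline v^{(2)}$ agree on position $n$ as well, completing the induction. The main obstacle will be verifying the stabilization condition for the greedily constructed word, i.e.~showing that freezing positions $1, \ldots, n$ at stage $n$ remains compatible with the modifications needed to realize the leftmost choice at all later stages; this is the heart of the bookkeeping argument sketched above.
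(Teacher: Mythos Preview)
There are two genuine gaps. First, your working definition of ``leftmost'' is not the paper's. The paper has already fixed the meaning just before the proposition: an infinite word, viewed as a subword of $\underline c^\infty$, is leftmost if every finite prefix occupies the lexicographically minimal positions in $\underline c^\infty$ among all reduced subwords for that element. Your recursive condition on the $n$-th letter of $\underline v$ being ``earliest in the cyclic ordering induced by $\underline c$'' is neither equivalent nor well-defined (a cyclic order has no earliest element). The correct greedy procedure walks through the \emph{positions} of $\underline c^\infty$, deciding at each whether that position is used, rather than through the letters of $\underline v$.

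Second, and more seriously, your existence argument lacks the engine that makes the greedy construction run. The paper proceeds through $\underline c^\infty$ one position at a time; at a position carrying the reflection $s_i$, one must decide whether the current remaining infinite reduced word $\underline u$ admits a braid limit beginning with $s_i$, and if so, realize this with finitely many braid moves. The tool that settles both points is the infinite Exchange Lemma of Lam--Pylyavskyy \cite[Lemma 4.8]{LP}: either $s_i\underline u$ is again reduced, in which case no finite prefix of $\underline u$ is braid-equivalent to a word starting with $s_i$ and hence no braid limit of $\underline u$ can begin with $s_i$; or a unique letter of $\underline u$ cancels against $s_i$, in which case a \emph{finite} sequence of braid moves brings $s_i$ to the front. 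This dichotomy simultaneously drives the greedy choice, proves uniqueness at that step, and guarantees the stabilization required by the definition of braid limit. Without it, your ``bookkeeping argument'' for stabilization has no content. Your separate uniqueness argument also stumbles: two braid limits agreeing on positions $1,\dots,n-1$ but differing at position $n$ have length-$n$ prefixes representing \emph{different} Weyl group elements, so Matsumoto's theorem cannot be invoked as you propose.
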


\begin{proof}
Let $s_1$ be the first reflection in ${\underline c}^\infty$.  
\cite[Lemma 4.8]{LP} (an extension to infinite words of the usual Exchange
Lemma from Coxeter theory) states that, given an infinite word ${\underline w}$, one of
two things will happen when we consider the infinite word ${s_1\underline w}$:
either it will be reduced in turn, or there will be a unique reflection from
${\underline w}$ which cancels with $s_1$, leaving some 
${\underline {\hat w}}$.  

In the first case, no finite prefix of 
$\underline w$ is equivalent under braid moves 
to a word beginning $s_1$.  Since a finite
number of braid moves can only alter a finite prefix of ${\underline w}$, 
it follows that no braid limit for $\underline w$ can begin with $s_1$.  
In the second case, ${\underline w}$ is equivalent,
after a finite number of braid moves, to $s_1{\underline {\hat w}}$.  
Clearly, in this case, ${\underline w}$ admits a braid limit which begins
with $s_1$.

Therefore, if the first case holds, no braid limit for ${\underline w}$ can
involve the initial $s_1$, so it plays no role and we can continue on
to consider the next simple
reflection in ${\underline c}^\infty$.  In the second case, a finite number
of braid moves suffice to bring $s_1$ to the front of ${\underline w}$, and we 
can now go on to find a braid limit for ${\underline {\hat w}}$ beginning with
the second reflection in ${\underline c}^\infty$.   
\end{proof}

Say that two (possibly infinite) reduced words in the simple generators of $W$ 
are equivalent if they have the same leftmost braid limit in
$\underline{c}^\infty$.  (The equivalence classes in which the words are 
of finite length correspond naturally to elements of $W$.)
Then we can restate Theorem~\ref{noncofinite}
in the following way:

\begin{corollary} There is a bijection between equivalence classes of
reduced words in the simple generators of $W$ and 
quotient closed subcategories of the preprojective component of mod $kQ$.  
\end{corollary}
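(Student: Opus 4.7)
The plan is to combine Theorem~\ref{noncofinite} with the preceding Proposition to obtain the stated bijection. First I would observe that, by the Proposition together with the standard fact that any two reduced expressions for a finite $w \in W$ are connected by braid moves, every reduced word (finite or infinite) has a unique leftmost braid limit; hence equivalence classes of reduced words correspond bijectively to the set of all leftmost braid limits in $\underline c^\infty$.

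Next, I would identify the leftmost braid limits with the leftmost subwords of $\underline c^\infty$ appearing in Theorem~\ref{noncofinite}. For the forward direction, the construction in the proof of the Proposition scans $\underline c^\infty$ from left to right and greedily includes the current reflection whenever the current tail can be braided to begin with it; an induction on the length of a finite prefix shows that each finite prefix of the output is the leftmost reduced expression, in $\underline c^\infty$, for the corresponding element of $W$, so the output is itself a leftmost subword. For the converse, if $\underline w$ is a leftmost subword of $\underline c^\infty$, then no word braid-equivalent to it can place any reflection strictly earlier in $\underline c^\infty$ on a finite prefix, so the greedy construction applied to $\underline w$ reproduces $\underline w$ itself. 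This exhibits a bijection between equivalence classes of reduced words and leftmost subwords of $\underline c^\infty$.

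Once these sets are identified, Theorem~\ref{noncofinite} (which handles both the cofinite case, via Theorem~\ref{main}, and the non-cofinite case) translates this bijection into the desired bijection with quotient closed subcategories of the preprojective component of $\mod kQ$.

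The main obstacle will be verifying the identification of leftmost braid limits with leftmost subwords of $\underline c^\infty$: one needs to check that the greedy procedure of the Proposition never omits a reflection whose inclusion would produce a strictly more leftmost subword, and conversely that a leftmost subword is fixed by that procedure. Both verifications should follow from the infinite-word Exchange Lemma \cite[Lemma 4.8]{LP} already used in the proof of the Proposition, which controls precisely when a later reflection in a reduced word can be brought to the front via braid moves.
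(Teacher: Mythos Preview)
Your proposal is correct and follows the same route as the paper, which in fact gives no separate proof at all: the Corollary is presented simply as a restatement of Theorem~\ref{noncofinite} once the equivalence relation via leftmost braid limits has been introduced. Your careful identification of leftmost braid limits with leftmost subwords of $\underline c^\infty$ is more explicit than anything the paper writes down, but it is exactly the content implicit in the greedy construction in the Proposition's proof, so there is no divergence in approach.
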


\section{Quotient closed subcategories of modules over hereditary algebras over finite
fields}
In this section, we show how to extend our analysis of quotient closed
subcategories of modules for path algebras over algebraically closed fields 
to quotient closed subcategories of modules for arbitrary hereditary
algebras over finite fields.  

Let $\F$ be a finite field with $q$ elements, and 
let $\oF$ be its algebraic closure.  
Let $H$ be a hereditary algebra over $\F$.  We first recall (see 
Subsection \ref{recsec} below) that
there is a quiver $Q$ and an $\F$-endomorphism $F$ of $\oF Q$ called a 
Frobenius morphism, such that $H\cong (\oF Q)^F$, the $\F$-subspace of 
$\oF Q$ fixed under $F$.  
We then invoke a theorem which says
that the module category of $(\oF Q)^F$ is equivalent to the category of
$F$-stable representations of $\oF Q$.  (The $F$-stable representations
of $\oF Q$, defined below, are a certain non-full subcategory of the
representations of $\oF Q$.)
We apply our analysis of
quotient closed categories for path algebras over algebraically closed
fields to analyze quotient closed subcategories of the
$F$-stable representations of $\oF Q$.  

The Frobenius morphism $F$ comes
from a certain quiver automorphism $\sigma$ of $Q$.  There is a Weyl group
$W_{Q,\sigma}$ 
(typically not simply laced) associated to the quotient of $Q$ by
$\sigma$.  We prove Theorem \ref{fqmain}, an analogue of our Main Theorem,
which applies to
cofinite quotient closed subcategories of $H$-modules. 

Our main reference for the techniques specific to this section
is \cite[Chapters 2 and 3]{DDPW}.

\subsection{Recollection of results on Frobenius twisting}\label{recsec}
%Let $\f$ denote the field automorphism of $\oF$ taking 
%$\lambda$ to $\lambda^q$.  So $\oF^\f=\F$.  
%
Let $V$ be an $\oF$ vector space. 
An $\F$-linear isomorphism $F:V\to V$ is called a Frobenius map on $V$ if:
\begin{itemize}
\item $F(\lambda v)=\lambda^q v$ for $\lambda \in \oF$.
\item for any $v \in V$, $F^t(v)=v$ for some $t\geq 1$.  
\end{itemize}

Let $A$ be an $\oF$-algebra.  A
Frobenius morphism on $A$ is a Frobenius map on the underlying vector space of 
$A$ which also satisfies $F(ab)=F(a)F(b)$ for $a,b$ in $A$. We write 
$A^F$ for the elements of $A$ fixed by $F$.  It is an $\F$-subalgebra of $A$.   

For example, let $Q$ be a quiver with an automorphism $\sigma$.  
Define a Frobenius morphism $F=F_\sigma: \oF Q\to \oF Q$ by sending
$\sum x_ip_i$ to $\sum x_i^q\sigma(p_i)$, for $x_i \in \oF$ and $p_i$ paths
in $Q$.  %(This is \cite[Example 2.7]{DDPW})
In fact, this example plays an important role.  

\begin{theorem} [{\cite[Theorem 3.40]{DDPW}}] \label{setupt}
Any hereditary algebra $H$ over 
$\F$ can be realized as $(\oF Q)^{F_\sigma}$ for a suitable quiver $Q$ and
quiver automorphism $\sigma$.  
\end{theorem}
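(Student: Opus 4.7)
The plan is to establish the result by Galois descent from $\oF$ down to $\F$. First I would form the base change $\bar H := H \otimes_\F \oF$, a finite-dimensional hereditary $\oF$-algebra carrying a natural $\F$-semi-linear action of the arithmetic Frobenius $\phi$, defined by $\phi(h \otimes \lambda) = h \otimes \lambda^q$. The key point recorded for later use is that $(\bar H)^\phi = H$, so once $\phi$ is identified with a Frobenius morphism $F_\sigma$ on some path algebra model, the theorem follows.

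Next, assuming $H$ is basic (otherwise one replaces $H$ by its basic version, noting that Morita equivalence is compatible with the base change and with taking Frobenius fixed points), I would invoke Gabriel's theorem over the algebraically closed field $\oF$: the algebra $\bar H$ is isomorphic to $\oF Q$ for a unique finite acyclic quiver $Q$, recovered from a complete set of primitive orthogonal idempotents and the $\Ext^1$-quiver of $\bar H$. Fixing such an isomorphism $\bar H \cong \oF Q$, the Frobenius action $\phi$ transports to a semi-linear $\F$-algebra automorphism $F$ of $\oF Q$ (semi-linear in the sense that $F(\lambda a) = \lambda^q F(a)$ for $\lambda \in \oF$).

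The technical heart of the proof is to arrange that $F = F_\sigma$ for some quiver automorphism $\sigma$ of $Q$. Since $F$ permutes the isomorphism classes of simple $\bar H$-modules, it induces a permutation $\sigma$ on the vertices of $Q$. After conjugating the chosen isomorphism $\bar H \cong \oF Q$ by a suitable inner automorphism (a Skolem--Noether style argument, using that $F$ fixes the set of conjugacy classes of complete systems of primitive idempotents), one can arrange $F(e_i) = e_{\sigma(i)}$ on a complete set of primitive idempotents. Then for each arrow $a \colon i \to j$ of $Q$, the image $F(a)$ is necessarily a scalar multiple of $\sigma(a) \colon \sigma(i) \to \sigma(j)$, since $e_{\sigma(i)} (\oF Q) e_{\sigma(j)}$ is one-dimensional in the appropriate degree; it remains to rescale the arrows so that $F(a) = \sigma(a)$ on the nose.

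The main obstacle I anticipate is precisely this last normalization step. For each $\sigma$-orbit of arrows of length $t$, rescaling amounts to finding $\lambda_1,\dots,\lambda_t \in \oF^\times$ satisfying an equation of the form $\lambda_1 \lambda_2^{\,q} \cdots \lambda_t^{\,q^{t-1}} = c$ for a prescribed $c \in \oF^\times$. Such equations are always solvable, as they are instances of the multiplicative Lang--Steinberg theorem for the torus $(\oF^\times)^E$ equipped with the $\sigma$-twisted Frobenius action; the solvability uses in an essential way that $\oF$ is algebraically closed and that the relevant equations become genuine polynomial equations over $\oF$. Once the arrows are rescaled accordingly, the semi-linear automorphism $F$ coincides with $F_\sigma$, and by construction $H = (\bar H)^\phi = (\oF Q)^{F_\sigma}$, proving the theorem.
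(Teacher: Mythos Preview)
The paper does not give its own proof of this statement: it is quoted verbatim from \cite[Theorem~3.40]{DDPW} and used as a black box in Section~9. There is therefore nothing in the paper to compare your proposal against.

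Your outline is the standard Galois-descent argument and is essentially correct. One point deserves more care. You write that for an arrow $a\colon i\to j$ the image $F(a)$ is \emph{necessarily a scalar multiple of $\sigma(a)$} because the degree-one piece $e_{\sigma(i)}(\oF Q)e_{\sigma(j)}$ is one-dimensional. That is only true when there is a single arrow from $i$ to $j$. If $Q$ has multiple arrows between the same pair of vertices, $F$ is merely a $q$-semilinear bijection of the arrow space onto the arrow space at $(\sigma(i),\sigma(j))$, and you still need to produce a \emph{basis} of arrows on which it acts by a permutation. This is again a Lang--Steinberg argument, but for $\mathrm{GL}$ of the arrow space rather than for a torus, so your torus version does not quite cover it. A second small point: $(\oF Q)^{F_\sigma}$ is always basic, so the statement as written is really about basic $H$; your reduction ``replace $H$ by its basic version'' is the right move, but then the conclusion you obtain is an isomorphism for the basic algebra, i.e.\ a Morita equivalence for the original $H$, which is how the result is used in the paper.
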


%From now on, we fix this choice of $Q$, $\sigma$, and $F$.  

%We will explain how to use $\sigma$ to 
%define a Frobenius morphism $F$ of $\oF Q$, so that the
%$F$-stable representations over $\oF$ are equivalent to the representations
%of the $\F$-algebra $(\oF Q)^F$.  Any hereditary algebra over a finite
%field can be realized as $(\oF Q)^F$ for some $Q$ and $\sigma$.  

%We will then show that quotient closed categories of $(\oF Q)^F$-modules
%can be described in terms of our usual Coxeter combinatorics, in the 
%non-simply-laced Weyl group corresponding to $Q/\sigma$.  

%{\it Review of Frobenius twisting.}
%I am following closely \cite[Chapter 2]{DDPW}.  I am using left
%modules because they do, and all my modules and algebras
%are finite dimensional. 

Let $(A,F_A)$ be an algebra with a Frobenius morphism.  Let $M$ be an 
$A$-module, with a Frobenius map $F_M$ (i.e., a Frobenius map on the 
underlying vector space of $M$).  

Define a new $A$-module, $M^{[F_M]}$, which is called the 
\emph{Frobenius twist} of $M$,  
by letting the underlying vector space be that of $M$,
and defining the $A$-action by:

$$m * a = F_M(F_M^{-1}(m)F_A^{-1}(a)) $$

Up to isomorphism, this $A$-module is independent of the choice of $F_M$.

A module $M$ is called $F$-stable if for some Frobenius
map $F_M$ (or, equivalently, for all of them) we have
$M\cong M^{[F_M]}$.  If $M$ is $F$-stable, it is possible to choose $F_M$ so
that $M=M^{[F_M]}$, or equivalently 
\begin{equation}\label{goodF} F_M(ma)=F_M(m)F_A(a).\end{equation}  
The $F$-stable modules form a category: its objects
are pairs $(M,F_M)$ with $F_M$ satisfying (\ref{goodF}), and the morphisms
from $(M,F_M)$ to $(N,F_N)$ are maps $f\in \Hom(M,N)$ satisfying
$f \circ F_M=F_N\circ f$.  

Now we can state the following theorem:
\begin{theorem}[{\cite[Theorem 2.16]{DDPW}}] \label{equiv}
The category of $F$-stable $A$-modules is equivalent to the category of
$A^F$-modules. 
\end{theorem}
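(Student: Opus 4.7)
The plan is to exhibit mutually quasi-inverse functors between the two categories and verify the required isomorphisms by a descent argument. First I would define the \emph{fixed-point functor} $\Phi \colon (M, F_M) \mapsto M^{F_M} := \{ m \in M \mid F_M(m) = m\}$. The compatibility condition $F_M(ma) = F_M(m) F_A(a)$ shows that for $a \in A^F$ and $m \in M^{F_M}$ we have $F_M(ma) = ma$, so $M^{F_M}$ inherits an $A^F$-module structure. Morphisms $f \colon (M,F_M) \to (N,F_N)$ satisfying $f \circ F_M = F_N \circ f$ obviously restrict to $A^F$-linear maps $M^{F_M} \to N^{F_N}$. In the other direction, I would define the \emph{extension functor} $\Psi \colon N \mapsto N \otimes_{A^F} A$, equipped with the Frobenius map $F_{\Psi(N)}(n \otimes a) = n \otimes F_A(a)$; this is well-defined because $F_A$ fixes $A^F$ pointwise, and it satisfies the compatibility~\eqref{goodF} by construction.

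The main technical input is a Lang--Steinberg-style descent lemma for $\oF$-vector spaces: if $V$ is a finite-dimensional $\oF$-vector space equipped with a Frobenius map $F$, then the natural map
\[ V^F \otimes_{\F} \oF \to V, \qquad v \otimes \lambda \mapsto \lambda v, \]
is an isomorphism, and in particular $\dim_{\F} V^F = \dim_{\oF} V$. This follows because $F$ has finite order on each finite-dimensional $F$-stable subspace, so one may exhibit an $\oF$-basis of $V$ lying in $V^F$ (equivalently, the class of $F$ in $H^1(\langle F \rangle, GL_n(\oF))$ is trivial by Hilbert~90). I would apply this lemma to the underlying vector space of any $F$-stable $A$-module $(M,F_M)$.

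With the lemma in hand, checking that $\Phi$ and $\Psi$ are quasi-inverse reduces to two natural maps. The counit $\Psi \Phi(M,F_M) = M^{F_M} \otimes_{A^F} A \to M$, sending $m \otimes a$ to $ma$, is an isomorphism of $A$-modules by the descent lemma applied to $M$ (it is surjective by the dimension count, and an isomorphism because the Frobenius structures match). The unit $N \to \Phi \Psi(N) = (N \otimes_{A^F} A)^F$, sending $n$ to $n \otimes 1$, is seen to be an isomorphism by applying the descent lemma to $A$ itself (which gives $A \cong A^F \otimes_{\F} \oF$) and tensoring with $N$, so that $N \otimes_{A^F} A \cong N \otimes_{\F} \oF$ with $F$ acting on the second factor, whose fixed points are exactly $N$.

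The step I expect to be the main obstacle is verifying that the counit is $A$-linear and compatible with the Frobenius maps, because one must juggle the two different $A$-actions (the original one on $M$ and the one on $M^{F_M} \otimes_{A^F} A$ coming from the right factor), and ensure that the identification supplied by the descent lemma on the underlying vector space is genuinely an $A$-module map. Once this bookkeeping is done, functoriality in both directions is routine, and the equivalence of categories follows.
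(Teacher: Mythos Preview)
The paper does not prove this theorem; it is quoted from \cite[Theorem~2.16]{DDPW} and used as a black box. Your sketch is the standard Galois descent argument, and it is essentially what is carried out in the cited reference, so there is nothing to compare against here.

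Your argument is correct in outline. One small point: the step you flag as the main obstacle (that the counit is $A$-linear and Frobenius-equivariant) is in fact routine bookkeeping, exactly as you compute it. The slightly more delicate step is the one you pass over: showing that the counit $M^{F_M}\otimes_{A^F} A \to M$ is bijective. Your descent lemma gives $M^{F_M}\otimes_{\F}\oF \cong M$, but the counit is a tensor product over $A^F$, not over $\F$. The fix is exactly what you use for the unit: apply the descent lemma to $A$ itself to get $A \cong A^F\otimes_{\F}\oF$ as an $(A^F,\oF)$-bimodule, whence
\[
M^{F_M}\otimes_{A^F} A \;\cong\; M^{F_M}\otimes_{A^F}\bigl(A^F\otimes_{\F}\oF\bigr)\;\cong\; M^{F_M}\otimes_{\F}\oF \;\cong\; M.
\]
The phrase ``by the dimension count'' should also be replaced by this argument, since the modules involved need not be finite dimensional; the second axiom on Frobenius maps (every vector is $F$-periodic) is what makes the descent lemma apply in general, by writing $V$ as a filtered union of finite-dimensional $F$-stable subspaces.
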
 
%(If $M$ is $F$-stable, then there exists a Frobenius map
%$F_M$ such that $F_M(am)=F_A(a)F_M(m)$.  Then
%$M^F=\{m\in M\mid F_M(m)=m\}$
%is a module over $A^F$.)

We want to understand the indecomposable $F$-stable $A$-modules.  It turns
out that they can be constructed as follows.  
Let $M$ be an
$A$-module with Frobenius morphism $F_M$. 
Let $r$ be the maximum possible so that 
$M$, $M^{[F_M]}$, $M^{[F_M]^2},\dots$, $M^{[F_M]^{r-1}}$ are pairwise non-isomorphic. 
(Such an $r$ exists by \cite[Proposition 2.13]{DDPW}.) We
can choose $F_M$ so that $M=M^{[F_M]^{r}}$ (note that here we want equality, 
not just isomorphism).
Let 
$$\widetilde M= M\oplus  M^{[F_M]}\oplus  M^{[F_M]^2} \oplus \cdots 
\oplus M^{[F_M]^{r-1}},$$

and define $F_{\widetilde M}$ by 
$$F_{\widetilde M}(m_0,\dots,m_{r-1})=
(F_M(m_{r-1}),F_M(m_0),\dots F_M(m_{r-2})).$$

\begin{theorem}[{\cite[Theorem 2.20]{DDPW}}] \label{indec}
Let $M$ be an indecomposable $A$-module.  Then 
$\widetilde M$ is an indecomposable $F$-stable module, 
and every
indecomposable $F$-stable module arises in this way.  
\end{theorem}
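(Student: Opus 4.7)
The plan is to prove the two halves of the statement by combining Krull--Schmidt for $A$-modules with the following basic observation: if $(N,F_N)$ is any $F$-stable $A$-module and $M' \subseteq N$ is an $A$-submodule, then $F_N(M')$ is again an $A$-submodule (because $F_N(m')a = F_N(m' F_A^{-1}(a)) \in F_N(M')$), and the restriction $F_N|_{M'} : M' \to F_N(M')$ realises $F_N(M')$ as the Frobenius twist $M'^{[F_N|_{M'}]}$. Thus $F_N$ permutes the set of indecomposable summands of $N$ (from any Krull--Schmidt decomposition), with each summand sent to its Frobenius twist.

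First I would verify that $\widetilde M$ really is $F$-stable: by construction $F_{\widetilde M}$ is a Frobenius map on the underlying vector space, and the compatibility $F_{\widetilde M}(m a) = F_{\widetilde M}(m) F_A(a)$ is checked componentwise from the fact that $F_M$ is a Frobenius on $M$ and that $(M^{[F_M]^i})$ has the appropriate twisted $A$-action. Next I would show indecomposability. Suppose $(\widetilde M, F_{\widetilde M}) \cong (N_1,F_1) \oplus (N_2,F_2)$ in the category of $F$-stable modules. By Krull--Schmidt applied to $\widetilde M$ as an $A$-module, $N_1$ is a direct sum of a subfamily of the pairwise non-isomorphic indecomposables $M, M^{[F_M]}, \ldots, M^{[F_M]^{r-1}}$. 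Since $F_1$ is the restriction of $F_{\widetilde M}$, and $F_{\widetilde M}$ is a cyclic shift sending the $i$-th summand into the $(i{+}1)$-th, the index set for $N_1$ must be closed under this cyclic shift. Transitivity forces $N_1 = 0$ or $N_1 = \widetilde M$.

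For the converse, let $(N, F_N)$ be an indecomposable $F$-stable module. Decompose $N = M_1 \oplus \cdots \oplus M_s$ into indecomposable $A$-modules. By the opening observation, $F_N$ induces a permutation on the isomorphism classes of the $M_j$ via Frobenius twisting. If this permutation had more than one orbit, one could gather the summands in a single orbit to obtain a proper $F$-stable direct summand of $(N,F_N)$, contradicting indecomposability. Hence the permutation is a single cycle. Setting $M = M_1$ and letting $r$ be the length of the cycle, the summands of $N$ are, up to reordering, $M, M^{[F_M]}, \ldots, M^{[F_M]^{r-1}}$, and these are pairwise non-isomorphic by minimality of $r$.

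The main obstacle will be the last step: upgrading the isomorphism class match to the statement that $(N,F_N)$ is actually isomorphic to $(\widetilde M, F_{\widetilde M})$ as $F$-stable modules, not just that its $A$-module type agrees. After choosing isomorphisms $\varphi_i : M_{j(i)} \xrightarrow{\sim} M^{[F_M]^i}$ of $A$-modules, the compatibility $F_N(m a) = F_N(m)F_A(a)$ forces the transported Frobenius on $\bigoplus_{i=0}^{r-1} M^{[F_M]^i}$ to act between consecutive summands as $F_M$ up to an $A$-linear automorphism of $M$. Absorbing those automorphisms into the $\varphi_i$ (and using $r$-fold iteration plus Lang's triviality of $1$-cocycles for the action of $F$ on $\mathrm{Aut}_A(M)$, to adjust the initial choice of $F_M$ so that $M = M^{[F_M]^r}$ holds on the nose) brings $F_N$ into the exact cyclic form of $F_{\widetilde M}$. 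This cocycle adjustment is the one nontrivial technical point; the rest is bookkeeping around Krull--Schmidt.
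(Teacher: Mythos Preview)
The paper does not contain a proof of this statement: it is quoted verbatim as \cite[Theorem 2.20]{DDPW} and used as a black box, so there is no ``paper's own proof'' to compare your proposal against.

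That said, your outline is essentially the standard argument one finds in \cite{DDPW}. The indecomposability of $\widetilde M$ via Krull--Schmidt plus transitivity of the cyclic shift is correct (note that strictly speaking Krull--Schmidt only gives that an $F$-stable summand $N_1$ is \emph{isomorphic} as $A$-module to a sum over a subset $S \subseteq \{0,\dots,r-1\}$; you then use $F$-stability of $N_1$, i.e.\ $N_1 \cong N_1^{[F_1]}$, together with pairwise non-isomorphism of the twists, to conclude $S$ is shift-invariant). For the converse direction you have correctly identified the genuine technical point: matching the $A$-module type of $N$ with $\widetilde M$ is easy, but upgrading to an isomorphism of $F$-stable modules requires trivialising a $1$-cocycle in $\mathrm{Aut}_A(M)$ under the Frobenius action, which is exactly the Lang--Steinberg argument used in \cite{DDPW}. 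Your sketch of absorbing the discrepancy automorphisms into the $\varphi_i$ is the right idea; the only thing to be careful about is that Lang's theorem applies because $\mathrm{Aut}_A(M)$ is (the $\overline{\mathbb F}_q$-points of) a connected algebraic group, which in turn uses that $\mathrm{End}_A(M)$ is local for $M$ indecomposable.
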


The AR-sequences of $A$-mod and $A^F$-mod are closely related:
\begin{theorem}[{\cite[Theorem 2.48]{DDPW}}]
Every almost split sequence
$$0\to X \to Y \to Z \to 0$$ in 
$A$-mod gives rise to an almost split sequence
$$0\to \wt X^F \to \wt Y^F \to \wt Z^F \to 0$$
in $A^F$-mod and every almost split sequence in 
$A^F$-mod arises in this way.
\end{theorem}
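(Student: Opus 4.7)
The plan is to transport everything through the equivalence of Theorem~\ref{equiv} between the category of $F$-stable $A$-modules and $\mod A^F$. The key claim then reduces to showing that $M \mapsto (\wt M, F_{\wt M})$ sends almost split sequences in $\mod A$ to almost split sequences in the category of $F$-stable $A$-modules, and conversely that every almost split sequence of $F$-stable modules arises this way.

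For the forward direction, I would start with an almost split sequence $0 \to X \to Y \to Z \to 0$ in $\mod A$. Frobenius twisting $(-)^{[F]}$ is an autoequivalence of $\mod A$, so it carries almost split sequences to almost split sequences; taking the direct sum of the twists over a full orbit $0 \le i < r$ yields a short exact sequence $0 \to \wt X \to \wt Y \to \wt Z \to 0$ on whose terms the cyclic-shift morphisms $F_{\wt X}, F_{\wt Y}, F_{\wt Z}$ are intertwined by the maps. Non-splitness is inherited (a splitting would restrict to a splitting of the $0$-th summand), and $\wt Z$ is indecomposable in the $F$-stable category by Theorem~\ref{indec}.

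The heart of the argument is the right almost split property. Given an $F$-equivariant morphism $g = (g_0,\dots,g_{r-1}) \colon (W,F_W) \to (\wt Z,F_{\wt Z})$ which is not a split epimorphism, each component $g_i\colon W \to Z^{[F]^i}$ fails to be a split epimorphism (otherwise $F$-equivariance would propagate this to all components and then to $g$), so by the almost split property of the $i$-th twisted sequence there is a lift $h_i \colon W \to Y^{[F]^i}$ of $g_i$. The subtle step is to choose these lifts in an $F$-equivariant manner so that they assemble into a single morphism $W \to \wt Y$ of $F$-stable modules. I would pick any lift $h_0$, define $h_{i+1} = F_Y \circ h_i \circ F_W^{-1}$, and then correct the resulting discrepancy $h_r - h_0 \in \Hom_A(W,X)$ using that the relevant $H^1$ of the cyclic group generated by Frobenius on a finite-dimensional $\oF$-vector space vanishes (an averaging argument of Lang's theorem type).

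For the converse, Theorem~\ref{indec} guarantees that every indecomposable $F$-stable module is of the form $\wt Z$ for an indecomposable $Z \in \mod A$. Given an almost split sequence in $\mod A^F$ ending at the corresponding object, the candidate sequence produced by the forward construction from the almost split sequence in $\mod A$ ending at $Z$ must agree with it by uniqueness of almost split sequences. The main obstacle throughout is the $F$-equivariant lifting step in the third paragraph; once that cohomological vanishing is in place, both directions follow cleanly.
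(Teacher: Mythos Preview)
The paper does not supply its own proof of this statement: it is quoted verbatim as \cite[Theorem~2.48]{DDPW} and used as a black box, so there is nothing in the paper to compare your argument against.

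That said, your outline is essentially the standard strategy and is on the right track. A couple of points deserve care. First, the orbit sizes: you implicitly take a single $r$ for $X$, $Y$, and $Z$, but while $X=\tau Z$ shares the orbit length of $Z$ (since $\tau$ commutes with the twist), the middle term $Y$ need not be indecomposable and its summands may have smaller periods; your construction of $\wt Y$ as the direct sum of all $r$ twists is still $F$-stable, but it is then a direct sum of several indecomposable $F$-stable modules, which is fine for the statement. Second, in the lifting step your maps $F_Y, F_W$ are only $q$-semilinear, not $A$-linear, so the recursion $h_{i+1}=F_Y\circ h_i\circ F_W^{-1}$ must be read as producing an $A$-linear map $W\to Y^{[F]^{i+1}}$ via the twist, not a map of untwisted modules; once formulated correctly the discrepancy after $r$ steps is indeed a $1$-cocycle for the Frobenius action on the finite-dimensional $\oF$-space $\Hom_A(W,X)$, and Lang's theorem (or the fact that $H^1$ of Frobenius on such a space vanishes) kills it. With those adjustments your argument goes through.
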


In particular, in the case we considered above, where $A=\oF Q$ and $F=F_\sigma$
for $\sigma$ an automorphism of $Q$, 
there is a nice description of the preprojective 
component of $A^F$-mod.

\begin{theorem}[{\cite[Theorem 3.30]{DDPW}}]
The preprojective component of the AR quiver of 
$(\oF Q)^F$ is (up to multiplicities of arrows) a translation quiver on 
$Q/\sigma$, whose vertices are labelled by $\sigma$-orbits of 
preprojective indecomposable representations of $\oF Q$.  
\end{theorem}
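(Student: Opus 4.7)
The plan is to pull back the translation quiver structure from $\mod \oF Q$ to $\mod (\oF Q)^F$ via the correspondence between indecomposable $F_\sigma$-stable $\oF Q$-modules and $\sigma$-orbits of indecomposable $\oF Q$-modules (Theorem~\ref{indec}), combined with the correspondence of almost split sequences stated just above. Essentially all of the content is inherited from the classical translation quiver structure on the preprojective component of $\mod \oF Q$.

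First I would verify that Frobenius twisting $M \mapsto M^{[F_\sigma]}$ preserves the preprojective component of $\mod \oF Q$: on indecomposable projectives one has $P_i^{[F_\sigma]} \cong P_{\sigma(i)}$, and the twist commutes up to natural isomorphism with $\tau^{-1}$, so $(\tau^{-n}P_i)^{[F_\sigma]} \cong \tau^{-n}P_{\sigma(i)}$. Hence $F_\sigma$-orbits of indecomposables preserve preprojectivity, and through Theorem~\ref{indec} the indecomposable preprojective $(\oF Q)^F$-modules (up to isomorphism) correspond bijectively to $\sigma$-orbits of indecomposable preprojective $\oF Q$-modules, supplying the vertex set asserted in the statement. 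Next, Theorem 2.48 yields that AR-sequences in $\mod (\oF Q)^F$ are precisely the $\widetilde{(-)}^F$-images of AR-sequences in $\mod \oF Q$, so on orbits $\tau[\widetilde M^F] = [\widetilde{\tau M}^F]$, and each irreducible map between two indecomposable $F$-stable modules comes from a $\sigma$-orbit of irreducible maps between representatives. Consequently the preprojective component of the AR-quiver of $(\oF Q)^F$ is obtained from that of $\mod \oF Q$ by passing to $\sigma$-orbits of vertices and arrows, so its underlying unvalued quiver has shape $Q/\sigma$ with the descended translation.

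The main obstacle---and the reason for the qualifier ``up to multiplicities of arrows''---is that the AR-quiver of $(\oF Q)^F$ is genuinely valued: $\End_{(\oF Q)^F}(\widetilde M^F)$ is an extension field of $\F$ of degree equal to the length $r$ of the $\sigma$-orbit of $M$, and the irreducible-map space between two orbits combines contributions from all $\sigma$-orbits of arrows between representatives. Reading off the exact valuation requires bookkeeping for these orbit lengths, which is not part of the claim; what is claimed---the bijection on vertices, the compatible translation, and the shape of the underlying quiver---follows directly from the two steps above.
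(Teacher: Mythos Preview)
The paper does not prove this statement: it is quoted verbatim from \cite[Theorem 3.30]{DDPW} and used as a black box, with the remark that ``the cited theorem provides a more precise statement about multiplicities of arrows, but we will not need it.'' So there is no proof in the paper to compare your proposal against. Your sketch is a reasonable outline of how the result can be assembled from the other facts the paper quotes from \cite{DDPW} (Theorems~\ref{indec} and the AR-sequence correspondence), and it correctly identifies why the multiplicities require separate bookkeeping.
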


The cited theorem provides a more precise statement about multiplicities of arrows, but we will not need it.  

\subsection{Quotient closed subcategories of hereditary algebras over
$\F$}

Let $H$ be a hereditary algebra over $\F$.  Fix a quiver $Q$, 
a quiver automorphism $\sigma$, and a Frobenius map
$F=F_\sigma$ as in Theorem \ref{setupt}, so $H\cong (\oF Q)^F$.  
By Theorem \ref{equiv},
the category of modules over $H$
is equivalent to the category of $F$-stable modules over $\oF Q$.

\begin{proposition}
There is a numbering of the vertices of $Q$ which has the following
properties:
\begin{enumerate} 
\item If $\Hom(P_i,P_j)\ne 0$ then $i<j$, and
\item The labels assigned to any 
$\sigma$-orbit of vertices form a consecutive sequence of 
numbers.  
\end{enumerate}
\end{proposition}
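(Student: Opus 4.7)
The strategy is to build the numbering in two stages: first topologically sort the $\sigma$-orbits inside the quotient quiver $Q/\sigma$, and then fill in consecutive integers within each orbit in any internal order.

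I would first collect two preliminary facts about the interaction of $\sigma$ with the arrows of $Q$. Since $H = (\oF Q)^{F_\sigma}$ is a finite dimensional hereditary algebra, $\oF Q$ is hereditary, so $Q$ has no oriented cycles. From this I deduce (a) that there is no arrow between two (necessarily distinct) vertices of a single $\sigma$-orbit, since applying the finitely many powers of $\sigma$ to such an arrow would produce a cycle in $Q$, and (b) that the quotient quiver $Q/\sigma$ is itself acyclic. For (b), given a hypothetical cycle $[u_1] \to [u_2] \to \cdots \to [u_k] \to [u_1]$ in $Q/\sigma$, I would lift arrow by arrow to a genuine path $u_1 \to u_2 \to \cdots \to u_k \to u_1'$ in $Q$ with $u_1' = \sigma^m(u_1)$ for some $m$, and then use that the $\sigma$-orbit of $u_1$ is finite to iterate, producing a true oriented cycle in $Q$ and hence a contradiction.

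With (a) and (b) in hand, I would fix a topological order $O_1,\ldots,O_r$ of the $\sigma$-orbits of vertices so that every arrow $[u]\to[v]$ in $Q/\sigma$ goes from an earlier orbit to a later one. I then label the vertices of $Q$ by listing the vertices of $O_1$ as $1,\ldots,|O_1|$, then the vertices of $O_2$ as $|O_1|+1,\ldots,|O_1|+|O_2|$, and so on. Condition~(2) is then immediate from the construction. For condition~(1), I would note that $\Hom(P_i,P_j)\neq 0$ forces the existence of a path in $Q$ between $i$ and $j$; by (a) such a path cannot stay inside a single $\sigma$-orbit, so it descends to a nontrivial path in $Q/\sigma$ from $[i]$ to $[j]$, and the topological ordering of the orbits then gives the desired comparison on the indices after the block assignment.

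The only non-formal step, and hence the main obstacle, is the cycle-lifting argument used for (a) and (b): one must assemble local liftings of individual arrows into one coherent path in $Q$ by using the freedom to translate by powers of $\sigma$, and then exploit finiteness of the orbit to close the path up into an honest oriented cycle. Once this is carried out, the remainder of the proof is a routine book-keeping check that the block numbering respects both required properties.
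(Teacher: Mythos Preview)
Your argument is correct, and it reaches the goal by a different route than the paper's. The paper does not lift cycles: it starts from an arbitrary numbering $f$ of $Q$ satisfying (1), \emph{averages} $f$ over each $\sigma$-orbit to obtain a real-valued function $\tilde f$ that is constant on orbits, and observes that for any arrow $[i]\to[j]$ in $Q/\sigma$ the difference $\tilde f([j])-\tilde f([i])$ is the average of $f(\text{head})-f(\text{tail})$ over the $\sigma$-orbit of that arrow, hence strictly positive. One then labels the vertices so that $\tilde f$ is weakly increasing and orbits are kept in consecutive blocks. In effect, the averaging produces a strictly increasing potential on $Q/\sigma$, which is another way of certifying the acyclicity you prove by lifting. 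Your approach is more hands-on and makes the key intermediate fact (that $Q/\sigma$ has no oriented cycles or loops) completely explicit; the paper's averaging trick is slicker and bypasses the path-lifting bookkeeping entirely. One small point of exposition: in your verification of (1), what you really use is that any nontrivial path in $Q$ descends to a walk of positive length in $Q/\sigma$ (since each arrow joins distinct orbits by (a)), and then (b) rules out the possibility $[i]=[j]$; you might phrase this more directly rather than saying the path ``cannot stay inside a single $\sigma$-orbit.''
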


\begin{proof} A labelling $f$ satisfying (1) exists because
$Q$ has no oriented cycles.  Define a new labelling $\wt f$, by averaging
$f$ over $\sigma$-orbits.  
Now $\wt f$ is constant on $\sigma$-orbits and still
satisfies (1), since if there is an arrow from $[i]$ to $[j]$ in 
$Q/\sigma$, then $\wt f([j])-\wt f([i])$ is the average 
difference between
the head and the tail of the arrows in the $\sigma$ orbit of this arrow.  
Number the
vertices of $Q$ in an order such that $\wt f$ weakly increases at each step, 
and such that once we assign a
label to an element of a $\sigma$-orbit, we go on to assign consecutive
labels to the remainder of the $\sigma$-orbit before we assign any further
labels.  This labelling satisfies (1) and (2).  
\end{proof}

Fix a numbering for $Q$ as in the proposition.  
Consider the subgroup of the Weyl group $W_Q$ generated by the elements
$s_{[i]}$, where $[i]$ denotes the $\sigma$-orbit of $i$, and 
$s_{[i]}$ is the product of the simple reflections labelled by elements of
$[i]$, in arbitrary order.  Since the corresponding vertices
 are not adjacent in $Q$,
$s_{[i]}$ does not depend on the choice of the order in which the product
is taken.  

The subgroup of $W_Q$ generated by the $s_{[i]}$ is again a Weyl group \cite{St};
we denote it by $W_{Q,\sigma}$.  It is typically not simply laced.  Let 
$\wt c$ denote the Coxeter element in $W_{Q,\sigma}$ corresponding to this 
ordering.

We can now state the main theorem of this section:

\begin{theorem}\label{fqmain} Let $H$ be a hereditary algebra over $\F$.  Let $Q$ and $\sigma$ be defined as above.   
Then quotient closed cofinite subcategories of $H$-modules 
correspond to 
lexicographically first subwords of $\wt {\underline c}^\infty$ in $W_{Q,\sigma}$.  
\end{theorem}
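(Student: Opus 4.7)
The strategy is to reduce Theorem~\ref{fqmain} to Theorem~\ref{main} (applied to the algebra $\oF Q$) via the Frobenius-fixed-point equivalence of Theorem~\ref{equiv}. Let $\mathcal A$ be a cofinite quotient closed subcategory of $\mod H$. By Theorem~\ref{equiv} I may regard $\mathcal A$ as a cofinite quotient closed subcategory $\wt{\mathcal A}$ of the category of $F$-stable $\oF Q$-modules, whose indecomposable objects are the orbit sums $\wt M = M \oplus M^{[F]} \oplus \cdots \oplus M^{[F^{r-1}]}$ from Theorem~\ref{indec}, for a suitable collection of $\sigma$-orbits of indecomposable $\oF Q$-modules.

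Next I would lift $\wt{\mathcal A}$ to a subcategory $\mathcal A^{\oF}$ of $\mod \oF Q$ whose indecomposable objects are precisely those $M$ for which $\wt M$ lies in $\wt{\mathcal A}$. By construction $\mathcal A^{\oF}$ is $\sigma$-stable and cofinite (each $\sigma$-orbit is finite). For quotient closedness, given an indecomposable $M \in \mathcal A^{\oF}$ and a surjection $f \colon M \epi N$ in $\mod \oF Q$, the Frobenius twists $f^{[F^i]}$ assemble into a surjection $\wt M \epi \bigoplus_{i=0}^{r-1} N^{[F^i]}$ which is $F$-equivariant with respect to the cyclic Frobenius on the target. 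Decomposing the target into $F$-stable indecomposables and applying quotient closedness of $\wt{\mathcal A}$, every indecomposable summand of $N$ belongs to $\mathcal A^{\oF}$.

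Now apply Theorem~\ref{main} to $\mathcal A^{\oF}$: there is a unique $u \in W_Q$ with $\mathcal A^{\oF} = \C(I_u)$, and the leftmost reduced subword $\underline u$ of $\underline c^\infty$ records the missing indecomposables. Since $\mathcal A^{\oF}$ is $\sigma$-stable, the missing indecomposables form a union of $\sigma$-orbits; because $\tau^{-1}$ commutes with $\sigma$ and because the chosen ordering of vertices puts each $\sigma$-orbit in consecutive positions, each $\sigma$-orbit of missing preprojectives occurs as a contiguous block in the induced order. Hence $\underline u$ factors as a concatenation of blocks $\underline u_k$, each using every simple reflection in some $\sigma$-orbit $[i_k]$ exactly once. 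Replacing $\underline u_k$ by $s_{[i_k]} \in W_{Q,\sigma}$ (well-defined since the simple reflections within one orbit commute) yields a subword $\wt{\underline u}$ of $\wt{\underline c}^\infty$. Running this construction in reverse, starting from a leftmost subword of $\wt{\underline c}^\infty$, provides the inverse correspondence via Theorem~\ref{main} and Theorem~\ref{equiv}.

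The most delicate step is verifying the equivalence of the two leftmost conditions: namely that $\wt{\underline u}$ is leftmost in $\wt{\underline c}^\infty$ as a subword for $\wt u \in W_{Q,\sigma}$ if and only if its expansion $\underline u$ is leftmost in $\underline c^\infty$ as a subword for $u \in W_Q$. The reducedness of $\wt{\underline u}$ follows from the embedding $W_{Q,\sigma} \hookrightarrow W_Q$ of Steinberg together with the reducedness of $\underline u$. The leftmost comparison will use that each block $s_{[i]}$ of $\wt{\underline c}^\infty$ sits above a contiguous block of $\underline c^\infty$, so that pushing a chosen letter to the left in $\wt{\underline c}^\infty$ corresponds precisely to pushing its constituent reflections simultaneously to the left in $\underline c^\infty$. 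This block-by-block matching of the leftmost property, together with checking that the inverse construction indeed produces a $\sigma$-block leftmost subword, is where I expect the main technical work to lie.
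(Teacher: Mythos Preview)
Your approach mirrors the paper's exactly: pass to $\oF Q$-modules via the Frobenius equivalence, show the resulting subcategory is $\sigma$-stable, cofinite, and quotient closed, apply Theorem~\ref{main}, and then translate between words in $\underline c^\infty$ and $\wt{\underline c}^\infty$. The step you flag as delicate is in fact short: since the simple reflections within each $\sigma$-orbit commute, the greedy leftmost algorithm in $\underline c^\infty$ applied to an element of $W_{Q,\sigma}$ takes either all or none of the reflections in each $\sigma$-block, so it coincides block-by-block with the greedy leftmost algorithm in $\wt{\underline c}^\infty$.
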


Let $\mathcal C$ be a cofinite quotient closed subcategory of $H$-modules.
We think of $\mathcal C$ as a category of $F$-stable modules over $\oF Q$.

Write $\wc$ for the category of
modules over $\oF Q$ obtained from $\mathcal C$ by forgetting
the $F$ structure, and then taking direct sums of direct summands.  
We now want to describe $\wc$.  

\begin{proposition}\begin{enumerate}
\item The indecomposable objects of $\wc$ are a union of $\sigma$-orbits
of indecomposable representations of $Q$.  

\item $\wc$ is a cofinite, quotient closed subcategory of $\oF Q$-mod.  
\end{enumerate}
\end{proposition}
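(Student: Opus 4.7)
For (1), I would unpack Theorem~\ref{indec}: every indecomposable object of $\mathcal C$, viewed as an $F$-stable $\oF Q$-module, has underlying $\oF Q$-module equal to a full $\sigma$-orbit $M \oplus M^{[F]} \oplus \cdots \oplus M^{[F]^{r-1}}$ of some indecomposable $\oF Q$-module $M$. Passing to $\wc$ by closure under direct sums and summands therefore produces a category whose indecomposables are a union of $\sigma$-orbits. Cofiniteness in (2) will then follow formally: cofiniteness of $\mathcal C$ provides finitely many absent indecomposable $F$-stable modules, so only finitely many $\sigma$-orbits of $\oF Q$-indecomposables are absent from $\wc$, and each orbit is finite because the Frobenius twist has finite order on isomorphism classes of finite-dimensional indecomposables.

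The real work is quotient closure. Given an epimorphism $g \colon X \twoheadrightarrow Y$ in $\mod \oF Q$ with $X \in \wc$, I would use closure of $\wc$ under direct sums and summands, together with decomposition of $Y$ into indecomposables, to reduce to the case where $X$ is the underlying module of an indecomposable $\widetilde M \in \mathcal C$ and $Y = N$ is $\oF Q$-indecomposable. By (1), it then suffices to prove the $F$-closure $\widetilde N$ of $N$ belongs to $\mathcal C$. The strategy is to construct an $F$-equivariant surjection from some object of $\mathcal C$ onto $\widetilde N$ and to invoke quotient closure of $\mathcal C$ in $\mod H$. Concretely, writing $s$ for the size of the $\sigma$-orbit of $N$ and fixing an $\F$-Frobenius $F_N \colon N \to N$ with $F_N^s = \mathrm{id}$, I will set $g_i := F_N^i \circ g \circ F_{\widetilde M}^{-i} \colon \widetilde M \to N^{[F]^i}$ and assemble the $g_i$ into a surjection $\Phi \colon \widetilde M^{\oplus s} \twoheadrightarrow \widetilde N$; the identity $g_{i+1} \circ F_{\widetilde M} = F_N \circ g_i$ will give $F$-equivariance of $\Phi$ with respect to the cyclic-shift $F$-structures on both sides.

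The main obstacle will be identifying the source $(\widetilde M^{\oplus s}, F_{\mathrm{cyc}})$ with an object of $\mathcal C$. Since $\mathcal C$ contains $\widetilde M$ and is additive, it suffices to show that the cyclic-shift and diagonal $F$-structures on $\widetilde M^{\oplus s}$ yield isomorphic $F$-stable modules. I would prove this by constructing an intertwining $\oF Q$-linear automorphism of $\widetilde M^{\oplus s}$ in the spirit of Lang's theorem, exploiting the already-fixed Frobenius $F_{\widetilde M}$ on $\widetilde M$ to assemble the required descent datum from scalars in $\F_{q^s}$. Once this identification is in hand, applying quotient closure of $\mathcal C$ to $\Phi$ gives $\widetilde N \in \mathcal C$ and hence $N \in \wc$.
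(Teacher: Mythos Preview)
Your outline for (1) and cofiniteness is fine, and you correctly identify that the identification of $(\widetilde M^{\oplus s}, F_{\mathrm{cyc}})$ with an object of $\mathcal C$ reduces to Krull--Schmidt plus Theorem~\ref{indec}. However, the construction of the $F$-equivariant surjection $\Phi$ has a genuine gap.

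First, a minor point: the reduction to a \emph{single} indecomposable source $\widetilde M$ is not justified. An indecomposable $N$ may fail to be a quotient of any one $\widetilde M_i$ while still being a quotient of a direct sum. This is easily repaired by allowing $X$ to be the underlying module of an arbitrary object of $\mathcal C$.

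The real problem is the wrap-around in your $F$-equivariance check. Your identity $g_{i+1}\circ F_{\widetilde M} = F_N\circ g_i$ holds for all $i\ge 0$, but cyclic equivariance on $s$ copies also requires $g_0\circ F_{\widetilde M} = F_N\circ g_{s-1}$, i.e.\ $g_0 = g_s$. Since $g_s = F_N^s\circ g\circ F_{\widetilde M}^{-s} = g\circ F_{\widetilde M}^{-s}$ (using $F_N^s=\id$), you would need $g\circ F_{\widetilde M}^{s}=g$. There is no reason the period of $F_{\widetilde M}$ should divide $s$, so this fails in general. One can salvage the idea by taking $t$ copies for $t$ a common multiple of $s$ and the period of $F_{\widetilde M}$, but you have not done so, and the bookkeeping is unpleasant.

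The paper sidesteps this period-matching entirely by working with the \emph{kernel} rather than the target. Starting from the surjection $f\colon X\twoheadrightarrow Y$ with kernel $K$, one chooses a Frobenius map $F_X$ on $X$ of some finite period $r$ (possible since $X$ is preprojective), and forms the $F$-stable module $\widetilde X = \bigoplus_{i=0}^{r-1} X^{[F_X]^i}$ together with its $F$-stable submodule $\widetilde K = \bigoplus_{i=0}^{r-1} F_X^i(K)$. Then $Y\cong X/K$ is visibly a direct summand of the underlying $\oF Q$-module of $\widetilde X/\widetilde K$, and $\widetilde X\in\mathcal C$ by the same Krull--Schmidt argument you sketched. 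Because both $\widetilde X$ and $\widetilde K$ are built with the \emph{same} period $r$, no compatibility condition between source and target periods ever arises.
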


\begin{proof}
The first claim follows from Theorem \ref{indec}.

Note that the second claim
is not obvious, because
$\wc$ has more morphisms than $\mathcal C$.  

So, suppose that we have a surjection $f$ from $X$ to $Y$, with $X \in 
\wc$. We may assume that $X$ and $Y$ are (sums of) preprojective
representations of $Q$.  
Let $K$ be the kernel of $f$.

Since $X$ is a sum of preprojective representations, it can be lifted
from a $\F$-representation, and thus admits a Frobenius map $F_X$ with
finite period $r$.  

We then have an $F$-stable module 
$$\widetilde X = X \oplus X^{[F_X]} \oplus \dots \oplus X^{[F_X]^{r-1}}$$

Consider the submodule defined by 
$\widetilde K = K \oplus K^{[F_X]} \oplus \dots \oplus K^{[F_X]^{r-1}}$.  
This is also $F$-stable.
We now see that $Y$ is a summand of $\widetilde X/\widetilde K$.  This
implies that $\wc$ is quotient closed.  
\end{proof}

We also have the following converse:

\begin{proposition}\label{conprop}
Any cofinite quotient closed subcategory of modules over 
$\oF Q$, 
whose indecomposables consist of 
a union of $\sigma$-orbits, arises as 
$\wc$ for some cofinite quotient closed category $\mathcal C$
of $F$-stable modules.
\end{proposition}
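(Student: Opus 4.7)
The plan is to define $\mathcal{C}$ as the full additive subcategory of $F$-stable $\oF Q$-modules whose underlying $\oF Q$-module (obtained by forgetting the $F$-structure) lies in the given subcategory, which I will denote $\mathcal{D}$. By Theorem~\ref{indec}, the indecomposable $F$-stable modules are the $\widetilde{M}$ as $M$ ranges over one representative per $\sigma$-orbit of indecomposable $\oF Q$-modules, and the underlying $\oF Q$-module of $\widetilde{M}$ is the direct sum $M \oplus M^{[F_M]} \oplus \cdots \oplus M^{[F_M]^{r-1}}$ over the full $\sigma$-orbit of $M$. Since by hypothesis the indecomposables of $\mathcal{D}$ form a union of $\sigma$-orbits, the definition of $\mathcal{C}$ is equivalent to the cleaner formulation: $\widetilde{M} \in \mathcal{C}$ iff $M \in \mathcal{D}$.

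Once $\mathcal{C}$ is defined, the properties fall out directly. Cofiniteness is immediate: the finitely many indecomposables missing from $\mathcal{D}$ form finitely many (necessarily finite) $\sigma$-orbits, each accounting for exactly one missing indecomposable $F$-stable module under the bijection of Theorem~\ref{indec}. For quotient closure, suppose $f \colon X \to Y$ is a surjection of $F$-stable modules with $X \in \mathcal{C}$. The forgetful functor to $\oF Q$-modules is exact (since a morphism of $F$-stable modules is surjective iff it is surjective on underlying $\oF$-vector spaces), so forgetting $F$ gives a surjection of $\oF Q$-modules whose source lies in $\mathcal{D}$; quotient closure of $\mathcal{D}$ forces the target to lie in $\mathcal{D}$ as well, so $Y \in \mathcal{C}$.

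Finally I verify that $\wc = \mathcal{D}$. For every indecomposable $\widetilde{M} \in \mathcal{C}$, its underlying $\oF Q$-module decomposes as the full $\sigma$-orbit of $M$, each member of which lies in $\mathcal{D}$ because $\mathcal{D}$ is a union of $\sigma$-orbits and $M \in \mathcal{D}$; hence $\wc \subseteq \mathcal{D}$. Conversely, for any indecomposable $N \in \mathcal{D}$, the $F$-stable module $\widetilde{N}$ belongs to $\mathcal{C}$ by definition, and $N$ appears as a summand of its underlying $\oF Q$-module, so $N \in \wc$.

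There is no substantial obstacle here: the construction of $\mathcal{C}$ is forced, and each property transfers cleanly through the forgetful functor. The only subtlety worth flagging is the compatibility of surjections with forgetting $F$-structure, which drives the quotient-closure argument; this is immediate once one recalls that morphisms of $F$-stable modules are simply $\oF Q$-module maps intertwining the chosen Frobenius maps, so surjectivity is checked on the underlying modules.
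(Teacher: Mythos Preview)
Your proof is correct and follows the same approach as the paper's, which is a terse two-sentence argument: use Theorem~\ref{indec} to define $\mathcal{C}$ as the additive hull of those $\widetilde{M}$ with $M$ in the given subcategory, and observe that quotient closure of $\mathcal{C}$ follows from quotient closure of the underlying $\oF Q$-subcategory. You have simply made explicit the steps the paper leaves to the reader, including the verification that $\wc = \mathcal{D}$, the cofiniteness check, and the reason surjectivity passes through the forgetful functor.
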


\begin{proof} By Theorem \ref{indec}, there is a subcategory 
$\mathcal C$ of $F$-stable modules such that the associated subcategory
of $\F Q$-modules is $\wc$.  Since $\wc$ is quotient closed, so is
$\mathcal C$. \end{proof}

We now prove the main theorem of this section.  

\begin{proof}[Proof of Theorem \ref{fqmain}] A cofinite 
quotient closed subcategory
of $H$-mod corresponds, as above, to a cofinite, quotient closed 
subcategory of $\oF Q$-mod.  Thus, when we read the word in 
$W_Q$ corresponding to it, it is a leftmost word in ${\underline c}^\infty$.  (Here
$c$ is the Coxeter element in $W_Q$, and $\underline c$ is the 
corresponding word.)  Because the indecomposables in
the subcategory of $\oF Q$-mod are unions of $\sigma$-orbits, the factors
in the word can be grouped into generators of $W_{Q,\sigma}$, so we can view
it as a word in $\wt {\underline c}^\infty$, which is automatically also leftmost.  

Conversely, suppose we take some $w\in W_{Q,\sigma}$. 
Because the leftmost word for $w$ inside ${\underline c}^\infty$ can be
obtained greedily, for each $\sigma$-orbit of reflections in ${\underline c}^\infty$, 
either all of them will be used in the leftmost word for $w$, or none of 
them will.  Thus the leftmost word for $w$ in ${\underline c}^\infty$ corresponds to a 
word for $w$ in $\wt {\underline c}^\infty$, and therefore to the leftmost word there.
The subcategory of $\oF Q$-mod corresponding to $w$ satisfies the
hypotheses of Proposition \ref{conprop}, so it corresponds to a 
cofinite, quotient closed subcategory of $F$-stable $\F Q$-modules, as
desired.  \end{proof}

\section{Subclosed subcategories}

We have seen that $\C$ induces a bijection between the ideals of the form $I_w$ in $\Pi$ and the cofinite quotient closed subcategories of $\mod kQ$. It is natural to ask if $\C$ similarly induces a bijection between the quotients $\Pi / I_w$ and certain subcategories of $\mod kQ$, and further if one can explicitly describe the subcategories of $\mod kQ$ which are of the form $\C(\Pi/I_w)$ for some $w$.

We start by observing that in case $Q$ is Dynkin the situation is as good as one could have hoped:

\begin{theorem} \label{thm.Dynkin_subclosed}
Let $Q$ be a Dynkin quiver. Then the map
\begin{align*}
 W & \to \{\text{subcategories of } \mod kQ\} \\
 w & \mapsto \C(\Pi/I_w)
\end{align*}
induces a bijection between $W$ and the subclosed subcategories of $\mod kQ$.
\end{theorem}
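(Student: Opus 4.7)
The plan is to establish the bijection in three stages: injectivity of $w \mapsto \C(\Pi/I_w)$, surjectivity onto the class of subclosed subcategories, and subclosedness of each $\C(\Pi/I_w)$.

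Injectivity should be essentially immediate from the results of Section~\ref{sectionfive}: if $\C(\Pi/I_v) = \C(\Pi/I_w)$, then Proposition~\ref{prop.I=Ann} gives $I_v = \Ann(\filt(\C(\Pi/I_v))) = \Ann(\filt(\C(\Pi/I_w))) = I_w$, and the injectivity of $w \mapsto I_w$ from \cite{BIRS} (used already in the proof of Theorem~\ref{converse}) concludes this step.

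For surjectivity I plan to pass through duality. Given a subclosed subcategory $\mathcal{S}$ of $\mod kQ$, its image $D\mathcal{S}$ under the standard $k$-duality is a quotient closed subcategory of $\mod kQ^{\op}$. In the Dynkin case $\mod kQ^{\op}$ coincides with its preprojective component and is automatically finite, so Theorem~\ref{main} applied to the opposite quiver gives a unique $v \in W_{Q^{\op}} = W$ with $D\mathcal{S} = \C_{kQ^{\op}}(I_v)$; here $I_v$ is the right $\Pi^\op$-ideal associated to $v$, equivalently a left $\Pi$-ideal. Since duality commutes with restriction along $kQ \hookrightarrow \Pi$ (both are just restrictions of the underlying $k$-vector space structure along compatible subalgebra inclusions), I can dualize back to get $\mathcal{S} = \C_{kQ}(DI_v)$, and Proposition~\ref{prop.dualities} then identifies $DI_v$ with $\Pi/I_w$ for $w = w_0 v^{-1}$ (up to the appropriate side convention), yielding $\mathcal{S} = \C(\Pi/I_w)$.

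Subclosedness of each $\C(\Pi/I_w)$ will then follow by applying the surjectivity step to the subclosed category $\C(\Sub \Pi/I_w)$ provided by Lemma~\ref{subclosed}: this produces $u \in W$ with $\C(\Sub \Pi/I_w) = \C(\Pi/I_u)$, and Proposition~\ref{prop.I=Ann} forces $I_u = I_w$, hence $u = w$, so $\C(\Pi/I_w) = \C(\Sub \Pi/I_w)$ is subclosed. The main obstacle I expect is the surjectivity step, where one has to verify carefully that $D$ commutes with restriction on both sides and to keep track of the involution $v \leftrightarrow w_0 v^{-1}$ through Proposition~\ref{prop.dualities} so that the correspondence truly lands on the map $w \mapsto \C(\Pi/I_w)$ rather than on a composite with an automorphism of $W$. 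Beyond this, the argument is essentially a repackaging of the material in Sections~\ref{sectionfive} and \ref{sectionsix} together with the Dynkin case of Theorem~\ref{main}.
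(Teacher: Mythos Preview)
Your proposal is correct and rests on the same ingredients as the paper's proof: $k$-duality, Proposition~\ref{prop.dualities}, and the main theorem applied on the opposite side. The organization differs, however. The paper simply writes
\[
\C(\Pi/I_w) \;=\; \C\bigl(D I_{w_0 w^{-1}}\bigr) \;=\; D\,\C_{\rm left}\bigl(I_{w_0 w^{-1}}\bigr)
\]
and observes that $w \mapsto \C(\Pi/I_w)$ is the composite of three bijections: $w \mapsto w_0 w^{-1}$ on $W$, then $v \mapsto \C_{\rm left}(I_v)$ from $W$ to the quotient closed subcategories of $\mod kQ^{\op}$ (the left-module version of the main theorem), then $D$ from those to the subclosed subcategories of $\mod kQ$. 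This single identity delivers injectivity, surjectivity, and subclosedness of the image simultaneously.

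Your three-step decomposition is valid but does more work than necessary. In particular, your step~3 reproves that $\C(\Pi/I_w)$ is subclosed via $\C(\Sub \Pi/I_w)$ and Proposition~\ref{prop.I=Ann}; this argument already appears verbatim in the Dynkin part of the proof of Theorem~\ref{converse}, so by Section~10 it is available for free. Likewise, your separate injectivity check via Proposition~\ref{prop.I=Ann} is subsumed once you recognize the map as a composite of bijections. The substantive content of your surjectivity step is exactly the paper's one-line factorization, and your caveat about tracking the side conventions and the involution $v \leftrightarrow w_0 v^{-1}$ is the only real thing to verify.
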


\begin{proof}
We have
\[ \C(\Pi / I_w) = \C(D I_{w_0 w^{-1}}) = D \C_{\rm left}(I_{w_0 w^{-1}}) \]
from Section~\ref{sectionsix}.  
Now the claim follows since $w \mapsto w_0 w^{-1}$ is a bijection from $W$ to itself, while $w \mapsto \C_{\rm left}(I_w)$ is a bijection from $W$ to the set of 
quotient closed subcategories of $\mod {kQ}^{\op}$, and $D$ induces a bijection between quotient closed subcategories of $\mod {kQ}^{\op}$ and subclosed subcategories of $\mod kQ$.
\end{proof}

The most obvious guess would be that for $Q$ arbitrary, the map $w \to \C(\Pi/I_w)$ might be a bijection from $W$ to the subclosed subcategories of $\mod kQ$ containing only finitely many indecomposables. However, this is not the case: there are subclosed subcategories of $\mod kQ$ with finitely many indecomposable objects which do not appear as $\C(\Pi/I_w)$ for any $w$. For instance, let 
$Q$ be the Kronecker quiver. The subcategories consisting of direct
sums of copies of one quasi-simple regular module and the simple projective module are subclosed, but not of the form $\C(\Pi/I_w)$, as we will see in 
Proposition~\ref{twovertices}.

We however suspect that the following statement in the converse 
direction holds:

\begin{conjecture} \label{conj.is_subclosed}
For $w \in W$, the subcategory $\C(\Pi/I_w)$ of $\mod kQ$ is subclosed.
\end{conjecture}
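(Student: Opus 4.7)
The plan is to proceed by induction on $\ell(w)$. The base case $\ell(w)=0$ gives $\Pi/I_w=0$, which is vacuously subclosed. For the inductive step, assume subclosedness is known for every $v$ with $\ell(v)<\ell(w)$. Choose a left descent $s_i$ of $w$ and write $w=s_iv$ with $\ell(v)=\ell(w)-1$. By Lemma~\ref{birs.lemma} (in the non-Dynkin case) or Lemma~\ref{six.zero} (in the Dynkin case), $I_w=I_vI_i$, so $I_v/I_w=I_v/I_vI_i$ is killed by the right action of $I_i$ and is therefore isomorphic to $S_i^m$ as a right $\Pi$-module, for some $m\geq 0$. This yields a short exact sequence of $\Pi$-modules
\[ 0\to S_i^m \to \Pi/I_w \to \Pi/I_v \to 0. \]

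When $i$ is a sink of $Q$, the simple $S_i$ is injective in $\mod kQ$ and hence $\Ext_{kQ}^1(-,S_i)=0$, so the sequence splits upon restriction to $kQ$:
\[ (\Pi/I_w)_{kQ} \cong S_i^m \oplus (\Pi/I_v)_{kQ}. \]
Consequently $\C(\Pi/I_w)=\add\bigl(\{S_i\}\cup \ind\C(\Pi/I_v)\bigr)$. To check this is subclosed, take any $kQ$-submodule $N\subseteq S_i^a\oplus Y$ with $Y\in\C(\Pi/I_v)$. Projection onto $Y$ gives a short exact sequence $0\to S_i^b \to N \to N'\to 0$ with $N'\hookrightarrow Y$; by the inductive hypothesis $N'\in\C(\Pi/I_v)$, and again by injectivity of $S_i$ the sequence splits, so $N\in\C(\Pi/I_w)$.

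The main obstacle is that not every $w$ admits a left descent corresponding to a sink of $Q$. The simplest example is the Kronecker quiver $1\rightrightarrows 2$ with $w=s_1s_2$, whose unique left descent $s_1$ corresponds to a source. The plan to overcome this is to exploit the fact that the algebra $\Pi$ depends only on the underlying graph of $Q$, not on its orientation. Pick a different orientation $Q'$ of the graph in which the chosen $i$ is a sink; the same $\Pi$-module $\Pi/I_w$ can be restricted to either $kQ$ or $kQ'$, and by the sink case above, $\C_{kQ'}(\Pi/I_w)$ is subclosed in $\mod kQ'$. It then remains to transfer subclosedness back to $\mod kQ$ along the APR-tilting relationship supplied by Theorem~\ref{thm.tensoringcommutes}, which identifies the two restriction functors via the reflection functor $-\otimes_{kQ'}T$.

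The hard part is precisely this transfer. The APR equivalence is only a derived equivalence on the module categories, and subclosedness does not pass through a derived equivalence automatically: a $kQ$-submodule of $(\Pi/I_w)_{kQ}$ need not correspond to a $kQ'$-submodule of $(\Pi/I_w)_{kQ'}$ under reflection. A reasonable route is to combine the graded decomposition $(\Pi/I_w)_{kQ}=\bigoplus_{d\geq0}(\Pi/I_w)_d$, where each $(\Pi/I_w)_d$ is a quotient of $\Pi_d\cong\bigoplus_j\tau^{-d}P_j$, with the graded-submodule construction used in the proof of Lemma~\ref{subclosed}: any $kQ$-submodule of $(\Pi/I_w)^n_{kQ}$ extends to a graded $\Pi$-submodule of $\bigoplus_d(\Pi/I_w)(d)^{n_d}$ with the original submodule as its degree-$0$ component. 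The key open question, which I expect to be the crux of the conjecture, is whether this graded $\Pi$-submodule is in fact a $kQ$-direct summand of some power of $(\Pi/I_w)_{kQ}$ — equivalently, whether $\C(\Pi/I_w)=\C(\Sub\Pi/I_w)$ for $Q$ non-Dynkin.
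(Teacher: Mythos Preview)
The statement is presented in the paper as a \emph{conjecture}, not a theorem: the authors prove it only for $Q$ Dynkin (Theorem~\ref{thm.Dynkin_subclosed}) and remark that the two-vertex case can be checked by hand. Your proposal is explicitly incomplete, and you have correctly located the difficulty. Your final reformulation---that the conjecture is equivalent to $\C(\Pi/I_w)=\C(\Sub\Pi/I_w)$---is precisely the equivalence the paper records immediately after stating the conjecture.

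Your sink-descent induction step is correct. However, it does not by itself recover even the Dynkin case, since not every $w$ has a left descent at a sink of $Q$ (already $w=s_1$ for $A_2$ oriented $1\to 2$ fails). The paper's Dynkin argument is entirely different and does not proceed by induction on length: it uses the duality $DI_w\cong\Pi/I_{w_0w^{-1}}$ of Proposition~\ref{prop.dualities} together with the main theorem, so that dualizing the bijection between $W$ and quotient-closed subcategories shows that \emph{every} subclosed subcategory of $\mod kQ$ is of the form $\C(\Pi/I_v)$ for some $v$. Since $\C(\Sub\Pi/I_w)$ is subclosed by Lemma~\ref{subclosed}, it equals some $\C(\Pi/I_v)$, and Proposition~\ref{prop.I=Ann} then forces $v=w$. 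This route is unavailable outside Dynkin type because there is no longest element $w_0$ and the duality of Proposition~\ref{prop.dualities} has no analogue.

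Your orientation-change strategy runs into exactly the obstacle you name. Theorem~\ref{thm.tensoringcommutes} relates the restrictions to $kQ$ and $kQ'$ only \emph{after} tensoring with $I_i$, not for $\Pi/I_w$ itself, and the functor $-\otimes_{kQ'}T$ is merely a derived equivalence on module categories; it does not carry $kQ'$-monomorphisms to $kQ$-monomorphisms in general. The paper offers no mechanism for transporting subclosedness across this reflection, and neither do you. So what you have is an honest isolation of the open problem---matching the paper's own assessment---rather than a proof.
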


By Theorem~\ref{thm.Dynkin_subclosed} the conjecture holds for $Q$ Dynkin. It is also easy to verify this conjecture in the case that $Q$ has two vertices
by a direct calculation.  See Proposition \ref{twovertices} below for
an explicit description of the categories that arise.

Recall that we have seen in Lemma~\ref{subclosed} that the categories $\C(\Sub \Pi/I_w)$ are always subclosed. It follows that $\Sub \C(\Pi/I_w) = \C(\Sub \Pi/I_w)$, and hence that Conjecture~\ref{conj.is_subclosed} is equivalent to
\[ \C(\Sub \Pi/I_w) = \C(\Pi/I_w). \]

We now formulate two conjectures on the description of the subcategories 
$\C(\Pi/I_w)$. In the first one we restrict to the affine case, where the combinatorial description is somewhat simpler.

\begin{conjecture} \label{affinedescription}
Suppose that $Q$ is affine.  
A full subcategory $\mathcal Z$ of $kQ$-mod arises as $\mathscr C(\Pi/I_w)$
for some $w\in W$ iff
\begin{itemize}
\item $\mathcal Z$ has a finite number of indecomposables, 
\item $\mathcal Z$ is subclosed, and
\item for any tube, there is at least one ray in the tube which
does not intersect $\mathcal Z$.
\end{itemize}
\end{conjecture}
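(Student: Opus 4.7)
The plan is to prove both implications separately. For necessity of the three conditions, condition (1) is immediate since $\Pi/I_w$ is finite-dimensional over $k$. Condition (2) reduces to Conjecture~\ref{conj.is_subclosed}, which in the affine case I would approach via the identity $\Sub \C(\Pi/I_w) = \C(\Sub \Pi/I_w)$ noted just after that conjecture, combined with Lemma~\ref{subclosed} and the fact from \cite{BIRS} that $\Sub \Pi/I_w$ is closed under extensions. Condition (3) is the new content: if every ray in some tube $\mathcal{T}$ intersected $\mathcal{Z}$, then by the subclosed property every quasi-simple in $\mathcal{T}$ would lie in $\mathcal{Z}$, and I would argue that the extra arrows $a^*$ of $\overline{Q}$ force the $\Pi$-module structure on the corresponding summand to close up into an infinite-dimensional $\Pi$-submodule inside $\Pi/I_w$, a contradiction. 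Concretely, the $a^*$ act along rays (or corays) of $\mathcal{T}$, and iterating such actions starting from a quasi-simple socle of an element of $\mathcal{Z}$ produces an unbounded sequence unless some ray is systematically avoided.

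For sufficiency, given $\mathcal{Z}$ satisfying (1)--(3), I would construct $w$ explicitly and then verify $\C(\Pi/I_w) = \mathcal{Z}$. First, decompose $\mathcal{Z}$ by AR-component as $\mathcal{Z} = \mathcal{Z}_P \sqcup \mathcal{Z}_R \sqcup \mathcal{Z}_Q$, where the three pieces lie in the preprojective, regular, and preinjective components respectively. Second, for each tube $\mathcal{T}$ of rank $r$, select a ray of $\mathcal{T}$ disjoint from $\mathcal{Z}$, as granted by (3); after rotating $\mathcal{T}$ appropriately this forces $\mathcal{Z} \cap \mathcal{T}$ to have the shape of a ``rectangle'' below the chosen missing ray, which is precisely the shape of tube-part of an $\add (\Pi/I_w)_{kQ}$. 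Third, build $w$ by concatenating a Coxeter-sortable prefix handling $\mathcal{Z}_P$ (parallel to the leftmost-word argument of Theorem~\ref{main}), local tube contributions determined by the rectangles and their positions, and a dual sortable suffix for $\mathcal{Z}_Q$. The verification that $\C(\Pi/I_w)$ is exactly $\mathcal{Z}$ should follow by induction on $\ell(w)$, combining Theorem~\ref{thm.tensoringcommutes} for the preprojective/preinjective parts with a direct calculation for the tube rectangles.

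The principal obstacle will be the tube step. Unlike $\mathcal{Z}_P$ and $\mathcal{Z}_Q$, where length-$\ell(w)$ induction and Theorem~\ref{main} give a clean handle, the regular part is genuinely new: finite-dimensional $\Pi$-modules supported on tubes form a subtle category, and a single element $w \in W_Q$ must simultaneously produce the correct rectangle on every tube while also matching the chosen preprojective and preinjective data. The affine Weyl group is infinite, and its elements are not determined by tube-local information alone, so I will need a combinatorial model (presumably related to $c$-sortable words decorated by tube data) showing that all compatible choices can be realized by a single $w$. Proving that the three listed conditions are \emph{exactly} the realizable ones, rather than merely necessary, amounts to showing that no further compatibility constraint is hidden in this global interaction, and this is where I expect the main difficulty to lie.
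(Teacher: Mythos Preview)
This statement is a \emph{conjecture} in the paper, not a theorem: the authors do not prove it. Their only treatment is to show (in the Proposition immediately following Conjecture~\ref{description}) that in the affine case Conjecture~\ref{affinedescription} is equivalent to the more general Conjecture~\ref{description}, and then to verify the two-vertex case (Proposition~\ref{twovertices}). So there is no proof in the paper to compare your proposal against; you are attempting to settle an open problem.

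With that understood, your proposal has genuine gaps. For necessity of condition~(2), you invoke the identity $\Sub \C(\Pi/I_w) = \C(\Sub \Pi/I_w)$ together with Lemma~\ref{subclosed} and extension-closure of $\Sub \Pi/I_w$. But these ingredients only tell you that $\Sub \C(\Pi/I_w)$ is subclosed, which is tautological; to conclude that $\C(\Pi/I_w)$ itself is subclosed you need the equality $\C(\Pi/I_w) = \C(\Sub \Pi/I_w)$, and the paper explicitly notes that this equality is \emph{equivalent} to Conjecture~\ref{conj.is_subclosed}, not a consequence of the facts you cite. So your argument for~(2) is circular. Your argument for~(3) is also only a heuristic: the claim that the $a^*$-arrows ``force an infinite-dimensional $\Pi$-submodule'' needs a precise statement about how the $\Pi$-module structure on $\Pi/I_w$ restricts to tube summands, and none is given.

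For sufficiency you correctly identify the core difficulty---realizing prescribed tube rectangles simultaneously with preprojective and preinjective data by a single $w$---but you do not propose a mechanism for doing so. The decomposition $w = (\text{sortable prefix})(\text{tube contributions})(\text{sortable suffix})$ is suggestive, but the affine Weyl group does not factor in this way, and you give no candidate for what the ``local tube contributions'' are as elements of $W$ or why concatenating them with the prefix and suffix yields a reduced word. In short, what you have written is a reasonable outline of what a proof would need to accomplish, but it does not contain the ideas that would close the gaps the authors left open.
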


In order to generalize this conjecture to arbitrary quivers, we introduce the following notion.
Define the reduced $\Ext$-quiver of a full subcategory $\mathcal Z$ of
$kQ$-mod as follows: The vertices are the indecomposable objects of $\mathcal Z$.  There is an arrow from $Y$ to $X$ if the simple $\mathcal{Z}$-module $\frac{\Hom_{kQ}(-, X)}{{\rm Rad}(-,X)}$ is a direct summand of the socle of the $\mathcal{Z}$-module $\Ext_{kQ}^1(-, Y)$. Equivalently there is an arrow from $Y$ to $X$ if there is a morphism from $\tau^{-1} Y$ to $X$ which does not factor through any radical map $\mathcal{Z} \to X$.

\begin{conjecture} \label{description}
A full subcategory $\mathcal Z$ of $kQ$-mod arises as $\mathscr C(\Pi/I_w)$
for some $w\in W$ iff
\begin{itemize}
\item $\mathcal Z$ has a finite number of indecomposables, 
\item $\mathcal Z$ is submodule-closed, and
\item the reduced $\Ext$-quiver of $\mathcal Z$ contains no cycles. 
\end{itemize}
\end{conjecture}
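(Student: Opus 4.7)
The plan is to handle the two directions separately. For the forward direction, finiteness of the set of indecomposables is immediate from the finite-dimensionality of $\Pi/I_w$ established in \cite{BIRS}. Submodule-closedness is precisely Conjecture~\ref{conj.is_subclosed}, which I would attack first: by Lemma~\ref{subclosed} together with the identity $\Sub \C(\Pi/I_w) = \C(\Sub \Pi/I_w)$ noted in the text, it reduces to the inclusion $\C(\Sub \Pi/I_w) \subseteq \C(\Pi/I_w)$. Acyclicity of the reduced $\Ext$-quiver I would derive from the canonical path-length grading on $\Pi$ (arrows of $Q$ in degree $0$, dual arrows in degree $1$): since $I_w$ is a graded ideal, $\Pi/I_w$ is a graded $\Pi$-module, and under the description of Section~3 the $\Pi$-structure map $\varphi : M \otimes_{kQ} \Omega \to M$, which is $\tau^{-1} M \to M$, strictly raises this grading. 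Via Lemma~\ref{lemma.exact_1} the non-radical components of $\varphi$ between indecomposable summands of $(\Pi/I_w)_{kQ}$ should be identified with the arrows of the reduced Ext-quiver of $\C(\Pi/I_w)$; a cycle of length $k$ would then iterate to produce, in every graded degree that is a multiple of $k$, a non-radical endomorphism of the starting summand, contradicting $\dim_k \Pi/I_w < \infty$.

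For the reverse direction I would argue by induction on the number of indecomposables of $\mathcal Z$, in the spirit of the proof of Theorem~\ref{main}. The acyclicity provides a source $X$ of the reduced Ext-quiver; removing $X$ with an appropriate multiplicity adjustment should yield a smaller $\mathcal Z'$ still satisfying all three conditions, to which induction applies, giving a $w'$ with $\mathcal Z' = \C(\Pi/I_{w'})$. I would then look for a simple reflection $s_i$ relating $w'$ to a candidate $w$ with $\mathcal Z = \C(\Pi/I_w)$, the reflection being determined by the position of $X$ in the AR-quiver through a variant of the APR mechanism of Theorem~\ref{thm.tensoringcommutes}. To close the argument, one must finally recognize the constructed $\Pi$-module as $\Pi/I_w$ rather than merely some quotient of $\Pi$, for which I would invoke Proposition~\ref{prop.I=Ann} to pin down the annihilator ideal.

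The principal obstacle lies in this inductive step. In Theorem~\ref{main} the choice of simple reflection to peel off is dictated transparently by the presence or absence of a simple projective in $\C(I_w)$; here the source $X$ of the Ext-quiver may be an arbitrary indecomposable of $\mathcal Z$, and the correct reflection must encode data about the $\Pi$-module structure of $\Pi/I_w$ (the multiplicity of $S_i$ in the socle, the position in the graded filtration) that is not visible in the $kQ$-decomposition alone. I expect that settling the affine case (Conjecture~\ref{affinedescription}) first, where excluded rays in tubes provide a more transparent combinatorial model, will be an essential stepping stone; the connection between the ``excluded ray'' condition and the acyclicity of the reduced Ext-quiver should furnish both the correct inductive hypothesis and the right analogue of the APR peeling procedure.
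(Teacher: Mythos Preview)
The paper does not prove this statement: it is stated as a conjecture and left open in general. What the paper does establish is (i) the Dynkin case, where the third condition is vacuous and the result reduces to Theorem~\ref{thm.Dynkin_subclosed}; (ii) the two-vertex case, by the direct argument of Proposition~\ref{twovertices}; and (iii) that in the affine case the conjecture is equivalent to Conjecture~\ref{affinedescription}. There is no general proof to compare against.

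Your outline is a reasonable plan of attack, and you are right to flag the inductive step in the reverse direction as the principal obstacle; the paper offers no mechanism for peeling off an arbitrary source of the reduced $\Ext$-quiver, and nothing like the APR step of Theorem~\ref{thm.tensoringcommutes} is available when the source is not a simple projective. Note also that the forward direction already assumes Conjecture~\ref{conj.is_subclosed}, which the paper likewise leaves open outside the Dynkin and two-vertex cases.

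There is, however, a genuine gap in your acyclicity argument that you should be aware of. The reduced $\Ext$-quiver of $\mathcal Z$ is defined purely in terms of $\mathcal Z$ as a subcategory of $\mod kQ$: an arrow $Y\to X$ records the existence of \emph{some} morphism $\tau^{-1}Y\to X$ not factoring through radical maps from $\mathcal Z$, irrespective of any $\Pi$-structure. Your argument identifies such arrows with the non-radical components of the specific structure map $\varphi_{\Pi/I_w}$, but nothing guarantees that an arrow in the reduced $\Ext$-quiver is witnessed by a component of this particular $\varphi$. So a cycle in the reduced $\Ext$-quiver does not obviously produce, via iteration of $\varphi$, nonzero elements of $\Pi/I_w$ in arbitrarily high degree. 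To make the grading argument go through you would need to show that whenever $X,Y\in\C(\Pi/I_w)$ and there is an arrow $Y\to X$ in the reduced $\Ext$-quiver, one can realise a witnessing morphism $\tau^{-1}Y\to X$ as (a component of) $\varphi_{\Pi/I_w}$ between suitable copies of $Y$ and $X$ inside $(\Pi/I_w)_{kQ}$; this is not addressed in the paper and does not appear to be straightforward.
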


This conjecture is clearly true for $Q$ of finite type.  (The third condition
is vacuous in this case.)

In the tame case, we show that the two conjectures above coincide.

\begin{proposition} In the case that $Q$ is affine, Conjecture 
\ref{affinedescription} is equivalent to Conjecture \ref{description}.
\end{proposition}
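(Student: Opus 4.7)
The two conjectures share the conditions ``finite indecomposables'' and ``subclosed'', so the task reduces to showing that for a finite subclosed subcategory $\mathcal Z\subseteq\mod kQ$ with $Q$ affine, the reduced $\Ext$-quiver of $\mathcal Z$ is acyclic if and only if for every tube at least one ray fails to meet $\mathcal Z$.

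First I will argue that any cycle in the reduced $\Ext$-quiver of $\mathcal Z$ must be confined to a single tube. Since $\Hom$-spaces between indecomposable $kQ$-modules flow only preprojective $\to$ regular $\to$ preinjective, $\tau^{-1}$ preserves each of these three classes of components (sending each tube to itself), and the AR-quivers of the preprojective and preinjective components are acyclic, no arrow of the reduced $\Ext$-quiver can reverse any of these directions. Cycles therefore live inside a single tube.

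Next I will work inside a tube of rank $r$ with quasi-simples $E_1,\ldots,E_r$, indexed so that $\tau^{-1}E_i=E_{i+1}$ cyclically. The socle of every module on the ray through $E_j$ is $E_j$, so by subclosure the ray of $E_j$ meets $\mathcal Z$ iff $E_j\in\mathcal Z$; hence the ray condition translates into $\{E_1,\ldots,E_r\}\not\subseteq\mathcal Z$. If every $E_j\in\mathcal Z$, then the identities $\tau^{-1}E_i\to E_{i+1}$ yield arrows $E_i\to E_{i+1}$ in the reduced $\Ext$-quiver (the identity on $E_{i+1}$ does not factor through any radical $Z\to E_{i+1}$ with $Z\in\mathcal Z$ because $\End(E_{i+1})=k$ and any nonzero map from a higher-length indecomposable with top $E_{i+1}$ lands in a socle distinct from $E_{i+1}$), so the reduced $\Ext$-quiver contains the cycle $E_1\to E_2\to\cdots\to E_r\to E_1$.

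For the converse I assume some $E_{j_0}\notin\mathcal Z$ and set $h(M)=a(M)\cdot N-l(M)$ on the indecomposables of $\mathcal Z$ in the tube, where $l(M)$ is the quasi-length, $a(M)\in\{1,\ldots,r-1\}$ is the socle-offset of $M$ from $j_0$ (well defined because no module of $\mathcal Z$ has socle $E_{j_0}$), and $N$ is chosen larger than the maximum quasi-length in $\mathcal Z$. I will show that every arrow strictly increases $h$ by classifying: if $\tau^{-1}Y\in\mathcal Z$, then any map $\tau^{-1}Y\to X$ with $\tau^{-1}Y\not\cong X$ is already a radical map in $\mathcal Z$ and factors through $\mathrm{id}_{\tau^{-1}Y}$, so the only arrow from $Y$ goes to $X\cong\tau^{-1}Y$, yielding $\Delta a=+1$, $\Delta l=0$, and $\Delta h=N$; if $\tau^{-1}Y\notin\mathcal Z$, one parameterises nonzero morphisms $\tau^{-1}Y\to X$ by the length $m$ of their image, and shows that subclosure of $Y$ forces its length-$(l_Y-1)$ submodule (the unique one with socle $E_{j_Y}$) into $\mathcal Z$, providing a radical factorisation for every candidate arrow except the one with $a_X=a_Y$ and $l_X=l_Y-1$, for which $\Delta h=+1$. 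The main obstacle is this last case analysis: one must verify that every candidate arrow with $d:=l_Y-l_X\geq 2$ is killed by the subclosure-forced intermediate $E_{t_X}[l_Y-1]$ (whose top matches that of $X$) in $\mathcal Z$, which provides a radical factorisation of the relevant length-$l_X$ morphism. Once this classification is in hand the strict monotonicity of $h$ is immediate, no cycle can close, and Conjectures~\ref{affinedescription} and~\ref{description} agree.
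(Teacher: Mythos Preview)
Your reduction to a single tube and the forward direction (all quasi-simples present $\Rightarrow$ cycle on the quasi-simples) are fine and match the paper. The height-function argument for the converse, however, is wrong as stated: the claimed classification of arrows in the case $\tau^{-1}Y\notin\mathcal Z$ fails. Take a rank-$3$ tube with $E_3$ the missing quasi-simple and $\mathcal Z=\{E_1,E_2,E_2[2]\}$ (subclosed, since the only proper submodule of $E_2[2]$ is $E_2$). For $Y=E_2[2]$ one has $\tau^{-1}Y=E_3[2]\notin\mathcal Z$; your prescription predicts the unique arrow goes to $E_2$ (same $a$, length one less), but $\Hom(E_3[2],E_2)=0$. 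Instead the surjection $E_3[2]\twoheadrightarrow E_1$ yields an arrow $E_2[2]\to E_1$, since there are no nonzero radical maps $\mathcal Z\to E_1$ at all. Along this arrow $a$ drops from $2$ to $1$, so $\Delta h=-N+1<0$: your height function is not monotone. The proposed intermediate $E_{j_Y}[l_Y-1]=E_2$ cannot provide a factorisation because $\Hom(E_2,E_1)=0$, and the alternative intermediate $E_{t_X}[l_Y-1]$ you mention need not be a submodule of anything in $\mathcal Z$.

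The paper's argument avoids this by using quasi-length itself rather than a composite invariant. One checks two facts: (i) no arrow goes to a strictly higher quasi-length, because any map $\tau^{-1}Y\to X$ with $l_X>l_Y$ factors through the submodule of $X$ of length $l_Y$ on the same ray, which lies in $\mathcal Z$ by subclosure and gives a radical factorisation; (ii) at equal quasi-length the only possible arrow is $Y\to\tau^{-1}Y$, since for any other $X$ of the same length the map factors through the length-$(l_X-1)$ submodule of $X$, again in $\mathcal Z$. Hence any cycle lives at a fixed quasi-length and visits every ray via successive $\tau^{-1}$, contradicting the missing ray. This is both simpler and correct; your $h$ can be salvaged only by replacing it with something that encodes exactly this two-step filtration (level first, then position within a level), at which point you have essentially reproduced the paper's proof.
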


\begin{proof}
Suppose that $\mathcal Z$ is a subclosed category, 
and that there is some tube such that 
$\mathcal Z$ contains objects
from each ray of the tube.  By the fact that $\mathcal Z$ is subobject
closed, it contains the quasisimples from the bottom of the tube.  Since
each is the AR-translation of the next around the tube, the corresponding
extensions cannot factor through any other element of $\mathcal Z$, so
they give rise to a cycle in the reduced $\Ext$-quiver of $\mathcal Z$,
contrary to our assumption.  

Conversely, suppose that $\mathcal Z$ is subclosed and 
each tube has a ray such that $\mathcal Z$ does
not intersect that ray.  Suppose $Y$ and $X$ are in the same tube, with
$X$ strictly higher than $Y$.  A map from $\tau^{-1} Y$ to $X$ factors 
through $X'$, the indecomposable in the same ray as $X$ and on the same
level as $Y$, and $X'$ is in $\mathcal Z$ since $\mathcal Z$ is subclosed.
Therefore, there is no arrow in the reduced $\Ext$-quiver from $Y$ to $X$, 
so the only arrows in the reduced $\Ext$-quiver go from an object to another
object at the same height or lower.  
Further, there can only be an arrow from 
$Y$ to some 
$X$ at the same height as $Y$ if $X=\tau^{-1} Y$, since otherwise the map 
from $\tau^{-1} Y$ will factor through $X''$, the indecomposable on the
same ray as $X$ which is one level lower.  

Thus, a cycle would necessarily involve objects all at the same height,
and each would have to be $\tau^{-1}$ of the previous one.  This would imply
that there was no ray in the tube not intersecting $\mathcal Z$.  
\end{proof}

We now prove Conjecture~\ref{description} 
for the case that $Q$ is a quiver with 
two vertices.  

\begin{proposition} \label{twovertices}
Conjecture \ref{description} holds when $Q$ is a
quiver with two vertices. \end{proposition}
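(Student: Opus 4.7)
The cases $m \le 1$ (where $m$ is the number of arrows of $Q$) are Dynkin, so the proposition follows directly from Theorem~\ref{thm.Dynkin_subclosed}; the third condition of Conjecture~\ref{description} is vacuous there, since there are no regular modules to provide a cycle in the reduced $\Ext$-quiver of a finite subclosed subcategory. So assume $m \ge 2$ and $Q$ is non-Dynkin. Then $W_Q$ is the infinite dihedral group generated by $s_1, s_2$, and every nonidentity $w \in W_Q$ has a unique reduced expression as an alternating word in $s_1$ and $s_2$.

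The forward implication proceeds by explicit computation: for each $w$, I would identify $(\Pi/I_w)_{kQ}$ as a specific finite direct sum of preprojective and preinjective $kQ$-indecomposables, by induction on $\ell(w)$. The induction step uses Lemma~\ref{birs.lemma} together with the tensor-algebra description of $\Pi$ from Section~3: each further multiplication by $I_{s_i}$ peels off either the next preprojective or the next preinjective summand, in a way dictated by the alternating reduced word. Finiteness of $\C(\Pi/I_w)$ is then clear, subclosedness follows from Lemma~\ref{subclosed} combined with the observation that $\C(\Sub \Pi/I_w)=\C(\Pi/I_w)$ in this setting, and acyclicity of the reduced $\Ext$-quiver is immediate from the linear arrangement of the preprojective and preinjective components of $\mod kQ$ together with the fact that no homomorphism runs from a preinjective to a preprojective.

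For the converse, suppose $\mathcal Z$ is finite, subclosed, with acyclic reduced $\Ext$-quiver. The key step is to show $\mathcal Z$ contains no regular indecomposable. In the Kronecker case ($m=2$), every regular indecomposable contains a regular simple $R$ as a submodule, and $\tau R=R$ gives $\Ext^1_{kQ}(R,R)\ne 0$; after verifying that the resulting element of $\Ext$ is not annihilated by the radical factorization condition, this produces a loop in the reduced $\Ext$-quiver. In the wild case ($m\ge 3$), a regular quasi-simple $R$ forced into $\mathcal Z$ by subclosure brings in, together with its submodules, enough regular indecomposables in its $\tau$-orbit that the non-vanishing of $\Ext^1_{kQ}(\tau^{-1}R,R)\cong D\End_{kQ}(R)$ along the orbit closes a cycle. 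Once regulars are excluded, $\mathcal Z$ is a finite subclosed subcategory of $\mathcal P\cup\mathcal I$ (preprojective and preinjective modules) whose possible shapes are easily enumerated; one then matches each such $\mathcal Z$ to a unique $w \in W_Q$ for which $\C(\Pi/I_w)=\mathcal Z$ according to the list produced in the forward direction.

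The principal obstacle is the wild case of the converse: one must argue carefully that the combination of subclosedness and the acyclicity condition really does exclude every finite collection of regular modules, which requires understanding submodule structure and $\Ext$-groups along $\tau$-orbits of regulars in wild two-vertex hereditary algebras. The remainder of the argument is in essence a routine (if bookkeeping-heavy) explicit calculation, made tractable by the alternating structure of reduced words in the infinite dihedral group $W_Q$.
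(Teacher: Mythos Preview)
Your plan for the forward direction is more laborious than necessary but would work; the paper simply asserts (and it is not hard to check) that the subcategories $\C(\Pi/I_w)$ are exactly the finite initial segments of the preprojective component, optionally together with the simple injective, and that these visibly satisfy the three conditions.

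The genuine gap is in your converse argument for the wild case $m\ge 3$. Your proposed mechanism for producing a cycle is to follow the $\tau$-orbit of a regular quasi-simple $R$, using $\Ext^1_{kQ}(\tau^{-1}R,R)\cong D\End_{kQ}(R)\ne 0$ at each step. But in the wild case the $\tau$-orbit of $R$ is infinite, and there is no reason subclosedness should force other members of that orbit into $\mathcal Z$: submodules of $R$ need not be $\tau$-translates of $R$. So the walk you build never returns, and no cycle is produced this way.

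The paper avoids this entirely with a uniform argument that works simultaneously for $m=2$ and $m\ge 3$. The key observation is that for a two-vertex non-Dynkin quiver, \emph{no} regular module is rigid: every regular $R$ has $\Ext^1_{kQ}(R,R)\ne 0$, hence a nonzero map $\tau^{-1}R\to R$. This map need not itself give an arrow $R\to R$ in the reduced $\Ext$-quiver, but it must factor through the arrow out of $R$, and since the map lands in a regular module, that arrow goes to some regular indecomposable of $\mathcal Z$. Thus every regular vertex of the reduced $\Ext$-quiver has an outgoing edge to another regular vertex; since there are only finitely many, pigeonhole forces a directed cycle. This replaces your separate tame/wild analysis with a single line, and sidesteps the $\tau$-orbit issue altogether.
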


\begin{proof} The subcategories that arise as $\mathscr C(\Pi/I_w)$ are 
exactly those of the following form:
\begin{itemize}
\item A finite initial segment of the preprojective component, or
\item A finite initial segment of the preprojective component 
together with the simple injective.
\end{itemize}

It is clear that these subcategories satisfy the conditions of 
Conjecture \ref{description}, so it is just a matter of checking that
no other subcategory does.  

Let $\mathcal Z$ be some subcategory satisfying the conditions of 
Conjecture \ref{description}.  If $\mathcal Z$ contains a non-simple 
injective, then (being subclosed) it would also contain all the
predecessors of $\mathcal Z$ in the preinjective component, and thus
it would not be finite, contradicting our assumption.

Suppose now that $\mathcal Z$ contains some regular objects, $R_1,\dots,
R_r$.  Since $kQ$-mod has no rigid regular objects, each of these objects
admits a self-extension.  Thus, for each $R_i$, there is a map from
$\tau^{-1}R_i$ to $R_i$.  This does not necessarily yield an arrow in the 
reduced $\Ext$-quiver, but we can conclude that there is {\it some} arrow 
in the reduced $\Ext$-quiver of $\mathcal Z$ starting at $R_i$, and further
that this arrow goes to a regular indecomposable of $\mathcal Z$,
since the morphism from $\tau^{-1}R_i$ to $R_i$ factors through the 
morphism corresponding to this arrow.  Thus,
the reduced $\Ext$-quiver contains arbitrarily long walks, so it must
contain a directed cycle, contradicting our assumption.

Finally, if $\mathcal Z$ contains a preprojective object $E$, it must
contain all the predecessors of $E$, since $\mathcal Z$ is subclosed.  
It follows that the only subcategories $\mathcal Z$ which satisfy the
conditions of Conjecture \ref{description} are those which we have already
identified.  \end{proof}

\section{Which cofinite quotient closed subcategories are torsion classes?}

We have established that the cofinite quotient closed subcategories of 
$kQ$-mod can be formed as the additive hull of $\mathscr C(I_w)$ together
with all non-preprojective indecomposable objects for $w\in W$.  
It is natural to ask
for which $w$ these subcategories are actually torsion classes.  

When we have found a torsion class, it is also natural to ask about the 
corresponding torsion-free class.  Since our torsion classes are cofinite,
the corresponding torsion-free class will be finite, and it will therefore
be useful to recall the correspondence established in \cite{IT,AIRT}
between torsion-free classes and 
certain elements of $W$ called $c$-sortable elements.

As usual, $c=s_1s_2\dots s_n$, where the simple reflections 
(or equivalently, the vertices of $Q$) are numbered
compatibly with the orientation of $Q$.  An element $w\in W$ is called 
$c$-sortable if 
there is an expression for $w$ of the form $c^{(0)}c^{(1)}\dots c^{(r)}$, 
where each $c^{(i)}$ consists of some subset of the simple reflections, taken
in increasing order, and such that the set of reflections appearing in
$c^{(i)}$ is contained in the set of reflections appearing in $c^{(i-1)}$ \cite{Re}.  
It is shown in \cite{AIRT} that there is a one-to-one correspondence
between $c$-sortable elements of $W$ and finite torsion-free classes for 
$kQ$-mod, which takes $w$ to $\mathscr C(\Pi/I_w)$.

%\comment{check flow around this paragraph}
%Let $c$ be a Coxeter element of $W,$ that is, a product of $\seq{s}$ in some order. Assume that $c$ is compatible with the orientation of $Q.$ Then

%An element $w\in W$ is \emph{$c$-sortable} if there is some reduced expression $\mathbf{w} = c^{(0)}c^{(1)}\ldots c^{(r)},$ where $c\supseteq c^{(0)}\supseteq c^{(1)}\supseteq\ldots\supseteq c^{(r)}.$ Then there is a one-one correspondence between $c$-sortable words in $W$ and finite torsion-free classes in $\m{kQ}$ \cite{T,AIRT}.

%We know that $\C(\Sub \Pi/I_w)$ is a torsion-free class 
%if and only if $w$ is $c$-sortable.  It is then natural to
%try to describe the elements $w$ such that $\C(I_w)$ is
%a torsion class, and in that case, to describe the $c$-sortable
%word corresponding to the associated torsion-free class.  

Therefore, when $\mathscr C(I_w)$ together with the non-preprojective
indecomposables form a torsion class, we can ask for the element $v\in W$
such that the corresponding torsion-free class is given by $\mathscr C(\Pi/
I_v)$.  
In this section we give conjectural answers to both these questions,
and we prove that our conjectures hold in the Dynkin case.  

Write $\sort_c(w)$ for the longest $c$-sortable prefix of $w$.  

\begin{conjecture} \label{whentorsion}
The following conditions are equivalent:
\begin{enumerate}
\item the additive category generated by $\mathscr C(I_w)$ together with all
non-preprojective indecomposable $kQ$-modules
is a torsion class,
\item for every $i$ such
that $\ell(ws_i)>\ell(w)$, we have that $\sort_c(ws_i)$ is 
strictly longer than $\sort_c(w)$.
\end{enumerate}
\end{conjecture}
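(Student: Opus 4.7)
The plan is to combine the main theorem of this paper with the classification of finite torsion-free classes by $c$-sortable elements due to \cite{IT, AIRT}. The key observation is that if $\mathcal A(u) := \add(\C(I_u) \cup \{\text{non-preprojective indecomposables}\})$ is a torsion class, then its torsion-free complement $\mathcal F_u := \mathcal A(u)^\perp$ consists only of preprojectives (since every non-preprojective lies in $\mathcal A(u)$), lies in $\add \mathcal X(u)$ where $\mathcal X(u)$ is the finite missing set, and is subclosed; hence by \cite{AIRT} one has $\mathcal F_u = \C(\Pi/I_v)$ for a unique $c$-sortable $v \in W$. Conversely each $c$-sortable $v$ gives a torsion class, which by Theorem~\ref{main} equals $\mathcal A(u)$ for a unique $u$. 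This establishes a bijection between $c$-sortable elements and $\{u \in W : \mathcal A(u)\text{ is a torsion class}\}$.

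The technical heart is to identify the $u$ above as the unique maximum in right weak order of the fiber $\sort_c^{-1}(v)$, with $v = \sort_c(u)$. To show $\sort_c(u) = v$, I would use Proposition~\ref{prop.dualities} and the machinery of Section~\ref{sectionsix} to translate the comparison of $\mathcal X(u)$ with the indecomposables of $\C(\Pi/I_v)$ into a word-combinatorial comparison of the leftmost subword of $u$ in $\underline c^\infty$ against a reduced expression for $v$; the $c$-sortable prefix structure of these words forces $v$ to be the longest $c$-sortable prefix of the $c$-sorting word of $u$. Fiber-maximality then follows from a uniqueness argument: any $u' > u$ in weak order with $\sort_c(u') = \sort_c(u)$ would force $\mathcal A(u') \subsetneq \mathcal A(u)$ by the main theorem, and since at most one element of each fiber can yield the torsion class paired with $v$, that element must be the weak-order maximum.

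With $u_v$ pinned down as the fiber-maximum, the equivalence (1) $\iff$ (2) reduces to an elementary combinatorial fact: an element $u \in W$ is the maximum of $\sort_c^{-1}(\sort_c(u))$ in right weak order if and only if no right ascent $s_i$ of $u$ (i.e., no $s_i$ with $\ell(us_i) > \ell(u)$) satisfies $\sort_c(us_i) = \sort_c(u)$. Since $\sort_c$ is monotone along weak-order covers, $\sort_c(us_i) \neq \sort_c(u)$ is equivalent to $\sort_c(us_i) > \sort_c(u)$ in weak order, which by length-monotonicity is equivalent to $\sort_c(us_i)$ being strictly longer than $\sort_c(u)$ — precisely condition (2). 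Thus (1) holds exactly when $u$ is fiber-maximum, which in turn holds exactly when (2) does.

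The principal obstacle is the identification $\sort_c(u_v) = v$ together with the fiber-maximality of $u_v$. The subtlety is that the $c$-sortable prefix of $u$, read from the leftmost subword in $\underline c^\infty$, is described in terms of preprojective indecomposables via Theorem~\ref{main}, whereas $\C(\Pi/I_v)$ is defined in terms of the $\Pi$-module structure of $\Pi/I_v$; bridging the two requires the dualities of Section~\ref{sectionsix} combined with a careful inductive argument along reduced expressions. In the Dynkin case, Proposition~\ref{prop.dualities} and Proposition~\ref{prop.I=Ann} provide the necessary dictionary and the argument should go through; in the non-Dynkin case, the AIRT correspondence still governs finite torsion-free classes, but matching the infinite leftmost word for $u$ against a finite reduced expression for $v$ is more delicate, which is presumably why the authors only state (1) $\iff$ (2) as a conjecture for general $Q$.
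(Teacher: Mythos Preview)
Your overall strategy is reasonable, but it takes a significantly harder route than the paper's proof and has a genuine gap at the point you yourself flag as ``the principal obstacle.''

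The paper's proof (for the Dynkin case, which is all that is proved) is much more direct. It does \emph{not} try to identify the torsion-free partner of $\C(I_w)$. Instead it uses the duality of Proposition~\ref{prop.dualities}(1), namely $DI_w \cong \Pi/I_{w_0w^{-1}}$ as left $\Pi$-modules, to get
\[
D\,\C(I_w) = \C_{\rm left}(\Pi/I_{w_0w^{-1}}).
\]
Now $\C(I_w)$ is a torsion class in $\mod kQ$ if and only if its dual is a torsion-free class in $\mod kQ^{\op}$, and by the left-module version of \cite{AIRT} this happens if and only if $(w_0w^{-1})^{-1} = ww_0$ is $c^{-1}$-sortable. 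Separately, Lemma~\ref{equivalences} (which is pure Coxeter combinatorics, citing Reading \cite{Re:au}) shows that $ww_0$ being $c^{-1}$-sortable is equivalent to $w$ being the fiber-maximum of $\sort_c$, which in turn is equivalent to your condition~(2). Thus the paper links both sides of the conjecture to the single criterion ``$ww_0$ is $c^{-1}$-sortable,'' and never needs to know what the torsion-free partner actually is.

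Your approach, by contrast, builds the proof of Conjecture~\ref{whentorsion} on top of the identification $\sort_c(u)=v$ where $\mathcal F_u = \C(\Pi/I_v)$. That identification is precisely Conjecture~\ref{associated}, which the paper proves \emph{after} Conjecture~\ref{whentorsion}, via an indirect lattice-automorphism argument (the map $\phi$ on torsion-free classes is shown to fix splitting torsion-free classes, hence principal ones, hence all of them). Your sketch for this step --- ``translate the comparison \ldots into a word-combinatorial comparison'' using Section~\ref{sectionsix} --- does not amount to a proof, and indeed the paper's own argument for this point is quite different and more delicate than anything you outline. Moreover, your fiber-maximality step (``any $u'>u$ in weak order with $\sort_c(u')=\sort_c(u)$ would force $\mathcal A(u')\subsetneq\mathcal A(u)$, and since at most one element of each fiber can yield the torsion class paired with $v$ \ldots'') is circular: you need to already know that the element of the fiber giving a torsion class is unique and sits at the top, which is what you are trying to prove.

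In short: your reduction (1)~$\iff$~(fiber-maximum)~$\iff$~(2) is the right shape, and the second equivalence is exactly the paper's Lemma~\ref{equivalences}(1)$\iff$(2). But for the first equivalence the paper bypasses the hard identification of the torsion-free partner by going through ``$ww_0$ is $c^{-1}$-sortable'' instead, which makes the argument both shorter and logically independent of Conjecture~\ref{associated}.
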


Consider $A_2$, with $S_1$ the simple projective, so $c=s_1s_2$.  
We find that the elements of $W$ satisfying each of the above conditions 
are
$e$, $s_1$, $s_1s_2$, $s_2s_1$, $s_1s_2s_1$.  (For example, $s_2$ does
not satisfy the second condition, because $s_2s_1$ is longer than $s_2$,
but the longest $c$-sortable prefix of both $s_2s_1$ and of $s_2$ is
$s_2$.  On the other hand, note that $s_2s_1$ satisfies the conditions:
its longest $c$-sortable prefix is $s_2$, while the only word 
which can be obtained 
by lengthening $s_2s_1$ is $s_2s_1s_2=s_1s_2s_1$ which is $c$-sortable.)
 
\begin{conjecture}\label{associated}
If the additive hull of $\mathscr C(I_w)$ together with the non-preprojective
indecomposable objects forms a torsion class, its corresponding 
torsion-free class is that associated to $\sort_c(w)$.  
\end{conjecture}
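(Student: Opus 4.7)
The plan is to identify $\mathcal{F}_w$ via the bijection from \cite{AIRT} between $c$-sortable elements of $W$ and finite torsion-free classes in $\mod kQ$.  Since $\mathcal{T}_w$ contains every non-preprojective indecomposable by hypothesis, the torsion-free class $\mathcal{F}_w$ lies entirely in $\mathcal{P}$, and its indecomposable objects form a subset of the finite set $\mathcal{X}_w$ of preprojectives missing from $\C(I_w)$; in particular $\mathcal{F}_w$ is a finite torsion-free class.  The cited bijection then produces a unique $c$-sortable $v' \in W$ with $\mathcal{F}_w = \C(\Pi/I_{v'})$, so the problem reduces to proving $v' = \sort_c(w)$.

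Introduce the map $\varphi$ from $c$-sortable elements to $W$ that sends $v$ to the unique $\varphi(v)$ with $\C(I_{\varphi(v)})$ equal to the preprojective part of the torsion class associated to the torsion-free class $\C(\Pi/I_v)$; this is well-defined by Theorem~\ref{main}.  The hypothesis of the conjecture says $w = \varphi(v')$, so the conclusion amounts to the identity $\sort_c \circ \varphi = \id$ on $c$-sortable elements.  Unwinding the definitions, the preprojectives missing from $\C(I_{\varphi(v)})$ are the indecomposable preprojective $X$ with $\Hom(X, \C(\Pi/I_v)) \neq 0$; reading them in AR-order yields the leftmost word for $\varphi(v)$ in $\underline c^{\infty}$, and what must be shown is that its longest $c$-sortable prefix is $v$.

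I would proceed by induction on $\ell(v)$, matching the inductive description of $c$-sortable elements from \cite{Re} (based on whether an initial simple reflection of $c$ is a descent of $v$) against the representation-theoretic recursion supplied by Theorem~\ref{thm.tensoringcommutes} (which identifies tensoring with $I_i$ over $\Pi$ with APR-tilting over $kQ$ at a source $i$) together with Lemmas~\ref{simpleproj} and \ref{difference} (which describe how $\C(I_w)$ changes under the passage $w \mapsto s_i w$).  The main obstacle lies in the non-Dynkin case: one must control potential contributions to $\C(\Pi/I_v)$ from the regular and preinjective components, and handle that $\sort_c^{-1}(v)$ can be infinite.  In the Dynkin case the duality of Proposition~\ref{prop.dualities} transports the question to $\mod kQ^{\op}$ and identifies $\varphi(v)$ directly with the maximum of the Cambrian interval $\sort_c^{-1}(v)$, reducing the identity $\sort_c(\varphi(v)) = v$ to well-known combinatorics of the Cambrian lattice and providing a clean route in that setting.
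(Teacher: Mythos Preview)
Your reduction is correct: writing $v'$ for the $c$-sortable element with $\mathcal F_w=\C(\Pi/I_{v'})$, the conjecture (for all admissible $w$ at once) is equivalent to $\sort_c\circ\varphi=\id$ on $c$-sortable elements, where $\varphi(v)$ is the Weyl group element indexing the torsion class attached to $\mathcal F_v$. This is exactly the statement the paper reduces to as well (in the paper's language, that the map $\phi$ on torsion-free classes is the identity). The gap is in how you propose to finish.

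In the Dynkin case your ``clean route'' is circular as stated. Proposition~\ref{prop.dualities}, together with the left-module AIRT bijection, shows only that $\C(I_w)$ is a torsion class iff $ww_0$ is $c^{-1}$-sortable; via Lemma~\ref{equivalences} this says $\varphi(v)$ is the maximum of \emph{its own} Cambrian class $\sort_c^{-1}(\sort_c(\varphi(v)))$. It does not tell you that this class is $\sort_c^{-1}(v)$. Identifying the class is exactly the content of the conjecture, and duality by itself does not provide it: dualizing the torsion pair $(\mathcal T_{\varphi(v)},\mathcal F_v)$ to $\mod kQ^{\op}$ yields another torsion pair indexed by $w_0 v^{-1}$ and $w_0\varphi(v)^{-1}$, i.e.\ the same unknown relationship transported to the other side. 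The paper closes this gap by a genuinely different argument: it shows that $\phi$ is a lattice automorphism of the (finite) lattice of torsion-free classes, checks directly that $\phi$ fixes the \emph{splitting} torsion-free classes (using Lemma~\ref{whensplitting} and \cite{PS}), deduces that $\phi$ fixes the join-irreducible (principal) torsion-free classes $\Sub(E)$ by trapping each between two consecutive splitting classes, and concludes $\phi=\id$ since every torsion-free class is a join of principals. None of this is ``well-known Cambrian combinatorics''; it is a specific lattice rigidity argument.

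Your inductive plan via Reading's recursion and Theorem~\ref{thm.tensoringcommutes} is a reasonable alternative strategy in Dynkin type, but you have not carried out the inductive step: you would need to show that APR-tilting at the initial vertex takes the torsion pair $(\mathcal T_{\varphi(v)},\mathcal F_v)$ for $c$ to the torsion pair for $scs$ (or to the parabolic), and that $\varphi$ and $\sort$ transform compatibly. That verification is where the work lies, and it is not supplied. In the non-Dynkin case, as you note yourself, neither the finiteness of $\sort_c^{-1}(v)$ nor Conjecture~\ref{whentorsion} is available, so the proposal there is a sketch of difficulties rather than a proof.
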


We will now prove both these conjectures for finite type.  In order to
do so, we introduce some notation.  

There is an order on $W$, called {\it right weak order}, in which 
$u\leq_{\rm R} v$ iff there is a reduced expression for $v$ with a prefix which is
an expression for $u$.  This is a weaker order than Bruhat order,
in the sense that if $u \leq_{\rm R} v$, it is also true that
$u\leq v$ in Bruhat order.  (Left weak order, which we shall not need here,
is defined similarly, using suffixes instead of prefixes.)  For more on
weak orders, see \cite[Chapter 3]{BB}.

In finite type, the 
map $\sort_c$, which takes $W$ to $c$-sortable elements, is a 
lattice homomorphism from $W$ with the right weak order to the 
$c$-sortable elements of $W$, ordered by the restriction of right weak 
order  \cite[Theorem 1.1]{Re:au}.
This implies, in particular, that each fibre of 
$\sort_c$ is an interval in $W$.  

\begin{lemma} \label{equivalences}
For $Q$ of finite type, the following conditions on
$w\in W$ are equivalent:
\begin{enumerate}
\item For every simple reflection $s_i$ such that $\ell(ws_i)>\ell(w)$, 
we have that $\ell(\sort_c(ws_i))>\ell(\sort_c(w))$. 
\item $w$ is the unique longest element among those $x\in W$ satisfying
$\sort_c(w)=\sort_c(x)$.  
\item $ww_0$ is $c^{-1}$-sortable.
\end{enumerate}
\end{lemma}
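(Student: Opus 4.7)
The plan is to establish $(1) \Leftrightarrow (2) \Leftrightarrow (3)$. For $(1) \Leftrightarrow (2)$ I would invoke the cited result \cite[Theorem 1.1]{Re:au} that $\sort_c$ is a lattice homomorphism of $(W, \leq_{\rm R})$ onto the $c$-sortable elements; in particular each fibre $\sort_c^{-1}(v)$ is an interval in right weak order, with minimum $v$ and a unique maximum. Condition (2) is the assertion that $w$ is the maximum of its fibre. Condition (1) is the assertion that no right ascent of $w$ remains in $\sort_c^{-1}(\sort_c(w))$: indeed, $\sort_c$ is monotone for right weak order, so when $\ell(ws_i) > \ell(w)$ one has $\sort_c(ws_i) \geq_{\rm R} \sort_c(w)$, and the strict length inequality in (1) is equivalent to this being strict in right weak order, i.e.\ to $ws_i$ leaving the fibre. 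By the interval structure of the fibre these two conditions coincide.

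For $(2) \Leftrightarrow (3)$ the plan is to identify the maxima of $\sort_c$-fibres---the $c$-\emph{antisortable} elements in Reading's terminology---with the set $\{\, vw_0 : v \text{ is } c^{-1}\text{-sortable}\,\}$. Granted this bijection $v \mapsto vw_0$, $w$ is a fibre-maximum if and only if $ww_0$ is $c^{-1}$-sortable, which is precisely (3); so (2) and (3) are equivalent.

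The main obstacle is establishing this bijection between $c$-antisortable and $c^{-1}$-sortable elements via right multiplication by $w_0$. If no direct citation is at hand, my plan is to argue by induction on $\ell(w_0) - \ell(w)$ using the standard recursive description of $c$-sortability: for $s$ an initial letter of $c$ (equivalently, a final letter of $c^{-1}$), $v$ is $c$-sortable iff either $sv >_{\rm R} v$ and $v$ lies in the standard parabolic $\langle W\setminus\{s\}\rangle$ and is $(scs)$-sortable there, or $sv <_{\rm R} v$ and $sv$ is $(scs)$-sortable in $W$. Conjugating this recursion through $w_0$ yields the parallel recursion for the fibre-maxima, phrased in terms of $c^{-1}$ and final letters, and the two recursions pair up cleanly under $v \mapsto vw_0$. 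The delicate part is bookkeeping the effect of $w_0$ on left versus right descents together with the Coxeter--Dynkin involution $s \mapsto w_0 s w_0^{-1}$; verifying that the two inductive steps do correspond is where the most care is needed.
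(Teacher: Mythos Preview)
Your argument for $(1)\Leftrightarrow(2)$ is essentially identical to the paper's: both use the lattice-homomorphism theorem to conclude that fibres of $\sort_c$ are intervals, and then observe that $w$ is the top of its fibre precisely when no right ascent of $w$ stays in the fibre.

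For $(2)\Leftrightarrow(3)$ you correctly identify the content as the bijection between $c$-antisortable elements and $c^{-1}$-sortable elements via $v\mapsto vw_0$, and you propose to rederive this by induction using the recursive characterization of sortability. The paper instead simply cites \cite[Proposition 1.3]{Re:au}, which is exactly this statement. So your fallback induction is unnecessary here; the result you need is already in Reading's paper and can be quoted directly. Your inductive sketch is a plausible route to an independent proof, but the bookkeeping with the diagram involution $s\mapsto w_0 s w_0$ and the left/right descent swap is genuinely fiddly, and you have not actually carried it out---so as written it is a plan rather than a proof. If you want a self-contained argument, you would need to fill in those details; otherwise, citing Reading suffices and matches the paper.
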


\begin{proof}
Suppose (2) does not hold, so there exists some $y>_{\rm R}w$ such that 
$\sort_c(y)=\sort_c(w)$. It then follows that the whole interval from
$y$ to $w$ has the same maximal $c$-sortable prefix, and in particular 
this holds 
for some element $ws_i$ which covers $w$.  This shows that (1) does not hold.

Now suppose that (2) holds.  Let $s_i$ be a simple reflection such that
$\ell(ws_i)>\ell(w)$.  By (2), $\sort_c(ws_i)\ne \sort_c(w)$.  Since 
$ws_i$ lies above $w$ in the right weak order, $\sort_c(ws_i)$ lies above
$\sort_c(w)$ in the right weak order, so in particular it is longer.  This
establishes (1).  

The equivalence of (2) and (3) follows from \cite[Proposition 1.3]{Re:au}.
\end{proof}

\begin{proposition} \label{conjone} Conjecture \ref{whentorsion} holds if 
$Q$ is of finite type.
\end{proposition}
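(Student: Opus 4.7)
The plan is to combine Lemma~\ref{equivalences} with the Ingalls--Thomas bijection between $c$-sortable elements of $W$ and torsion classes in $\mod kQ$, reducing the proof to a cardinality match plus one direction of inclusion.

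By Lemma~\ref{equivalences}, condition~(1) of Conjecture~\ref{whentorsion} is equivalent to $w$ being the maximum (in right weak order) of its $\sort_c$-fibre. Since the fibres of $\sort_c$ are intervals in right weak order (via the lattice homomorphism property \cite[Theorem 1.1]{Re:au}), the restriction of $\sort_c$ yields a bijection from the set $S_1$ of $w$ satisfying condition~(1) onto the $c$-sortable elements of $W$. Meanwhile, by \cite{IT} (see also \cite{AIRT}), the torsion classes in $\mod kQ$ are in bijection with the $c$-sortable elements, with a $c$-sortable $v$ corresponding to the torsion class $T_v$ whose torsion-free class is $\mathscr{C}(\Pi/I_v)$. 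Combining with Theorem~\ref{two.one}, the set $S_2$ of $w \in W$ for which $\mathscr{C}(I_w)$ is a torsion class also has size equal to the number of $c$-sortable elements, so $|S_1| = |S_2|$.

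It therefore suffices to show $S_1 \subseteq S_2$. I plan to prove this, along with the identification of the torsion-free class envisaged in Conjecture~\ref{associated}, by establishing that for every $w \in S_1$ the pair $(\mathscr{C}(I_w), \mathscr{C}(\Pi/I_{\sort_c(w)}))$ is a torsion pair in $\mod kQ$. The first component is quotient closed by Theorem~\ref{converse}, and the second is sub-closed by Theorem~\ref{thm.Dynkin_subclosed}, so what remains to verify is the $\Hom$-vanishing between them, together with the existence of a canonical torsion filtration of every module. By uniqueness of the $w$ recovering a given cofinite quotient closed subcategory (Theorem~\ref{two.one}), this identification will force $\mathscr{C}(I_w)=T_{\sort_c(w)}$, establishing both conjectures simultaneously.

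The main obstacle is verifying this torsion-pair property. I anticipate an induction on $\ell(w)$ exploiting the APR-tilt interpretation of $-\otimes_{\Pi} I_i$ from Theorem~\ref{thm.tensoringcommutes}: for a source $i$ of $Q$ with $\ell(s_i w) < \ell(w)$, Lemma~\ref{difference} relates $\mathscr{C}(I_w)$ and $\mathscr{C}(I_{s_i w})$ via the reflection functor, and the inductive step will reduce to tracking the behaviour of condition~(1), of $\sort_c$, and of the sub-closed class $\mathscr{C}(\Pi/I_{\sort_c(w)})$ under the passage to the APR-tilted quiver $Q'$. The crux is the combinatorial compatibility of $c$-sortability (and of ``being maximal in its $\sort_c$-fibre'') with APR-tilting, together with the fact that APR-tilts send torsion pairs to torsion pairs in the tilted category.
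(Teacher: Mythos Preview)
Your counting argument is correct and elegant: Lemma~\ref{equivalences} shows that condition~(2) picks out exactly one element per $\sort_c$-fibre, so $|S_1|$ equals the number of $c$-sortable elements, and by Theorem~\ref{two.one} together with the Ingalls--Thomas bijection the same is true of $|S_2|$. So the reduction to a single inclusion is sound.

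The gap is in the remaining step. You do not prove $S_1\subseteq S_2$; you only outline an induction via APR-tilts and explicitly flag the torsion-pair verification as ``the main obstacle''. Carrying this out would require several nontrivial compatibility statements that you have not established: how the condition ``$w$ is maximal in its $\sort_c$-fibre'' transforms when passing from $(Q,c)$ to $(Q',c')$ under reflection at a source, how $\sort_c(w)$ itself transforms (the relationship between $\sort_c$ and $\sort_{s_1 c s_1}$ is not treated anywhere in the paper), and why the pair $(\mathscr{C}(I_w),\mathscr{C}(\Pi/I_{\sort_c(w)}))$ survives the tilt as a torsion pair. None of these is immediate, and the induction must also handle the case where the chosen source is not a left descent of $w$. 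As written, this is a plan rather than a proof.

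By contrast, the paper's argument is a one-line reduction that bypasses all of this. Proposition~\ref{prop.dualities}(1) gives $DI_w\cong \Pi/I_{w_0 w^{-1}}$ as left $\Pi$-modules, so $D\mathscr{C}(I_w)=\mathscr{C}_{\rm left}(\Pi/I_{w_0 w^{-1}})$. By the left-module version of \cite{AIRT}, the latter is torsion-free in $\mod kQ^{\op}$ if and only if $(w_0 w^{-1})^{-1}=ww_0$ is $c^{-1}$-sortable; dualizing, $\mathscr{C}(I_w)$ is a torsion class iff $ww_0$ is $c^{-1}$-sortable, which is exactly condition~(3) of Lemma~\ref{equivalences}. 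This proves both directions of Conjecture~\ref{whentorsion} at once, with no counting and no induction. Your more ambitious goal of simultaneously establishing Conjecture~\ref{associated} is handled separately in the paper (Lemmas~\ref{moreeq} and~\ref{fixsplit}), again using this duality rather than an APR-tilt induction.
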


\begin{proof}
We denote by $\mathscr{C}_{\rm left}$ the left module version of $\mathscr{C}$, that is, the map associating to a left $\Pi$-module the category of all finite direct sums of direct summands of its restriction to $kQ$. Note that for a finite dimensional $\Pi$-module $X$ we have $D\mathscr{C}(X) = \mathscr{C}_{\rm left} D(X)$.

By the left module version of \cite{T,AIRT} we know that $\mathscr{C}_{\rm left}(\Pi / I_w)$ is a torsion free class if and only if $w^{-1}$ is $c^{-1}$-sortable. 

Since $D I_w \cong \Pi / I_{w_0 w^{-1}}$ as left $\Pi$-modules by Proposition~\ref{prop.dualities}(1), we have
\[ D\mathscr{C}(I_w) = \mathscr{C}_{\rm left}( \Pi / I_{w_0 w^{-1}} ), \]
and this is a torsion free class in $\mod kQ^{\op}$ if and only if $(w_0 w^{-1})^{-1} = w w_0$ is $c^{-1}$-sortable.

Dualizing we obtain the claim.
\end{proof}

%\begin{proof} 
%By Proposition \ref{prop.dualities}, we know that $I_w$ and  $D(\Pi/I_{w_0w^{-1}})$ are isomorphic
%as $\Pi^\op$-modules.  As in the proof of Lemma~\ref{notesDDynkin},
%we can also think of the $\Pi^\op$-module 
%$D(\Pi/I_{w_0w^{-1}})$ as $(\Pi^\op/I^\op_{ww_0})_{\Pi^\op}$, where  
%we write $I^\op_v$ for the 
%ideal in $\Pi^\op$ defined using the fact that $\Pi^\op$ is itself a 
%preprojective algebra.

%We now restrict to $kQ$-modules, which is the same as 
%applying $\otimes_\Pi kQ$.  
%We get: 

%$$(I_w)_{kQ}= [D(\Pi^\op/I^\op_{ww_0})]\otimes_\Pi kQ
%= D(\Pi^\op/I^\op_{ww_0} \otimes_{\Pi^\op} kQ^\op)$$

%Now, $\C(I_w)$ is torsion iff 
%$\add \Pi^\op/I^\op_{ww_0}\otimes _{\Pi^\op} kQ^\op$ is torsion-free, iff
%$ww_0$ is $c^{-1}$-sortable \cite{AIRT}.  The desired result now follows
%from Lemma~\ref{equivalences}.
%\end{proof}

As was already mentioned, if $w$ is $c$-sortable, 
we know that $\mathscr C_Q(\Pi/I_w)$ is 
torsion-free.  Write $\mathcal F_w$ for this class.

For $c$-sortable $w$, write $\hat w$ for the unique longest word with the
same $c$-sortable prefix as $w$.  
(In order to know that such an element exists, we must continue
to assume that $Q$ is Dynkin.)  Note, in particular, 
$\hat w$ satisfies the equivalent conditions of Lemma~\ref{equivalences}.
Thus, by Proposition \ref{conjone}, 
$\mathscr C(I_{\hat w})$ is a torsion class.
Write $\mathcal T_{\hat w}$ for this subcategory.  From the proof of 
Proposition \ref{conjone}, we also have the further equality:

$$\mathcal T_{\hat w} = \mathscr C(I_{\hat w}) = \mathscr C_{\rm left}
(\Pi/I_{w_0\hat w^{-1}})$$

Proving Conjecture 
\ref{associated} amounts to showing that, 
 for 
$w$ any $c$-sortable element, $(\mathcal T_{\hat w},\mathcal F_w)$ is a 
torsion pair.   

For $w\in W$, choose a reduced expression $w=s_{i_1}\dots s_{i_r}$.  Define
$\Inv(w)$ to consist of the set of positive roots 
$\{ s_{i_1}\dots s_{i_{t-1}}\alpha_{i_{t}}\}$.  Note that this set does not 
depend on the chosen expression for $w$.  %It can also be 
%described as $\Inv(w)=\{\alpha\in \Phi^+\mid w^{-1}(\alpha)\not\in \Phi^+\}$.  
%See \cite[Proposition 4.4.6 and Corollary 1.4.4]{BB}.

\begin{lemma} \label{moreeq} For $v,w$ two $c$-sortable elements, 
the following are equivalent:
\begin{enumerate}
\item $v\leq_{\rm R} w$,
\item $\Inv(v)\subseteq \Inv(w)$,
\item $\mathcal F_v \subseteq \mathcal F_w$,
\item $\hat v \leq_{\rm R} \hat w$,
\item $\hat vw_0 \geq_{\rm R} \hat ww_0$,
\item $\mathcal T_v \supseteq \mathcal T_w$.
\end{enumerate}
\end{lemma}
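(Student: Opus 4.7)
The plan is to establish (1)$\Leftrightarrow$(2) and (4)$\Leftrightarrow$(5) by direct combinatorial arguments, link (1) with (3) and (4) through lattice-theoretic properties of $\sort_c$, and connect (5) with (6) via the duality for preprojective algebras from Proposition~\ref{prop.dualities}. Throughout I would use that in finite type, $\sort_c$ is a surjective lattice homomorphism from $W$ (with right weak order) onto the set of $c$-sortable elements, whose fibers are intervals with $\hat v$ the maximum of the fiber above $v$ (\cite[Theorem 1.1]{Re:au}).

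First, (1)$\Leftrightarrow$(2) is the standard characterization of right weak order by inclusion of inversion sets (see \cite[Chapter 3]{BB}). Next, (4)$\Leftrightarrow$(5) follows from the identity $\Inv(xw_0)=\Phi^+\setminus \Inv(x)$, which makes $x\mapsto xw_0$ an anti-automorphism of right weak order. For (1)$\Leftrightarrow$(4), the implication (4)$\Rightarrow$(1) is immediate since $\sort_c$ is order-preserving and $\sort_c(\hat v)=v$, $\sort_c(\hat w)=w$; for (1)$\Rightarrow$(4) I would apply the lattice homomorphism property to compute $\sort_c(\hat v\vee \hat w)=v\vee w=w$, so that $\hat v\vee \hat w$ lies in the fiber above $w$ and is therefore bounded above by $\hat w$, giving $\hat v\leq_{\rm R}\hat w$.

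For the representation-theoretic equivalences, (1)$\Leftrightarrow$(3) is exactly the statement that the bijection $v\mapsto \mathcal F_v=\C(\Pi/I_v)$ from $c$-sortable elements to finite torsion-free classes of $\mod kQ$ (\cite{IT, AIRT}) is an order-isomorphism between right weak order and inclusion. For (5)$\Leftrightarrow$(6), I would apply Proposition~\ref{prop.dualities}(1) to get $DI_{\hat v}\cong \Pi/I_{w_0\hat v^{-1}}$ as left $\Pi$-modules; since $k$-duality commutes with restriction along $kQ\hookrightarrow \Pi$, this gives $D\mathcal T_{\hat v}=D\C(I_{\hat v})=\C_{\rm left}(\Pi/I_{w_0\hat v^{-1}})$, and similarly for $\hat w$. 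The inverse $(w_0\hat v^{-1})^{-1}=\hat v w_0$ is $c^{-1}$-sortable, so by the analogue of \cite{IT, AIRT} for $kQ^{\op}$ (as used in the proof of Proposition~\ref{conjone}), the right-hand side is the finite torsion-free class in $\mod kQ^{\op}$ corresponding to $\hat v w_0$. Combining this order-isomorphism with the fact that $D$ preserves inclusions, $\mathcal T_{\hat v}\supseteq \mathcal T_{\hat w}$ becomes $\hat v w_0\geq_{\rm R}\hat w w_0$, which is (5).

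The trickiest step will be (5)$\Leftrightarrow$(6): one must track direction conventions carefully when passing through $k$-duality and the involution $w\mapsto w_0 w^{-1}$ on the indexing set, and verify that the $c^{-1}$-sortable label attached to $\mathcal T_{\hat v}$ under this procedure is indeed $\hat v w_0$ rather than $w_0\hat v^{-1}$. Once the correct identification is in place, the rest is a composition of the order-preserving bijections already established.
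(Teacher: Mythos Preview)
Your proposal is correct and follows essentially the same route as the paper. The paper links (2)$\Leftrightarrow$(3) directly via the fact from \cite[Theorem~2.6]{AIRT} that the dimension vectors of the indecomposables in $\mathcal F_w$ are exactly $\Inv(w)$, whereas you link (1)$\Leftrightarrow$(3) by citing the order-isomorphism; these are the same content. For (1)$\Leftrightarrow$(4) the paper simply cites \cite[Theorem~1.1]{Re:au} ``together with its dual'', while you unpack this explicitly via the lattice-homomorphism argument $\sort_c(\hat v\vee\hat w)=v\vee w=w$; your argument is a clean way to make that citation concrete. For (5)$\Leftrightarrow$(6) both you and the paper pass through $D\mathcal T_{\hat v}=\C_{\rm left}(\Pi/I_{w_0\hat v^{-1}})$ and then invoke the left-module analogue of the (1)$\Leftrightarrow$(3) step, exactly as in the proof of Proposition~\ref{conjone}; your caution about tracking the label $\hat v w_0$ versus $w_0\hat v^{-1}$ is well placed, and the identification you arrive at is the one the paper uses.
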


\begin{proof} The equivalence of (1) and (2) is clear.        The equivalence
of (2) and (3) follows from the
fact that the dimension vectors of the indecomposable objects in 
$\mathcal F_w$ are given by $\Inv(w)$, by \cite[Theorem 2.6]{AIRT}.  
The equivalence of (1) and (4) follows from \cite[Theorem 1.1]{Re:au} 
together with its dual.  
The equivalence of (4) and (5) follows from the fact that 
$\Inv(vw_0)$ is the complement of $\Inv(v)$ in the set of positive roots.
The equivalence of (5) and (6) follows in the same way as the 
equivalence of (1) and (3), using 
$\mathcal T_{\hat v}=\C_{\rm left}(\Pi/I_{w_0v^{-1}})$ and the similar description
of $\mathcal T_{\hat w}$.  
\end{proof}

Define
$\phi$ to be the map on torsion-free classes that takes $\mathcal F_w$
to the torsion-free class associated to $\mathcal T_w$.  We want to
show that $\phi$ is the identity.

\begin{lemma} The map $\phi$ is a lattice automorphism
of the lattice of torsion-free classes. \end{lemma}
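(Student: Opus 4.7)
The plan is to realise $\phi$ as an order isomorphism from the (finite) lattice of torsion-free classes to itself; any order isomorphism between lattices is automatically a lattice isomorphism, since meets and joins are characterised by the order.

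First I would combine the equivalences in Lemma~\ref{moreeq}. For $c$-sortable elements $v$ and $w$, the containment $\mathcal F_v \subseteq \mathcal F_w$ is equivalent to $v \leq_{\mathrm R} w$, which in turn is equivalent to $\mathcal T_{\hat v} \supseteq \mathcal T_{\hat w}$. Since passing from a torsion class to its associated torsion-free class is an order-reversing bijection between the lattice of torsion classes and the lattice of torsion-free classes in $\mod kQ$, the last condition is equivalent to $\phi(\mathcal F_v) \subseteq \phi(\mathcal F_w)$. Hence $\phi$ is both order-preserving and order-reflecting on the poset of torsion-free classes.

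Next I would observe that in finite type the set of torsion-free classes of $\mod kQ$ is finite (by \cite{AIRT} they are in bijection with the $c$-sortable elements of $W$, a finite set). An order-reflecting map is injective, and an injective self-map of a finite set is a bijection, so $\phi$ is a bijection. Combined with the two-sided order-preservation above, this shows $\phi$ is an order isomorphism of the lattice of torsion-free classes, and hence a lattice automorphism.

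The only subtle point is to track the direction of the various order-reversals consistently (weak order on $W$, inclusion of torsion classes, inclusion of torsion-free classes), but once Lemma~\ref{moreeq} is applied the argument is purely formal and no further input is required.
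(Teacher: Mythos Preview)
Your proof is correct and follows essentially the same approach as the paper: both use Lemma~\ref{moreeq} to show that $\phi$ preserves and reflects inclusion, and then invoke finiteness to pass from order isomorphism to lattice automorphism. The only cosmetic difference is that the paper asserts invertibility of $\phi$ directly from its definition (as a composite of known bijections), whereas you deduce bijectivity from order-reflection plus finiteness of the set of torsion-free classes.
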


\begin{proof} It is clear from the definition that $\phi$ is invertible.  
The fact that $\phi$ is a poset automorphism 
follows from the equivalence of (3) and (5) in 
Lemma~\ref{moreeq}, together with the fact that taking the torsion-free class 
associated to a torsion class reverses containment.  For a finite 
lattice, being a lattice automorphism is equivalent to being a 
poset automorphism, because poset relations determine lattice operations
and vice versa.  
\end{proof}

\begin{lemma}\label{whensplitting} For $w$ a $c$-sortable element, $\mathcal F_w$ is splitting
iff $w$ admits an expression corresponding to an initial segment of the
AR-quiver of $kQ$-mod.
\end{lemma}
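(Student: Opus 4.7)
The plan has three stages: reduce the splitting condition on the torsion pair attached to $\mathcal{F}_w$ to a closure condition on $\mathcal{F}_w$ itself, translate that closure condition into an AR-quiver combinatorial property, and finally match the combinatorial property with the condition on the $c$-sortable expression of~$w$.

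First I will reduce splitting to the condition $\tau\mathcal{F}_w \subseteq \mathcal{F}_w$. Letting $\mathcal{T} = \{X\in\mod kQ : \Hom(X,\mathcal{F}_w)=0\}$ be the torsion class associated with $\mathcal{F}_w$, splitting is equivalent to $\Ext^1_{kQ}(\mathcal{F}_w,\mathcal{T})=0$. Because $kQ$ is hereditary, Auslander--Reiten duality yields $\Ext^1_{kQ}(F,T)\cong D\Hom_{kQ}(T,\tau F)$, so this vanishing rewrites as $\Hom_{kQ}(\mathcal{T},\tau F)=0$ for every $F\in\mathcal{F}_w$. By the definition of $\mathcal{T}$, this holds precisely when every such $\tau F$ lies in $\mathcal{F}_w$, i.e.\ when $\tau\mathcal{F}_w\subseteq \mathcal{F}_w$.

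Second I will argue that, for a torsion-free class, $\tau$-closure is equivalent to being an initial segment (order ideal) of the AR-quiver, that is, closed under every AR-predecessor. Assume $\tau$-closure and let $N\to M$ be an irreducible map with $M\in\mathcal{F}_w$. If this map is a monomorphism then $N\hookrightarrow M$ gives $N\in\mathcal{F}_w$ by submodule-closure. Otherwise $N$ is a direct summand of the middle term $E$ of the AR-sequence $0\to\tau M\to E\to M\to 0$; here $\tau M\in\mathcal{F}_w$ by assumption, so extension-closure of $\mathcal{F}_w$ gives $E\in\mathcal{F}_w$, and submodule-closure then gives $N\in\mathcal{F}_w$. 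Iterating along AR-predecessor chains shows $\mathcal{F}_w$ is an order ideal. Conversely, any order ideal is $\tau$-closed since $\tau M$ is itself an AR-predecessor of any non-projective $M$ (and vanishes for $M$ projective).

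Third I will translate back to the expression. By AIRT Theorem~2.6, together with the bijection recalled in Section~2 between letters of $\underline c^\infty$ and positions in the AR-ordering of indecomposable preprojectives, the $c$-sortable expression of $w$ reads off exactly the indecomposable summands of $\mathcal{F}_w$ in their AR-order. Hence the assertion that $w$ admits an expression corresponding to an initial segment of the AR-quiver is identical to the assertion that the set of indecomposables in $\mathcal{F}_w$ forms an order ideal of the AR-quiver. Combining the three stages yields the desired equivalence.

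The main obstacle will be the second stage: one must set up the AR-sequence argument carefully and verify closure along every irreducible map landing in $\mathcal{F}_w$, while handling the boundary case of projective $M$ (where $\tau M$ vanishes and the condition on the $\tau$-side is empty but order-ideal closure still demands that irreducible monic predecessors lie in $\mathcal{F}_w$).
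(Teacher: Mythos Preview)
Your first two stages are correct and in fact cleaner than the paper's treatment: the paper simply asserts that ``$\mathcal F_w$ splitting implies that the AR-quiver of $\mathcal F_w$ is an initial subquiver of the AR-quiver of $\mod kQ$'' without spelling out the $\tau$-closure argument, whereas you give a self-contained proof via AR-duality and extension/submodule closure.

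Stage 3, however, contains a genuine error. You assert that the positions of the $c$-sorting word for $w$ inside $\underline c^\infty$ correspond, under the bijection of Section~2, to the indecomposable objects of $\mathcal F_w$. This is false. In type $A_2$ with $c=s_1s_2$, take $w=s_2$: the $c$-sorting word sits at position~2 of $\underline c^\infty$, which corresponds to $P_2$, but $\mathcal F_{s_2}=\mathscr C(\Pi/I_2)=\add S_2$, and $S_2$ sits at position~3. What \cite[Theorem~2.6]{AIRT} actually gives you is that the \emph{dimension vectors} of the indecomposables in $\mathcal F_w$ are $\Inv(w)$; it says nothing about their positions in the ambient AR-quiver.

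The fix is short, using that we are in Dynkin type so indecomposables are determined by their dimension vectors. If $\mathcal F_w$ is an order ideal~$I$, reading $I$ in an AR-compatible order gives a reduced word for some $v$, and the dimension vectors of the indecomposables in $I$ are exactly $\Inv(v)$; since these coincide with $\Inv(w)$ by \cite{AIRT}, we get $v=w$, so $w$ admits the required expression. Conversely, if $w$ can be read from some initial segment $I$, then the indecomposables in $I$ have dimension vectors $\Inv(w)$, hence (Dynkin!) coincide with the indecomposables of $\mathcal F_w$, so $\mathcal F_w=I$ is an order ideal. The paper argues this step differently, invoking the fact from \cite{AIRT} that a word for $w$ can be read from the \emph{intrinsic} AR-quiver of $\mathcal F_w$, together with \cite{Arm} to identify the $c$-sorting word, rather than going through dimension vectors.
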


\begin{proof} $\mathcal F_w$ is splitting implies that the AR-quiver of
$\mathcal F_w$ is an initial subquiver of the AR-quiver of $\mod kQ$.  
By \cite{AIRT}, we can read off a word for $w$ from the AR-quiver of
$\mathcal F_w$, so this shows that $w$ admits an expression corresponding
to an initial segment of the AR-quiver of $\mod kQ$.  

Conversely, suppose $w$ corresponds to an initial subquiver of
the AR-quiver with respect to an arbitrary linear extension.  Reading this
word by slices gives the $c$-sorting word for 
$w$.  (This uses the fact, shown in \cite{Arm}, that if we think of the
$c$-sorting word for $w_0$ contained in ${\underline c}^\infty$, the $c$-sorting word for
any $w$ will be be contained in the $c$-sorting word for $w_0$.)  By 
\cite{AIRT}, it follows that the AR-quiver for the torsion-free class
corresponding to $w$ coincides with the given initial subquiver of the 
AR-quiver.   It follows that the objects of $\mathcal F_w$ consist of
an initial segment of the AR-quiver of $kQ$-mod.  
\end{proof}

\begin{lemma} \label{fixsplit}
The map $\phi$ is the identity map on the lattice of torsion-free classes. \end{lemma}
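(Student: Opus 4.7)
The plan is to combine the lattice-automorphism property of $\phi$ (established in the preceding lemma) with an explicit verification that $\phi$ fixes every splitting torsion-free class. To handle the non-splitting classes, I will dualize via Proposition~\ref{prop.dualities} to transfer the problem to $\mod kQ^{\op}$, where a parallel invocation of the splitting case completes the argument.

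First I show $\phi(\mathcal{F}_w) = \mathcal{F}_w$ whenever $\mathcal{F}_w$ is splitting. By Lemma~\ref{whensplitting}, $w$ arises from an initial segment of the AR-quiver of $\mod kQ$, and the proof of that lemma identifies the word read off this segment both with the $c$-sorting word for $w$ and --- via our main theorem applied to $\mathcal{T}_w = \C(I_w)$ --- with the leftmost subword for $w$ in $\underline{c}^\infty$. It follows that $\mathcal{T}_w$ is exactly the additive hull of the indecomposables of $\mod kQ$ not in $\mathcal{F}_w$, so $(\mathcal{T}_w, \mathcal{F}_w)$ is the (splitting) torsion pair paired with $\mathcal{F}_w$. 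Moreover, any simple reflection $s_i$ with $\ell(ws_i) > \ell(w)$ extends the initial segment by one further indecomposable, yielding another splitting and hence $c$-sortable element $ws_i$, so $\sort_c(ws_i) = ws_i > w = \sort_c(w)$. This verifies condition~(1) of Lemma~\ref{equivalences}, giving $\hat w = w$; thus $\mathcal{T}_{\hat w} = \mathcal{T}_w$ and $\phi(\mathcal{F}_w) = \mathcal{T}_w^{\perp} = \mathcal{F}_w$.

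For arbitrary $c$-sortable $u$, I apply $D$ to the torsion pair $(\mathcal{T}_{\hat u}, \mathcal{T}_{\hat u}^{\perp})$ in $\mod kQ$. By Proposition~\ref{prop.dualities}(1), this produces a torsion pair in $\mod kQ^{\op}$ whose torsion-free class is $D\mathcal{T}_{\hat u} \cong \C_{\rm left}(\Pi/I_{w_0\hat u^{-1}})$, the $\mathcal{F}$-class in $\mod kQ^{\op}$ associated to the $c^{-1}$-sortable element $w_0\hat u^{-1}$. Identifying the paired torsion class with $D\mathcal{F}_u \cong \C_{\rm left}(I_{w_0u^{-1}})$ reduces, via the $c^{-1}$-analogue of the first step, to the combinatorial identity $w_0u^{-1} = \widehat{w_0\hat u^{-1}}^{c^{-1}}$. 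This identity follows from Lemma~\ref{equivalences}(3) together with the observation that $v \mapsto w_0 v^{-1}$ interchanges $c$-sortability with $c^{-1}$-sortability, so that $w_0 u^{-1} w_0$ is $c$-sortable whenever $u$ is. Undualizing recovers $\phi(\mathcal{F}_u) = \mathcal{F}_u$, completing the proof. The hard part is making rigorous this last combinatorial identification, particularly the interaction between $c$- and $c^{-1}$-sortability under conjugation by $w_0$.
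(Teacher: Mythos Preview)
Your treatment of the splitting case is close in spirit to the paper's, but the justification you give for $\hat w = w$ is incorrect. You claim that for splitting $w$ and any $s_i$ with $\ell(ws_i)>\ell(w)$, the element $ws_i$ again ``extends the initial segment by one further indecomposable, yielding another splitting and hence $c$-sortable element.'' This fails already in $A_3$ with $c=s_1s_2s_3$: take $w=s_1$ (splitting, with segment $\{P_1\}$) and $i=3$; then $ws_3=s_1s_3$ is $c$-sortable but corresponds to $\{S_1,S_3\}$, which is not predecessor-closed and hence not splitting. The paper avoids this issue by invoking \cite[Proposition~2.8]{PS} directly to conclude $\hat w=w$.

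The more serious problem is your handling of the general case. Dualizing to $\mod kQ^{\op}$ does not reduce the problem to the splitting case; it reduces it to the identical assertion for the map $\phi^{\op}$ in the opposite category, namely that $\widehat{w_0\hat u^{-1}}^{c^{-1}}=w_0u^{-1}$. This is exactly the statement that $\phi^{\op}$ is the identity, so the argument is circular. Your sketch of why the identity should follow from Lemma~\ref{equivalences}(3) and the map $v\mapsto w_0v^{-1}$ does not work: that map does not in general interchange $c$-sortability with $c^{-1}$-sortability, and you yourself flag this as ``the hard part.'' The paper proceeds quite differently: it exploits the fact (already established) that $\phi$ is a lattice automorphism. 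Principal torsion-free classes $\Sub(E)$ are precisely the join-irreducibles of the lattice, so $\phi$ permutes them; then, sandwiching each $\Sub(E)$ between two adjacent splitting classes (both fixed by the first step) forces $\phi(\Sub(E))=\Sub(E)$. Since every torsion-free class is a join of principal ones, $\phi$ is the identity. This lattice-theoretic argument is short and entirely avoids the combinatorial identity you are trying to prove.
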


\begin{proof} 
We first show that $\phi$ fixes splitting torsion-free classes.  
If 
$\mathcal F_w$ is splitting, then it follows from Lemma~\ref{whensplitting}
that $w$ 
can be read off from an initial segment of the AR-quiver for $\mod kQ$.  
\cite[Proposition 2.8]{PS} establishes that if $w$ comes from an initial 
segment of the AR-quiver for $\mod kQ$, then $w$ is the unique element
of $W$ whose $c$-sortable prefix is $w$.  Thus $w=\hat w$.  

%It follows that $w=\anti_c(w)$, since any element $ws_i$ with $\ell(ws_i)>
%\ell(w)$ has a $c$-sorting word which includes the $c$-sorting word
%for $w$ by the greedy construction of $c$-sorting words.  

The leftmost word for $w$ inside the word for $w_0$ is the word read off
from the initial segment of the AR-quiver, since we know that this is
the $c$-sorting word.  (This is not a complete triviality, 
because an initial segment of the AR-quiver is not typically an initial
segment of our fixed linear order on the indecomposables.)  It now follows
from Theorem~\ref{two.two} that $\mathcal{T}_w$ consist of all sums of
indecomposables not in this initial segment. This is precisely the splitting
torsion class corresponding to $\mathcal{F}_w$.
It follows that $\phi(\mathcal{F}_w) = \mathcal{F}_w$ whenever
$\mathcal{F}_w$ is splitting.

%\comment{Why does this induce the Nakayama automorphism.}
%This induces the Nakayama
%automorphism on $Q$.  

%%The AR-quiver of $kQ^\op$ is, of course, the 
%%reverse of the AR-quiver of $kQ$.  
%%Consider the map $\psi:s_i \to w_0s_iw_0$.  
%%It follows from \cite[Remark 6.5]{cls} that 
%%we can obtain the 
%%$c^{-1}$-sorting word of $w_0$ by reading the 
%%AR-quiver of $kQ$ from right to left and applying $\psi$ to the labels.  

%%This means that when we consider $ww_0=w_0(w_0ww_0)$, we have that
%%$w_0ww_0$ cancels the reflections of the $c^{-1}$-sorting word $w_0$ 
%%corresponding to the objects in $\mathcal F_w$, and therefore 
%%$ww_0$ corresponds
%%to an intial segment of the AR-quiver for $kQ^\op$.  It follows that 
%%$D\mathscr C_{Q^\op}(I_{ww_0})$ consists of the objects in $\mathcal F_w$,
%%and thus $\phi(\mathcal F_w)=\mathcal F_w$ when $\mathcal F_w$ is 
%%a splitting torsion-free class.  

Say that
a torsion-free class is {\it principal} if it is of the form $\Sub(E)$ for
some indecomposable $kQ$-module $E$.  We will now show that $\phi$ fixes
principal torsion-free classes.  

Observe that principal torsion-free classes
can be described in purely lattice-theoretic terms, as the non-zero
torsion-free classes which cannot be written as the join of two smaller
torsion-free classes.  It follows that $\phi$ takes principal torsion-free 
classes 
to principal torsion-free classes.  

Let $E$ be an indecomposable object.  Let $\mathcal S$
be the splitting torsion-free class consisting of the additive hull of
the objects up to and
including $E$ in our standard linear order on the indecomposable $kQ$-modules.  
Let $\mathcal S'$ be 
the additive hull of the indecomposable objects of $\mathcal S$ other than $E$.  Then $\mathcal S'$
is clearly also a splitting torsion-free class.  As we have already seen,
$\phi$ fixes both $\mathcal S$ and $\mathcal S'$.  Since $\Sub E$ is the
only principal torsion-free class contained in $\mathcal S$ but not in
$\mathcal S'$, we have that $\phi(\Sub(E))=\Sub(E)$.
  
Since any torsion-free class can be written as the join of the 
principal torsion-free classes corresponding to the indecomposable summands
of a cogenerator for the torsion-free class, it follows that $\phi$ fixes
all torsion-free classes, as desired.  
\end{proof}

\begin{proposition} Conjecture \ref{associated} holds if $Q$ is of 
finite type.
\end{proposition}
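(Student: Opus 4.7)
The plan is to deduce the conjecture immediately from Lemma~\ref{fixsplit}, combined with the characterization of ``torsion $w$'' provided by Proposition~\ref{conjone} and Lemma~\ref{equivalences}. No new computation should be required, as all the essential work has already been carried out.

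First I would unwind the hypothesis of the conjecture. Suppose that the additive hull of $\mathscr{C}(I_w)$ together with the non-preprojective indecomposables is a torsion class. By Proposition~\ref{conjone} (i.e.\ Conjecture~\ref{whentorsion} in finite type), the element $w$ satisfies condition~(1) of Lemma~\ref{equivalences}, and then by the equivalence of~(1) and~(2) in that lemma, $w$ is the unique longest element of its $\sort_c$-fibre. Setting $v := \sort_c(w)$, this exactly says $w = \hat v$ in the notation introduced after Lemma~\ref{equivalences}, so $\mathcal T_{\hat v} = \mathscr C(I_w)$.

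Next I would apply the definition of $\phi$ together with Lemma~\ref{fixsplit}. By construction, $\phi(\mathcal F_v)$ is the torsion-free class associated to the torsion class $\mathcal T_{\hat v} = \mathscr C(I_w)$. Since $\phi$ is the identity, this torsion-free class coincides with $\mathcal F_v = \mathscr C(\Pi/I_{\sort_c(w)})$, which is precisely the assertion of Conjecture~\ref{associated}.

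The main obstacle is not in this concluding step at all: the heavy lifting is done inside Lemma~\ref{fixsplit}, whose proof first pins down $\phi$ on splitting torsion-free classes via Theorem~\ref{two.two} and Lemma~\ref{whensplitting}, then promotes this to principal torsion-free classes by a purely lattice-theoretic observation (a principal torsion-free class is a join-irreducible element of the lattice, and is isolated between two consecutive splitting classes), and finally extends to all torsion-free classes using that every such class is a join of principal ones. Once $\phi = \id$ is in hand, the present proposition is merely a matter of matching notation.
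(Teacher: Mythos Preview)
Your proposal is correct and follows essentially the same route as the paper: both invoke Proposition~\ref{conjone} and Lemma~\ref{equivalences} to identify the given element as $\hat v$ for $v=\sort_c(w)$, and then apply Lemma~\ref{fixsplit} ($\phi=\id$) to conclude. Your write-up is in fact a bit more explicit than the paper's own proof, which compresses the same steps into two lines (with the variable names $v$ and $w$ swapped relative to yours).
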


\begin{proof} Suppose that $\mathscr C(I_v)$ is a torsion class.  
By Proposition \ref{conjone}, we know that $vw_0$ is $c^{-1}$-sortable.  
Let $w=\sort_c(v)$.  Applying the above analysis, we find that
$\mathcal F_w=\phi(\mathcal F_w)$, so $\mathcal F_w$ is the torsion-free 
class associated to $\mathscr C(I_v)$, as desired.
\end{proof}

\section{Leftmost reduced words and $\protect\le$-diagrams}

In this section, we explain how our results applied in type $A_n$ 
provide an alternative derivation for Postnikov's description of 
leftmost reduced subwords inside Grassmannian permutations in terms of 
$\le$-diagrams.

Let $W$ be the Weyl group of type $A_n$, which is isomorphic to
the symmetric group on $n+1$ letters.  Fix an integer $k$ such that
$1\leq k \leq n$. 
Let $W_{\langle k\rangle}$ be the 
parabolic subgroup generated by the simple reflections other than $s_k$,
and let $W^{\langle k \rangle}$ be the minimal length coset
representatives for $W_{\langle k\rangle}\backslash W$.  These are the 
$k$-Grassmannian
permutations in $S_{n+1}$ 
(or, depending on a choice of convention, their inverses).
The elements of 
$W^{\langle k \rangle}$ have an essentially unique expression as a product
of simple reflections; if $w\in W^{\ak}$, then any two reduced expressions
for $w$ differ by commutation of commuting reflections.  

Leftmost reduced subwords inside a reduced word for $w\in W^{\ak}$ 
are of interest, 
as they
index the cells in the totally non-negative 
part of the Grassmannian of $k$-planes
in $\mathbb C^{n+1}$.  
In this context, such subwords are referred to as ``positive distinguished
subexpressions'' of $w$.  For more background on this, and for 
the equivalence of ``leftmost reduced'' and ``positive
distinguished,'' see \cite[Section 19]{Postnikov}.

Postnikov gives a combinatorial criterion to identify the lexicographically
first subexpressions in a reduced word for $w \in W^{\ak}$, as follows.  

Let $w_0^\ak$ be the longest element of $W^\ak$.  
A reduced expression for $w_0^\ak$ can be written out explicitly as
$(s_{k}s_{k+1}\dots s_n)(s_{k-1}\dots s_{n-1})\dots (s_1\dots s_{n-k+1})$.  
%There are many other reduced expressions for $w_0^\ak$, but they differ
%only by commutation of commuting reflections: no braid moves are 
%ever applicable to any reduced word for $w_0^\ak$.  
Write out this 
reduced expression  
inside a $k\times (n-k+1)$ rectangle, as is done
in the example below with $n=4,k=2$.  

$$\begin{array}{|c|c|c|} \hline s_2&s_3&s_4 \\  \hline s_1&s_2&s_3 \\ \hline
\end{array}$$

The elements $w\in W^\ak$ correspond bijectively to partitions that
can be drawn inside this rectangle in the usual English notation 
(that
is to say, the parts of the partition are left-justified rows of boxes,
with the sizes of the parts weakly decreasing from top to bottom).  
If $\lambda$ is a partition, a word for the corresponding element of $W^\ak$ is
given by reading the reflections inside $\lambda$ from left to right by
rows, starting at the top row.  
We say that a subword (thought of as a subset of the boxes of this
partition) has a bad $\le$ if there is some reflection which
is used, such that there is some reflection in the column above it which
is unused, and some reflection in the row to the left of it which is 
unused.  (The relative position of the three reflections explains
the use of the symbol $\le$.)

Then Postnikov shows:

\begin{theorem}[{\cite[Theorem 19.2]{Postnikov}}, see also \cite{LW}] 
A subword of $w\in W^{\ak}$ is a leftmost reduced subword iff
it has no bad $\le$.  
\end{theorem}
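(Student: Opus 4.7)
The strategy is to translate Postnikov's combinatorial condition into the categorical setting of Theorem~\ref{two.two}. Let $Q$ be the linearly oriented $A_n$. A direct verification shows that reading the $k\times (n-k+1)$ rectangle row-by-row from the top-left yields exactly the leftmost subword in $\underline c^\infty = s_1 s_2 \cdots s_n\, s_1 s_2 \cdots$ for $w_0^{\langle k \rangle}$. By Theorem~\ref{two.two}, box $(r,c)$ is thus identified with the preprojective module $\tau^{-(r-1)} P_{k-r+c}$, equivalently the interval module $M[r, k+c-1]$; the boxes of the rectangle are exactly the indecomposables of $\mod kA_n$ whose support contains vertex $k$. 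A subword of $w\in W^{\langle k \rangle}$ with associated Young diagram $\lambda$ corresponds, under this dictionary, to a subset $\mathcal X$ of rectangle boxes, and hence to a subset of indecomposables.

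Next, give Postnikov's rectangle combinatorics a representation-theoretic meaning. For the module $M[r,k+c-1]$ at box $(r,c)$, the boxes strictly above in the same column correspond precisely to the indecomposables that surject onto $M[r,k+c-1]$, while the boxes strictly to the left in the same row correspond precisely to the proper submodules of $M[r,k+c-1]$. Consequently, Postnikov's condition that every used box have either all of its column above used or all of its row to the left used becomes the categorical statement: for every $M\in \mathcal X$, either every indecomposable $M'$ with a surjection $M' \twoheadrightarrow M$ lies in $\mathcal X$, or every proper submodule of $M$ lies in $\mathcal X$.

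Finally, invoke Theorem~\ref{two.two}: a subset $\mathcal X$ of indecomposables equals the missing-indecomposable set of a cofinite quotient-closed subcategory of $\mathcal P$ precisely when $\mathcal X$ is the leftmost subword in $\underline c^\infty$ for some $v\in W$. For $\mathcal X$ contained inside the rectangle of a chosen $w\in W^{\langle k \rangle}$, one identifies this leftmost subword with the positive distinguished subexpression of $v$ in $w$'s reduced expression, i.e.\ with Postnikov's ``leftmost reduced subword of $w$''. The main obstacle is that Postnikov's notion of ``leftmost inside $w$'' does not literally coincide with ``leftmost in $\underline c^\infty$''; bridging the two requires a careful comparison of the two greedy selection rules, showing that once $\mathcal X$ is constrained to lie inside $w$'s rectangle the rules agree, and that this agreement is exactly captured by the interplay of the quotient and submodule relations on the rectangle's interval modules that appeared in the second step.
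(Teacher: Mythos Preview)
Your proposal has a genuine gap, and it lies in the choice of orientation. You take $Q$ to be the \emph{linearly} oriented $A_n$, but with this choice the key equivalence fails: the $\le$-condition on a subset $\mathcal X$ of rectangle boxes is \emph{not} equivalent to the complement of $\mathcal X$ being quotient closed. Concretely, for linear $A_n$ an interval module $M[a,b]$ receives a surjection from a sum of indecomposables iff one summand already surjects, and the indecomposables surjecting onto $M[a,b]$ are exactly the $M[a',b]$ with $a'\le a$. Hence the complement of $\mathcal X$ is quotient closed iff $\mathcal X$ is upward-closed in each column---a strictly stronger condition than ``no bad~$\le$''. For instance, in $A_3$ with $k=2$, the subword at boxes $(2,1),(2,2)$ (the element $s_1s_2$) has no bad~$\le$ and is indeed the leftmost reduced subword for $s_1s_2$ inside $s_2s_3s_1s_2$; but your dictionary sends it to $\mathcal X=\{M[2,2],M[2,3]\}$, whose complement is not quotient closed since $M[1,2]\twoheadrightarrow M[2,2]$. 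Your final paragraph flags exactly this mismatch between ``leftmost inside $w$'' and ``leftmost in $\underline c^\infty$'', but offers no mechanism to bridge it---and the example shows there is none for this orientation.

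The paper's proof sidesteps the issue entirely by orienting $A_n$ with all arrows pointing \emph{away} from vertex $k$. With this orientation the rectangle $R$ (the indecomposables whose support contains $k$) sits at the beginning of the AR-quiver, so for any $w\in W^{\langle k\rangle}$ one can choose a reading order compatible with the AR-quiver that begins with the partition $\lambda$ for $w$; then ``leftmost inside $w$'' literally agrees with ``leftmost in $\underline c^\infty$''. The second crucial feature of this orientation is that every irreducible morphism inside $R$ is a monomorphism. Consequently a single indecomposable in $R$ cannot surject onto another, and one checks that an epimorphism from $\mathcal C$ onto a missing $X\in R$ forces objects of $\mathcal C$ on both diagonals through $X$---precisely a bad~$\le$. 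This is what makes the $\le$-condition equivalent to quotient-closedness. The orientation is not a cosmetic choice; it is the whole point.
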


We will recover this result using our description of leftmost reduced
words in terms of quotient-closed subcategories.  

Let $Q$ be the quiver of type $A_n$, with all arrows oriented away from
vertex $k$.  When we consider the AR quiver for $kQ$-mod, we observe
that it consists of a rectangle $R$ 
and two triangles, $T_1$, $T_2$, as in the picture below, showing 
the case $n=4$, $k=2$.  

%$$\input arquiver.pdf_t $$

\[ \begin{tikzpicture}
 \node (0) at (0,1) [inner sep=0pt, circle,  fill=black, outer sep=1pt] {$\bullet$};
 \node (1) at (1,2) [inner sep=0pt, circle,  fill=black, outer sep=1pt] {$\bullet$};
 \node (2) at (2,3) [inner sep=0pt, circle,  fill=black, outer sep=1pt] {$\bullet$};
 \node (3) at (1,0) [inner sep=0pt, circle,  fill=black, outer sep=1pt] {$\bullet$};
 \node (4) at (2,1) [inner sep=0pt, circle,  fill=black, outer sep=1pt] {$\bullet$};
 \node (5) at (3,2) [inner sep=0pt, circle,  fill=black, outer sep=1pt] {$\bullet$};
 \node (6) at (4,3) [inner sep=0pt, circle,  fill=black, outer sep=1pt] {$\bullet$};
 \node (7) at (3,0) [inner sep=0pt, circle,  fill=black, outer sep=1pt] {$\bullet$};
 \node (8) at (4,1) [inner sep=0pt, circle,  fill=black, outer sep=1pt] {$\bullet$};
 \node (9) at (5,0) [inner sep=0pt, circle,  fill=black, outer sep=1pt] {$\bullet$};
 \draw [->] (0) -- (1);
 \draw [->] (1) -- (2);
 \draw [->] (0) -- (3);
 \draw [->] (1) -- (4);
 \draw [->] (2) -- (5);
 \draw [->] (3) -- (4);
 \draw [->] (4) -- (5);
 \draw [->] (5) -- (6);
 \draw [->] (4) -- (7);
 \draw [->] (5) -- (8);
 \draw [->] (7) -- (8);
 \draw [->] (8) -- (9);
 \draw (-.5,1) -- (2,3.5) -- (3.5,2) -- (1,-.5) -- cycle; \node at (0,2) {$R$};
 \draw (3,3.5) -- (4,2.5) -- (5,3.5) -- cycle; \node at (5,3) {$T_1$};
 \draw (2,-.5) -- (4,1.5) -- (6,-.5) -- cycle; \node at (5.5,.5) {$T_2$};
\end{tikzpicture} \]

The rectangle $R$ consists of the representations whose support includes
the vertex $k$.  
The lefthand corner
of the rectangle is the simple projective supported at vertex $k$, while the 
righthand corner is the corresponding injective, the unique sincere 
indecomposable representation.  
The triangle $T_1$ consists of 
representations supported only on vertices smaller than $k$, while $T_2$
consists of representations supported only on vertices greater than $k$.

If we replace the indecomposables in the AR-quiver 
by the corresponding simple reflections, and then read them in the order 
given by the 
slices, then Theorem \ref{two.two} tells us that we obtain a word for $w_0$, the 
longest element of $W$.  
Call
this our standard word for $w_0$.  
Say that a reading order \emph{respects the AR-quiver} if, for any
irreducible morphism $A \to B$, we read the reflection corresponding
to $A$ before the reflection corresponding to $B$.  Then any reading
order which respects the AR-quiver, will yield a reduced word for 
$w_0$ which differs from the standard word by a sequence of commutations 
of commuting reflections.  It follows that the leftmost reduced words
for any reading order which respects the AR-quiver will correspond to 
quotient-closed subcategories in exactly the same way.  

In particular, for any $w\in W^{\ak}$, 
we can take a reading order which begins by reading the 
reflections in the corresponding partition 
along lines sloping from bottom left to top right, followed by reading
the remaining reflections in $R$ and the reflections in the 
two triangles in any order 
compatible with the AR-quiver.  The result is a word for $w_0$ which 
begins with a word for $w$.  

By Theorem \ref{two.two}, leftmost reduced words inside $w$ therefore correspond to 
quotient-closed subcategories of $kQ$-mod which contain all the indecomposables
outside the partition corresponding to $w$.  
We have therefore reduced the combinatorial problem
of classifying leftmost reduced words inside $w$ to the problem
of classifying quotient-closed subcategories which contain all the
indecomposables outside a partition $\lambda$.  

We say that a subcategory has a 
bad $\le$ if there is some indecomposable $X$ in $R$ which is missing from the subcategory,
such that the subcategory contains 
an object on the line of morphisms leading to $X$ from the
top right, and an object on the line of morphisms leading to $X$ from the
bottom left.  We will therefore recover Postnikov's result once we have
established the following proposition:

\begin{proposition} The quotient-closed subcategories of $kQ$-mod which 
contain all the indecomposables outside $\lambda$ are exactly those 
which have no bad $\le$ inside $\lambda$.  \end{proposition}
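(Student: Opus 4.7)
The plan is to translate the bad $\le$ condition directly into a question about whether a missing indecomposable $X$ arises as a quotient of a direct sum of objects of $\mathcal{A}$. Throughout I would identify the box $(r, c)$ of the Postnikov rectangle with the interval module $M_{[k-r+1,\,k+c-1]}$ in $R$, so that the rows (respectively columns) of the partition correspond to fixing the left (respectively right) endpoint of the interval.

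The key computational input is a direct submodule/Hom analysis in $\mod kQ$. For the orientation at hand, the indecomposable submodules of $M_{[i,j]}$ that lie in $R$ are exactly the intervals $M_{[a,b]}$ with $i\leq a\leq k\leq b\leq j$, which in Postnikov coordinates are the boxes NW of $(r, c)$ inclusively. The standard boundary compatibility for representations of type $A_n$ then yields that for $M,N$ in $R$, $\Hom_{kQ}(M_{[a,b]}, M_{[i,j]})$ is one-dimensional when $[a,b]\subseteq [i,j]$, with any non-zero morphism being the inclusion of the submodule, and zero otherwise; and $\Hom_{kQ}(M,N)$ vanishes whenever $M$ lies in $T_1\cup T_2$ and $N$ lies in $R$, since the boundary compatibility forces the scalar on the support overlap to vanish. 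These facts imply that in any epimorphism $\bigoplus_k M_k \twoheadrightarrow X = M_{[i,j]}$ onto a missing $X$ in $R$, the summands outside $R$ contribute zero and may be discarded; each remaining $M_k = M_{[a_k, b_k]}$ is a submodule of $X$, and the sum of their images inside $X$ equals the submodule $M_{[\min_k a_k,\,\max_k b_k]}$ (because every such submodule contains the vertex $k$). Surjectivity therefore forces $\min_k a_k = i$ and $\max_k b_k = j$.

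Both directions of the proposition now follow. If no summand $M_k$ equals $X$ (for otherwise $X\in\mathcal{A}$, a contradiction), then surjectivity requires summands $M_{[i, b_1]}$ with $b_1 < j$ and $M_{[a_2, j]}$ with $a_2 > i$, which in Postnikov coordinates is a box $(r, c_1)$ with $c_1 < c$ in the row of $X$ together with a box $(r_1, c)$ with $r_1 < r$ in the column of $X$. Both boxes automatically lie in $\lambda$ by the NW-closedness of a Young diagram, and both are in $\mathcal{A}$, so this yields a bad $\le$ at $X$. Conversely, from a bad $\le$ at $X$ one reads off, via the two submodule inclusions, an epimorphism $(r, c_1)\oplus (r_1, c) \twoheadrightarrow X$ from an object of $\mathcal{A}$ to $X$, contradicting quotient-closure. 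The main obstacle is the combinatorial step that the only way to cover $X$ properly by a sum of submodules from $\mathcal{A}$ is through a pair of summands matching the two endpoints of $[i,j]$; this rests on the fact that the indecomposable submodules of $M_{[i,j]}$ in $R$ all contain $k$, so their sums inside $X$ are controlled entirely by the minimum of left endpoints and the maximum of right endpoints.
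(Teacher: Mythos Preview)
Your proof is correct and follows essentially the same approach as the paper's. The paper's proof is extremely terse—both directions are handled by ``it is easy to see,'' with the only justification being that all irreducible morphisms inside $R$ are monomorphisms—while you spell out explicitly the interval-module description of $R$, the vanishing of $\Hom$ from $T_1\cup T_2$ into $R$, and the resulting control of images inside $X$ via the minimum left endpoint and maximum right endpoint. This is exactly the content behind the paper's ``easy to see,'' so the arguments coincide.
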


\begin{proof} Suppose that a subcategory $\mathcal C$ has a bad $\le$.  
Then $\mathcal C$ is missing some indecomposable $X$, and contains
an indecomposable $Y$ on the line of morphisms leading to $X$ from the
top right, and an indecomposable $Z$ on the line of morphisms leading
to $X$ from the bottom right.  It is easy to see that there is an
epimorphism $Y\oplus Z \to X$.  Therefore $\mathcal C$ is not
quotient closed. 

Conversely, suppose that $\mathcal C$ contains all the indecomposables
outside $\lambda$ and is not quotient closed.  Then there is some 
indecomposable $X$ which is not in $\mathcal C$, and such that there is
an epimorphism from some object of $\mathcal C$ onto $X$.  It is easy
to see that this is only possible if $\mathcal C$ has a bad $\le$ 
with $X$ at the corner, since all the irreducible morphisms inside 
$R$ are monomorphisms.  
\end{proof}

For this choice of $Q$, 
it is possible to use the same approach to describe the explicit combinatorics
of the leftmost reduced words inside the word for $w_0$ which is
obtained by replacing the indecomposables in the AR-quiver by the
corresponding simple reflections, and then reading them in any order
compatible with the AR-quiver.  

Specifically, we have the following representation-theoretic result:

\begin{proposition} A subcategory $\mathcal C$ of $kQ$ is quotient-closed provided
that: \begin{itemize}
\item $\mathcal C$ has no bad $\le$ inside $R$. 
\item If any indecomposable from $R$ appears in $\mathcal C$, then
so do all the elements of $T_1$ on the same diagonal running from bottom left
to top right, and so do all the elements of $T_2$ on the same diagonal running
from top left to bottom right.  
\item Along any line of morphisms running from bottom left to top right,
if any indecomposable from $T_1$ is in
$\mathcal C$, all subsequent
indecomposables along the diagonal also lie in $\mathcal C$.   
\item Along any line of morphisms running from top left to bottom right,
if any indecomposable from $T_2$ is in
$\mathcal C$, all subsequent
indecomposables along the diagonal also lie in $\mathcal C$.   
\end{itemize}  
\end{proposition}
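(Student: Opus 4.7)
I would argue by contradiction: assume $\mathcal C$ satisfies the four conditions but is not quotient closed, so there exist an indecomposable $Y \notin \mathcal C$ and an epimorphism $\bigoplus_i X_i \twoheadrightarrow Y$ with each $X_i \in \mathcal C$ indecomposable. The first step is a collection of elementary $\Hom$-computations in type $A_n$: (a) for any $M \in R$ and $N \in T_1 \cup T_2$, $\Hom(M, N) = 0$, and likewise $\Hom$ between $T_1$ and $T_2$ vanishes; (b) any nonzero map between two indecomposables in $R$ is surjective, since its image contains the one-dimensional vertex-$k$ space of the target and therefore equals the whole target; (c) within $T_1$, a nonzero map $M_{[c, d]} \to M_{[a, b]}$ forces $d = b$ and $c \leq a$ and is then surjective, symmetrically for $T_2$. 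Each reduces to a short arrow-boundary compatibility check.

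The argument then splits on the region of $Y$. If $Y \in R$, surjectivity at vertex $k$ combined with (a) and (b) forces some single summand $X_i \in \mathcal C \cap R$ to surject onto $Y$ on its own. I would then introduce the auxiliary subcategory $\mathcal C''$ obtained from $\mathcal C \cap R$ by adjoining every indecomposable outside $R$; this contains every indecomposable except the missing set $\lambda \subseteq R$ of $\mathcal C$, and condition (1) is exactly the ``no bad $\le$'' hypothesis for $\lambda$, so the preceding Proposition applies to show $\mathcal C''$ is quotient-closed. Since $X_i \in \mathcal C''$ and $Y$ is a quotient of $X_i$, we conclude $Y \in \mathcal C''$, and $Y \in R$ then forces $Y \in \mathcal C$, a contradiction. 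If $Y \in T_1$, only $T_1$-summands can contribute by (a), so by (c) some $X_i = M_{[c, b]} \in T_1 \cap \mathcal C$ with $c \leq a$ surjects onto $Y = M_{[a, b]}$; the chain $M_{[c, b]}, M_{[c+1, b]}, \ldots, M_{[a, b]} = Y$ lies on a single sectional path in the AR quiver of $T_1$, precisely a ``line from bottom left to top right'' through $X_i$, so condition (3) forces $Y \in \mathcal C$, a contradiction. The case $Y \in T_2$ is symmetric via condition (4).

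The main obstacle is working out the exact correspondence between the visual layout language (the Postnikov-style ``column above / row to the left'' in $R$, the ``lines from bottom left to top right'' in $T_1$, and so on) and the interval parametrization $M_{[a, b]}$, so that conditions (1), (3), (4) can be recognized as module-theoretic statements and the preceding Proposition invoked cleanly. Condition (2) does not appear explicitly in the argument above, but I would expect it to play a supporting role in ensuring that the closure of $\mathcal C \cap R$ under quotients is compatible with the structure in $T_1$ and $T_2$, or to handle boundary cases when the support of an indecomposable straddles the $R$--$T_i$ boundary.
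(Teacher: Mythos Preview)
The paper does not give a proof of this proposition; it writes ``We leave the proof of these elementary facts to the reader.'' So there is nothing to compare against --- only the question of whether your argument is correct.

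Your overall plan is right, and your treatment of the case $Y \in R$ works, though not for the reason you give. Claim (b) is false: the paper notes in the proof of the preceding proposition that all irreducible morphisms inside $R$ are monomorphisms, and hence any nonzero map between indecomposables in $R$ is injective, not surjective. But you do not actually need (b): once you form $\mathcal C'' = (\mathcal C \cap R) \cup \{\text{everything outside } R\}$, every summand $X_i$ lies in $\mathcal C''$ no matter which region it comes from, so the preceding proposition (applied with $\lambda = R$) gives $Y \in \mathcal C'' \cap R = \mathcal C \cap R$ directly. No single-summand reduction is needed.

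The real gap is in the $Y \in T_1$ case (and symmetrically $T_2$), and it comes from claim (a), which has the direction reversed. What holds is $\Hom(T_1 \cup T_2, R) = 0$; but $\Hom(R, T_1)$ and $\Hom(R, T_2)$ are \emph{not} zero --- the irreducible maps leaving $R$ for $T_1$ and $T_2$ are epimorphisms, and this is precisely why condition (2) appears in the statement. So when $Y \in T_1$, a summand $X_i \in R$ can genuinely surject onto $Y$, and your reduction ``only $T_1$-summands contribute'' fails. The fix is exactly condition (2): the submodule lattice of any $Y \in T_1$ is a chain, so some single $X_i$ already surjects onto $Y$; if that $X_i$ lies in $T_1$ then condition (3) finishes as you say, while if $X_i \in R$ then the bottom-left-to-top-right diagonal through $X_i$ (which consists of all indecomposables sharing the same left interval-endpoint) contains $Y$ in its $T_1$-part, so condition (2) gives $Y \in \mathcal C$ directly. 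Thus condition (2) is not a boundary nicety but the essential ingredient your argument is missing.
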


\begin{proof} We leave the proof of these elementary facts to the reader.
\end{proof}

By Theorem \ref{two.two}, this yields the following consequence.  We think
of the simple reflections in our word for $w_0$ as positioned at the
vertices of the AR-quiver.  In particular, this means that where 
one usually refers to rows and columns, we will refer to diagonals.

\begin{corollary} A leftmost reduced word inside $w_0$ is one which has
the following properties:
\begin{itemize}
\item It has no bad $\le$ inside the reflections coming from $R$,
\item If any simple reflection $s$ inside $R$ is skipped, then all 
subsequent reflections in $T_1$ and $T_2$ 
on the diagonals through that $s$ must also be skipped. 
\item If any simple reflection $s$ inside $T_1$ is skipped, then
all subsequent reflections inside $T_1$ on the same upward-pointing
diagonal must be skipped,
\item If any simple reflection $s$ inside $T_2$ is skipped, then
all subsequent reflections inside $T_2$ on the same downward-pointing
diagonal must be skipped.    
\end{itemize}
\end{corollary}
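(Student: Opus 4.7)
The plan is to apply Theorem~\ref{two.two} to translate the preceding representation-theoretic proposition (together with its evident converse) into the claimed combinatorial characterisation. With the chosen reading order, the simple reflections positioned at the vertices of the AR-quiver yield a specific reduced word for $w_0$, and Theorem~\ref{two.two} places the cofinite quotient-closed subcategories of $\mod kQ$ in bijection with the leftmost reduced subwords of this word. Under this bijection, the reflections appearing \emph{in} the subword correspond to the indecomposables \emph{missing} from the subcategory; equivalently, reflections \emph{skipped} by the subword index the indecomposables that lie in the subcategory.

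Given this dictionary, I would invoke the preceding proposition. Each of its three ``propagation'' bullets has the form ``if such-and-such indecomposable is in $\mathcal{C}$, then certain other indecomposables are in $\mathcal{C}$'', and so translates directly into the corresponding bullet of the corollary phrased in terms of skipped reflections. The first bullet, about the absence of a bad $\le$ inside $R$, is defined purely in terms of which indecomposables of $R$ are missing from $\mathcal{C}$, so it carries over verbatim to the statement in terms of reflections coming from $R$.

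The main obstacle is essentially bookkeeping at the boundary between the three regions $R$, $T_1$, and $T_2$: one must verify that the diagonals of irreducible morphisms extending from $R$ into $T_1$ and $T_2$ really correspond to the ``diagonals through $s$'' in the corollary, and that the directions of propagation (``subsequent'', ``same upward-pointing diagonal'', etc.) match the orientation of the relevant AR-arrows. A secondary point, implicit in the proposition, is that its four sufficient conditions are also necessary for $\mathcal{C}$ to be quotient-closed: if any fails, a direct inspection of irreducible morphisms produces an epimorphism from an object of $\mathcal{C}$ onto a missing indecomposable, showing $\mathcal{C}$ is not quotient-closed (exactly as in the argument following the proposition about subcategories containing all indecomposables outside a partition). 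With both directions in hand, the four conditions match exactly with those of the corollary.
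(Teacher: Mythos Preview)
Your proposal is correct and follows the same approach as the paper, which simply states that the corollary follows from the preceding proposition by applying Theorem~\ref{two.two}. You are in fact more careful than the paper: you note explicitly that the proposition as stated gives only sufficient conditions for quotient-closedness, and that the (easy) converse is also required for the corollary to characterise leftmost reduced words; the paper leaves this implicit.
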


\section{Connection to the work of Armstrong}

In this section, we explain the link to Armstrong's work \cite{Arm}, which
provided the initial motivation for our investigations.    
We restrict to the case that $Q$ is Dynkin for simplicity; on the
whole, that is the setting in which combinatorialists have worked.  

Let $E$ be a finite ground set and let $\mathcal A$ be a collection
of subsets of $E$.  The sets in $\mathcal A$ are referred to as 
{\it feasible} sets.  
We say that the set system 
$\mathcal A$ is {\it accessible} if,
for every $\emptyset \ne A\in \mathcal A$, there exists some $x\in A$ such that
$A\setminus\{x\}\in \mathcal A$.  

An accessible set system $\mathcal A$ is called an {\it antimatroid} if it
satisfies the condition that if $A,B\in \mathcal A$ with $B\not\subseteq A$,
then there exists some $x\in B\setminus A$ such that $A\cup \{x\}\in \mathcal A$.  

An antimatroid is called {\it supersolvable} \cite{Arm}, if $E$ is 
equipped with a total order such that, if $A,B$ in $\mathcal A$, with
$B\not\subseteq A$, then $A \cup \{x\}\in \mathcal A$, where $x$ is
the minimum element of $B$ not in $A$ (with respect to the total order).

Let $W$ be a Coxeter group, which we assume to be finite.  
Fix an arbitrary word ${{\underline w}=s_{i_1}\dots s_{i_N}}$ in
the simple reflections of $W$.  For $v\in W$, consider the subwords of 
${\underline w}$ which define reduced words for $v$, and, if there is at least
one such subword, define $A_v$ to be the subset of $\{1,\dots,N\}$ 
corresponding to the positions occupied by the leftmost such word.  Then define
$\mathcal A_{\underline w}$ to consist of the collection of all the $A_v$ (for those 
$v$ such that $A_v$ is defined).  
One of the main results of \cite{Arm}, Theorem 4.4, 
says that $\mathcal A_{\underline w}$ is a supersolvable antimatroid (with 
respect to the usual order on the ground set $\{1,\dots,N\}$).

Using our results, we can recover this result of Armstrong 
for particular choices of word ${\underline w}$.   Suppose that $W$ is 
a simply-laced Weyl group, so that it corresponds to a Dynkin diagram.  Choose 
an arbitrary orientation for the diagram, obtaining a quiver $Q$.  
Now consider the word for the element $w_0\in W$ obtained by reading the
 AR-quiver for $kQ$, as described in Section 2.  We call this word
$\underline w_{\rm AR}$.  
%Fix a linear order on the vertices of the AR-quiver of $Q$, which extends the
%order which they naturally posess coming from the quiver structure.  
%Define the
%word ${\underline w}_{AR}$ to be the word for $w_0$ obtained as in 
%obtained by reading the 
%AR-quiver in this fixed order, where an indecomposable object in the 
%$\tau$-orbit of the indecomposable projective $P_i$ is read as $s_i$.  
Using our correspondence between leftmost words and quotient closed subcategories,
$A\in \mathcal A_{{\underline w}_{\rm AR}}$ iff $A$ is the set of indecomposables missing from
a quotient closed subcategory of $kQ$-mod.  In this case we write
$A^c$ for the corresponding quotient closed subcategory.  

Let $A,B \in \mathcal A_{{\underline w}_{\rm AR}}$, with $B\not\subseteq A$.  Let $x$ be the 
first indecomposable (with respect to our fixed total order) in $B$ which
is not in $A$.  To show that $\mathcal A_{{\underline w}_{\rm AR}}$ is a supersolvable antimatroid,
we must show that $A\cup \{x\}\in \mathcal A_{{\underline w}_{\rm AR}}$.  This is
equivalent to saying that $x$ can be removed from the quotient closed
category $A^c$, without destroying quotient closedness.  

To see this, write $X$ for the full subcategory of $\mod kQ$ whose indecomposable objects
properly precede $x$ in our fixed total order.  
Since $x\in B$, we know that $x$ is not a quotient of any object of 
$B^c$. Since $x$ is the first element of $B$ not in $A$, the indecomposable
objects of $B$ which precede $x$ all lie in $A$.  Thus 
$B^c \cap X \supseteq A^c \cap X$, so $A^c\setminus \{x\}$ is still 
quotient closed, as desired.  (As noted in \cite{Arm}, we do not have
to check the fact that $\mathcal A_{{\underline w}_{AR}}$ is accessible separately, since it 
follows from the condition we have already checked.)

\end{document}